\renewcommand\subsubsection{\@startsection {subsubsection}{1}{\z@}%
                                   {-3.5ex \@plus -1ex \@minus -.2ex}%
                                   {2.3ex \@plus.2ex}%
                                   {\normalfont\bf}}
\renewcommand\subsection{\@startsection {subsection}{1}{\z@}%
                                   {-3.5ex \@plus -1ex \@minus -.2ex}%
                                   {2.3ex \@plus.2ex}%
                                   {\normalfont\bf}}
\newtheorem{theorem}{Theorem}[section]
\newtheorem{prop}[theorem]{Proposition}
\newtheorem{lemma}[theorem]{Lemma}
\newtheorem{cor}[theorem]{Corollary}
\numberwithin{equation}{subsection}
\theoremstyle{definition}
\newtheorem{definition}[theorem]{Definition}
\newtheorem{defn}[theorem]{Definition}
\newtheorem{notation}[theorem]{Notation}
\newtheorem{example}[theorem]{Example}
\newtheorem{non-example}[theorem]{Non-example}
\newtheorem{remark}[theorem]{Remark}
\newtheorem{notation/term}[theorem]{Notation/Terminology}
\newtheorem{construction}[theorem]{Construction}
\newtheorem{observation}[theorem]{Observation}
\theoremstyle{remark}
	\newcommand{\nc}{\newcommand}
	\nc{\DMO}{\DeclareMathOperator}
	\nc{\tensor}{\otimes}
	\nc{\unit}{{1}}
	\DMO{\ass}{\sf{Alg}}
	\DMO{\conf}{\sf{Conf}}
	\DMO{\open}{\sf{Open}}
	\DMO{\Mod}{\mathsf{-Mod}}
	\DMO{\Exit}{\sE}
	\DMO{\cech}{\mathsf{Cech}}
	\DMO{\fun}{\mathsf{Fun}}
	\DMO{\sbar}{\mathsf{Bar}}
	\DMO{\coend}{\mathsf{Coend}}
	\DMO{\modpair}{\mathsf{ModPair}}
	\DMO{\maps}{\mathsf{Maps}}
	\DMO{\emb}{\mathsf{Emb}}
	\DMO{\Ab}{\mathsf{Ab}}
	\DMO{\chain}{\mathsf{Chain}}
	\DMO{\ob}{\mathsf{ob}}
	\DMO{\lan}{\mathsf{Lan}}
	\DMO{\orderI}{ {\pi_0\cI^{\sqcup}_{\mathsf{or}}}}
	\DMO{\id}{id}
	\nc{\diskover}{(\disk_1^{\partial,\fr})_{/[-1,1]}}
	\nc{\snglrall}{\mathsf{Snglr}^{\mathsf{csm}}}
	\nc{\Sp}{\mathcal{S}p}
\DeclareMathOperator{\Aut}{\sf Aut}
\DeclareMathOperator*{\colim}{\sf colim}
\DeclareMathOperator{\holim}{\sf holim}
\DeclareMathOperator{\End}{\sf End}
\DeclareMathOperator{\Fun}{\sf Fun}
\DeclareMathOperator{\Iso}{\sf Iso}
\DeclareMathOperator{\Map}{\sf Map}
\DeclareMathOperator{\Gammac}{{\Gamma}_{\!\sf c}}
\DeclareMathOperator{\Ass}{\mathsf{Ass}}
\DMO{\cmpct}{cmpt}
\DMO{\depth}{depth}
\DMO{\snglrr}{Snglr}
\nc{\power}{\mathsf{Power}}
\DeclareMathOperator{\Cat}{\sf Cat_\infty}
\DeclareMathOperator{\m}{\sf Mod}
\DeclareMathOperator{\Ch}{\sf Ch}
\DeclareMathOperator{\Alg}{\mathsf{Alg}}
\DeclareMathOperator{\op}{\sf op}
\DeclareMathOperator{\Emb}{\mathsf{Emb}}
\DeclareMathOperator{\spaces}{\mathsf{Spaces}}
\DeclareMathOperator{\spectra}{\mathsf{Spectra}}
\DeclareMathOperator{\disk}{\cD\mathsf{isk}}
\DeclareMathOperator{\diskd}{\mathsf{Disk}}
\DeclareMathOperator{\fr}{\sf fr}
\DeclareMathOperator{\Fin}{\mathsf{Fin}}
\DeclareMathOperator{\Strat}{\cS{\sf trat}}
\DeclareMathOperator{\stratd}{\sf Strat}
\DeclareMathOperator{\mfld}{\cM\mathsf{fld}}
\DeclareMathOperator{\mfldd}{\mathsf{Mfld}}
\DeclareMathOperator{\snglr}{\cS\mathsf{nglr}}
\DeclareMathOperator{\snglrd}{\mathsf{Snglr}}
\DeclareMathOperator{\bsc}{\cB\mathsf{sc}}
\DeclareMathOperator{\bscd}{\mathsf{Bsc}}
\DeclareMathOperator{\Psh}{\mathsf{PShv}}
\DeclareMathOperator{\psh}{\mathsf{PShv}}
\DeclareMathOperator{\uno}{\mathbbm{1}}
\DeclareMathOperator{\BO}{{\mathsf BO}}
\def\ot{\otimes}
\DeclareMathOperator{\fin}{\sf Fin}
\DeclareMathOperator{\oo}{\infty}
\DeclareMathOperator{\hh}{\sf HC}
\DeclareMathOperator{\free}{\sf Free}
\DeclareMathOperator{\Ran}{\sf Ran}
\DeclareMathOperator{\bcH}{\boldsymbol{\mathcal H}}
\newcommand{\into}{\hookrightarrow}
\newcommand{\ra}{\rightarrow}
\newcommand{\la}{\leftarrow}
\newcommand{\xra}{\xrightarrow}
\newcommand{\xla}{\xleftarrow}
\newcommand{\ov}{\overline}
\newcommand{\w}{\widetilde}
\newcommand{\tl}{\triangleleft}
	\newcommand{\Kan}{\sf Kan}
	\newcommand{\Set}{\mathsf{Set}}
\def\cB{\mathcal B}\def\cC{\mathcal C}\def\cD{\mathcal D}
\def\cE{\mathcal E}\def\cF{\mathcal F}\def\cG{\mathcal G}
\def\cI{\mathcal I}\def\cJ{\mathcal J}\def\cK{\mathcal K}
\def\cM{\mathcal M}\def\cO{\mathcal O}\def\cP{\mathcal P}
\def\cS{\mathcal S}
\def\cV{\mathcal V}\def\cX{\mathcal X}
\def\AA{\mathbb A}\def\DD{\mathbb D}
\def\HH{\mathbb H}
\def\QQ{\mathbb Q}\def\RR{\mathbb R}\def\SS{\mathbb S}
\def\ZZ{\mathbb Z}
\def\sB{\mathsf B}\def\sC{\mathsf C}\def\sD{\mathsf D}
\def\sE{\mathsf E}\def\sH{\mathsf H}
\def\sI{\mathsf I}
\def\sN{\mathsf N}\def\sO{\mathsf O}
\def\sS{\mathsf S}
\def\bDelta{\mathbf\Delta}
	\nc{\hiro}{\textcolor{blue}}
	\nc{\david}{
	\textcolor{red}
	}
\begin{document}

\title{Factorization homology of stratified spaces}
\author{David Ayala}
\author{John Francis}
\author{Hiro Lee Tanaka}
\date{}

\address{Department of Mathematics\\Montana State University\\Bozeman, MT 59717}
\email{david.ayala@montana.edu}
\address{Department of Mathematics\\Northwestern University\\Evanston, IL 60208-2370}
\email{jnkf@northwestern.edu}
\address{Department of Mathematics\\Harvard University\\Cambridge, MA 02138-2901}
\email{hirolee@math.harvard.edu}
\thanks{DA was partially supported by ERC adv.grant no.228082, and by the National Science Foundation under Award No. 0902639. JF was supported by the National Science Foundation under Award 0902974 and Award 1207758. HLT was supported by a National Science Foundation Graduate Research Fellowship, by the Northwestern University Office of the President, by the Centre for Quantum Geometry of Moduli Spaces, and by the National Science Foundation under Award DMS-1400761.}

\begin{abstract} 
This work forms a foundational study of factorization homology, or topological chiral homology, at the generality of stratified spaces with tangential structures. Examples of such factorization homology theories include intersection homology, compactly supported stratified mapping spaces, and Hochschild homology with coefficients. Our main theorem characterizes factorization homology theories by a  generalization of the Eilenberg--Steenrod axioms; it can also be viewed as an analogue of the Baez--Dolan cobordism hypothesis formulated for the observables, rather than state spaces, of a topological quantum field theory. Using these axioms, we extend the nonabelian Poincar\'e duality of Salvatore and Lurie to the setting of stratified spaces -- this is a nonabelian version of the Poincar\'e duality given by intersection homology. We pay special attention to the simple case of singular manifolds whose singularity datum is a properly embedded submanifold and give a further simplified algebraic characterization of these homology theories. In the case of 3-manifolds with 1-dimensional submanifolds, these structure gives rise to knot and link homology theories.
\end{abstract}

\keywords{Factorization homology. Topological quantum field theory. Topological chiral homology. Knot homology. Configuration spaces. Operads. $\oo$-Categories.}

\subjclass[2010]{Primary 57P05. Secondary 55N40, 57R40.}

\maketitle

\tableofcontents

\section*{Introduction}

The present work forms an initial step in a formalism for topological quantum field theory using stratified spaces. This larger program proposes to understand locality in topological quantum field theory by a fusion of the algebra of factorization homology and the geometry of stratifications. Before delineating this step and, briefly, this future program, we first review the subject of factorization homology.

\medskip

Factorization homology is, heuristically, a procedure which takes an $n$-manifold $M$ and an algebraic input $A$, such as an $\cE_n$-algebra, and produces an object $\int_MA$. The manifold provides the gluing data, the algebra provides the gluing rules, and one can think of integrating the multiplication of the algebra over the gluing data of the manifold. Viewing this object as an invariant of the manifold, the procedure generalizes usual homology theories; viewing the object as an invariant of the algebra, it generalizes Hochschild homology when the manifold is the circle and offers a natural repository for traces or index-type invariants.

\medskip

Such a procedure was introduced for algebraic varieties by Beilinson \& Drinfeld in their work on an algebro-geometric formalism for conformal field theory; see \cite{bd} and \cite{fg}. In \S5.5 of~\cite{HA}, Lurie defined a topological analogue of their construction -- known as factorization homology or topological chiral homology -- and this topological construction likewise generalizes the labeled configuration spaces of Salvatore \cite{salvatore} and Segal \cite{segallocal}. The main theorem of this area, non-abelian Poincar\'e duality, naturally generalizes the James construction and configuration space models of mapping spaces dating to the 1970s in work of McDuff \cite{mcduff} and others -- see \cite{Fact} for a more detailed history.

\medskip

Another recent catalyst for study in this area has been the approach of Costello \& Gwilliam to perturbative quantum field theory in \cite{kevinowen}. In mathematical approaches to topological field theory at least since Atiyah in \cite{atiyah}, it is common to organize the formalism around the functoriality of the state spaces in the theory. This choice leads to cobordisms and, proceeding deeper, higher categories of cobordisms after Baez \& Dolan in \cite{baezdolan}. In contrast, Costello \& Gwilliam, following the factorization algebra structures of \cite{bd}, codify their theory around the structure of observables, or operators, rather than state spaces. In their work, the earlier renormalization machinery of \cite{kevin} is married with the factorization point of view; the intuitive factorization homology procedure becomes a way of constructing a candidate object $\int_MA$ of global observables on a space-time $M$ from the algebra of observables $A$ on Euclidean space. This candidate object $\int_MA$ is intended to accurately capture the global observables on $M$ if the quantum field theory is perturbative. They prove a quantization theorem, designed as a mathematical formulation of physicists methods of perturbative renormalization, using the Batalin--Vilkovisky formalism applied to derived symplectic geometry. From this quantization theorem they construct a number of interesting examples of such perturbative field theories.

\medskip

The structural favoring of observables over state spaces has geometric consequences, namely a favoring of open embeddings over cobordisms. Dual to restricting fields, observables can extend by zero. This is unlike state spaces, where there is no procedure for extension by zero, no naturality with respect to open embeddings, and no values for non-compact manifolds. The cobordism hypothesis with singularities, after Baez--Dolan \cite{baezdolan} and Lurie \cite{cobordism}, gives a proposed classification for certain topological field theories in terms of their state spaces. With this in mind, one can ask if there is a similar classification which applies to this class of topological field theories constructed in \cite{kevinowen} in terms of the Batalin--Vilkovisky formalism, in terms of their observables. Our first main result, Theorem \ref{hmlgy=FH}, can be viewed as just such a classification.

\medskip

In the present work, we lay the foundations for a general theory of factorization homology, following the outline of \cite{Fact} and after the originating work of Lurie in \cite{HA}. We do this for stratified spaces and, more generally, $\cB$-manifolds, where $\cB$ is a collection of basic singularity types endowed with a tangential structure, applying the theory of stratified spaces and tangential structures developed in \cite{aft1}. This extra level of generality is carried out for two reasons. First, the theory without stratifications is related to observables on space-time in perturbative field theory; adding nontrivial stratifications allows one to incorporate boundary conditions and defects in this theory, such as Wilson line operators in Chern--Simons. The second reason concerns the extension of our theory outside of the perturbative range in quantum field theories; our larger program in progress (see \cite{striation} and the papers that follow) uses stratifications to effect this greater generality.

\medskip

We now turn to a linear overview of contents of the current work, which has three parts.

\medskip

In the first part, we cover the definition (Definition \ref{def:fact-hmlgy}) of factorization homology as a Kan extension from $\disk(\cB)$-algebras to $\cB$-manifolds, and we prove its existence. The first main result is that the symmetric monoidal and underlying left Kan extensions are equivalent, and that there is therefore a comprehensible formula (Theorem \ref{fact-explicit}) which computes factorization homology; this generalizes a formula for usual homology. To establish this existence result and this explicit formula we prove a general result giving conditions for existence and agreement of symmetric monoidal and underlying left Kan extensions in Lemma \ref{shape-existence}. To verify the conditions of this lemma requires proving that the $\oo$-category $\disk(\cB)_{/X}$ of basic singularity types embedded in a manifold is sifted, Corollary \ref{quittersifted}; our proof makes use of a localization result relating discrete and topological categories of embedded disks and of Dugger--Isaksen's work in \cite{Dugger--Isaksen}.

\medskip

Together with this explicit existence, we prove in Theorem \ref{:(} the existence of push-forwards for factorization homology along constructible bundles of stratified spaces. An immediate consequence is a Fubini theorem for factorization homology, Corollary \ref{fubini}. Along with an analysis of factorization homology in the case of a closed interval, which we identify as a relative tensor product in Proposition \ref{tensor-prod}, these results culminate in a main structural theorem. To state it, we require some terminology: let $\cB$ be an $\oo$-category of basic singularity types and $\mfld(\cB)$ the collection of stratified spaces locally modeled on $\cB$; let $\cV$ be a symmetric monoidal $\oo$-category which is $\ot$-sifted cocomplete (see Definition \ref{def-siftedcocplt}). Below, we next define the collection of $\cV$-valued homology theories $\bcH(\mfld(\cB), \cV)$ as a full $\infty$-subcategory of symmetric monoidal functors $\Fun^{\ot}(\mfld(\cB),\cV)$ satisfying the following symmetric monoidal generalization of the Eilenberg--Steenrod axioms.

\begin{definition}\label{homology}[Definition \ref{homology}]
The $\infty$-category of \emph{homology theories (over $X$)} is the full $\infty$-subcategory
\[
\bcH\bigl(\mfld(\cB)_{/X},\cV\bigr)~\subset~\Fun^\ot\bigl(\mfld(\cB)_{/X},\cV\bigr)
\]
consisting of those $H$ that satisfy the following two properties:
\begin{itemize}
\item {\bf $\ot$-Excision:} 
Let $W \cong W_- \underset{\RR\times W_0} \bigcup W_+$ denote a collar-gluing among $\cB$-manifolds over $X$.
Then the canonical morphism~(\ref{exc-compare})
\begin{equation}\label{ot-excision}
H(W_-)\underset{H(W_0)}\bigotimes H(W_+)\xra{~\simeq~} H(W)
\end{equation}
is an equivalence in $\cV$.
\item {\bf Continuous:}
Let $W_0\subset W_1\subset \dots \subset X$ be a sequence of open sub-stratified spaces of $X$ with union denoted as $\underset{i\geq 0} \bigcup W_i = : W$.
Then then the canonical morphism
\begin{equation}\label{continuous}
\colim\Bigl(H(W_0) \to H(W_1)\to \dots \Bigr)\xra{~\simeq~} H(W)
\end{equation}
is an equivalence in $\cV$.

\end{itemize}
Absolutely, the $\infty$-category of \emph{homology theories (for $\cB$-manifolds}) is the full $\infty$-subcategory
\[
\bcH\bigl(\mfld(\cB),\cV\bigr)~\subset~\Fun^\ot\bigl(\mfld(\cB),\cV\bigr)
\]
consisting of those $H$ for which, for each $\cB$-manifold $X$, the restriction $H_{|\mfld(\cB)_{/X}}$ is a homology theory for $X$.  

\end{definition}

The preceding definition generalizes to stratified spaces a notion introduced in \cite{Fact}; see also the introduction of that work for a discussion of the present concepts, such as $\ot$-excision. We can now state our main result, which generalizes the main result of {\it loco citato} from manifolds to stratified spaces.

\begin{theorem}[Theorem \ref{hmlgy=FH}]
There is an equivalence between $\disk(\cB)$-algebras in $\cV$ and $\cV$-valued homology theories for $\cB$-manifolds
\[\Alg_{\disk(\cB)}(\cV) \simeq \bcH(\mfld(\cB),\cV)\] defined by sending a $\disk(\cB)$-algebra $A$ to the factorization homology $\int A$.
\end{theorem}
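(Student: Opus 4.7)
The plan is to verify that the factorization homology functor $\int \colon \Alg_{\disk(\cB)}(\cV) \to \Fun^\otimes(\mfld(\cB),\cV)$ lands in the subcategory $\bcH(\mfld(\cB),\cV)$ of homology theories, and that its right adjoint -- restriction along $\disk(\cB) \hookrightarrow \mfld(\cB)$ -- furnishes an inverse equivalence when so restricted. First I would check that restriction of $\int A$ to $\disk(\cB)$ recovers $A$. This follows from the explicit pointwise formula of Theorem \ref{fact-explicit} together with the observation that for a basic $D\in\disk(\cB)$ the slice $\disk(\cB)_{/D}$ has a terminal object, whence the defining colimit computes $A(D)$.

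Second, I would show $\int A$ is a homology theory. The continuity axiom falls out directly from Theorem \ref{fact-explicit}: the defining colimit is taken over $\disk(\cB)_{/W}$, and for an ascending union $W = \bigcup_i W_i$ this category is the filtered colimit of the $\disk(\cB)_{/W_i}$, so the compatibility of colimits with colimits delivers the required equivalence. The $\otimes$-excision axiom carries the essential geometric content; I would deduce it by combining the existence of push-forwards along constructible bundles (Theorem \ref{:(}) with the Fubini theorem (Corollary \ref{fubini}). Given a collar-gluing $W \cong W_- \cup_{\RR\times W_0} W_+$, the associated projection $W \to [-1,1]$ is a constructible bundle, and push-forward of $A$ along it produces a $\diskover$-algebra in $\cV$. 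Fubini then identifies $\int_W A$ with the factorization homology over $[-1,1]$ of this push-forward, and Proposition \ref{tensor-prod} identifies the latter as the relative tensor product $\int_{W_-}\!A \otimes_{\int_{\RR\times W_0}\!A} \int_{W_+}\!A$, which is exactly $\otimes$-excision.

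Third, for the reverse direction, given a homology theory $H$ with restriction $A := H_{|\disk(\cB)}$, I would argue that the natural comparison $\int A \to H$ is an equivalence. The two sides are symmetric monoidal functors which agree on the generators $\disk(\cB)$, and both satisfy $\otimes$-excision and continuity. By the continuity axiom, for any $\cB$-manifold $X$, it suffices to compare values on an exhausting sequence of relatively compact open sub-$\cB$-manifolds. For each such piece, one produces a presentation by iterated collar-gluings whose atomic pieces are (disjoint unions of) basics; applying $\otimes$-excision inductively along this decomposition reduces the comparison to the case of basic disks, where it holds tautologically.

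The main obstacle is the inductive reduction in the third step: one must know that every $\cB$-manifold can be exhausted by opens each built from basics by finitely many collar-gluings. For smooth manifolds this is classical handle theory; in the stratified generality of $\cB$-manifolds it requires the structural results of \cite{aft1} on handle-type decompositions adapted to the singularity datum $\cB$, and these must be assembled at the level of the $\infty$-category $\mfld(\cB)$ so that the iterated collar-gluings produce the intended colimit presentation. Once this geometric input is in hand, the rest of the argument is formal: $\otimes$-excision propagates the identification from basics to all of $\mfld(\cB)$, and continuity upgrades compact to non-compact $\cB$-manifolds.
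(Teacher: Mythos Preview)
Your proposal is correct and takes essentially the same route as the paper, which merely packages your inductive third step more tersely as the observation that restriction $\bcH(\mfld(\cB),\cV)\to\Alg_{\disk(\cB)}(\cV)$ is conservative (Corollary~\ref{generation}), so that the counit $\int(H_{|\disk(\cB)})\to H$, being a map of homology theories restricting to an equivalence on disks, is an equivalence. One small correction: the identification $\int_W A\simeq\int_{[-1,1]}f_\ast A$ is the content of the pushforward Theorem~\ref{:(} itself, not Corollary~\ref{fubini} (which is the special case of a product), and the collar-gluing map $W\to[-1,1]$ is only \emph{weakly} constructible in general.
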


Here the $\ot$-excision property of the homology theory becomes a formulation of the locality of the field theory. The theorem includes a relative version, for a fixed $\cB$-manifold $X$, which in this interpretation is a fixed space-time. Thus, our result both generalizes the Eilenberg--Steenrod axioms and attempts to axiomatize the structure of the observables in a perturbative topological quantum field theory. This latter topic also lies in the domain of the cobordism hypothesis, or the cobordism hypothesis with singularities of \cite{cobordism}. A direct comparison between these approaches is possible, subject to suitable construction of the functor $Z$, below, proposed in the cobordism hypothesis. Namely, there should be a commutative diagram below (subject to a suitable construction of the dashed functors, which we do not provide in the present work):
\[\xymatrix{
\Alg_{\disk_n}(\cV)^{\sim}\ar[d]_{\int}\ar[r]&\bigl(\Alg_{n}(\cV)^{\sim}\bigr)^{\sO(n)}\ar@{-->}[d]^Z\\
\bcH(\mfld(\cB), \cV)^{\sim} \ar@{-->}[r]& \Fun^\ot({\sf Bord}_n, \Alg_{n}(\cV))\\
}\]
\noindent

We briefly explain the terms in this picture: $\disk_n$ and $\mfld$ are the $\oo$-categories of $n$-disks and $n$-manifolds and embeddings; ${\sf Bord}_n$ is the $(\oo,n)$-category of bordisms of manifolds from \cite{cobordism}; and $\Alg_{n}(\cV)$ is the higher Morita category, where $k$-morphisms are framed $(n-k)$-disk algebras in bimodules; the superscript $\sim$ denotes that we have taken the underlying $\oo$-groupoids, restricting to invertible morphisms. The bottom dashed arrow from homology theories valued in $\cV$ to topological quantum field theories valued in $\Alg_{n}(\cV)$, assigns to a homology theory $\cF$ the functor on the bordism category sending a $k$-manifold $M$ to $\cF({M}^\circ\times\RR^{n-k})$, the value of $\cF$ on a thickening of the interior of $M$. There is a similar diagram replacing $n$ with a general  collection of basic singularity types $\cB$, where on the right we substitute bordism categories with singularities.

\medskip

We assert that this diagram commutes, but this assertion relies on the existence of the higher bordism $(\oo,n)$-category ${\sf Bord}_n$ and on the verification of the cobordism hypothesis, i.e., the existence of the dotted arrow $Z$. A proof by Lurie has been outlined in \cite{cobordism}, building on earlier work with Hopkins. We defer the commutativity of this diagram to a future work, after the full completion of the right hand side.

\medskip

We turn to a description now of the the second part of this work. There, we apply our main result, Theorem \ref{hmlgy=FH}, to give a proof of non-abelian Poincar\'e duality, after \cite{salvatore}, \cite{segallocal}, and Section~5.5.6 of~\cite{HA}. Further, our Theorem \ref{non-abel} is a stratified generalization, that there is a homotopy equivalence
\[\int_X A_E \simeq \Gammac(X,E)\]
between a factorization homology theory and a space of sections of a stratified bundle $E$ over a $\cB$-manifold $X$, subject to stratum by stratum connectivity conditions on $E$. The left hand side can be thought of either as a labeled configuration space or a form of nonabelian homology with coefficients in a higher loop space. 

\medskip

In the last section of this paper, we detail the structure involved in particular choices of $\cB$-structures. We observe that the intersection homology theories of Goresky \& MacPherson fit our axiomatics. As such, our stratified theories generalize intersection homology in the way that the unstratified factorization homology of \cite{HA} and \cite{Fact} generalize ordinary homology. We then study two examples: manifolds with boundary and $n$-manifolds with a submanifold of fixed dimension $d$. Both of these have descriptions as special cases of our main theorem, and each of these descriptions mix with the form of the higher Deligne conjecture proved by Lurie \cite{HA}, which asserts that the higher Hochschild cohomology $\hh^\ast_{\sD^{\fr}_d}(B)$ of a framed $d$-disk algebra $B$ carries a framed $(d+1)$-disk algebra structure which is characterized by a universal property. In particular, we prove the following by combining our main theorem and this form of Deligne's conjecture:

\begin{cor} There is an equivalence 
\[
\int\colon \Alg_{\disk_{d\subset n}^{\fr}}(\cV)\leftrightarrows\bcH(\mfld^{\fr}_{d\subset n}, \cV)\colon \rho
\] 
between $\disk_{d\subset n}^{\fr}$-algebras in $\cV$ and $\cV$-valued homology theories for framed $n$-manifolds with a framed $d$-dimensional submanifold with trivialized normal bundle. The datum of a $\disk_{d\subset n}^{\fr}$-algebra is equivalent to the data of a triple $(A,B,\alpha)$, where $A$ is a $\disk_n^{\fr}$-algebra, $B$ is a $\disk_d^{\fr}$-algebra, and $\alpha:\int_{S^{n-d-1}}A\ra \hh^\ast_{\sD^{\fr}_d}(B)$ is a map of $\disk^{\fr}_{d+1}$-algebras.
\end{cor}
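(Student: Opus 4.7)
The first claimed equivalence $\int \dashv \rho$ is an instance of Theorem~\ref{hmlgy=FH} specialized to $\cB = \cB^{\fr}_{d\subset n}$, the $\infty$-category of basics for framed $n$-manifolds equipped with a framed $d$-dimensional submanifold with trivialized normal bundle. This $\cB$ has only two isomorphism classes of basics, namely $\RR^n$ and $\RR^d \times \RR^{n-d}$ (the latter with $\RR^d \times \{0\}$ as the singular stratum), and morphisms the framed stratified open embeddings; the main theorem then delivers the adjoint equivalence at once.

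It remains to identify a symmetric monoidal functor $F \colon \disk^{\fr}_{d\subset n} \to \cV$ with a triple $(A, B, \alpha)$. Set $A := F(\RR^n)$ and $B := F(\RR^d\times\RR^{n-d})$. Restricting $F$ along the inclusion $\disk_n^{\fr} \subset \disk^{\fr}_{d\subset n}$ exhibits $A$ as a $\disk_n^{\fr}$-algebra. The framed stratified self-embeddings of $\RR^d \times \RR^{n-d}$ preserving the submanifold are determined, up to contractible choice in the normal directions, by their restrictions to $\RR^d$, and hence equip $B$ with a $\disk_d^{\fr}$-algebra structure. To capture the interaction between $A$ and $B$, I would consider the open sub-$\cB$-manifold $\RR^d \times (\RR^{n-d}\setminus 0) \subset \RR^d \times \RR^{n-d}$, which carries no singular stratum and so detects only $A$. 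The radial projection $\RR^d \times (\RR^{n-d}\setminus 0) \simeq \RR^{d+1} \times S^{n-d-1} \to S^{n-d-1}$ is a constructible bundle, and the pushforward theorem (Theorem~\ref{:(}) together with the Fubini formula (Corollary~\ref{fubini}) identify the value of $F$ on this tube, regarded as a $\disk^{\fr}_{d+1}$-algebra in the $\RR^{d+1}$ direction, with $\int_{S^{n-d-1}} A$.

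Assembled, these data furnish an action of the $\disk^{\fr}_{d+1}$-algebra $\int_{S^{n-d-1}} A$ on the $\disk_d^{\fr}$-algebra $B$, produced from the inclusions of punctured tubular neighborhoods of $\RR^d \times \{0\}$ into the full disk $\RR^d \times \RR^{n-d}$. By the higher Deligne conjecture in the form proved by Lurie, $\hh^\ast_{\sD^{\fr}_d}(B)$ is the universal $\disk^{\fr}_{d+1}$-algebra admitting a compatible action on $B$, so this action is equivalent to a map $\alpha \colon \int_{S^{n-d-1}} A \to \hh^\ast_{\sD^{\fr}_d}(B)$ of $\disk^{\fr}_{d+1}$-algebras.

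\textbf{Main obstacle.} The nontrivial step will be the reverse direction: from a triple $(A, B, \alpha)$, one must assemble a symmetric monoidal functor $F \colon \disk^{\fr}_{d\subset n} \to \cV$ and verify that this construction inverts the extraction procedure above. Concretely, this requires exhibiting $\disk^{\fr}_{d\subset n}$ as generated under disjoint union by the two basics, with morphism spaces controlled by (i) framed embeddings among $\RR^n$-disks, (ii) stratified framed self-embeddings of $\RR^d \times \RR^{n-d}$, and (iii) framed embeddings of disjoint unions of $\RR^n$-disks into the annular region around the submanifold --- this last class being precisely what the Fubini calculation packages as $\int_{S^{n-d-1}}A$ and its action on $B$. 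Verifying this presentation, and matching the coherence of the resulting annular action with the universal compatibility supplied by Deligne's conjecture, is the delicate point.
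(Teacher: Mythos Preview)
Your proposal is correct in outline and rests on the same two pillars as the paper: the first equivalence is indeed an instance of Theorem~\ref{hmlgy=FH} specialized to $\cB = \sD^{\fr}_{d\subset n}$, and the unpacking of a $\disk^{\fr}_{d\subset n}$-algebra into a triple $(A,B,\alpha)$ ultimately goes through the higher Deligne conjecture. Where you differ from the paper is in the execution of this unpacking, and the difference is instructive.

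You proceed geometrically: extract $A$ and $B$ by evaluation on the two basics, then identify the value on the punctured tube $\RR^d\times(\RR^{n-d}\smallsetminus 0)$ with $\int_{S^{n-d-1}}A$ via pushforward and Fubini, and read off an action on $B$. This is perfectly sound for the forward direction, and you correctly flag the reverse direction as the obstacle. The paper (Proposition~\ref{nk}) instead organizes the argument categorically, via a filtration of $\disk^{\fr}_{d\subset n}$ by the number of components isomorphic to $(\RR^d\subset\RR^n)$. At the first stage $(\disk^{\fr}_{d\subset n})_{\leq 1}$ one sees directly that a symmetric monoidal functor is the same as a $\disk^{\fr}_n$-algebra $A$ together with a $\disk^{\fr}_{n-d}$-$A$-module; extending over the whole filtration then amounts to equipping that module with a compatible $\disk^{\fr}_d$-algebra structure. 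This yields a \emph{pullback square} of $\infty$-categories identifying $\Alg_{\disk^{\fr}_{d\subset n}}(\cV)$ over the pair $(A,B)$ with $\Alg_{\disk^{\fr}_d}\bigl(\m_A^{\disk^{\fr}_{n-d}}(\cV)\bigr)$. The passage from $\disk^{\fr}_{n-d}$-$A$-modules to $\int_{S^{n-d-1}}A$-modules is then imported as a black box from~\cite{cotangent}, and higher Deligne finishes. The payoff of the paper's approach is that the pullback formulation gives both directions at once: what you call the ``main obstacle'' dissolves because one is comparing two $\infty$-categories rather than building an inverse by hand. Your Fubini computation is, in effect, a geometric rediscovery of the module-category equivalence $\m_A^{\disk^{\fr}_{n-d}}(\cV)\simeq \m_{\int_{S^{n-d-1}}A}(\cV)$ in this special case, whereas the paper simply cites it.
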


Specializing to the case of 3-manifolds with a 1-dimensional submanifold, i.e., to links, the preceding provides an algebraic structure that gives rise to a link homology theory. To a triple $(A, B, \alpha)$, where $A$ is a $\disk_3^{\fr}$-algebra, $B$ is an associative algebra, and $\alpha: \hh_\ast (A) \ra \hh^\ast (B)$ is a $\disk_2^{\fr}$-algebra map, one can then construct a link homology theory, via factorization homology with coefficients in this triple. This promises to provide a new source of such knot homology theories, similar to Khovanov homology. Khovanov homology itself does not fit into this structure, for a very simple reason: an open embedding of a noncompact knot $(U, K) \subset (U', K')$ does not define a map on Khovanov homologies, from ${\sf Kh}(U,K)$ to ${\sf Kh}(U',K')$. (In addition, Khovanov homology still has not been constructed for general manifolds $K$, other than $S^3$ or $\RR^3$.) factorization homology theories can be constructed, however, using the same input as Chern--Simons theory, and these appear to be closely related to Khovanov homology -- these theories will be the subject of another work, one which requires a more involved use of stratifications to capture nonperturbative phenomena.

\begin{remark}
In this work, we use Joyal's {\it quasi-category} model  of $\oo$-category theory \cite{joyal}. 
Boardman \& Vogt first introduced these simplicial sets in \cite{bv}, as weak Kan complexes, and their and Joyal's theory has been developed in great depth by Lurie in \cite{HTT} and \cite{HA}, our primary references; see the first chapter of \cite{HTT} for an introduction. We use this model, rather than model categories or simplicial categories, because of the great technical advantages for constructions involving categories of functors, which are ubiquitous in this work. 
More specifically, we work inside of the quasi-category associated to this model category of Joyal's.  In particular, each map between quasi-categories is understood to be an iso- and inner-fibration; and (co)limits among quasi-categories are equivalent to homotopy (co)limits with respect to Joyal's model structure.

We will also make use of $\Kan$-enriched categories, such as $\snglr$ of stratified spaces and conically smooth embeddings among them. 
By a functor $\cS \ra \cC$ to an $\infty$-category from a $\Kan$-enriched category we will always mean a functor $\sN\cS \ra \cC$ from the simplicial nerve of $\cS$.  

The reader uncomfortable with the language of $\infty$-categories can substitute the words ``topological category" for ``$\oo$-category" wherever they occur in this paper to obtain the correct sense of the results, but they should then bear in mind the proviso that technical difficulties may then abound in making the statements literally true. The reader only concerned with algebra in chain complexes, rather than spectra, can likewise substitute ``pre-triangulated differential graded category" for ``stable $\oo$-category" wherever those words appear, with the same proviso.
\end{remark}

 \medskip

{\bf Acknowledgements.} We are indebted to Kevin Costello for many conversations and his many insights which have motivated and informed the greater part of this work. We also thank Jacob Lurie for illuminating discussions, his inspirational account of topological field theories, and his substantial contribution to the theory of $\infty$-categories. JF thanks Alexei Oblomkov for helpful conversations on knot homology.

\section{Recollections}\label{recollections}
This entire work is founded on a predecessor, \cite{aft1}, which lays our foundation for structured stratified spaces and the $\infty$-categories organizing them.
In this first subsection, we briefly recall some of the concepts introduced there, skipping many specifics. 
The results mentioned in this section will be precise, but we refer any reader who wishes to truly understand the precise definitions to~\cite{aft1}. 
In the second subsection we review some essentials of symmetric monoidal $\infty$-categories, after \S2 of~\cite{HA}.

\subsection{Stratified spaces}\label{sec.stratified-spaces}

Before defining \emph{conically smooth} stratified spaces which occupy this work, we first review the baseline topological notions.
We will regard a poset $P$ as a topological space with closed sets $P_{\leq p}$, for each element $p\in P$.
A {\it stratified topological space indexed by a poset $P$} is a paracompact Hausdorff topological space $X$ with a continuous map $X\ra P$.
For many purposes, the indexing poset $P$ can be taken to be the totally ordered set $[n]=\{0<1<\ldots <n\}$, where $n$ is the maximal dimension of a neighborhood of a point in $X$.
A \emph{stratified topological space} $X$ is a stratified topological space indexed by a poset $P$ (typically omitted from the notation).
The {\it stratum} $X_p\subset X$ is the inverse image of the element $p\in P$.
The \emph{depth} of a stratified topological space is the depth of its stratifying poset, by which we mean the maximum among lengths of strictly increasing sequences, should it exist.
A map between stratified topological spaces $X\ra Y$ is a commutative diagram of topological spaces
\[
\xymatrix{
X\ar[r]\ar[d]&Y \ar[d]\\
P\ar[r]& Q}
\]
where $P$ and $Q$ are the indexing posets for $X$ and $Y$. Such maps are closed under composition, so this forms a category.
We say such a map is an \emph{open embedding} if $X\ra Y$ is an open embedding between topological spaces.

\begin{example}
We observe an essential operation for generating new stratified topological spaces from old ones: taking cones. First, for $P$ is a poset, then the left-cone $P^\tl$ on $P$ is the poset defined by adding a new minimum element to $P$.
Note an identification $[n]^\tl = [n+1]$. For $X$ a stratified topological space indexed by $P$, then the open cone on $X$,
\[
\sC(X):=\{0\} \underset{\{0\}\times X}\coprod [0,\infty)\times X \longrightarrow \{0\} \underset{\{0\}\times P}\amalg [1]\times P = P^{\tl}~,
\]
is a stratified topological space indexed by $P^\tl$. 
Note that the cone $\sC(X)$ carries a natural action by multiplication of the nonnegative reals $\RR_{\geq 0}$, by scaling in the cone coordinate.
Note also that the stratifying poset $P^{\tl}$ has strictly greater depth than the poset $P$, provided the latter exists.
\end{example}

The category of {\it $C^0$ stratified space} is the smallest full subcategory of stratified topological spaces characterized by the following properties:

	\begin{enumerate}
	\item The empty set $\emptyset$ is a $C^0$ stratified space indexed by the empty poset.
	\item If $X$ is a $C^0$ stratified space and both $X$ and its indexing poset $P$ are compact, then the open cone $\sC(X)$ is a $C^0$ stratified space.
	\item If $X$ and $Y$ are $C^0$ stratified spaces, then the product stratified space $X\times Y$ is a $C^0$ stratified space. 
	\item  If $X$ is a $C^0$ stratified space and $(U \to P_{|U}) \to (X \to P)$ is an open embedding, then $(U \to P_{|U})$ is a $C^0$ stratified space.
	\item  If $X$ is a stratified topological space admitting an open cover by $C^0$ stratified spaces, then $X$ is a $C^0$ stratified space.
	\end{enumerate}	
Observe these examples of $C^0$ stratified spaces:
\begin{itemize}
\item a singleton, $\ast= (\ast \to \ast)$;
\item a half-open interval, $[0,\infty) = \bigl([0,\infty)\to [1]\bigr)$, with stratification such that the $0$-stratum is precisely $\{0\}$;
\item Euclidean spaces, $\RR^i = (\RR^i\to \ast)$;
\item any $C^0$ manifold.  

\end{itemize}

We now add a {\it conical smoothness} condition to our stratifications. The assumption of conical smoothness will be present throughout this work. This is, unfortunately and perhaps ineluctably, an inductive definition.
The induction is on the \emph{depth} of the stratifying poset.

A \emph{conically smooth stratified space}, or \emph{stratified space} for short, is a $C^0$ stratified space $X$ that is equipped with a \emph{conically smooth atlas}
\[
\Bigl\{\RR^{i_\alpha}\times \sC(Z_\alpha) \hookrightarrow X\Bigr\}_\alpha
\]
by \emph{basics}.
We now explain these terms.
Each $Z_\alpha$ is a compact stratified space of depth strictly less than that of $X$; so we assume that we have already defined what it means for $Z_\alpha$ to be equipped with a conically smooth atlas.
In general, a \emph{basic} is the data of a non-negative integer $i$ and a compact stratified space $Z$; together, these data define the $C^0$ stratified space $\RR^i\times \sC(Z)$, which might also be referred to as a \emph{basic} when the conically smooth atlas on $Z$ is understood.
Note that, for $P$ the stratifying poset of $Z$, the stratifying poset of $\RR^i\times \sC(Z)$ is $P^{\tl}$; the \emph{cone-locus} of this basic is the stratum $\RR^i = \RR^i\times \ast$, which is that indexed by the adjoined minimum.
Continuing toward our definition of a stratified space, a \emph{conically smooth atlas} is a collection of basics openly embedding as $C^0$ stratified spaces into $X$.  This collection satisfies three conditions: 
\begin{enumerate}
\item This collection of open embeddings forms a basis for the topology of $X$, in particular it forms an open cover of $X$.
\item The \emph{transition maps}, by which we mean the inclusions of open embeddings $\RR^i\times \sC(W) \hookrightarrow \RR^j\times \sC(Z)$ over $X$, are \emph{conically smooth}, which we explain momentarily.
\item This collection is maximal with respect to (1) and (2).  
\end{enumerate}
To complete our definition of a stratified space, it remains to explain the condition for an open embedding between basics to be \emph{conically smooth}.
So consider an $C^0$ open embedding $f\colon \RR^i\times \sC(Y) \hookrightarrow \RR^j\times \sC(Z)$ between basics.
This open embedding $f$ is \emph{conically smooth} in the sense of~\S3.3 of~\cite{aft1} if the following conditions are satisfied.
\begin{itemize}
\item {\bf Away from the cone-locus:}
Each element of the atlas $\psi\colon \RR^k\times \sC(W) \hookrightarrow Y$ determines a composite open embedding $\RR^i\times(0,\infty)\times \RR^k \times \sC(W) \hookrightarrow \RR^i\times \sC(Y) \hookrightarrow \RR^j\times \sC(Z)$.
By way of a smooth identification $(0,\infty)\cong \RR$, we recognize the domain of this composition as a basic.  
Using that $f$ is an open embedding between stratified topological spaces, necessarily this is the data of an open embedding $f\psi\colon \RR^{i+1+k}\times \sC(W) \hookrightarrow \RR^j\times \sC(Z)\smallsetminus \RR^j$ to the complement of the cone-locus.  
As so, for each such $\psi$, the condition of \emph{conical smoothness} requires this open embedding $f\psi$ is a member of the atlas of the target, which carries meaning via the induction in the definition of a stratified space.

\item {\bf Along the cone-locus:}
Should the preimage of the cone-locus be empty, then the above point entirely stipulates the conical smoothness condition on $f$.  
So suppose the preimage of the cone-locus is not empty.
Using that $f$ is an open embedding between stratified topological spaces, upon representing the values of $f = \bigl[(f_{\parallel},f_r,f_\theta)\bigr]$ as coordinates, then $f_r(p,0,y) = 0$ for all $(p,y)\in \RR^i\times Y$.  
For $0< k <\infty$, the condition that $f$ is \emph{conically $C^k$ (along the cone-locus)} requires the assignment
\begin{equation}\label{derivative}
(p,v,s,y)~\mapsto~\underset{t\to0^+} {\sf lim} \Bigl[\bigl(\frac{f_{\parallel}(p+tv,ts,y) - f_{\parallel}(p,0,y)}{t}, \frac{f_r(p+tv,ts,y)}{t}, f_\theta(p+tv,ts,y) \bigr)\Bigr] 
\end{equation}
to be defined as a map $D_{\parallel}f\colon T\RR^i\times \sC(Y) \to T\RR^j\times \sC(Z)$, required to be conically $C^{k-1}$ along the cone-locus (which carries meaning via induction on $k$).  
The condition of \emph{conical smoothness} requires that $f$ is conically $C^k$ for all $k\geq 0$.  
\end{itemize}

Outlined above is a definition of a stratified space, in the sense of~\cite{aft1}.  
We now, more briefly, outline the definition of a \emph{conically smooth} map between two; the assumption of conical smoothness on maps between stratified spaces will be present throughout this work, unless otherwise stated.  
Like the definition of a stratified space, a conically smooth map between two stratified spaces is offered by induction on depth.  
For $X$ and $Y$ stratified spaces, a map $f\colon X\to Y$ between their underlying stratified topological spaces is \emph{conically smooth} if, for each pair of basic charts $\phi\colon \RR^i\times \sC(Y) \hookrightarrow X$ and $\psi\colon \RR^j\times \sC(Z)\hookrightarrow Y$ for which there is a containment $f(\phi(\RR^i)) \subset \psi(\RR^j)$ of the images of the cone-loci, the map $\psi^{-1}\circ f \circ \phi\colon \RR^i\times \sC(Y) \to \RR^j\times \sC(Z)$ is \emph{conically smooth}.
This latter use of the term means conically smooth \emph{away from the cone-locus}, which can be ensured to carry meaning via induction on depth, and conically smooth \emph{along the cone-locus} which means $f$ abides by expression~(\ref{derivative}).

\begin{example}
Note that smooth manifolds are precisely those stratified spaces that have no strata of positive codimension.
If $g:M\ra N$ is a smooth map between compact smooth manifolds, then the map of cones $\sC(g):\sC(M)\ra \sC(N)$ is conically smooth. If additionally $h:\RR^i \ra \RR^j$ is a smooth map between Euclidean spaces, then the product map $h\times \sC(g): \RR^i \times \sC(M)\ra \RR^j\times \sC(N)$ is again conically smooth.
\end{example}
Let $f\colon X\to Y$ be a map between stratified spaces.
We single out three important classes of conically smooth maps:
\begin{itemize}
\item Say $f$ is an \emph{embedding} if it is an isomorphism onto its image, and that it is an \emph{open embedding} if it is an embedding and it is an open map.  
\item Say $f$ is a \emph{refinement} if it is a homeomorphism of underlying topological spaces, and its restriction to each stratum of $X$ is an embedding.  
\item Say $f$ is a \emph{constructible bundle} if the following is true:
\begin{itemize}
\item[~]
Let $Y \to Q$ be the stratification of $Y$.
Then for each $q\in Q$, the restriction to the $q$-stratum $f_|\colon X_{|f^{-1}Y_q} \to Y_q$ is a fiber bundle of stratified spaces. 
\end{itemize}
\end{itemize}
\noindent
Unlike in the preceding definitions, for this next notion of {\it weak} constructibility the map $f$ does not preserve the stratification (i.e., it does not lie over a map of posets).

\begin{itemize}
\item Say a continuous map $f\colon X \to Y$ between the underlying topological spaces of stratified spaces is a \emph{weakly constructible bundle} if there is a diagram among stratified spaces $X\xla{r} \w{X} \xra{\ov{f}} Y$, with $r$ a refinement and $\ov{f}$ a constructible bundle, for which there is an equality between maps of underlying topological spaces: $\ov{f} = f\circ r$.  
\end{itemize}
Conically smooth maps compose, as do those that are open embeddings, thereby yielding the pair of categories
\[
\stratd~\supset~\snglrd~.
\]
The category $\stratd$ admits finite products, and so we regard it as enriched over the Cartesian category $\Fun(\stratd^{\op},\Set)$ of set-valued presheaves on itself through the expression 
\[
\Map_{\stratd}(X,Y) \colon Z\mapsto \stratd_{/Z}(X\times Z, Y\times Z)
\]
where the subscript ${}_{/Z}$ indicates those maps that commute with projecting to $Z$.  
Restricting along the standard cosimplicial object $\Delta^\bullet_e\colon \bDelta \longrightarrow \stratd$ given by $[p]\mapsto \{[p]\xra{t}\RR\mid \sum_{i\in [p]} t_i = 1\}$, gives a natural enrichment of $\stratd$, and thereafter of $\snglrd$, over simplicial sets.
In~\cite{aft1} we show that these enrichments factors as a $\Kan$-enrichments.
In a standard manner, we obtain $\infty$-categories
\[
\snglr\qquad\text{ and }\qquad \Strat
\]
of stratified spaces and \emph{spaces} of open embeddings among them, and respectively of conically smooth maps among them.   
Manifestly, there are natural functors $\snglrd \to \snglr$ and $\stratd \to \Strat$.
There is a full subcategory and an $\infty$-subcategory,
\[
\iota \colon \bscd~\subset~\snglrd\qquad\text{ and }\qquad \iota \colon \bsc~\subset~\snglr
\]
consisting of the \emph{basics}.  
A key result is that the functor $[-]\colon \bsc\to [\bsc]$ to the poset of isomorphism classes is \emph{conservative} (i.e., it does not create isomorphisms), which has immense consequences that are specific to the set-up herein. (See Theorem 4.3.1 of~\cite{aft1}.)

The \emph{tangent classifier} is the restricted Yoneda functor
\[
\snglr\xra{~j~} \Psh(\snglr)\xra{~\iota^\ast~} \Psh(\bsc)~,\qquad X\mapsto \bigl({\sf Entr}(X)\xra{\tau_X} \bsc\bigr)
\]
where, here, we have taken the model for presheaves on $\infty$-categories as \emph{right fibrations}.  
Specifically, each stratified space $X$ determines an $\infty$-category ${\sf Entr}(X):= \bsc_{/X}$ over $\bsc$.
(A key result in~\cite{aft1} is that ${\sf Entr}(X)$ is identified as the \emph{enter-path category} of the stratified space $X$, though we do not explain this result here for we do not use it.)
Imitating the notion of a tangential structure in differential topology, we define a \emph{category of basics} as a right fibration
\[
\cB ~=~(\cB \to \bsc)
\]
and declare a $\cB$-manifold to be a stratified space $X$ together with a lift of its tangent classifier:
\[
\xymatrix{
&&
\cB \ar[d]
\\
{\sf Entr}(X) \ar[rr]^-{\tau_X}  \ar[urr]^-g
&&
\bsc.
}
\]
A $\cB$-manifold $(X,g)$ will typically be denoted simply as $X$.  
More precisely, we define the pullback $\infty$-categories
\[
\xymatrix{
\mfldd(\cB)  \ar[r]  \ar[d]
&
\mfld(\cB)  \ar[r]  \ar[d]
&
\Psh(\bsc)_{/\cB}  \ar[d]
\\
\snglrd  \ar[r]
&
\snglr \ar[r]^-{\tau}
&
\Psh(\bsc)
}
\]
and refer to the upper middle as that of $\cB$-manifolds.

\begin{remark}\label{g-sheaf}
Via the straightening/unstraightening construction (see Section 2.2 of~\cite{HTT} -- this is the $\infty$-categorical analogue of the Grothendieck construction), a right fibration $\cB\to \bsc$ is the same data as a presheaf of spaces: $\bsc^{\op} \to \spaces$.  
Precomposing this presheaf with the tangent classifier for $X$ determines another presheaf of spaces: ${\sf Entr}(X)^{\op}\xra{\tau_X} \bsc^{\op} \xra{\cB} \spaces$.  
Via Theorem~A.9.3 of~\cite{HA}, such a presheaf is the data of a sheaf of spaces on $X$ that happens to be constructible with respect to the given stratification of $X$.  
As so, a $\cB$-manifold is a stratified space $X$ together with a section $g$ the sheaf named just above.  

\end{remark}

Definitionally, there is a fully faithful inclusion 
\[
\iota \colon \cB \hookrightarrow \mfld(\cB)~.
\]
A main result of~\cite{aft1} is that $\mfld(\cB)$ is generated by $\cB$ through the formations of \emph{collar-gluings} and \emph{sequential unions}:
\begin{itemize}
\item A \emph{collar-gluing} is a weakly constructible bundle $X\xra{f} [-1,1]$.  
We typically denote such a collar-gluing as $X\cong X_-\underset{\RR\times X_0} \bigcup X_+$ where $X_- := f^{-1}[-1,1)$ and $X_0 := f^{-1}(0)$ and $X_+ := f^{-1}(-1,1]$; we regard $X$ as the collar-gluing of $X_-$ and $X_+$ along $X_0$.  

\item A \emph{sequential union} is a sequence of open subspaces $X_0\subset X_1\subset \dots \subset X$ of a stratified space for which $\underset{i\geq 0} \bigcup X_i = X$. 

\end{itemize}

It is also useful to single out this notion of a finitely-presented $\cB$-manifolds, namely one which is generated from $\cB$ only by collar-gluings.
We say that an $\oo$-subcategory $\cM$ of $\mfld(\cB)$ is {\it closed under the formation of collar-gluings} if $X$ belongs to $\cM$ whenever $X_0$, $X_-$, and $X_+$ belong to $\cM$, where $X\ra [-1,1]$ is a weakly constructible bundle with those inverse images.
The $\oo$-category $\mfld(\cB)^{\sf fin}$ of {\it finitary} $\cB$-manifolds is the smallest full $\oo$-subcategory of $\mfld(\cB)$ which contains $\cB$ and is closed under collar-gluings.

\begin{example}\label{union-collar-gluing}
Disjoint unions are instances of collar-gluings.
That is, if $\cB$ is a category of basics, and $X:= X_- \sqcup X_+$ is a disjoint union of $\cB$-manifolds, then the map $f:X\ra [-1,1]$, given by $f_{|X_\pm} \equiv \pm 1$, is a collar-gluing.  
\end{example}

We conclude this subsection with a remark on our use of weakly constructible bundles.

\begin{remark}
Wherever we use weakly constructible bundles in this work, we will only work with a single map at a time. That is, we will not consider any sort of category or $\oo$-category of stratified spaces whose morphisms are weakly constructible bundles. This is due to a technical issue: it is unclear how to directly topologize the set of weakly constructible bundles (and then how to ensure a well-defined composition on these spaces of maps). This issue is dealt with in a successor work, \cite{striation}, by the construction of the $\oo$-category ${\sf c}\cB{\sf un}$. This is an $\oo$-category of compact stratified spaces whose morphisms are proper contructible bundles over the asymmetrically stratified interval $\{0\}\subset [0,1]$. A main result of that paper implies that such a proper constructible bundle $X\ra [0,1]$ is equivalent to a weakly constructible bundle map $X_1 \ra X_0$ from the generic fiber to the special fiber. Consequently, one could {\it define} the $\oo$-category of compact stratified spaces and weakly constructible bundles among them to be ${\sf c}\cB{\sf un}^{\op}$, but we have chosen not to incorporate this later work into the present paper.
\end{remark}

\subsection{Symmetric monoidal $\oo$-categories}
We review some aspects of symmetric monoidal $\infty$-categories, as well as $\infty$-operads, that we will make use of later on.
The content of this section is essentially extracted from \S2 of~\cite{HA}.  

We use the notation $\Fin_\ast$ for the category of based finite sets.
We let $\Fin$ denote the category of (possibly empty) finite sets, and $\Fin^{\sf inj}$ the subcategory of only injective morphisms.
There is a functor $(\Fin^{\sf inj})^{\op} \xra{\sf inert} \Fin_\ast$ given on objects as $I\mapsto I_+$ (i.e., attaching a disjoint basepoint) and on morphisms as $(f\colon I\hookrightarrow J)\mapsto (f^+:J_+\to I_+)$ where $f^+$ is specified by requiring both that the diagram of finite sets
\[
\xymatrix{
I_+  \ar[r]^-=  \ar[d]_-{f_+}
&
I_+  
\\
J_+  \ar[r]_-=  
&
J_+  \ar[u]_-{f^+}
}
\] 
commutes and that $f$ sends $J\smallsetminus f(I)$, the complement of the image of $f$, to the basepoint $+$.  
We denote the essential image of ${\sf inert}$ as $\Fin_\ast^{\sf inrt}\subset \Fin_\ast$ and refer to its morphisms as \emph{inert} maps -- these are the maps for which the inverse image of a point (which is not the basepoint) consists of at most a single element. The category $\Fin^{\sf inj}\simeq (\Fin_\ast^{\sf inrt})^{\op}$ carries the standard Grothendieck topology in which the covers are surjective maps.

\begin{defn}[After Definition 2.1.1.10 of~\cite{HA}]\label{def.operad}
An $\infty$-operad is a functor $\cO \to \Fin_\ast$ that satisfies the following points:
\begin{itemize}
\item For each inert map $I_+\xra{f} J_+$, and for each lift $O\in \cO_{|I_+}$, there is a coCartesian morphism $O \xra{\w{f}} O'$ lifting $f$. 
In particular, the restriction $\cO_{|\Fin^{\sf inrt}_\ast} \to \Fin_\ast^{\sf inrt}$ is a coCartesian fibration.

\item  The coCartesian fibration $\cO_{|\Fin^{\sf inrt}_\ast} \to \Fin^{\sf inrt}_\ast \cong (\Fin^{\sf inj})^{\op}$ is a $\Cat$-valued sheaf whose value on $+$ is terminal.

\item Let $(O_i)_{i\in I} \in \cO_{\ast_+}^{\times I} \simeq \underset{i\in I} \prod \cO_{\{i\}_+} \xla{\simeq} \cO_{|I_+}$ be an object, and likewise let $(O'_j)_{j\in J} \in \cO_{|J_+}$ be an object.
The canonical map of spaces 
\[
\Map_{\cO}\bigl((O_i)_{i\in I}, (O'_j)_{j\in J}\bigr) \xra{~\simeq~}\underset{I_+\xra{f}J_+}\coprod\underset{j\in J}\prod \Map_{\cO}\bigl((O_i)_{f(i)=j}, O_j\bigr)
\]
is an equivalence.  

\end{itemize}
Let $\cO$ be an $\infty$-operad.
We say $\cO$ is \emph{unital} if $\cO$ has an initial object.
The \emph{active $\infty$-subcategory}
\[
\cO_{\sf act} ~:=~ \cO_{|\Fin}
\]
is the restriction of $\cO$ along the functor $\Fin\xra{(-)_+}\Fin_\ast$ that adjoins a disjoint base point.  
An active morphism in $\cO$ whose target lies over $\ast_+\in \Fin_\ast$ is called a \emph{multi-morphism}.
\end{defn}

\begin{remark}
Definition~\ref{def.operad} of an $\infty$-operad is an $\infty$-categorical version of a \emph{colored operad}, also known as a \emph{multi-category}.
In that terminology, each object of the fiber $\cO_{|\ast_+}$ is a \emph{color}.  

\end{remark}

\begin{notation}\label{sym-cat-same}
Let $\cO \to \Fin_\ast$ be an $\infty$-operad. We will typically only carry the notation $\cO$ for such an $\infty$-operad.  We will refer to its restriction to $\ast_+$ as the \emph{underlying $\infty$-category}, and again denote it as $\cO$. Context should prevent confusion.
\end{notation}

\begin{remark}
Recall that giving a coCartesian fibration $\cO \to \fin_\ast^{\sf inrt}$ is equivalent to giving a functor $\fin_\ast^{\sf inrt} \to \Cat$ by the straightening/unstraightening construction (see Section 2.2 of~\cite{HTT} -- this is the $\infty$-categorical analogue of the Grothendieck construction). Further, the sheaf condition guarantees that the fiber over the based finite set $I_+$ is an $\infty$-category equivalent to an $I$-fold product of the fiber over $+$.
\end{remark}

\begin{defn}[Definition 2.1.2.7 of~\cite{HA}]
For $\cO$ and $\cP$ $\infty$-operads, the $\infty$-category of \emph{$\cO$-algebras in $\cP$} is the full $\infty$-subcategory
\[
\Alg_\cO(\cP)~\subset~\Fun_{\Fin_\ast}(\cO,\cP)
\]
consisting of those functors over based finite sets that preserve inert-coCartesian morphisms. 
There is an $\infty$-category ${\sf Operad}_\infty$ whose objects are $\infty$-operads and whose space of morphisms from $\cO$ to $\cP$ is the underlying $\infty$-groupoid of $\Alg_\cO(\cP)$. (See Definition 2.1.4.1 of~\cite{HA}.)
\end{defn}

\begin{defn}[Definition~2.0.0.7 of~\cite{HA}; see also Remark~2.1.2.19 of~\cite{HA}.]\label{def.sym-mon}
A \emph{symmetric monoidal $\infty$-category} is a coCartesian fibration $\cV \to \Fin_\ast$ whose restriction $\cV_{|\Fin^{\sf inrt}_\ast} \to \Fin^{\sf inrt}_\ast$ is a $\Cat$-valued sheaf whose value on $+$ is terminal.
(In particular, a symmetric monoidal $\infty$-category is an $\infty$-operad.)
Let $\cV$ be a symmetric monoidal $\infty$-category.
A \emph{symmetric monoidal unit} $\uno_\cV$ for $\cV$ is a target of a morphism in $\cV$ over $+\to \ast_+$.  
For $\cD$ another symmetric monoidal $\infty$-category, the $\infty$-category of \emph{symmetric monoidal functors (from $\cV$ to $\cD$)} is the full $\infty$-subcategory
\[
\Fun^\ot(\cV,\cD)~\subset~\Fun_{\Fin_\ast}(\cV,\cD)
\]
consisting of those functors over based finite sets that preserve coCartesian morphisms.  
There is an $\infty$-category ${\sf Cat}^\ot_\infty$ whose objects are symmetric monoidal $\infty$-categories and whose space of morphisms from $\cV$ to $\cD$ is the underlying $\infty$-groupoid of $\Fun^\ot(\cO,\cV)$. (See Variant 2.1.4.13 of~\cite{HA}.)
\end{defn}

\begin{lemma}\label{creates-limits} 
The forgetful functor 
\[
{\sf Operad}_\infty \longrightarrow \Cat_{/\Fin_\ast}
\]
creates limits.
The underlying $\infty$-category functor
\[
{\sf Cat}^\ot_\infty \longrightarrow \Cat
\]
is conservative and creates limits.  
\end{lemma}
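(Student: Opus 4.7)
The strategy for both claims is to realize limits in ${\sf Operad}_\infty$ and ${\sf Cat}^\ot_\infty$ by taking underlying limits in $\Cat_{/\Fin_\ast}$ and checking that the defining axioms survive. Given $D \colon K \to {\sf Operad}_\infty$, let $\ov L := \lim(U \circ D)$ in $\Cat_{/\Fin_\ast}$; I will verify that $\ov L \to \Fin_\ast$ satisfies the three bullets of Definition \ref{def.operad} and that the limit cone lifts uniquely to a limit cone in ${\sf Operad}_\infty$.

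The coCartesian-fibration condition over $\Fin^{\sf inrt}_\ast$ follows from the standard fact that a limit of coCartesian fibrations over a fixed base, taken along a diagram of coCartesian-preserving morphisms, is again a coCartesian fibration whose coCartesian lifts are computed componentwise; this applies because morphisms in ${\sf Operad}_\infty$ preserve inert-coCartesian lifts by definition. The sheaf condition with terminal value on $+$ is preserved by limits from the straightened $\Cat$-valued presheaf viewpoint, since sheaves are closed under limits and a componentwise terminal object is terminal. The Segal decomposition of mapping spaces follows by combining
\[
\Map_{\ov L}\bigl((O_i),(O'_j)\bigr) \simeq \lim_\alpha \Map_{\cO_\alpha}\bigl((O_{i,\alpha}),(O'_{j,\alpha})\bigr)
\]
with the observation that each side splits as a coproduct over the discrete, $\alpha$-constant set $\Map_{\Fin_\ast}(I_+, J_+)$, so the coproduct commutes with $\lim_\alpha$, and with the fact that the $J$-indexed products of the axiom likewise commute with $\lim_\alpha$.

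A morphism $\cP \to \ov L$ in $\Cat_{/\Fin_\ast}$ preserves inert-coCartesian lifts iff each composite $\cP \to \cO_\alpha$ does, by the componentwise characterization of inert-coCartesian morphisms in $\ov L$; hence the universal property of $\ov L$ in $\Cat_{/\Fin_\ast}$ restricts to the universal property in ${\sf Operad}_\infty$, giving creation of limits. For ${\sf Cat}^\ot_\infty$, the same construction works with one upgrade: when each $\cV_\alpha$ is symmetric monoidal, coCartesian lifts exist over \emph{all} of $\Fin_\ast$ and are preserved by every morphism in $D$ (since symmetric monoidal functors preserve all coCartesian morphisms), so the componentwise argument produces coCartesian lifts in $\ov L$ over all of $\Fin_\ast$, exhibiting $\ov L$ as symmetric monoidal. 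The fiber $\ov L_{|\ast_+} \simeq \lim_\alpha (\cV_\alpha)_{|\ast_+}$ identifies the underlying category of $\ov L$ with the limit in $\Cat$, giving creation of limits.

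For conservativity, let $f \colon \cV \to \cD$ be a symmetric monoidal functor whose underlying $f_{|\ast_+}$ is an equivalence; since $f$ is a map of coCartesian fibrations over $\Fin_\ast$, it is an equivalence iff each fiber map $f_{|I_+}$ is, and the sheaf condition identifies both fibers with the $I$-fold products of the underlying categories and $f_{|I_+}$ with $f_{|\ast_+}^{\times I}$, which is an equivalence whenever $f_{|\ast_+}$ is. The principal subtlety throughout is the verification of the Segal decomposition under limits, which hinges on the discreteness and $\alpha$-constancy of the indexing set $\Map_{\Fin_\ast}(I_+, J_+)$; everything else is a routine application of the general principle that limits of coCartesian fibrations over a fixed base are formed fiberwise.
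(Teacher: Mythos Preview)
Your proof is correct. For the first statement, your approach is essentially the same as the paper's: take the limit in $\Cat_{/\Fin_\ast}$ and verify directly that each bullet of Definition~\ref{def.operad} passes to the limit, with the Segal decomposition of mapping spaces surviving because the indexing coproduct is over the fixed discrete set $\Map_{\Fin_\ast}(I_+,J_+)$ and the remaining finite products commute with limits.

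For the second statement the two proofs diverge. The paper does not repeat the fiberwise analysis; instead it observes that symmetric monoidal $\infty$-categories are commutative algebra objects in $(\Cat,\times)$ and invokes Lemma~3.2.2.6 of~\cite{HA}, which gives at once that the forgetful functor from algebras over any operad is conservative and creates limits. Your direct argument---coCartesian lifts over all of $\Fin_\ast$ are computed componentwise, fibers are $I$-fold powers of the underlying category, and a map of coCartesian fibrations is an equivalence iff it is so fiberwise---is more self-contained and makes the mechanism transparent, while the paper's citation is quicker and situates the statement in a broader framework. Both are valid; yours has the minor advantage of not depending on identifying ${\sf Cat}^\ot_\infty$ with $\Alg_{\sf Com}(\Cat)$.
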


\begin{proof}
The first statement can be verified directly.
Consider a small diagram $\cK\to {\sf Operad}_\infty$, denoted $k\mapsto \cO_k$, and a limit cone $\cK^\tl \to \Cat_{/\Fin_\ast}$.  Denote the value of the cone-point as $\cO$.
Let $I_+\xra{f} J_+$ be an inert map, and let $O \in (\cO)_{|I_+}$ be an object over $I_+$.  
Consider the composite functor 
\[
\cK \to {\sf Operad}_\infty \to \Cat_{/\Fin_\ast} \xra{\{O\}\underset{(-)_{|I_+}}\times \Fun^{\sf coCart}\bigl([1],(-)\bigr)_{|f}} \Cat
\]
given by assigning to an object $k$ the category of coCartesian lifts of $O_k\xra{\w{f}_k} O_k'$ in $\cO_k$ from the image of $O$.
This functor takes values in terminal $\infty$-categories, and so the limit too is terminal and it is equipped with a functor to 
\[
\underset{k\in \cK} \lim\Bigl( \{O\}\underset{(\cO_k)_{|I_+}} \times \Fun\bigl([1],\cO_k\bigr)_{|f} \Bigr)\xla{\simeq} \{O\}\underset{\cO_{|I_+}} \times \Fun\bigl([1],\cO\bigr)_{|f}~.
\]
By design, a morphism $O\xra{\w{f}} O'$ in the essential image of this functor is coCartesian.  

Now consider a functor $\cO_{|I_+} \to \underset{i\in I}\prod \cO_{\{i\}_+}$ induced by the diagram of inert morphisms $(I_+ \xra{a_i} \{i\}_+)_{i\in I}$.  
That this functor is an equivalence follows because limits commute with finite products.
Likewise, for each pair of objects $(O_i)_{i\in I}\in  \cO_{|I_+}$ and $(O'_j)_{j\in J} \in \cO_{|J_+}$, consider the map of spaces
\[
\Map_\cO\bigl((O_i)_{i\in I}, (O'_j)_{j\in J}\bigr) \longrightarrow \underset{j \in J} \prod \Map_{\cO}\bigl( (O_i)_{\{f(i)=j\}}, O_j\bigr)~. 
\]
That this map is an equivalence follows again because limits commute with finite products.  

The second statement is easier to find the literature.  
By Lemma~3.2.2.6 of~\cite{HA}, a forgetful map from a category of algebras over an operad is both conservative and limit-creating. Symmetric monoidal $\infty$-categories are commutative algebra objects in the symmetric monoidal category $\Cat$, hence Lemma~3.2.2.6 of~\cite{HA} applies.
\end{proof}

\begin{remark}
Let $F: \cO \to \cV$ be a symmetric monoidal functor. Since $F$ preserves coCartesian morphisms, it induces a map between the sheaves on $\Fin^{\sf inj}$ associated to $\cO$ and $\cV$. Hence for every $f: I_+ \to J_+$, we obtain a diagram $[1]\times [1] \to \Cat$ as follows:
 \[
 \xymatrix{
 \cO^I  \ar[d]_-{f_\ast}  \ar[rr]^-{F^I}
 &&
 \cV^I  \ar[d]^-{f_\ast}
 \\
 \cO^J  \ar[rr]^-{F^J}
 &&
 \cV^J.
 }
\]
For $J=\ast$, one can interpret the diagram as specifying an equivalence $\bigotimes_\cV^I F(O_i) \simeq F(\bigotimes_\cO^I O_i)$.
\end{remark}

\begin{remark}
Let $\cV$ and $\cD$ be symmetric monoidal $\infty$-categories.
There is an evident full inclusion $\Fun^\ot(\cV,\cD) \subset \Alg_{\cV}(\cD)$ -- it is typically not essentially surjective. The latter corresponds to lax monoidal functors.
\end{remark}

For each finite set $I$, there is then a canonical diagram of $\infty$-categories
\[
\ot \colon \cV^I \xla{~\simeq~}\cV_{I_+}  \xra{\cV_{(I_+ \to \ast)}} \cV~.
\]

\begin{defn}\label{def-siftedcocplt}

For $\cK$ a small $\infty$-category, the symmetric monoidal structure of $\cV$ {\it distributes over $\cK$-shaped colimits} if for each $c\in \cV$, the composite functor \[c\ot -\colon \cV \simeq \ast \times \cV \xra{ c\times {\sf id}_\cV} \cV\times \cV \xra{\ot} \cV\] commutes with colimits of $\cK$-shaped diagrams.  
We say $\cV$ is \emph{$\ot$-sifted cocomplete} if its underlying $\infty$-category admits sifted colimits and its symmetric monoidal structure distributes over sifted colimits, where an $\oo$-category $\cK$ is {\it sifted} if it is nonempty and the diagonal $\cK\ra \cK\times \cK$ is a final functor.

\end{defn}

\begin{lemma}[Section~2.2.4 of~\cite{HA}]\label{sym-env}
The forgetful functor admits a left adjoint
\[
{\sf Env}\colon {\sf Operad}_\infty ~\rightleftarrows~ {\sf Cat}^\ot_\infty~,
\]
referred to as the \emph{symmetric monoidal envelope} functor.  
\end{lemma}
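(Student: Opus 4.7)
The plan is to exhibit the left adjoint explicitly, paralleling Section~2.2.4 of~\cite{HA}, and then verify the universal property. An alternative, less explicit route goes through the adjoint functor theorem: both ${\sf Operad}_\infty$ and ${\sf Cat}^\ot_\infty$ are presentable, Lemma~\ref{creates-limits} shows that the forgetful functor preserves all limits, and the coCartesian-lift and sheaf conditions defining these $\infty$-subcategories of $\Cat_{/\Fin_\ast}$ are preserved by sufficiently filtered colimits, so the forgetful functor is accessible and HTT~5.5.2.9 yields a left adjoint. I prefer the explicit route below, which is more useful downstream.

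For an $\infty$-operad $\cO \to \Fin_\ast$, set
\[
{\sf Env}(\cO) ~:=~ \cO \times_{\Fin_\ast} \Fun^{\sf act}\bigl([1],\Fin_\ast\bigr),
\]
where the pullback is along evaluation at $0$, the superscript ${\sf act}$ denotes the full $\infty$-subcategory spanned by active morphisms, and the structure map to $\Fin_\ast$ is evaluation at $1$. An object is a pair $(O, I_+\xra{f} J_+)$ with $O\in \cO_{|I_+}$ and $f$ active, heuristically representing the $J$-tuple obtained by grouping the colors of $O$ according to $f$. The first step is to verify that this is a symmetric monoidal $\infty$-category in the sense of Definition~\ref{def.sym-mon}, using the orthogonal active/inert factorization system on $\Fin_\ast$: given $(O, f)$ over $J_+$ and an inert $J_+\xra{g}K_+$, one factors $g\circ f$ uniquely as an inert $I_+\xra{g'}I'_+$ followed by an active $I'_+\xra{f'}K_+$, then lifts $g'$ to an inert-coCartesian morphism $O\to O'$ in $\cO$; the pair $(O',f')$ provides the sought coCartesian lift of $g$. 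The sheaf condition on inerts transfers from $\cO$ by the same factorization analysis.

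The unit of the adjunction is the morphism of $\infty$-operads $\eta\colon \cO \to {\sf Env}(\cO)$ sending $O\in \cO_{|I_+}$ to $(O, \id_{I_+})$, and the content of the lemma is that, for any symmetric monoidal $\cV$, precomposition with $\eta$ induces an equivalence
\[
\eta^\ast\colon \Fun^\ot\bigl({\sf Env}(\cO), \cV\bigr) \xra{~\sim~} \Alg_\cO(\cV).
\]
The inverse sends an operad map $F\colon \cO \to \cV$ to the functor $\w{F}$ specified on objects by $\w{F}(O,f) := f_\ast F(O)$, using the coCartesian lift of $f$ in $\cV$; symmetric monoidality of $\w F$ is forced by the coCartesian property of $\cV$ together with the compatibility of active composition with tensor product. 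The principal obstacle is establishing that $\eta^\ast$ is an equivalence rather than merely fully faithful. Essential surjectivity amounts to showing that every symmetric monoidal functor out of ${\sf Env}(\cO)$ is determined up to contractible choice by its restriction to $\cO$ via the formula above, since the active morphisms in ${\sf Env}(\cO)$ freely generate values under tensor product from the colors in $\cO$ and coCartesian lifts in $\cV$ then fix these values uniquely; fully faithfulness reduces to identifying mapping spaces in ${\sf Env}(\cO)$ with the multi-morphism data encoded by $\Alg_\cO(\cV)$. Both verifications follow the strategy of Proposition~2.2.4.9 of~\cite{HA}.
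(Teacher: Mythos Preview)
The paper does not supply a proof of this lemma; it is stated as a citation of Section~2.2.4 of~\cite{HA}. Your sketch faithfully summarizes Lurie's construction there (the pullback ${\sf Env}(\cO)=\cO\times_{\Fin_\ast}\Fun^{\sf act}([1],\Fin_\ast)$ with projection ${\sf ev}_1$, and the universal property via the inert/active factorization) and is correct as an outline, so nothing further is needed.
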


\begin{observation}\label{extending-endos}
There is a canonical filler in the diagram among $\infty$-categories:
\[
\xymatrix{
{\sf Cat}_\infty^\ot\ar@{-->}[rr]^-{(-)^{\op}}  \ar[d]
&&
{\sf Cat}_\infty^\ot  \ar[d]
\\
\Cat  \ar[rr]^-{(-)^{\op}}
&&
\Cat.
}
\]
In other words, the opposite of the underlying $\infty$-category of a symmetric monoidal $\infty$-category is canonically endowed with a symmetric monoidal structure.  
\end{observation}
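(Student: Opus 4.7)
The plan is to construct the lifted functor $(-)^{\op}\colon {\sf Cat}^\ot_\infty \to {\sf Cat}^\ot_\infty$ by first exhibiting the opposite-category functor on $\Cat$ as a symmetric monoidal endofunctor (with respect to the Cartesian product), and then passing to commutative algebra objects.

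First I would observe that $(-)^{\op}\colon \Cat\to \Cat$ preserves finite products canonically, via the equivalences $(\cC\times \cD)^{\op}\simeq \cC^{\op}\times \cD^{\op}$ and $\ast\simeq \ast^{\op}$. Since the Cartesian symmetric monoidal structure on $\Cat$ is characterized by a universal property (\S 2.4.1 of~\cite{HA}), any finite-product-preserving endofunctor of $\Cat$ admits a canonical promotion to a symmetric monoidal endofunctor with respect to the Cartesian structure. This yields a symmetric monoidal functor $(-)^{\op}\colon (\Cat,\times)\to (\Cat,\times)$.

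Next, a symmetric monoidal functor $\cV\to \cD$ induces a functor on commutative algebra objects $\Alg_{\Com}(\cV)\to \Alg_{\Com}(\cD)$, and this assignment is itself functorial in symmetric monoidal functors; applying it to the promoted $(-)^{\op}$ above yields an endofunctor $\Alg_{\Com}(\Cat)\to \Alg_{\Com}(\Cat)$. Under the canonical identification of ${\sf Cat}^\ot_\infty$ with commutative algebras in $(\Cat,\times)$, this is the desired $(-)^{\op}$. Commutativity of the square in the statement is then automatic: the underlying-$\infty$-category functor corresponds to the forgetful $\Alg_\Com(\Cat)\to \Cat$, which is natural in the ambient symmetric monoidal $\infty$-category, so applying $(-)^{\op}$ to an underlying category is the same as taking the underlying category after applying $(-)^{\op}$.

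The main obstacle is the coherent construction of $(-)^{\op}$ as a symmetric monoidal endofunctor of $(\Cat,\times)$ --- that is, upgrading the product-preservation data into fully coherent symmetric monoidality. A hands-on alternative, bypassing the appeal to a universal property, is to argue via straightening/unstraightening: given $\cV\to \Fin_\ast$, straighten to $F_\cV\colon \Fin_\ast \to \Cat$, post-compose with $(-)^{\op}$, and unstraighten the result back to a coCartesian fibration over $\Fin_\ast$. That this is again a symmetric monoidal $\infty$-category follows because $(-)^{\op}$ preserves terminal objects and finite products, so the Segal and unit conditions of Definition~\ref{def.sym-mon} persist; functoriality in symmetric monoidal functors then follows from the naturality of straightening/unstraightening.
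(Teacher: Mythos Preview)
Your argument is correct. The paper states this as an \emph{Observation} without proof, so there is nothing to compare against directly; the authors evidently regard the fact as standard. Your route via commutative algebras in $(\Cat,\times)$ is the cleanest way to see it: the identification ${\sf Cat}^\ot_\infty \simeq \Alg_{\Com}(\Cat,\times)$ is exactly how Lurie defines symmetric monoidal $\infty$-categories (and is invoked in the paper's proof of Lemma~\ref{creates-limits}), and the promotion of a product-preserving functor to a Cartesian-symmetric-monoidal one is Corollary~2.4.1.8 of~\cite{HA}. Your alternative via straightening/unstraightening is also fine and is perhaps closer in spirit to the paper's Definition~\ref{def.sym-mon}, which phrases things in terms of coCartesian fibrations; either approach suffices here.
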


\begin{example}
Here are some examples of symmetric monoidal $\infty$-categories that are $\ot$-sifted cocomplete.  
\begin{itemize}
\item $({\sf Ch}_\Bbbk,\oplus)$ and $({\sf Ch}_\Bbbk,\ot)$: chain complexes over a ring $\Bbbk$ with equivalences given by quasi-isomorphisms, equipped with direct sum, or with tensor product.  
\item $({\sf Spectra}, \vee)$ and $({\sf Spectra}, \wedge)$: spectra with equivalences generated by stable homotopy equivalences, equipped with wedge sum, or with smash product.  
\item $(\cX,\times)$: any cocomplete Cartesian closed $\infty$-category (for instance $\spaces$ or $\Cat$) with finite product.  
\item Let $\cV$ be any symmetric monoidal $\infty$-category whose underlying $\infty$-category admits small colimits and whose symmetric monoidal structure distributes over small colimits.
Then, for $\cO$ any $\infty$-operad, the $\infty$-category of algebras $\Alg_{\cO}(\cV)$ inherits a standard symmetric monoidal structure which is given pointwise, and this symmetric monoidal $\infty$-category is $\ot$-sifted cocomplete.  
\end{itemize}
An example of a symmetric monoidal $\infty$-category that is \emph{not} $\ot$-sifted cocomplete is $({\sf Ch}_\QQ^{\op}, \ot)$, for $\ot$ does not distribute over totalizations.  

\end{example}

We are about to define an $\infty$-operad $\cV_{/c}$ for $\cV$ a unital symmetric monoidal $\infty$-category and $c\in \cV$ an object in its underlying $\infty$-category.  
Recall the construction from \S2.4.3 of \cite{HA} of an $\infty$-operad $\cD^\amalg$ from an $\infty$-category $\cD$.  Quickly, a functor $\cK \to \cD^\amalg$ is the datum of a functor $\cK\underset{\Fin_\ast}\times \Fin_{\ast, \ast'} \to \cD$ where the second factor in the fiber product is the category of finite sets equipped with an inclusion from the two-element set $\{\ast,\ast'\}$ and maps among such that preserve such inclusions.  
Should $\cD$ be an ordinary category, then the $\cD^{\amalg}$ is an ordinary colored operad with one color for each object of $\cD$, while a multi-morphism from $(d_1,\dots,d_n)$ to $d$ is simply a collection of morphisms $d_i\to d$ for each $1\leq i \leq n$.   

\begin{lemma}\label{unit-initial-cocart}
For $\cO =(\cO\to \Fin_\ast)$ a unital $\infty$-operad,
there is a preferred map of $\infty$-operads
\[
\cO \longrightarrow (\cO_{|\ast_+})^\amalg~.
\]

\end{lemma}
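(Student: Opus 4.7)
The plan is to exploit the universal description of $(-)^\amalg$ recalled in the paragraph preceding the lemma: giving a map of $\infty$-operads $\cO \to (\cO_{|\ast_+})^\amalg$ over $\Fin_\ast$ is equivalent to giving a functor
\[
F\colon \cO \underset{\Fin_\ast}\times \Fin_{\ast,\ast'} \longrightarrow \cO_{|\ast_+}~,
\]
the operadic condition translating to $F$ sending coCartesian lifts of inert morphisms to equivalences. I will construct $F$ directly, drawing its object-level values from coCartesian transport along inerts and its morphism-level values from unitality.

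On objects, a pair $(O,i)$--with $O\in \cO$ lying over $S_+$ and $i\in S$ a marked non-basepoint element--is sent to the target $O_{|i}$ of the (essentially unique) coCartesian lift of the inert projection $\rho_i\colon S_+\to \{i\}_+\cong \ast_+$ starting at $O$. A morphism $(O_1,i_1)\to (O_2,i_2)$ in the pullback is the data of $\phi\colon O_1\to O_2$ in $\cO$ over some $f\colon S_{1+}\to S_{2+}$ with $f(i_1)=i_2$. The composite $O_1\xrightarrow{\phi} O_2\to O_{2|i_2}$ lies over $\rho_{i_2}\circ f$, whose preimage of $i_2$ is $f^{-1}(i_2)\supseteq\{i_1\}$; decomposing $O_1$ inertly and applying the operadic mapping-space formula of Definition~\ref{def.operad}, this composite is classified by a multi-morphism $\mu\colon \bigl(O_{1|j}\bigr)_{j\in f^{-1}(i_2)}\to O_{2|i_2}$ in $\cO_{|\ast_+}$. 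The same mapping-space formula, combined with the unitality hypothesis, shows that the space of multi-morphisms $O_{1|i_1}\to \bigl(O_{1|j}\bigr)_{j\in f^{-1}(i_2)}$ whose projection to the $i_1$-factor is the identity is contractible, for the remaining factors in the product each have the form $\Map_\cO(\uno,-)$ and are contractible by initiality of $\uno$. Composing the canonical such multi-morphism with $\mu$ produces a morphism over the identity of $\ast_+$, which I declare to be $F(\phi,f)\colon O_{1|i_1}\to O_{2|i_2}$ in $\cO_{|\ast_+}$. Preservation of inert-coCartesian morphisms is then formal: for inert $f$ with $f(i_1)=i_2$ one has $f^{-1}(i_2)=\{i_1\}$, the unitality-filling step collapses to the identity, and $F(\phi,f)$ is an equivalence.

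The principal technical obstacle lies not in this recipe but in assembling it into a genuine $\infty$-functor carrying all higher coherences. The cleanest route I see is to exhibit $F$ as a left Kan extension of its restriction to the wide subcategory of $\cO \underset{\Fin_\ast}\times \Fin_{\ast,\ast'}$ spanned by morphisms mapping to inerts in $\Fin_{\ast,\ast'}$, where the restriction is manifestly functorial, being pointwise given by coCartesian transport. Unitality serves as an absoluteness condition: every pointwise mapping space controlling the extension factors through a contractible $\Map_\cO(\uno,-)$, so the extension exists and is essentially unique up to a contractible space of choices. This last assertion also delivers the essential uniqueness of the resulting map $\cO\to (\cO_{|\ast_+})^\amalg$, justifying the adjective \emph{preferred} in the statement.
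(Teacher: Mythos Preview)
Your object- and morphism-level descriptions are correct and agree with what the paper produces: both send $(O,i)$ to the inert pushforward $O_{|i}$ and use unitality to fill in morphisms over non-inert maps. The divergence is in how coherence is obtained, and this is where your argument has a genuine gap.

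The paper observes that unitality---equivalently, the existence of an initial object in $\cO_{|\ast_+}$---makes the restricted projection $\cO_{|\Fin^{\sf inj}} \to \Fin^{\sf inj}$ a \emph{Cartesian} fibration: over an injection $\alpha\colon I_+\hookrightarrow J_+$, the Cartesian lift ending at $(O_j)_{j\in J}$ is the restriction $(O_{\alpha(i)})_{i\in I}$, and the Cartesian universal property holds exactly because the residual factors $\Map_\cO\bigl((),O_j\bigr)$ for $j\notin\alpha(I)$ are contractible. Once this is in hand, Cartesian transport furnishes a coherent filler $\cO\times_{\Fin_\ast}\Fun_{\sf inj}\bigl([1],\Fin_\ast\bigr)\dashrightarrow\cO$ over ${\sf ev}_0$ at once; restricting along $\Fin_{\ast,\ast'}\hookrightarrow\Fun_{\sf inj}\bigl([1],\Fin_\ast\bigr)$ gives the desired map of $\infty$-operads. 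No Kan extension is invoked: coherence is automatic from the fibration structure.

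Your proposed route---left Kan extension along the wide inclusion $\cI\hookrightarrow\cC:=\cO\times_{\Fin_\ast}\Fin_{\ast,\ast'}$ of morphisms lying over inerts---does not obviously yield the functor you describe. For a wide but non-full inclusion the unit $F_{\sf inert}\to\iota^\ast\iota_! F_{\sf inert}$ is typically \emph{not} an equivalence, so the pointwise formula
\[
(\iota_! F_{\sf inert})(O,i)~=~\underset{((O',i'),\,\phi)\,\in\,\cI\times_\cC\cC_{/(O,i)}}\colim~O'_{|i'}
\]
need not return $O_{|i}$: the indexing category has no terminal object (the pair $((O,i),\mathrm{id})$ receives a morphism from $((O',i'),\phi)$ only when $\phi$ itself lies over an inert), and nothing forces the remaining contributions to collapse. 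Your appeal to ``absoluteness'' conflates the existence of \emph{some} coherent extension---which the contractibility of the spaces $\Map_\cO(\uno,-)$ does buy---with the claim that this extension is the left Kan extension of $F_{\sf inert}$. These are different assertions, and only the first is what you need. The paper's Cartesian-fibration argument is precisely the mechanism that converts that contractibility into a coherent functor, and you should replace your final paragraph with it.
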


\begin{proof}
We will use the notation $\Fin^{\sf inj}\subset\Fin \xra{(-)_+} \Fin_\ast$ for the category of finite sets and injections among them.  
Consider the category $\Fun_{\sf inj}\bigl([1], \Fin_\ast\bigr)$ consisting of those functors from the two-object totally ordered set $[1]=\{0<1\}$ which classify \emph{injective} maps among based finite sets, and natural transformations among them.   
Evaluation at $0$ gives a functor $\Fun_{\sf inj}\bigl([1], \Fin_\ast\bigr)\xra{{\sf ev}_0} \Fin_\ast$.  
Notice the evident functor $\Fin_{\ast,\ast'}\to \Fun_{\sf inj}\bigl([1], \Fin_\ast\bigr)$, and notice that it factors as an equivalence $\Fin_{\ast,\ast'} \xra{\simeq} {\sf ev}_0^{-1}(\ast_+)$.  

The $\infty$-operad $\cO$ being unital grants that the underlying $\infty$-category $\cO_{|\ast_+}$ has an initial object.  
It follows that the restricted projection $\cO_{|\Fin^{\sf inj}} \to \Fin^{\sf inj}$ is a Cartesian fibration.  
In particular, there is a canonical filler in the diagram of $\infty$-categories
\[
\xymatrix{
\cO \underset{\Fin_\ast} \times \Fun_{\sf inj}\bigl([1], \Fin_\ast\bigr)     \ar@{-->}[rr]  \ar[dr]^{{\sf ev}_0}
&&
\cO  \ar[dl]
\\
&
\Fin_\ast
&
}
\]
in which the implicit functor $\Fun_{\sf inj}\bigl([1], \Fin_\ast\bigr) \xra{{\sf ev}_1} \Fin_\ast$ is evaluation at $1$.
Restricting the second factor in the above fiber product to $\Fin_{\ast,\ast'}$ gives the functor $\cO \to (\cO_{|\ast_+})^\amalg$.  
It is direct to verify that this functor sends inert-coCartesian morphisms to inert-coCartesian morphisms.  

\end{proof}

\begin{cor}\label{slice-operads}
Let $\cO$ be a unital $\infty$-operad.
Let $\cE_{|\ast_+} \to \cO_{|\ast_+}$ be a right fibration over the underlying $\infty$-category.
We denote the pullback $\infty$-category
\[
\xymatrix{
\cE  \ar[r]  \ar[d]
&
(\cE_{|\ast_+})^\amalg  \ar[d]
\\
\cO  \ar[r]
&
(\cO_{|\ast_+})^\amalg ~.
}
\]
After Lemma~\ref{creates-limits}, the composite functor $\cE \to \cO \to \Fin_\ast$ makes $\cE$ into an $\infty$-operad, and the functor $\cE \to \cO$ is a map of $\infty$-operads.  

\end{cor}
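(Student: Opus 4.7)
The plan is to realize $\cE$ as a pullback of $\infty$-operads and apply Lemma \ref{creates-limits}. Consider the cospan in $\Cat_{/\Fin_\ast}$
\[
\cO \longrightarrow (\cO_{|\ast_+})^\amalg \longleftarrow (\cE_{|\ast_+})^\amalg
\]
whose pullback, by the very definition of $\cE$, is $\cE$ itself. Since the forgetful functor ${\sf Operad}_\infty \to \Cat_{/\Fin_\ast}$ creates limits, it suffices to verify that each vertex of the cospan is an $\infty$-operad and each arrow is a morphism of $\infty$-operads; granting this, the pullback $\cE$ inherits a canonical $\infty$-operad structure for which the projection $\cE\to \cO$ is automatically a morphism of $\infty$-operads, which is exactly the conclusion.

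The vertex $\cO$ is an $\infty$-operad by hypothesis, while the construction $\cD^\amalg$ of Section~2.4.3 of \cite{HA} produces an $\infty$-operad from any $\infty$-category $\cD$, handling the two other vertices. The left arrow $\cO\to (\cO_{|\ast_+})^\amalg$ is a morphism of $\infty$-operads by Lemma \ref{unit-initial-cocart}. For the right arrow, the functoriality of the $\amalg$-construction in its input $\infty$-category produces from the right fibration $\cE_{|\ast_+}\to \cO_{|\ast_+}$ a commutative square of $\infty$-categories
\[
\xymatrix{
(\cE_{|\ast_+})^\amalg \ar[r] \ar[d] & (\cO_{|\ast_+})^\amalg \ar[d] \\
\Fin_\ast \ar@{=}[r] & \Fin_\ast ~,
}
\]
so what remains is to verify that this induced functor preserves inert-coCartesian morphisms.

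This last verification is the step I expect to be the main obstacle. Using the explicit model in which a functor $\cK\to\cD^\amalg$ is the datum of a functor $\cK\underset{\Fin_\ast}\times \Fin_{\ast,\ast'}\to \cD$, an object of $\cD^\amalg$ lying over $I_+$ corresponds to an $I$-indexed tuple of objects of $\cD$, and a morphism lying over an inert map $I_+\to J_+$ is coCartesian precisely when its components indexed by $J$ are equivalences in $\cD$. Because postcomposition with any functor of $\infty$-categories preserves equivalences, the induced map $(\cE_{|\ast_+})^\amalg \to (\cO_{|\ast_+})^\amalg$ sends inert-coCartesian morphisms to inert-coCartesian morphisms and hence is a morphism of $\infty$-operads. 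Lemma \ref{creates-limits} then completes the proof.
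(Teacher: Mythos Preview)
Your proof is correct and follows exactly the approach the paper intends: the paper gives no separate proof for this corollary, simply writing ``After Lemma~\ref{creates-limits}'' in the statement itself, and you have spelled out precisely the verification that makes this invocation work. Your identification of the inert-coCartesian morphisms in $\cD^\amalg$ and the observation that any functor preserves equivalences is the right way to check that $(\cE_{|\ast_+})^\amalg \to (\cO_{|\ast_+})^\amalg$ is a map of $\infty$-operads; note incidentally that your argument never uses the right-fibration hypothesis on $\cE_{|\ast_+}\to\cO_{|\ast_+}$, which is indeed not needed for the bare conclusion as stated.
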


\begin{notation}\label{slice-convention}
Let $\cD \to \cM$ be a symmetric monoidal functor between symmetric monoidal $\infty$-categories, each for which the symmetric monoidal unit is initial.  
Let $X\in \cM$ be an object of the underlying $\infty$-category.
We use the notation
\[
\cD_{/X}
\]
for the output of Corollary~\ref{slice-operads} applied to the right fibration $\cD_{/X}\to \cD$.  
By construction, this $\infty$-operad is equipped with an $\oo$-operad map to $\cD$ for which the functor on underlying $\infty$-categories is the standard projection from the slice.  
Say a morphism of $\cD_{/X}$ is \emph{pre-coCartesian} if its image in $\cD$ is coCartesian over $\Fin_\ast$.  
We adopt the following notational convention: for $\cV$ a symmetric monoidal $\infty$-category, the $\infty$-category of $\cD_{/X}$-algebras in $\cV$ is the full $\infty$-subcategory
\[
\Alg_{\cD_{/X}}(\cV)~\subset~ \Fun_{\Fin_\ast}\bigl(\cD_{/X}, \cV\bigr)
\]
consisting of those functors that send pre-coCartesian morphisms to coCartesian morphisms.  

\end{notation}

\section{Factorization}

In this section, we define the factorization homology of $\cB$-manifolds with coefficients in $\disk(\cB)$-algebras, and we verify some of the essential properties of this construction. It is a symmetric monoidal functor from $\cB$-manifolds and embeddings.

For this section we fix:
\begin{itemize}
\item a symmetric monoidal $\infty$-category $\cV$ that is $\ot$-sifted cocomplete,
\item an $\oo$-category of basics $\cB = (\cB \to \bsc)$.
\end{itemize}

\subsection{Disk algebras}
Factorization homology will have a universal property with respect to monoidal and operadic structures on $\mfld(\cB)$ and $\mfld(\cB)_{/X}$, which we present below in Constructions \ref{B-monoidals} and \ref{B-operadics}.

\begin{construction}[The symmetric monoidal structures on $\mfld(\cB)$ and $\disk(\cB)$]\label{B-monoidals}
Using disjoint union, we endow $\snglrd$, $\snglr$, $\mfldd(\cB)$, and $\mfld(\cB)$ with natural symmetric monoidal structures. The details are as follows: Disjoint union makes each of the $\Kan$-enriched categories $\snglrd$ and $\snglr$ into a symmetric monoidal $\Kan$-enriched category in the usual sense.
We then realize each as a symmetric monoidal $\infty$-category (by taking the simplicial nerve, for instance -- see Proposition 2.1.1.27 of~\cite{HA}). 

To further endow $\mfldd(\cB)$ and $\mfld(\cB)$ with symmetric monoidal structures, recall that they are defined via limit diagrams as below
\[\xymatrix{
\mfldd(\cB)\ar[r]\ar[d]&\mfld(\cB)\ar[r]\ar[d]&\psh(\bsc)_{/\cB}\ar[d]\\
\snglrd\ar[r]&\snglr\ar[r]^-\tau&\psh(\bsc)\\}\]
where the functor $\tau$ is the tangent classifier, i.e., the restriction of the Yoneda embedding to basics. 
In light of Lemma~\ref{creates-limits}, to endow $\mfld(\cB)$ with a symmetric monoidal structure it suffices to lift the diagram among $\infty$-categories $\snglr \ra \psh(\bsc)\la\psh(\bsc)_{/\cB}$ to a diagram among symmetric monoidal $\infty$-categories; this is easily accomplished. 
Both $\psh(\bsc)$ and $\psh(\bsc)_{/\cB}$ are cocomplete and hence carry a coCartesian monoidal structure (i.e., the monoidal structure given by coproducts); since the forgetful functor $\psh(\bsc)_{/\cB}\ra \psh(\bsc)$ preserves colimits, it in particular preserves coproducts, and is thus symmetric monoidal with respect to this coCartesian monoidal structure.  
Now, because each basic $U$ is connected, there is a canonical isomorphism of Kan complexes
\[
\underset{i\in I} \coprod \snglr(U,M_i) \xra{~\cong~} \snglr\bigl(U,\underset{i\in I} \bigsqcup M_i\bigr)~.  
\]
So $\snglr \xra{\tau} \psh(\bsc)$ canonically extends as a symmetric monoidal functor.

Finally, since $\snglrd\ra\snglr$ preserves disjoint union, through the same logic $\mfldd(\cB)$ obtains a symmetric monoidal structure.

\end{construction}

\begin{definition}\label{sym-mon-mflds}
The symmetric monoidal $\oo$-categories
\[
\disk(\cB)~\subset~\mfld(\cB)
\qquad\text{ and }\qquad
\diskd(\cB)~\subset~\mfldd(\cB)
\]
are smallest full symmetric monoidal subcategories containing $\cB\subset \mfld(\cB)$ and $\cB\underset{\mfld(\cB)}\times \mfldd(\cB) \subset \mfldd(\cB)$, respectively.
\end{definition}

\begin{remark}
The objects of $\disk(\cB)$ are disjoint unions of elements of $\cB$.
\end{remark}
The slice categories $\disk(\cB)_{/X}$ and $\mfld(\cB)_{/X}$ do not carry symmetric monoidal structures -- for instance, there is no natural way to take a disjoint union of two embeddings if the embeddings overlap. 
Regardless, we endow these categories with the structure of an $\infty$-operad. As usual (see Section~2.1 of~\cite{HA}), one can think of these $\infty$-operads as colored operads, one color for every embedding of a finite disjoint union of basics $U \into X$.

\begin{construction}[$\disk(\cB)_{/X}$ as an $\infty$-operad]\label{B-operadics}
The empty stratified space $\emptyset$ is a symmetric monoidal unit for $\snglrd$ and $\snglr$, and it is also initial in each of the associated underlying $\infty$-categories.
It follows that the same is true for the symmetric monoidal structures on $\mfldd(\cB)$ and $\mfld(\cB)$.
Let $X$ be a $\cB$-manifold.
Notation~\ref{slice-convention} produces the $\infty$-operads $\mfldd(\cB)_{/X}$ and $\mfld(\cB)_{/X}$ as well as $\diskd(\cB)_{/X}$ and $\disk(\cB)_{/X}$.

\end{construction}

\begin{example}
The fiber of the $\infty$-operad $\snglr_{/X}$ over the finite set $\ast_+$ is an $\infty$-category whose objects are conically smooth open embeddings $(Y \into X)$, where $Y$ is a connected stratified space. A morphism $(Y \into X) \to (Y' \into X)$ is specified by a conically smooth open embedding $Y \into Y'$ and an isotopy from $Y \into X$ to the composite $Y \into Y' \into X$. A multi-morphism from $\bigl((Y_1 \into X), (Y_2 \into X)\bigr)$ to $(Y \into X)$ is a conically smooth open embedding $Y_1\sqcup Y_2\hookrightarrow Y$ from the disjoint union, together with a pair of isotopies $\gamma_\nu \colon Y_\nu \times [0,1]\to X$ from the given $Y_\nu \into X$ to the composition $Y_\nu \into Y_1\sqcup Y_2\into X$.  
Notice that this last composition makes use of $\emptyset$ being both initial and the symmetric monoidal unit.  
\end{example}

\begin{remark}
We caution the reader about a notational conflict: There are equivalences
\[
\bigl(\disk(\cB)_{/X}\bigr)_{|\ast_+}~\simeq ~ \disk(\cB)_{/X}\qquad \text{ and }\qquad \bigl(\mfld(\cB)_{/X}\bigr)_{|\ast_+}~\simeq ~ \mfld(\cB)_{/X}~.
\]   
Here, the lefthand side of each equivalence is the underlying $\infty$-category of the $\infty$-operad defined in Construction~\ref{B-operadics}, while the righthand side is the $\infty$-category of disks/$\cB$-manifolds equipped with a map to $X$.
Furthermore, the maps of $\oo$-operads
\[
\disk(\cB)_{/X} \to \disk(\cB)\qquad\text{ and }\qquad \mfld(\cB)_{/X} \to \mfld(\cB)
\]
restrict to the standard functors on underlying $\infty$-categories.  

\end{remark}

\begin{observation}\label{B-independent}
Let $X= (X,g)$ be a $\cB$-manifold.
Then the $\infty$-operad $\disk(\cB)_{/X}$ is independent of the $\cB$-structure on the underlying stratified space of $X$.  
More precisely, because $\cB \to \bsc$ is a right fibration, then the map
\[
\disk(\cB)_{/X} \xra{~\simeq~} \disk(\bsc)_{/X}
\]
is an equivalence of $\infty$-operads.  
For the same reason, for $(U\hookrightarrow X)$ a point of $\disk(\cB)_{/X}$, there is a canonical identification of $\infty$-categories
\[
\disk(\cB)_{/U}~{}~\simeq~{}~\bigl(\disk(\cB)_{/X}\bigr)_{/(U\hookrightarrow X)}~.
\] The analogous assertions hold if replacing $\disk(\cB)$ by $\mfld(\cB)$.

\end{observation}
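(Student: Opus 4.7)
The plan is to exploit that $\cB \to \bsc$ is a right fibration in order to show that the forgetful functor $\mfld(\cB) \to \snglr$ is itself a right fibration, from which the claimed equivalences will follow by standard facts about slices over a right fibration. First I would observe that $\mfld(\cB)$ is defined as the pullback $\snglr \underset{\psh(\bsc)}\times \psh(\bsc)_{/\cB}$, and that for any $\infty$-category $\cC$ and any object $F\in \cC$ the slice projection $\cC_{/F}\to\cC$ is a right fibration. Applied with $\cC = \psh(\bsc)$ and $F = \cB$, this says $\psh(\bsc)_{/\cB}\to \psh(\bsc)$ is a right fibration. Since right fibrations are stable under pullback, $\mfld(\cB)\to \snglr$ is a right fibration.

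Next I would invoke the standard fact (cf.\ Section~2.1.2 of \cite{HTT}) that for a right fibration $p\colon \cE \to \cC$ and an object $e\in\cE$ with image $c=p(e)$, the induced functor on slices $\cE_{/e}\to \cC_{/c}$ is a trivial Kan fibration: it is itself a right fibration, and its fiber over any $(c'\to c)\in \cC_{/c}$ is the space of lifts of that morphism with target $e$, which is contractible by the right fibration property of $p$. Applied to $(X,g)\in \mfld(\cB)$ over $X\in \snglr$, this gives an equivalence of $\infty$-categories
\[
\mfld(\cB)_{/(X,g)} \xra{~\simeq~} \snglr_{/X}~=~\mfld(\bsc)_{/X}~.
\]
Restricting to the full subcategories generated under disjoint union by the basics (which is compatible with the forgetful functor, since basics map to basics and disjoint unions to disjoint unions) yields an equivalence of underlying $\infty$-categories $\disk(\cB)_{/X}\simeq \disk(\bsc)_{/X}$. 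To promote this to an equivalence of $\infty$-operads, I note that Notation~\ref{slice-convention} constructs $\disk(\cB)_{/X}$ as an $\infty$-operad via the pullback in Corollary~\ref{slice-operads} applied functorially to the right fibration $\disk(\cB)_{/X}\to \disk(\cB)$; naturality of this construction in the base symmetric monoidal $\infty$-category converts the equivalence of underlying $\infty$-categories into an equivalence of $\infty$-operads.

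For the second identification, having reduced via the first part to the statement $\disk(\bsc)_{/U}\simeq \bigl(\disk(\bsc)_{/X}\bigr)_{/(U\hookrightarrow X)}$, I would invoke associativity of the slice construction: for any $\infty$-category $\cC$ and object $(U\to X)\in \cC_{/X}$, there is a canonical equivalence $(\cC_{/X})_{/(U\to X)}\simeq \cC_{/U}$ (Proposition~1.2.9.3 of \cite{HTT}). A full subcategory inclusion $\cD\subset \cC$ restricts through this equivalence, so taking $\cD=\disk(\bsc)\subset \snglr=\cC$ gives the claim. The analogous assertion for $\mfld(\cB)$ follows by the identical argument, simply dropping the restriction to disks at the final step.

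The main obstacle is bookkeeping around Notation~\ref{slice-convention}: one must check that under the equivalence $\disk(\cB)_{/X}\simeq \disk(\bsc)_{/X}$ of underlying $\infty$-categories the pre-coCartesian morphisms match, but since both $\infty$-operads are defined by the same pullback against $\bigl(\disk(\cB)\bigr)^\amalg$ and the functor $\mfld(\cB)\to\snglr$ is symmetric monoidal by Construction~\ref{B-monoidals}, this compatibility is automatic.
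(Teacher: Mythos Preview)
Your argument is correct and is exactly the unpacking of what the paper leaves implicit: the Observation is stated without proof, the entire justification being the phrase ``because $\cB \to \bsc$ is a right fibration.'' Your expansion via the pullback definition of $\mfld(\cB)$, the standard fact that slicing a right fibration at an object yields a trivial fibration on slices, and associativity of iterated slices is precisely the intended reasoning.
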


\begin{notation}
Defined in~\cite{aft1} is the category of basics $\sD_n$, governing smooth $n$-manifolds, and the various elaborations governing smooth $n$-manifolds with boundary, possibly equipped with a framing.  
We use the shorthand notation
	\[
	\disk_n = \disk(\sD_n)~,
	\qquad
	\disk_n^{\fr} = \disk(\sD_n^{\fr})~,
	\qquad
	\disk_n^{\partial} = \disk(\sD_n^{\partial})~,
	\qquad
	\disk_{d\subset n}^{\fr} = \disk(\sD_{d\subset n}^{\fr})~,
	\]
and likewise for other elaborations on $\sD$. For example, the underlying $\infty$-category of $\disk(\sD_n)$ is the $\infty$-category whose objects are diffeomorphic to a finite disjoint union of standard $\RR^n$, and whose morphisms are given by smooth open embeddings between them. The underlying $\infty$-category of $\disk_n^{\fr}$ has objects finite, disjoint unions of $\RR^n$ together with a framing. The space of morphisms between two framed manifolds has the homotopy type of the space of smooth embeddings together with a choice of homotopy between framings. In all these cases, the symmetric monoidal structure is disjoint union.
\end{notation}

\begin{notation}\label{notation.disks}

Prompted by the upcoming Proposition~\ref{smoothing}, and by Notation~\ref{slice-convention}, we will make the following notational conventions
\[
\Alg_{\diskd(\cB)}(\cV)~:=~\Fun^\ot\bigl(\diskd(\cB),\cV\bigr)\qquad\text{ and }\qquad\Alg_{\disk(\cB)}(\cV)~:=~\Fun^\ot\bigl(\disk(\cB),\cV\bigr)
\]
and refer to their objects as $\diskd(\cB)$-algebras and $\disk(\cB)$-algebras, respectively. 

\end{notation}

\begin{remark}
We caution the reader concerning our non-standard notation, which is summarized as follows.
\begin{itemize}
\item As Notation~\ref{sym-cat-same}, we do not distinguish between the notation for a symmetric monoidal $\infty$-category and its underlying $\infty$-category.
\item As Notation~\ref{slice-convention}, we do not distinguish between the notation for certain slice $\infty$-categories and their corresponding $\infty$-operads.  
\item As Notation~\ref{notation.disks}, we use $\Alg_{\disk(\cB)}(\cV)$ for \emph{symmetric monoidal} functors, not just maps of operads.  
Likewise for the other variants such as that concerning $\mfld(\cB)$.
\item As Notation~\ref{slice-convention} we use $\Alg_{X}(\cV)$ for those maps of $\infty$-operads $\disk(\cB)_{/X} \to \cV$  that send \emph{pre-coCartesian} edges to coCartesian edges.  
Likewise for the other variants such as that concerning $\mfld(\cB)$.

\end{itemize}
\end{remark}

\begin{example}\label{n-disk-algebras}
A $\disk^{\fr}_1$-algebra in $\cV$ is canonically identified as an $A_\infty$-algebra in $\cV$.  

\end{example}

In what follows, we let $\cE_n$ denote the $\infty$-operad of little $n$-disks.

\begin{prop}\label{smoothing}
Let $\cV$ be a symmetric monoidal $\infty$-category.
Let $n$ be a finite cardinality.  
There is a canonical equivalence of $\infty$-categories
\[
\Alg_{\disk_n^{\sf fr}}(\cV)\xra{\simeq}\Alg_{\cE_n}(\cV)~.
\]
With respect to the given $\sO(n)$-action on the righthand side of the above equivalence, there is an equivalence of $\infty$-categories
\[
\Alg_{\disk_n}(\cV) \xra{\simeq} \Alg_{\cE_n}(\cV)^{\sf \sO(n)}
\]
to the $\sO(n)$-invariants.  

\end{prop}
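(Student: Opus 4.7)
The approach I would take proceeds in two steps. First, I would establish the equivalence $\Alg_{\disk_n^{\fr}}(\cV) \simeq \Alg_{\cE_n}(\cV)$ by identifying the $\infty$-operads $\disk_n^{\fr}$ and $\cE_n$ directly. Second, I would deduce the unframed statement by exhibiting $\disk_n$ as an $\sO(n)$-quotient of $\disk_n^{\fr}$, then converting the quotient to homotopy invariants under $\Alg_{(-)}(\cV)$.

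For step one, both $\disk_n^{\fr}$ and $\cE_n$ are $\infty$-operads with essentially a single color, so the comparison reduces to matching the multi-morphism spaces together with composition and the $\Sigma_k$-actions. In $\disk_n^{\fr}$, the space of $k$-ary operations is $\Map_{\disk_n^{\fr}}\bigl(\bigsqcup_k \RR^n, \RR^n\bigr)$, which by Construction~\ref{B-operadics} and the definition of a $\cB$-structure is the space of smooth open embeddings together with a homotopy trivialization of the pulled-back standard framing on each component. I would show that this space deformation-retracts onto the space of rectilinear embeddings of $k$ standard open disks into $\RR^n$, which is the classical $k$-ary operation space of $\cE_n$: the framing trivialization lets one linearize each embedding at the origin of each component, producing a translation-and-scaling retract whose image is exactly the little $n$-disks. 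The composition and symmetric group actions are visibly compatible. Packaging this levelwise equivalence into an equivalence of $\infty$-operads can be done by using the symmetric monoidal envelope of Lemma~\ref{sym-env}, or by appealing to the analogous identification already carried out in~\S5.4 of~\cite{HA}.

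For step two, I would use that the forgetful functor $\sD_n^{\fr} \to \sD_n$ presents $\sD_n^{\fr}$ as an $\sO(n)$-principal structure over $\sD_n$. Indeed, the automorphism $\infty$-group of the basic $\RR^n \in \sD_n$ is $\sO(n)$ (via $\Diff(\RR^n)\simeq \sO(n)$, a consequence of Kister's theorem), while the framed basic has contractible automorphism space; the fiber of $\sD_n^{\fr}\to \sD_n$ over $\RR^n$ is thus the $\sO(n)$-torsor of framings. By the functoriality of the assignment $\cB \mapsto \disk(\cB)$, this transports to an $\sO(n)$-action on $\disk_n^{\fr}$ whose homotopy quotient in symmetric monoidal $\infty$-operads recovers $\disk_n$. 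Since $\Alg_{(-)}(\cV) = \Fun^\ot((-),\cV)$ is a limit construction, it converts homotopy quotients of source $\infty$-operads into homotopy invariants, yielding $\Alg_{\disk_n}(\cV)\simeq \Alg_{\disk_n^{\fr}}(\cV)^{\sO(n)}$. Combined with step one, this gives the desired equivalence.

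The principal obstacle is step one at the $\infty$-operadic (rather than spacewise) level: matching operation spaces is routine, but organizing the comparison into a coherent map of $\infty$-operads, and subsequently verifying that the $\sO(n)$-action one obtains on $\cE_n$ through the identification of step one agrees with the standard rotational $\sO(n)$-action on little $n$-disks, are the technically delicate points. Both are, however, essentially already handled in~\S5.4 of~\cite{HA}, and I would appeal to those references rather than re-deriving the comparison from scratch.
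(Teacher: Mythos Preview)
Your proposal is correct and follows essentially the same approach as the paper's proof. The paper likewise builds the comparison via the symmetric monoidal envelope ${\sf Env}(\cE_n)\to \disk_n^{\fr}$ and checks it is an equivalence by identifying the multi-morphism spaces on both sides with configuration spaces $\conf_I(\RR^n)$; for the unframed statement, the paper uses that $\disk_n^{\fr}\to\disk_n$ is a symmetric monoidal right fibration with fiber $\Emb(\RR^n,\RR^n)\simeq\sO(n)$, which is exactly your $\sO(n)$-principal structure argument (one minor quibble: the equivalence $\Diff(\RR^n)\simeq\sO(n)$ is a direct Gram--Schmidt/scaling retraction rather than Kister's theorem, which concerns topological embeddings).
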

\begin{proof}
The evident $\cE_n$-algebra in $\disk_n^{\sf fr}$ induces a symmetric monoidal functor ${\sf Env}(\cE_n)\to \disk_n^{\sf fr}$ from the symmetric monoidal envelope.  
The mapping homotopy types of both sides of this functor are in terms of the spaces $\conf_I(\RR^n):= \{I\hookrightarrow \RR^n\}$, spaces of injective maps into $\RR^n$ from finite sets.  
Inspecting this symmetric monoidal functor reveals that it is an equivalence.  
This gives the first equivalence.

There is the symmetric monoidal right fibration $\disk_n^{\sf fr} \to \disk_n$, which manifestly factors through the $\sO(n)$-invariants.
The fiber over $\RR^n$ is canonically identified as $\Emb(\RR^n,\RR^n) \simeq \sO(n)$, with the translation action of $\sO(n)$.  
The second equivalence follows.  

\end{proof}

\begin{example}\label{deligne-boundary}
Consider the category of basics $\sD^{\partial,\fr}_n$. Then a $\sD^{\partial,\fr}_n$ manifold is an $n$-manifold possibly with boundary, equipped with a framing of the $n$-manifold and a splitting of the framing on the boundary as a product framing. $\disk_n^{\partial,\fr}$-algebras are equivalent to algebras over the Swiss-cheese operad of \cite{voronov}; an object can be regarded as a triple $(A,B,\alpha)$ of a $\disk^{\fr}_n$-algebra $A$, a $\disk^{\fr}_{n-1}$-algebra $B$, and a map
	\[
	\alpha\colon A\longrightarrow {\sf HC}^\ast_{\sD^{\fr}_{n-1}}(B) := {\m_B^{\disk^{\fr}_{n-1}}}(B,B)
	\] 
which is a map of $\disk^{\fr}_n$-algebras; this reformulation is the higher Deligne conjecture, proved in this generality in~\cite{HA} and~\cite{thomas}.
\end{example}

\subsection{Factorization homology}

The following is the main definition and object of interest in the present paper, that of factorization homology with coefficients in a disk algebra.

Recall Notation~\ref{slice-convention} for the $\infty$-category of $\disk_n$-algebras.  
\begin{definition}[Factorization homology]\label{def:fact-hmlgy}
Let $\cV$ be a symmetric monoidal $\infty$-category, and let $\cB$ be an $\oo$-category of basics.
The \emph{(absolute) factorization homology} functor is a left adjoint to the restriction
\\
%\\
%\[
%\xymatrix{
%\int_-\colon \Alg_{\diskd(\cB)}(\cV) \ar@(-,u)@{-->}[rr] 
%&&
%\Fun^\ot\bigl(\mfldd(\cB), \cV\bigr)  \ar[ll]
%}
%\]
%and
\[
\xymatrix{
\int_-\colon \Alg_{\disk(\cB)}(\cV) \ar@(-,u)@{-->}[rr] 
&&
\Fun^\ot\bigl(\mfld(\cB), \cV\bigr)  ~. \ar[ll]
}
\]
For $X$ a $\cB$-manifold, the \emph{(relative) factorization homology} functor is a left adjoint to the restriction
\\
%\\
%\[
%\xymatrix{
%\int_-\colon \Alg_{\EE^\delta(X)}(\cV) \ar@(-,u)@{-->}[rr] 
%&&
%\Fun^\ot\bigl(\mfldd(\cB)_{/X}, \cV\bigr) \ar[ll]
%}
%\]
%and
\[
\xymatrix{
\int_-\colon \Alg_{X}(\cV) \ar@(-,u)@{-->}[rr] 
&&
\Alg_{\mfld(\cB)_{/X}}(\cV)  ~. \ar[ll]
}
\]

\end{definition}

The left adjoint in the previous definition need not exist. The essential result provided in the following is that for a large class of targets $\cV$, factorization homology both exists and has a relatively simple expression, agreeing with the left adjoint at the level of underlying $\oo$-categories; i.e., without remembering the monoidal structures.

\begin{theorem}\label{fact-explicit}
Let $\cV$ be a symmetric monoidal $\infty$-category which is $\ot$-sifted cocomplete, and let $\cB$ be an $\oo$-category of basics.
Then each of the absolute and the relative factorization homology functors exists and each is fully faithful, and each evaluates as
\begin{equation}\label{eq.fact-explicit}
\int_X A~\simeq~ \colim\Bigl(\disk(\cB)_{/X} \to \disk(\cB) \xra{A} \cV\Bigr)~\simeq~\disk(\cB)_{/X}\underset{\disk(\cB)} \bigotimes A
\end{equation}
-- here, all terms are as the underlying $\infty$-categories of the respective $\infty$-operads.

\end{theorem}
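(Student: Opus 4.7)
\emph{Proof proposal.} The plan is to apply Lemma \ref{shape-existence} to the inclusion of symmetric monoidal $\infty$-categories $\disk(\cB)\hookrightarrow\mfld(\cB)$ in the absolute case, and to the analogous inclusion of $\infty$-operads $\disk(\cB)_{/X}\hookrightarrow\mfld(\cB)_{/X}$ in the relative case. That lemma is precisely designed so that when the relevant slice $\infty$-categories are sifted and the target is $\otimes$-sifted cocomplete, the symmetric monoidal (resp.\ operadic) left Kan extension exists and agrees with the underlying left Kan extension computed by the pointwise colimit formula. The factorization homology functor is then identified with this Kan extension.

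The two hypotheses of Lemma \ref{shape-existence} are immediately in hand. First, for each $\cB$-manifold $M$, the $\infty$-category $\disk(\cB)_{/M}$ is sifted by Corollary \ref{quittersifted}. Second, by assumption $\cV$ is $\otimes$-sifted cocomplete, so it admits sifted colimits and its tensor product distributes over them. The underlying left Kan extension thus exists and is computed by the pointwise formula
\[
M~\longmapsto~\colim\bigl(\disk(\cB)_{/M}\to\disk(\cB)\xra{A}\cV\bigr),
\]
which is precisely~(\ref{eq.fact-explicit}). To upgrade this pointwise formula to a symmetric monoidal functor in the absolute case, one uses that every basic is connected, so any embedding of a disjoint union of disks into $X\sqcup Y$ decomposes uniquely by component, giving
\[
\disk(\cB)_{/X\sqcup Y}~\simeq~ \disk(\cB)_{/X}\times\disk(\cB)_{/Y};
\]
the canonical comparison $\int_X A\otimes\int_Y A\to\int_{X\sqcup Y}A$ is then an equivalence by the distributivity of $\otimes$ over sifted colimits.

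Full faithfulness of $\int_-$ follows formally once the adjunction is constructed: for any disjoint union of basics $U\in\disk(\cB)$, the identity embedding $(U\hookrightarrow U)$ is a terminal object of $\disk(\cB)_{/U}$, so the colimit formula evaluates to $A(U)$, showing that the unit $A\to\bigl(\int_-A\bigr)_{|\disk(\cB)}$ is an equivalence. The principal obstacle is the careful application of Lemma \ref{shape-existence} in the relative case, where $\disk(\cB)_{/X}$ and $\mfld(\cB)_{/X}$ are $\infty$-operads rather than symmetric monoidal $\infty$-categories: one must interpret ``symmetric monoidal Kan extension'' as an operadic Kan extension into $\cV$ sending pre-coCartesian edges, in the sense of Notation \ref{slice-convention}, to coCartesian ones, and verify that the sifted pointwise formula extends compatibly along the operadic structure (using Observation \ref{B-independent} to identify the relevant sub-slice operads). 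Once this bookkeeping is complete, existence, the explicit formula, and full faithfulness are all consequences of the general machinery.
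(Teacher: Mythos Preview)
Your proposal is correct and follows essentially the same route as the paper: apply Lemma~\ref{shape-existence} (and its operadic variant, Lemma~\ref{shape-existence-operad}, for the relative case) using Corollary~\ref{quittersifted} for siftedness and the equivalence $\disk(\cB)_{/X\sqcup Y}\simeq\disk(\cB)_{/X}\times\disk(\cB)_{/X'}$ for the monoidal compatibility. One minor point: you refer to ``the two hypotheses'' of Lemma~\ref{shape-existence}, but that lemma has five numbered conditions; you do address (1)--(3) and (5), and (4) is immediate since the monoidal unit $\emptyset$ is the same in both $\disk(\cB)$ and $\mfld(\cB)$, but it would be cleaner to enumerate them explicitly rather than lump them together.
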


We give a proof of this result predicated on several results to come. In Lemma~\ref{shape-existence}, we identify the general formal features for symmetric monoidal and operadic left Kan extension to exist and agree with the underlying left Kan extension. In the following sections, we show that these formal features hold in our example of interest.

\begin{proof}[Proof of Theorem \ref{fact-explicit}]
This is an application of Lemmas~\ref{shape-existence} and \ref{shape-existence-operad}.

To apply this result, we need these three facts:
\begin{itemize}
\item Let $X$ be a $\cB$-manifold.  
Then the slice $\infty$-category $\disk(\cB)_{/X}$ is sifted.
This is Corollary~\ref{quittersifted} to come.

\item 
The functor $\disk(\cB)_{/\uno_{\disk(\cB)}} \to \mfld(\cB)_{/\uno_{\mfld(\cB)}}$ is final.  
In our case, this functor is an equivalence, manifestly.  

\item 
Let $X$ and $X'$ be $\cB$-manifolds.
Then the functor $\disk(\cB)_{/X}\times \disk(\cB)_{/X'} \to \disk(\cB)_{/X\sqcup X'}$ is final.
In our case, this functor is actually an equivalence, by inspection.  

\end{itemize}

\end{proof}

\begin{lemma}\label{shape-existence}
Let $\cV$ be a symmetric monoidal $\infty$-category; let $\iota \colon \cB \to \cM$ be a symmetric monoidal functor with small domain and locally small codomain.
Consider the commutative diagram of solid arrows
\[
\xymatrix{
\Fun^\ot(\cB, \cV)  \ar@{-->}@(-,u)[rr]^-{\iota_\natural^\ot}  \ar[dd]
&&
\Fun^\ot(\cM,\cV)  \ar[ll]^-{\iota^\ast}  \ar[dd]
\\
&&
\\
\Fun(\cB, \cV)  \ar@{-->}@(-,u)[rr]^-{\iota_\natural}  
&&
\Fun(\cM,\cV)  \ar[ll]_-{\iota^\ast} 
}
\]
where $\iota^\ast$ is restriction along $\iota$, and the vertical arrows forget that a given functor was symmetric monoidal. Suppose both:
\begin{itemize}
\item[(1)] the underlying $\infty$-category of $\cV$ admits sifted colimits;
\item[(2)] for each $M\in \cM$, the slice $\infty$-category $\cB_{/M}$ is sifted.
\end{itemize}
Then $\iota^\ast$ has a left adjoint $\iota_\natural$ as indicated, which can be calculated as
\begin{equation}\label{lke-expression-1}
\iota_\natural F \colon M\mapsto \colim\bigl(\cB_{/M} \to \cB \xra{F} \cV\bigr)~\simeq~ \iota^\ast M \underset{\cB} \ot F~.
\end{equation}
The last expression is a coend, and we have identified $M\in \cM$ with its image under the Yoneda functor.
In addition, suppose
\begin{itemize}
\item[(3)] the symmetric monoidal structure for $\cV$ distributes over sifted colimits; 

\item[(4)] the functor between slice $\infty$-categories over units 
\[
\cB_{/\uno_\cB} \to \cB_{/\uno_\cM}
\]
is final; and 

\item[(5)] for each pair of objects $M, M'\in \cM$, the tensor product functor 
\[
\bigotimes \colon \cB_{/M} \times \cB_{/M'}  \longrightarrow \cB_{/M\ot M'}
\]
is final.
\end{itemize}
Then there is a left adjoint $\iota_\natural^\ot$ as indicated, and the downward right square commutes.  
If $\iota$ is fully faithful then so are each of $\iota_\natural$ and $\iota_\natural^\ot$.  

\end{lemma}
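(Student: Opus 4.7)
The plan is to prove the lemma in two stages: first establish the existence of $\iota_\natural$ at the level of underlying $\infty$-categories via the pointwise left Kan extension formula, then upgrade to a symmetric monoidal adjoint using the finality hypotheses (4) and (5) together with the distributivity hypothesis (3). The fully faithful statement will fall out as a corollary of the formula.

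First I would take the pointwise left Kan extension of $F \in \Fun(\cB, \cV)$ along $\iota$, evaluated at $M \in \cM$, to be the sifted colimit
\[
\iota_\natural F(M) \;:=\; \colim\bigl(\cB_{/M} \to \cB \xra{F} \cV\bigr).
\]
This colimit exists by hypotheses (1) and (2), and its reformulation as the coend $\iota^\ast M \underset{\cB}\ot F$ is formal. The universal property as a left adjoint to $\iota^\ast$ then follows from the pointwise Kan extension criterion of \cite{HTT}~\S4.3; note that no cocompleteness of $\cV$ beyond sifted colimits is required, precisely because each slice $\cB_{/M}$ is sifted.

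Next I would upgrade $\iota_\natural F$ to a symmetric monoidal functor. The binary tensor compatibility is the chain
\begin{align*}
\iota_\natural F(M \ot M')
&\simeq \colim_{\cB_{/M \ot M'}} F \\
&\simeq \colim_{\cB_{/M} \times \cB_{/M'}} F \circ \ot \\
&\simeq \colim_{\cB_{/M} \times \cB_{/M'}} F(-) \ot F(-) \\
&\simeq \iota_\natural F(M) \ot \iota_\natural F(M')
\end{align*}
by, in turn: the formula above; finality of $\ot\colon \cB_{/M} \times \cB_{/M'} \to \cB_{/M \ot M'}$ from (5); symmetric monoidality of $F$; and two applications of distributivity (3) combined with Fubini for iterated colimits (using that the product of sifted $\infty$-categories is sifted, so each iterated colimit exists in $\cV$). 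For the unit, hypothesis (4) gives
\[
\iota_\natural F(\uno_\cM) \;\simeq\; \colim_{\cB_{/\uno_\cB}} F \;\simeq\; F(\uno_\cB) \;\simeq\; \uno_\cV,
\]
since $\uno_\cB$ is terminal in $\cB_{/\uno_\cB}$ and $F$ preserves units.

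The main obstacle is assembling the above pointwise binary data into a fully coherent symmetric monoidal functor $\iota_\natural^\ot F$, i.e., a functor over $\Fin_\ast$ preserving coCartesian lifts of all morphisms. I would exhibit $\iota_\natural F$ as a functor over $\Fin_\ast$ built fiberwise from the Kan-extension formula, and verify preservation of coCartesian lifts of active morphisms $I_+ \to J_+$ by induction on the cardinality of $J$, reducing to the binary computation above; inert-coCartesian preservation is automatic since on fibers $\iota_\natural F$ factors as an $I$-fold product of its value on $\ast_+$. To organize this cleanly I would invoke the operadic left Kan extension machinery of \cite{HA}~\S3.1.2, whose hypotheses are engineered to promote precisely this finality-plus-distributivity data into a coherent symmetric monoidal Kan extension; commutativity of the right-hand square is then tautological, since by construction the underlying functor of $\iota_\natural^\ot F$ agrees with $\iota_\natural F$. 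Finally, if $\iota$ is fully faithful then $(b, \id_{\iota(b)})$ is terminal in $\cB_{/\iota(b)}$ for each $b \in \cB$, so the defining colimit collapses to $F(b)$ and the unit $F \to \iota^\ast \iota_\natural F$ is an equivalence; both $\iota_\natural$ and $\iota_\natural^\ot$ are therefore fully faithful, the latter because $\Fun^\ot(\cB, \cV) \to \Fun(\cB, \cV)$ is conservative by Lemma~\ref{creates-limits}.
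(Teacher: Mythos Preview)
Your proposal is correct and follows essentially the same approach as the paper: establish the pointwise Kan extension via \cite{HTT}~\S4.3 under (1)--(2), then verify monoidal compatibility using (3)--(5), and deduce full faithfulness from the collapse of the colimit when $\iota$ is fully faithful. The only organizational difference is that, rather than invoking the operadic Kan extension machinery of \cite{HA}~\S3.1.2 for coherence, the paper directly checks that for every map $f\colon I_+\to J_+$ in $\Fin_\ast$ the square relating $(\iota_\natural F)^I$, $(\iota_\natural F)^J$, and the two $f_\ast$'s commutes, by decomposing $f$ into its inert, injective-active, and surjective-active factors and handling the $I$-ary tensor case in one step (iterating condition~(5), since a composite of final functors is final).
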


\begin{proof}
Conditions (1) and (2) grant that each value of the expression~(\ref{lke-expression-1}) exists.  
Lemma 4.3.2.13 of~\cite{HTT} states that these expressions depict a functor as indicated.
Proposition 4.3.3.7 of~\cite{HTT} states that this functor satisfies the universal property of being a left adjoint to $\iota^\ast$.

We now argue that conditions (3) and (4) grant that, for each symmetric monoidal functor $\cB\xra{F} \cV$, and for each based map among finite sets $I_+ \xra{f} J_+$, the diagram of $\infty$-categories 
\[
\xymatrix{
\cM^I  \ar[d]_-{f_\ast}  \ar[rr]^-{(\iota_\natural F)^I}
&&
\cV^I  \ar[d]^-{f_\ast}
\\
\cM^J  \ar[rr]^-{(\iota_\natural F)^J}
&&
\cV^J
}
\]
commutes.
Each based map $f \colon I_+ \to J_+$ is canonically a composition of a surjective active map $f^{\sf surj}$ followed by an injective active map $f^{\sf inj}$ followed by an \emph{inert} map $f^{\sf inrt}$, and so it is enough to verify commutativity for each such class of maps.
The case of inert maps is obvious, because then $f_\ast$ is projection and $(\iota_\natural F)^I$ is defined as the $I$-fold product of functors.   
The case of injective active maps amounts to verifying that $\iota_\natural F$ sends a symmetric monoidal unit to a symmetric monoidal unit.  This follows from Condition~(4) because $F$ does so.  

The case of surjective active maps follows from the case that $f\colon I_+ \to \ast_+$ is given by $i\mapsto \ast$, so that $f_\ast = \bigotimes^I$ is the $I$-fold tensor product.
Well, because $F$ is symmetric monoidal, there is a canonical arrow
$
\iota_\natural F \circ \bigotimes^I 
\longrightarrow 
\bigotimes^I\circ (\iota_\natural F)^I
$
between functors $\cM^I \to \cV$ that we will argue is an equivalence.
This arrow evaluates on $(M_i)_{i\in I}$ as the horizontal one in the following natural diagram inside $\cV$:
\[
\Small
\xymatrix{
\colim\bigl(\cB_{/\underset{i\in I}\bigotimes M_i} \to \cB\xra{F} \cV \bigr)  \ar[rr]
&&
\underset{i\in I}\bigotimes \colim\bigl(\cB_{/M_i} \to \cB \xra{F} \cV\bigr) 
\\
&
\colim\bigl( \underset{i\in I}\prod \cB_{/M_i} \to \cB^I \xra{F^I} \cV^I\xra{\bigotimes^I} \cV \bigr) \ar[ul]^-{(5)}  \ar[ur]_-{(3)}
&
.
}
\]
The arrow labeled by~(3) is an equivalence precisely because of condition~(3). 
Condition~(5) implies that the functor $\prod_{i\in I} \cB_{/M_i} \to \cB_{/\underset{i\in I}\bigotimes M_i}$ is final, for it is a composition of final functors.
It follows that the arrow labeled by~(5) is an equivalence, after observing the following diagram among $\infty$-categories:
\[
\xymatrix{
\underset{i\in I} \prod \cB_{/M_i}  \ar[rr]^-{\bigotimes^I}  \ar[d]
&&
\cB_{/\underset{i\in I} \bigotimes M_i}  \ar[d]
\\
\cB^I  \ar[rr]^-{\bigotimes^I}  \ar[d]_-{F^I}
&&
\cB  \ar[d]^-F
\\
\cV^I \ar[rr]^-{\bigotimes^I} 
&&
\cV~.
}
\]

\end{proof}

Recall Notation~\ref{slice-convention}.  
We now state a likewise result as Lemma~\ref{shape-existence} for the case of the $\infty$-operads of Notation~\ref{slice-convention}.
The proof of this lemma is the same as that for Lemma~\ref{shape-existence}.  

\begin{lemma}\label{shape-existence-operad}
Let $\cV$ be a symmetric monoidal $\infty$-category.
Let $\iota \colon \cD \to \cM$ be a symmetric monoidal functor between symmetric monoidal $\infty$-categories with $\cD$ small and $\cM$ locally small. 
Suppose both the symmetric monoidal unit for each of $\cD$ and $\cM$ is initial.
Let $X\in \cM$ be an object of the underlying $\infty$-category, and consider the $\infty$-operads $\cD_{/X}$ and $\cM_{/X}$ over $\cD$ and $\cM$, respectively.  
Consider the solid commutative sub-diagram of restriction and forgetful functors
\[
\xymatrix{
\Alg_{\cD_{/X}}(\cV)  \ar@{-->}@(-,u)[rr]^-{\iota_\natural^\ot}  \ar[dd]
&&
\Alg_{\cM_{/X}}(\cV)  \ar[ll]^-{\iota^\ast}  \ar[dd]
\\
&&
\\
\Fun(\cD_{/X}, \cV) \ar@{-->}@(-,u)[rr]^-{\iota_\natural}  
&&
\Fun(\cM_{/X},\cV)  \ar[ll]_-{\iota^\ast} 
}
\]
where the superscript ${}^\ast$ denotes the evident restriction and the vertical arrows restrict to active $\infty$-subcategories.
Suppose both:
\begin{itemize}
\item[(1)] the underlying $\infty$-category of $\cV$ admits sifted colimits;
\item[(2)] for each $Z=(Z\to X)\in \cM_{/X}$, the slice $\infty$-category $(\cD_{/X})_{/Z} \simeq \cD_{/Z}$ is sifted.
\end{itemize}
Then $\iota^\ast$ has a left adjoint $\iota_\natural$ as indicated, which can be calculated as
\begin{equation}\label{lke-expression}
\iota_\natural A \colon (Z\to X) \mapsto \colim\bigl(\cD_{/Z} \to \cD_{/X} \xra{A} \cV\bigr)~\simeq~ \iota^\ast (Z\to X)\underset{\cD_{/X}} \ot A
\end{equation}
-- here in the expression of the coend we identify $(Z\to X)\in \cM$ with its image under the Yoneda functor.
In addition, suppose
\begin{itemize}
\item[(3)] the symmetric monoidal structure for $\cV$ distributes over sifted colimits; 

\item[(4)] the functor between slice $\infty$-categories over units 
\[
\cD_{/\uno_\cD} \to \cM_{/\uno_\cM}
\]
is final; 

\item[(5)] for each pair of objects $(Z\to X), (Z'\to X) \in \cM_{/X}$, the tensor product functor 
\[
\bigotimes \colon \cD_{/Z} \times \cD_{/Z'}  \longrightarrow \cD_{/Z\ot Z'}
\]
is final.
\end{itemize}
Then there is a left adjoint $\iota_\natural^\ot$ as indicated, and the downward right square commutes.  
If $\iota$ is fully faithful then so are each of $\iota_\natural$ and $\iota_\natural^\ot$.  
\end{lemma}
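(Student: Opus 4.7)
The plan is to follow the proof of Lemma~\ref{shape-existence} almost verbatim, transcribing its three-step verification (inert/injective-active/surjective-active) into the operadic setting. First I would produce the underlying left adjoint $\iota_\natural$ by the pointwise formula~(\ref{lke-expression}). Condition~(2) guarantees each slice $(\cD_{/X})_{/(Z\to X)}\simeq\cD_{/Z}$ is sifted, and condition~(1) ensures the colimit defining $\iota_\natural A(Z\to X)$ exists in $\cV$. HTT Lemma~4.3.2.13 then assembles these values into a functor, and HTT Proposition~4.3.3.7 identifies it as left adjoint to the restriction $\iota^\ast$ between the lower functor $\infty$-categories.

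Next I would verify that when $A\in\Alg_{\cD_{/X}}(\cV)$ sends pre-coCartesian morphisms to coCartesian morphisms, the extension $\iota_\natural A$ does as well, so that $\iota_\natural$ lifts to $\iota_\natural^\ot$. For a based map $I_+\xra{f}J_+$, one must show the square comparing $(\iota_\natural A)^I\circ f_\ast$ with $f_\ast\circ(\iota_\natural A)^J$ commutes in $\cV$. Factor $f$ as an inert map following an injective active following a surjective active; the inert case is automatic because the restriction of $\cM_{/X}$ to inert morphisms is computed by projection; the injective active case reduces to the assertion that $\iota_\natural A$ sends the symmetric monoidal unit of $\cM$ (which is initial) to the symmetric monoidal unit of $\cV$, which follows from condition~(4) applied to the finality of $\cD_{/\uno_\cD}\to\cM_{/\uno_\cM}$ combined with $A$ itself being an algebra.

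The crux, as in the proof of Lemma~\ref{shape-existence}, is the surjective active case $I_+\to\ast_+$. Given $(Z_i\to X)_{i\in I}\in\cM_{/X}$, I would construct the canonical comparison
\[
\iota_\natural A\bigl(\underset{i\in I}\bigotimes Z_i\to X\bigr)~\longrightarrow~\underset{i\in I}\bigotimes \iota_\natural A(Z_i\to X)
\]
and factor it through the auxiliary coend $\colim\bigl(\prod_{i\in I}\cD_{/Z_i}\to\cD^I\xra{A^I}\cV^I\xra{\bigotimes^I}\cV\bigr)$. Condition~(5) and the composability of final functors make $\prod_{i\in I}\cD_{/Z_i}\to\cD_{/\bigotimes Z_i}$ final, giving one edge of the factorization; condition~(3) lets $\bigotimes^I$ commute past the sifted colimits (which exist by condition~(2)), giving the other edge. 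Both factors are therefore equivalences, proving the comparison is. This is the step I expect to be the main obstacle, since it is the only one genuinely combining all three supplementary hypotheses.

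Finally, when $\iota$ is fully faithful, I would establish full faithfulness of $\iota_\natural$ by checking the unit $A\to\iota^\ast\iota_\natural A$ is an equivalence: at $(U\to X)\in\cD_{/X}$ this unit is the canonical map from $A(U\to X)$ to the colimit defining $\iota_\natural A(\iota(U)\to X)$, which is an equivalence because $(U=U)$ is terminal in $\cD_{/U}$ by full faithfulness of $\iota$. Full faithfulness of $\iota_\natural^\ot$ then follows, since the vertical forgetful functors in the diagram are conservative on the essential image (by Lemma~\ref{creates-limits} applied fiberwise) and the unit of the lifted adjunction is computed by the underlying unit.
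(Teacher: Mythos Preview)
Your proposal is correct and matches the paper's approach exactly: the paper's proof consists solely of the sentence ``The proof of this lemma is the same as that for Lemma~\ref{shape-existence},'' and you have carried out precisely that transcription, following the same inert/injective-active/surjective-active decomposition and invoking conditions (1)--(5) at the same points. Your added detail on full faithfulness (checking the unit at a terminal object of $\cD_{/U}$) is a reasonable unpacking of what the paper leaves implicit.
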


\subsection{Disks and finite subsets}
We pause our development of factorization homology to relate the $\infty$-category $\disk(\bsc)_{/X}$, which appears in the Definition~\ref{def:fact-hmlgy} of factorization homology, with the enter-path $\infty$-category of another stratified space associated to $X$: the Ran space of $X$.

Let $i$ be a finite cardinality.  
For $X$ a topological space, the \emph{bounded Ran space} $\Ran_{\leq i}(X)$ is the topological space consisting of subsets $S\subset X$ with bounded cardinality $|S|\leq i$ for which the inclusion $S\hookrightarrow X$ is surjective on connected components;
the topology on $\Ran_{\leq i}(X)$ is the coarsest for which, for each set $I$ with cardinality $|I|=i$ equal to $i$, the map ${\sf Image}\colon X^I \to \Ran_{\leq i}(X)$ is continuous.  
For $f \colon X\hookrightarrow Y$ an open embedding that is surjective on connected components, the evident map $\Ran_{\leq i}(f) \colon \Ran_{\leq i}(X) \to \Ran_{\leq i}(Y)$ is an open embedding.

In~\S3 of~\cite{aft1} we prove the following result, which in particular states that $\Ran_{\leq i}(X)$ inherits the structure of a stratified space from one on $X$.  
We use the notation 
\begin{equation}\label{surj-notation}
\snglr^{\sf surj}~\subset ~\snglr
\end{equation}
for the $\infty$-subcategory with the same objects and with those morphisms that induce surjections on connected components.  
\begin{prop}[Proposition 3.7.5 of \cite{aft1}]\label{ran-singular}
Let $i$ be a finite cardinality.  
There is a functor between $\infty$-categories
\begin{equation}\label{rans}
\Ran_{\leq i}\colon \snglr^{\sf surj} \longrightarrow \snglr
\end{equation}
for which the underlying topological space of each value $\Ran_{\leq i}(X)$ is the bounded Ran space of the underlying space of $X$, and whose value on a morphism $X\xra{f} Y$ is $\Ran_{\leq i}(f)$.  

\end{prop}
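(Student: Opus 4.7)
The plan is to construct the functor in three stages: first, endow each topological space $\Ran_{\leq i}(X)$ with a conically smooth stratified space structure canonically determined by that of $X$; second, verify that a surjective-on-components open embedding $f\colon X\hookrightarrow Y$ induces a conically smooth open map $\Ran_{\leq i}(f)$; third, promote the assignment to a functor of $\infty$-categories by checking compatibility with the $\Kan$-enrichments of $\snglr^{\sf surj}$ and $\snglr$.

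For the first stage, let $X$ be stratified by a poset $P$. I would stratify $\Ran_{\leq i}(X)$ by the poset $\mathsf P_i$ whose elements are nonempty finite multisets in $P$ of cardinality at most $i$, ordered so that $\lambda\leq\mu$ when $\lambda$ is obtained from $\mu$ by a sequence of collisions (identifying two entries) and specializations (replacing an entry $p$ by some $p'\geq p$ in $P$). The stratifying map sends $S\subset X$ to the multiset underlying the composition $S\hookrightarrow X\to P$; continuity follows from the quotient topology on $\Ran_{\leq i}(X)$ via the ${\sf Image}$ surjection out of $X^I$.

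The conically smooth atlas is then built by induction on the depth of $X$. For a configuration $S=\{x_1,\dots,x_k\}\in\Ran_{\leq i}(U)$ in a basic $U=\RR^j\times\sC(Z)$, one selects pairwise disjoint basic charts $V_\ell\hookrightarrow U$ about each $x_\ell$, small enough that any configuration in $\Ran_{\leq i}(U)$ sufficiently close to $S$ decomposes as a disjoint union of configurations in the $V_\ell$. This produces a chart $\prod_{\ell}\Ran_{\leq m_\ell}(V_\ell)\hookrightarrow \Ran_{\leq i}(U)$, where $\sum_\ell m_\ell\leq i$ and each factor is a conically smooth stratified space by the inductive hypothesis (together with the standard cardinality stratification on $\Ran_{\leq m}(\RR^n)$ as the depth-zero base case). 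Transition maps between such charts reduce fibrewise to transition maps in the atlas of $X$ combined with reindexing by symmetric group actions, hence are conically smooth.

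For the second stage, since $f$ is injective on points the map $\Ran_{\leq i}(f)$ sends $S$ to $f(S)$; surjectivity of $f$ on connected components ensures this is open, as a small neighborhood of $f(S)$ in $\Ran_{\leq i}(Y)$ cannot acquire new points from outside $f(X)$ because any component of $Y$ meeting such a neighborhood is already hit by $f$. Conical smoothness is verified chart-by-chart using the atlases from stage one and the fact that $f$ is itself conically smooth. The third stage is routine: the same construction applied fibrewise to an isotopy $X\times\Delta^k_e\hookrightarrow Y\times\Delta^k_e$ (an open embedding over $\Delta^k_e$ which is fibrewise surjective on components) produces an isotopy $\Ran_{\leq i}(X)\times\Delta^k_e\hookrightarrow \Ran_{\leq i}(Y)\times\Delta^k_e$, yielding a $\Kan$-enriched functor which passes to the simplicial nerve.

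The main obstacle will be the inductive construction of the conically smooth atlas when a configuration contains several points clustered near the cone-locus of a basic $U=\RR^j\times\sC(Z)$. In this regime the conical-smoothness condition must be verified simultaneously for the combinatorial degeneration (points colliding) and for the geometric degeneration (a point approaching a lower-depth stratum along the cone's scaling direction), and these two types of degeneration interact nontrivially. Establishing that the derivative-along-the-cone-locus condition of expression~(\ref{derivative}) holds in this combined setting is the technical crux of the argument and the place where the detailed machinery of \cite{aft1} must be invoked.
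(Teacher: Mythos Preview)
This paper does not supply a proof of the proposition: it is quoted verbatim as Proposition~3.7.5 of~\cite{aft1}, and all of the work establishing the conically smooth structure on $\Ran_{\leq i}(X)$ is carried out there. So there is no proof in the present paper for your proposal to be compared against.

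That said, your outline is broadly in the spirit of what is done in~\cite{aft1}, with one structural difference worth flagging. The key lemma, invoked explicitly later in this paper (in the proof of Lemma~\ref{EE-to-E}), is that $\Ran_{\leq i}(U)$ is itself a \emph{basic} whenever $U$ is a disjoint union of at most $i$ basics. Your chart construction produces open subsets of the form $\prod_\ell \Ran_{\leq m_\ell}(V_\ell)$, but for these to constitute a conically smooth atlas in the sense of~\cite{aft1} you must identify each such product with a single basic $\RR^k\times\sC(W)$; your inductive hypothesis as stated (``each factor is a conically smooth stratified space'') stops short of this. Establishing that identification---in particular, recognizing the link $W$ of the deepest stratum of $\Ran_{\leq i}\bigl(\RR^j\times\sC(Z)\bigr)$ as a compact conically smooth stratified space---is precisely the content of the argument in \S3 of~\cite{aft1}, and is where the interaction you correctly anticipate between collision degenerations and cone-direction degenerations is resolved. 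Your sketch is on the right track, but the passage from ``product of conically smooth pieces'' to ``a single basic'' is the substance of the proof, not a detail to be filled in afterward.
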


The next result gives a description of the topology of $\Ran_{\leq i}(X)$ in terms of that of $X$.  
\begin{lemma}\label{ran-basis}
Let $i$ be a finite cardinality.
Let $X$ be a stratified space.
Consider the collection of open embeddings 
\[
\bigl\{\Ran_{\leq i}(U) \xra{~\Ran(f)~} \Ran_{\leq i}(X)\bigr\}_{\{U\xra{f} X\}}
\]
indexed by conically smooth open embeddings from finite disjoint unions of basics which are surjective on connected components.
This collection of open embeddins forms a basis for the topology of $\Ran_{\leq i}(X)$.  

\end{lemma}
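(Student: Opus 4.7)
\medskip

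The plan is to verify the basis property pointwise: given any $S \in \Ran_{\leq i}(X)$ and any open neighborhood $V$ of $S$, I will construct a conically smooth open embedding $U \hookrightarrow X$ from a finite disjoint union of basics, surjective on connected components, with $S \in \Ran_{\leq i}(U) \subseteq V$. That the maps $\Ran_{\leq i}(U) \to \Ran_{\leq i}(X)$ are open embeddings is already part of Proposition~\ref{ran-singular}, so the only content is in showing the collection is fine enough.

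Write $S = \{x_1,\dots,x_k\}$ with $k \leq i$, and fix an $i$-element set $I$. By the defining property of the topology on $\Ran_{\leq i}(X)$ as the coarsest making $\mathsf{Image}\colon X^I \to \Ran_{\leq i}(X)$ continuous, the preimage $\mathsf{Image}^{-1}(V) \subset X^I$ is open. For each of the finitely many surjections $\sigma\colon I \twoheadrightarrow \{1,\dots,k\}$, the point $\phi_\sigma := (x_{\sigma(l)})_{l \in I}$ lies in $\mathsf{Image}^{-1}(V)$, so the product topology supplies open neighborhoods $W^\sigma_l \ni x_{\sigma(l)}$ in $X$ with $\prod_l W^\sigma_l \subseteq \mathsf{Image}^{-1}(V)$. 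Using that basics form a basis for the topology of $X$ and that $X$ is Hausdorff, choose for each $j \in \{1,\dots,k\}$ a basic chart $U_j \ni x_j$ with
\[
U_j \;\subseteq\; \bigcap_{(\sigma,l)\,:\,\sigma(l)=j} W^\sigma_l,
\]
shrunk further so that the $U_j$ are pairwise disjoint in $X$. Set $U := \bigsqcup_{j=1}^k U_j$; its inclusion into $X$ is conically smooth and open, and it is surjective on connected components because $S \hookrightarrow X$ is.

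It remains to check $S \in \Ran_{\leq i}(U) \subseteq V$. The membership $S \in \Ran_{\leq i}(U)$ is immediate: $x_j \in U_j$ for each $j$, so $S$ hits every connected component of $U$. For the inclusion, fix any $T \in \Ran_{\leq i}(U)$, so $T \subseteq U$ with $|T| \leq i$ and $T \cap U_j \neq \emptyset$ for every $j$. Choose a surjection $\psi\colon I \twoheadrightarrow T$ (possible since $|T| \leq i$) arranged so that the composite $\sigma_\psi\colon I \xrightarrow{\psi} T \to \{1,\dots,k\}$, recording which $U_j$ each element of $T$ lies in, is itself surjective---this is possible because $T$ meets each $U_j$. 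Then $(\psi(l))_{l\in I} \in \prod_l U_{\sigma_\psi(l)} \subseteq \prod_l W^{\sigma_\psi}_l \subseteq \mathsf{Image}^{-1}(V)$, so $T = \mathsf{Image}((\psi(l))_l) \in V$, as desired.

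The main bookkeeping subtlety---and the only place the argument is not entirely formal---is the simultaneous control over all surjections $\sigma\colon I \twoheadrightarrow \{1,\dots,k\}$: shrinking $U_j$ to sit inside \emph{every} relevant $W^\sigma_l$ is what guarantees that an arbitrary $T$ (rather than only one close to $S$) lies in the intended product box after reindexing by some $\psi$. The finiteness of the set of such $\sigma$ is exactly what makes the intersection defining $U_j$ an open neighborhood of $x_j$, so the argument does not degenerate.
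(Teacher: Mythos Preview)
Your proof is correct and follows the same strategy as the paper's: choose pairwise disjoint basic charts $U_j$ around the points of $S$, small enough that $\Ran_{\leq i}(U)$ lands inside the given open set. Your version is in fact more careful than the paper's sketch---the paper works with a single map $f_0\colon I \to S$ and a single product box $\prod_{l} V_{f_0(l)}$, whereas your intersection over \emph{all} surjections $\sigma\colon I \twoheadrightarrow \{1,\dots,k\}$ is exactly what is needed to ensure that an arbitrary $T \in \Ran_{\leq i}(U)$, whose distribution among the $U_j$ need not match that of any single $f_0$, still maps into $V$.
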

\begin{proof}
Let $S\in \Ran_{\leq i}(X)$ be an element, and let $S\in O\subset \Ran_{\leq i}(X)$ be an open neighborhood.
We must show that there is a disjoint union of basics $U\hookrightarrow X$ openly embedding into $X$ as a surjection on components so that $S$ belongs to the image of the map $\Ran_{\leq i}(U) \to \Ran_{\leq i}(X)$ and this image lies in $O$.  

Fix a set $I$ of cardinality $i$.  
Choose a map $I\xra{f_0}X$ for which the image ${\sf Image}(f_0)=S$ equals $S$.  
From the definition of the topology of $\Ran_{\leq i}(X)$, the subset $O\subset \Ran_{\leq i}(X)$ being open implies the subset
\[
\w{O}:=\bigl\{I\xra{f} X\mid {\sf Image}(f)\in O  \bigr\}~\subset~X^I
\]
is open. 
From the definition of the product topology on $X^I$, for each $s\in S$ there is an open subset $s \in V_{s}\subset X$ so that the product $\underset{i\in I} \prod V_{f_0(i)} \subset \w{O}$ lies inside $\w{O}$.  
From the definition of a stratified space in the sense of~\S3 of~\cite{aft1}, the collection of conically smooth open embeddings from basics into $X$ forms a basis for the topology of $X$.  
Therefore, we can choose each $V_s$ above to be a basic $s\in U_s\hookrightarrow X$.
Because $X$ is in particular Hausdorff, we can make these choices so that the intersection $U_s \cap U_{s'}$ is empty whenever $s\neq s'\in S$.  
We obtain an open embedding $U:=\underset{s\in S}\bigsqcup U_s \hookrightarrow X$ from a finite disjoint union of basics which is surjective on components.  
By construction, this embedding has the desired properties stated at the beginning of this proof.

\end{proof}

Through Proposition~\ref{ran-singular}, and with the notation of~(\ref{surj-notation}), there is the functor between over $\infty$-categories
\begin{equation}\label{continuous-ran-slice}
\Ran_{\leq i}\colon \snglr^{\sf surj}_{/X} \to \snglr^{\sf surj}_{/\Ran_{\leq i}(X)}
\end{equation}
for each stratified space $X$.  
For the next result, we will denote the full $\infty$-subcategory 
\[
\disk(\bsc)^{{\sf surj}, \leq i}_{/X}~\subset~ \snglr^{\sf surj}_{/X}
\]
consisting of those open embeddings $U\hookrightarrow X$ that are surjective on components and for which $U$ has at most $i$ connected components, each of which is isomorphic to a basic stratified space.  

\begin{lemma}\label{EE-to-E}
Let $i$ be a finite cardinality. 
Let $X$ be a stratified space.
The functor~(\ref{continuous-ran-slice}) restricts as an equivalence of $\infty$-categories 
\[
\Ran_{\leq i}\colon \disk(\bsc)^{{\sf surj},\leq i}_{/X} \xra{~\simeq~} \bsc_{/\Ran_{\leq i}(X)}~.  
\]

\end{lemma}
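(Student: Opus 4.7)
The plan is to verify that the functor $\Ran_{\leq i}$ is well-defined on the indicated source and then produce an explicit quasi-inverse. The key structural input is that for $U = \bigsqcup_{s=1}^{k} U_s \in \disk(\bsc)^{{\sf surj}, \leq i}_{/X}$ with each $U_s = \RR^{j_s}\times \sC(Z_s)$ a basic and $k \leq i$, the stratified space $\Ran_{\leq i}(U)$ is itself a basic. The configuration placing exactly one point at the cone-locus of each $U_s$ picks out a distinguished ``cone stratum'' $\prod_s \RR^{j_s}$, and the diagonal $\RR_{\geq 0}$-action induced by scaling each $U_s$ toward its cone-locus realizes $\Ran_{\leq i}(U)\cong \RR^{\sum j_s}\times \sC(L)$ for a compact link $L$ whose combinatorics records the individual links $Z_s$ together with the ways at most $i$ points of $T\in \Ran_{\leq i}(U)$ may distribute among and split within the components $U_s$; compactness of $L$ uses the cap $i$ and the compactness of each $Z_s$. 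That $\Ran_{\leq i}(U)\hookrightarrow \Ran_{\leq i}(X)$ is an open embedding then follows from Proposition~\ref{ran-singular}, with the surjectivity-on-components of $U\hookrightarrow X$ precisely guaranteeing the image lies in $\Ran_{\leq i}(X)$.

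For essential surjectivity, let $V \hookrightarrow \Ran_{\leq i}(X)$ be an object of $\bsc_{/\Ran_{\leq i}(X)}$. By Lemma~\ref{ran-basis} the inclusion factors as $V \hookrightarrow \Ran_{\leq i}(U')\hookrightarrow \Ran_{\leq i}(X)$ for some disjoint union of basics $U'\hookrightarrow X$ surjective on components. Pick a point $c$ in the cone-locus of $V$, and let $T_* = \{x_1, \dots, x_k\}\subset U'$ be its image configuration; since this image lies in $\Ran_{\leq i}(X)$ we have $k\leq i$ and $T_*$ surjects onto components of $X$. Choose pairwise-disjoint basic neighborhoods $U_s\hookrightarrow U'$ of each $x_s$, and set $U := \bigsqcup_s U_s$; then $U\hookrightarrow X$ is an object of $\disk(\bsc)^{{\sf surj}, \leq i}_{/X}$ and $\Ran_{\leq i}(U)$ is, by the first step, a basic open neighborhood of $T_*$. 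After an isotopy shrinking $V$ radially toward $c$ via its own cone coordinate, the embedding $V\hookrightarrow \Ran_{\leq i}(U')$ factors through $\Ran_{\leq i}(U)$. Both $V$ and $\Ran_{\leq i}(U)$ are basic neighborhoods of $T_*$ in $\Ran_{\leq i}(X)$ and therefore have the same isomorphism class in $[\bsc]$; the conservativity of $[-]\colon \bsc \to [\bsc]$ (Theorem~4.3.1 of~\cite{aft1}) then upgrades the morphism $V\hookrightarrow \Ran_{\leq i}(U)$ to an equivalence in $\bsc_{/\Ran_{\leq i}(X)}$.

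For fully faithfulness, functoriality of $\Ran_{\leq i}$ supplies a natural map of mapping spaces, and I would construct its inverse from cone-locus data. Any open embedding $\Ran_{\leq i}(U)\hookrightarrow \Ran_{\leq i}(U')$ over $\Ran_{\leq i}(X)$ restricts to an open embedding of cone loci $\prod_s\RR^{j_s}\hookrightarrow \prod_{s'}\RR^{j'_{s'}}$. The compatibility over $X$ (each cone-locus point of $\Ran_{\leq i}(U)$ corresponds to a configuration in $X$ whose $s$-th element lies in $U_s$ and must be carried into some $U'_{s'}$) forces this product-embedding to arise from a component-matching open embedding $U\hookrightarrow U'$ over $X$. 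The same cone-contraction principle applied to families yields the statement for entire mapping spaces and verifies the two constructions are mutually inverse up to coherent homotopy.

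The principal obstacle is the explicit structural description of $\Ran_{\leq i}(U)$ as a basic: one must produce a conically smooth atlas on $\Ran_{\leq i}(U)$ from that of $U$ and verify that the $\RR_{\geq 0}$-action exhibits it in the form $\RR^{\sum j_s}\times\sC(L)$ with compact $L$ carrying its own conically smooth atlas. Once this is in hand, both essential surjectivity and fully faithfulness reduce to the local-to-global principle that a $\bsc$-neighborhood of a configuration $T_*\in \Ran_{\leq i}(X)$ is determined, up to equivalence in $\bsc$, by the germ data of its constituent points in $X$.
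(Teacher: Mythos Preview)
Your essential surjectivity argument is sound in outline, though the first invocation of Lemma~\ref{ran-basis} overstates what it provides: the lemma gives a basis, not a factorization of an arbitrary basic $V$ through some $\Ran_{\leq i}(U')$. You recover from this with the isotopy shrinking $V$ toward its cone point, so this part can be repaired.

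The full faithfulness argument has a genuine gap. Your key move is that an embedding $\Ran_{\leq i}(U)\hookrightarrow \Ran_{\leq i}(U')$ over $\Ran_{\leq i}(X)$ restricts to a map of cone loci, from which you would read off a component-matching embedding $U\hookrightarrow U'$. But morphisms in $\bsc$ need not carry cone locus to cone locus when the source and target have different depth. Concretely: take $X=\RR$, let $U$ be a disjoint union of two open intervals, $U'$ a single open interval containing both, and $i=2$. Then $\Ran_{\leq 2}(U)\cong\RR^2$ is a basic with trivial stratification (its cone locus is all of $\RR^2$), while $\Ran_{\leq 2}(U')\cong\RR\times\sC(\ast)$ has cone locus the $1$-dimensional stratum of singletons. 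The induced embedding $\Ran_{\leq 2}(U)\hookrightarrow\Ran_{\leq 2}(U')$ lands entirely in the open stratum of pairs and misses the cone locus of the target. So your proposed inverse is not defined on such morphisms, and the phrase ``apply the cone-contraction principle to families'' does not address this.

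The paper takes a different route that sidesteps any inversion of $\Ran_{\leq i}$ on individual morphisms. It first proves the equivalence on maximal $\infty$-subgroupoids by identifying each side explicitly: $\bigl(\disk(\bsc)^{{\sf surj},\leq i}_{/X}\bigr)^\sim$ as a coproduct of products of unordered configuration spaces of strata of $X$, and $\bigl(\bsc_{/\Ran_{\leq i}(X)}\bigr)^\sim$ as a coproduct over strata of $\Ran_{\leq i}(X)$, matching these via a direct isomorphism of stratified spaces. For morphism spaces it then bootstraps: both sides are slice $\infty$-categories, the fiber of ${\sf ev}_1\colon\Map([1],\cC)\to\cC^\sim$ over $c$ is $(\cC_{/c})^\sim$, and the iterated slice over $(U\hookrightarrow X)$ identifies with the single slice over $U$ (respectively over $\Ran_{\leq i}(U)$). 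This reduces the morphism statement to the groupoid statement already established, now with $U$ in place of $X$.
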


\begin{proof}
The factorization happens because $\Ran_{\leq i}(U)$ is a basic whenever $U$ is an (at most) $i$-fold disjoint union of basics -- this statement is proved in~\S3 of~\cite{aft1}.  
We will argue that this functor is an equivalence by showing it is an equivalence on maximal $\infty$-subgroupoids, then that it is an equivalence on spaces of morphisms.

Keeping with the notation of~\cite{aft1}, for each stratified space $Z$ we will denote the over $\infty$-category
\[
{\sf Entr}(Z)~:=~\bsc_{/Z}
\]
of basic stratified spaces over $Z$.   
In~\S4 of~\cite{aft1} we show that, for $Z$ a stratified space, the maximal $\infty$-subgroupoid of ${\sf Entr}(Z)$ is $\underset{[B]} \coprod {\sf Entr}_{[B]}(Z)$ where the coproduct is indexed by isomorphism classes of basics, and ${\sf Entr}_{[B]}(Z) =  \sB\End(B)_{/Z}$ is the $\infty$-subcategory of those basics over $Z$ which are isomorphic to $B$.   
Also in~\S4 of~\cite{aft1} we show that ${\sf Entr}_{[B]}(Z) \simeq \Emb\bigl(B,Z\bigr)_{\Aut_0(B)}$ is the coinvariants by the origin preserving automorphisms of $B$, and thereafter that this latter space evaluates at the center of $B$ as an equivalence with the underlying space of the $[B]$-stratum $Z_{[B]}\subset Z$.  
In summary, the maximal $\infty$-subgroupoid of ${\sf Entr}(Z)$ is canonically identified as
\[
\underset{[B]}\coprod {\sf Entr}_{[B]}(Z)~{}~\simeq~{}~ \underset{[B]}\coprod Z_{[B]}~.  
\]

Similarly, the maximal $\infty$-subgroupoid of $\disk(\bsc)^{{\sf surj},\leq i}_{/Z}$ is canonically identified as
\[
\underset{[U]} \coprod \underset{\alpha \in A}\prod  \disk(\bsc_{[U_\alpha]})^{{\sf surj},= i_\alpha}_{/X}~{}~\simeq~{}~ \underset{[U]} \coprod \underset{\alpha \in A} \prod \conf_{I_\alpha}(W_{[U_\alpha]})_{\Sigma_{I_\alpha}}
\]
which we now explain.
The coproducts are indexed by isomorphism classes of finite disjoint unions of basics $U = \underset{\alpha \in A} \bigsqcup (U_\alpha)^{\sqcup I_\alpha}$, grouped here according to isomorphism type, for which $\underset{\alpha \in A} \sum |I_\alpha|\leq i$.
$\bsc_{[U_\alpha]}$ is the full $\infty$-subcategory of $\bsc$ consisting of those basics which are isomorphic to $U_\alpha$; and $i_\alpha := |I_\alpha|$ is the cardinality.  
So the left $[U]$-cofactor is the largest $\infty$-subgroupoid of $\disk(\bsc)^{\sf surj}_{/X}$ consisting of those open embeddings $V \hookrightarrow X$ for which $V$ is isomorphic to an $i_\alpha$-fold disjoint union of the basic $U_\alpha$.  
For $M$ a smooth manifold, $\conf_J(M)$ is the underlying space of the open submanifold $M^J$ consisting of the injections $J\hookrightarrow M$.
The subscript denotes the coinvariants by the evident $\Sigma_J$-action.

Thus, the map of maximal $\infty$-subgroupoids induced by the functor $\disk(\bsc)^{{\sf surj},\leq i}_{/X} \xra{\Ran_{\leq i}} \bsc_{/\Ran_{\leq i}(X)}$ is canonically identified as the map of spaces
\[
\underset{[U]}\coprod \underset{\alpha \in A} \prod \conf_{I_\alpha}(X_{[U_\alpha]})_{\Sigma_{I_\alpha}}~{}~\simeq~{}~\underset{[U]} \coprod \underset{\alpha \in A}\prod  \disk(\bsc_{[U_\alpha]})^{{\sf surj},= i_\alpha}_{/X}
\longrightarrow
\underset{[B]}\coprod {\sf Entr}_{[B]}\bigl(\Ran_{\leq i}(X)\bigr)~{}~\simeq~{}~
\underset{[B]}\coprod \bigl(\Ran_{\leq i}(X)\bigr)_{[B]}
\]
in where the indexing set of the left coproduct consists of those such isomorphism classes $[U]$ for which $U$ has at most $i$ components.  
Lemma~\ref{ran-basis} grants that this map is a bijection on the sets indexing the coproducts.  
That this map of spaces is an equivalence then follows because, by detailed inspection, there is a canonical isomorphism of stratified spaces
\[
\underset{\alpha \in A} \prod \conf_{I_\alpha}(X_{[U_\alpha]})_{\Sigma_{I_\alpha}}~{}~\cong~{}~\bigl(\Ran_{\leq i}(X)\bigr)_{[\Ran_{\leq i}(U)]}
\]
whose map of underlying spaces is a summand of the above composite map.

We now consider the map of spaces of morphisms induced by the functor $\disk(\bsc)^{{\sf surj},\leq i}_{/X} \xra{\Ran_{\leq i}} {\sf Entr}\bigl(\Ran_{\leq i}(X)\bigr)$.
This map fits as the top horizontal arrow in the diagram of spaces
\[
\xymatrix{
%\bigl(\EE^{{\sf surj},\leq i}(U)\bigr)^{\sim}  \ar[rr]  \ar[d]
%&&
%\Bigl({\sf Entr}\bigl(\Ran_{\leq i}(U)\bigr)\Bigr)^{\sim}  \ar[d]
%\\
\Map\bigl([1],\disk(\bsc)^{{\sf surj},\leq i}_{/X}\bigr)  \ar[rr]  \ar[d]_{{\sf ev}_1}
&&
\Map\Bigl([1],{\sf Entr}\bigl(\Ran_{\leq i}(X)\bigr)\Bigr)  \ar[d]^{{\sf ev}_1}
\\
\bigl(\disk(\bsc)^{{\sf surj},\leq i}_{/X}\bigr)^{\sim}  \ar[rr]
&&
\Bigl({\sf Entr}\bigl(\Ran_{\leq i}(X)\bigr)\Bigr)^{\sim}~.
}
\]
We have already shown that the bottom horizontal map is an equivalence of spaces.
To show that the top horizontal map is an equivalence of spaces, it is enough to show that, for each point $(U\hookrightarrow X)$ of the bottom left space, the map of associated fibers is an equivalence.

Let $(U\hookrightarrow X)\in \disk(\bsc)^{{\sf surj},\leq i}_{/X}$.  Both $\disk(\bsc)^{{\sf surj},\leq i}_{/X}$ and ${\sf Entr}\bigl(\Ran_{\leq i}(X))$ being slice-$\infty$-categories, we canonically identify the map of fibers over $(U\hookrightarrow X)$ as the map of maximal $\infty$-subgroupoids
\[
\bigl( \disk(\bsc)^{{\sf surj},\leq i}_{/U} \bigr)^{\sim} \longrightarrow \Bigl({\sf Entr}\bigl(\Ran_{\leq i}(U)\bigr)\Bigr)^{\sim}
\]
induced by $\Ran_{\leq i}$.
We have already shown that this map is an equivalence, and so concludes this proof.

\end{proof}

\subsection{Localizing with respect to isotopy equivalences}
Here we explain that the $\infty$-category $\disk(\cB)_{/X}$, which appears in the Definition~\ref{def:fact-hmlgy} of factorization homology, can be witnessed as a localization of its un-topologized version $\diskd(\cB)_{/X}$ on the collection of those inclusions of finite disjoint unions of disks $U\subset V$ in $X$ for which this inclusion is isotopic to an isomorphism.  
This comparison plays a fundamental role in recognizing certain colimit expressions in this theory, for instance that support the pushforward formula of~\S\ref{sec.pushforward}.

Given a topological space $X$ and a finite set $J$, we let $\conf_J(X)$ denote the topological space of injections from $J$ to $X$.  There is an evident action of the symmetric group $\Sigma_J$ on this space, and we denote the coinvariants as $\conf_J(X)_{\Sigma_J}$. Finally, given a basic $U \cong \RR^n \times C(Z)$, we denote by $\Aut_0(U)$ the Kan complex of isomorphisms $U \to U$ that preserve the origin $(0,\ast) \in U$. (See Section~4.3 of~\cite{aft1}.) In the following lemma, $\sB$ is the classifying space functor.

\begin{lemma}\label{EE-equivs}
For $\cB$ a category of basics, the maximal $\infty$-groupoid of $\disk(\cB)$ is canonically identified as
\[
\bigl(\disk(\cB)\bigr)^\sim~{}~\simeq~{}~ \underset{[U = \underset{i\in I} \bigsqcup U_i^{\sqcup J_i}]} \coprod \prod_{i\in I} \sB\bigl(\Sigma_{J_i} \wr \End_{\cB}(U_i)\bigr)
\]
where the coproduct is indexed by isomorphism classes of finite disjoint unions of objects of $\cB$, whose connected components are grouped here according to isomorphism type.
In particular, the symmetric monoidal functor $[-]\colon \disk(\cB) \to \Fin$ is conservative.  

For $X$ a $\cB$-manifold, the underlying $\infty$-groupoid of $\disk(\cB)_{/X}$ is canonically identified as the space
\[
\bigl(\disk(\cB)_{/X}\bigr)^\sim~{}~\simeq~{}~ \underset{[U = \underset{i\in I} \bigsqcup U_i^{\sqcup J_i}]} \coprod \prod_{i\in I} \conf_{J_i}(X_{[U_i]})_{\Sigma_{J_i}}
\]
given in terms of unordered configuration spaces of various strata of $X$.  

\end{lemma}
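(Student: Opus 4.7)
The plan is to apply the general principle that the maximal $\infty$-subgroupoid of any $\infty$-category decomposes as
\[
\cC^\sim ~\simeq~ \coprod_{[c]\in \pi_0 \cC^\sim} \sB\Aut_\cC(c)~,
\]
and then to evaluate both factors on the right for $\cC = \disk(\cB)$ and $\cC = \disk(\cB)_{/X}$. The two essential inputs will be the conservativity of $[-]\colon \bsc\to[\bsc]$ (Theorem~4.3.1 of~\cite{aft1}, recalled in~\S\ref{sec.stratified-spaces}) and the stratum identification $\Emb_\cB(B,Z)_{\Aut_0(B)} \simeq Z_{[B]}$ also recalled there.

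For the absolute case, an object of $\disk(\cB)$ is a finite disjoint union $U = \bigsqcup_i U_i^{\sqcup J_i}$ of basics. The conservativity of $[-]\colon \bsc\to[\bsc]$ upgrades to a classification of equivalences in $\disk(\cB)$: a morphism is an equivalence exactly when it induces a bijection on components preserving isomorphism types and restricts to an equivalence of basics on each matched pair. This identifies the indexing set of $\pi_0 \disk(\cB)^\sim$ with isomorphism classes of such $U$, and as a by-product establishes the ``in particular'' assertion that $[-]\colon \disk(\cB)\to \Fin$ is conservative. For a fixed representative $U = \bigsqcup_i U_i^{\sqcup J_i}$ with components grouped by isomorphism type, any self-equivalence permutes the $J_i$ copies of each $U_i$ and applies an endomorphism --- equivalently, by conservativity, an automorphism --- to each copy, yielding
\[
\Aut_{\disk(\cB)}(U)~\simeq~ \prod_i \Sigma_{J_i}\wr \End_\cB(U_i)~.
\]
Applying $\sB$ and coproducting over $[U]$ yields the first formula.

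The relative case proceeds analogously. The forgetful map $\disk(\cB)_{/X}^\sim \to \disk(\cB)^\sim$ organizes components by the underlying isomorphism class $[U]$, and the $[U]$-component is the homotopy quotient $\Emb_\cB(U, X)_{\Aut_{\disk(\cB)}(U)}$ by the wreath-product automorphism group computed above. I will evaluate this quotient type-by-type: for each basic $U_i$ with multiplicity $J_i$, the evaluation-at-origins map
\[
\Emb_\cB\bigl(U_i^{\sqcup J_i}, X\bigr) ~\longrightarrow~ \conf_{J_i}\bigl(X_{[U_i]}\bigr)
\]
is a principal $\Aut_0(U_i)^{J_i}$-bundle, by $J_i$-fold application of the stratum identification $\Emb_\cB(U_i,X)_{\Aut_0(U_i)} \simeq X_{[U_i]}$ from~\cite{aft1}. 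Quotienting first by $\End_\cB(U_i)^{J_i}$ collapses the bundle to its base, producing the ordered configuration space, and the remaining $\Sigma_{J_i}$-quotient unorders it to give $\conf_{J_i}(X_{[U_i]})_{\Sigma_{J_i}}$. Since the strata $X_{[U_i]}$ for distinct isomorphism classes $[U_i]$ are pairwise disjoint in $X$, no cross-type disjointness condition is needed, and the contributions assemble as a product over $i$, yielding the second formula.

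The main delicacy I anticipate is justifying that the slice $\infty$-categorical mapping spaces in $\disk(\cB)_{/X}$ really are the expected spaces of ``embedding together with an isotopy to the given embedding in $X$'', and that the $\Aut_{\disk(\cB)}(U)$-action on $\Emb_\cB(U,X)$ is correctly twisted by the $\cB$-structure through the right fibration $\cB \to \bsc$. Once this $\infty$-categorical bookkeeping is in place, the wreath-product computation runs formally, and the geometric content is carried entirely by the stratum-identification recalled from~\cite{aft1}.
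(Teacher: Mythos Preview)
Your overall strategy matches the paper's: decompose the maximal subgroupoid over isomorphism classes, identify the automorphism groups as wreath products via conservativity of $[-]\colon\bsc\to[\bsc]$, and for the slice case pass through the evaluation-at-origins map to configuration spaces. The paper also first reduces to $\cB=\bsc$ via the equivalence $\disk(\cB)_{/X}\simeq\disk(\bsc)_{/X}$ (Observation~\ref{B-independent}), which cleanly disposes of the $\cB$-structure bookkeeping you flag at the end; you should do this explicitly rather than carry the $\cB$-twist through.

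There is one genuine imprecision. You assert that
\[
\Emb_\cB\bigl(U_i^{\sqcup J_i}, X\bigr)\longrightarrow \conf_{J_i}(X_{[U_i]})
\]
is a principal $\Aut_0(U_i)^{J_i}$-bundle ``by $J_i$-fold application'' of the single-basic identification $\Emb_\cB(U_i,X)_{\Aut_0(U_i)}\simeq X_{[U_i]}$. But a $J_i$-fold product of the single-copy result yields $X_{[U_i]}^{J_i}$, not $\conf_{J_i}(X_{[U_i]})$: the disjointness constraint among the $J_i$ embedded copies is exactly what distinguishes these. The paper supplies the missing argument by induction on $|J_i|$: restrict along a proper nonempty subset $J'\subset J_i$ to obtain a commuting square of restriction maps, identify the map on fibers as the evaluation map for $U_i^{\sqcup J_i\smallsetminus J'}$ embedded in the complement $X\smallsetminus J'$, and invoke the inductive hypothesis. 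This fiber-sequence induction is what actually establishes the $\Sigma_{J_i}$-equivariant equivalence
\[
\Map_{\snglr}\bigl(U_i^{\sqcup J_i},X\bigr)_{\Aut_0(U_i)^{J_i}}\xra{\simeq}\conf_{J_i}(X_{[U_i]}),
\]
and you should replace the ``$J_i$-fold application'' phrase with this induction.
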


\begin{proof}
The statement concerning $\disk(\cB)$ follows immediately from the characterization of $\bsc$ in~\cite{aft1}.  

Upon the canonical equivalence of $\infty$-categories $\disk(\cB)_{/X} \simeq \disk(\bsc)_{/X}$, we can assume $\cB = \bsc$, so that $\mfld(\cB) \simeq \snglr$ is an $\infty$-category associated to a ${\sf Kan}$-enriched category.  
In \S4.3 of~\cite{aft1} it is shown that the inclusion of $\sf Kan$-groups $\Aut_0(U) \to \End_{\bsc}(U)$ is an equivalence of underlying Kan complexes.  
Upon these considerations, to show the first statement it is sufficient to show that, for each finite set $J$, and each basic $U$, the $\Sigma_J$-equivariant map which is evaluation at each center of $U$
\[
{\sf ev}_{(0)_{j\in J}}\colon \Map_{\snglr}\bigl(U^{\sqcup J}, X\bigr)_{\Aut_0(U)^J} ~\xra{~\simeq~}~\conf_J(X_{[U]}) 
\]
is an equivalence of $\Sigma_J$-spaces.  
We do this by induction on the cardinality $|J|$, with the base case $|J|=1$ offered by results in~\cite{aft1}.  
Now suppose $|J|>1$, and choose a non-empty proper subset $J'\subset J$.
Restriction along this inclusion of subsets gives the commutative diagram
\[
\xymatrix{
\Map_{\snglr}\bigl(U^{\sqcup J}, X\bigr)_{\Aut_0(U)^J}  \ar[rr] \ar[d]^-{{\sf ev}_0}
&&
\Map_{\snglr}\bigl(U^{\sqcup J'}, X\bigr)_{\Aut_0(U)^{J'}}  \ar[d]^-{{\sf ev}_0}
\\
\conf_J(X_{[U]})  \ar[rr]
&&
\conf_{J'}(X_{[U]})
}
\]
which is evidently appropriately equivariant.
In a standard manner, the map of fibers of the horizontal maps is canonically identified as the map of $\Sigma_{J\smallsetminus J'}$-spaces
\[
\Map_{\snglr}\bigl(U^{\sqcup J\smallsetminus J'}, X\smallsetminus J'\bigr)_{\Aut_0(U)^{J\smallsetminus J'}}  \xra{~{\sf ev}_0~}  \conf_{J\smallsetminus J'}(X_{[U]}\smallsetminus J')~,
\]
which is an equivalence by induction. 
The result follows.  

\end{proof}

Let $\cB$ be a category of basics; let $X$ be a $\cB$-manifold.
Consider the $\infty$-subcategory 
\begin{equation}\label{eqn.I_X}
\cI_X\subset \diskd(\cB)_{/X}
\end{equation}
consisting of the same objects but only those morphisms $(U\hookrightarrow X) \hookrightarrow (V\hookrightarrow X)$ whose image in $\disk(\cB)_{/X}$ is an equivalence. Note that in~\ref{eqn.I_X}, we think of $X$ as an object of $\mfldd(\cB)$, while when we refer to $\disk(\cB)_{/X}$, we think of $X$ as an object of $\mfld(\cB)$.

\begin{prop}\label{EEd-vs-EE}
The standard functor
\begin{equation}\label{disc-to-cont}
\diskd(\cB)_{/X} \longrightarrow \disk(\cB)_{/X}
\end{equation}
witness an equivalence between $\infty$-operads from the localization (among $\infty$-operads):
\[
\bigl(\diskd(\cB)_{/X}\bigr)[\cI_X^{-1}]~\simeq~\disk(\cB)_{/X}~.
\]
In particular, for each symmetric monoidal $\infty$-category $\cV$, restriction defines a fully faithful functor
\[
\Alg_{\disk(\cB)_{/X}}(\cV)  ~{}~\hookrightarrow~{}~\Alg_{\diskd(\cB)_{/X}}(\cV)
\]
whose image consists of those $\diskd(\cB)_{/X}$ algebras that carry each isotopy equivalence to an equivalence in $\cV$.  
\end{prop}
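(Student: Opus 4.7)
The plan is to establish the equivalence by verifying the universal property of localization at $\cI_X$ in the $\infty$-operad setting. The first step is the easy direction: by the very definition of $\cI_X$, the functor $\diskd(\cB)_{/X}\to\disk(\cB)_{/X}$ sends every morphism in $\cI_X$ to an equivalence. Hence it factors through $\diskd(\cB)_{/X}[\cI_X^{-1}]$, and the task reduces to showing that the induced comparison functor $\diskd(\cB)_{/X}[\cI_X^{-1}]\to\disk(\cB)_{/X}$ is an equivalence of $\infty$-operads. Essential surjectivity on objects is immediate, since the two $\infty$-operads share the same class of objects --- finite disjoint unions of basics over $X$.

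Next, I would verify that the comparison is an equivalence on maximal $\infty$-subgroupoids. Here the input is Lemma~\ref{EE-equivs}, which identifies $\bigl(\disk(\cB)_{/X}\bigr)^\sim$ with a disjoint union of unordered configuration spaces $\conf_{J_i}(X_{[U_i]})_{\Sigma_{J_i}}$. On the other side, $\bigl(\diskd(\cB)_{/X}\bigr)^\sim$ is a discrete set of embeddings, and localizing at $\cI_X$ precisely glues together those embeddings that are connected by an isotopy. The identification amounts to observing that two embeddings $U\hookrightarrow X$ and $U'\hookrightarrow X$ with $[U]=[U']$ are identified in the localization exactly when they represent the same point of the configuration space up to the symmetry; combinatorially, a zig-zag through a common refinement in $\cI_X$ witnesses this. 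The necessary technical input is a Dugger--Isaksen-style recognition criterion: the simplicial nerve of the hammock localization of $\bigl(\diskd(\cB)_{/X},\cI_X\bigr)$ is weakly equivalent to the Kan-enriched nerve that presents $\disk(\cB)_{/X}$.

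The next, and main, step is to upgrade this to an equivalence on all mapping spaces and multi-morphism spaces. For a fixed source $U$ and target $V$, one computes
\[
\Map_{\diskd(\cB)_{/X}[\cI_X^{-1}]}(U,V)~\simeq~\underset{(V\hookrightarrow W)\in \cI_X}{\colim} \Map_{\diskd(\cB)_{/X}}(U,W)
\]
by the standard formula for mapping spaces in a left Bousfield--type localization. The coend on the right is filtered by open neighborhoods of $V$ inside $X$ up to isotopy, and its value is precisely the space of embeddings $U\hookrightarrow V$ together with a compatible isotopy in $X$ --- this is the definition of $\Map_{\disk(\cB)_{/X}}(U,V)$, after one invokes the isotopy extension theorem for stratified spaces proved in~\cite{aft1}. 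Multi-morphism spaces are treated identically by replacing $U$ with a disjoint union $U_1\sqcup\cdots\sqcup U_n$; the $\infty$-operad structure is preserved because the symmetric monoidal structure on both sides is disjoint union and the comparison functor is the identity on objects. Having established the localization, the ``in particular'' clause about algebras is the standard formal consequence: $\Fun^\ot$ out of a localization is fully faithfully embedded in $\Fun^\ot$ out of the original, with image the functors inverting the designated class.

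The main obstacle I anticipate is the mapping-space identification in the third paragraph: converting the abstract colimit formula for the localized mapping space into the explicit isotopy-parameterized embedding space requires care, since one must handle both the stratified topology on $X$ and the need to work $\Sigma_J$-equivariantly for disconnected $U$. This is where the Dugger--Isaksen apparatus and the inductive configuration-space identification from Lemma~\ref{EE-equivs} do the real work, reducing the comparison to a diagram-chase over connected components combined with the classical statement that the space of self-isotopies of a basic $U\hookrightarrow X$ deformation retracts onto $\Aut_0(U)$.
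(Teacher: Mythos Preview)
Your overall shape is right—reduce to checking the comparison on underlying $\infty$-groupoids and then on mapping spaces—but the execution of the mapping-space step has a genuine gap. The displayed formula
\[
\Map_{\diskd(\cB)_{/X}[\cI_X^{-1}]}(U,V)~\simeq~\underset{(V\hookrightarrow W)\in \cI_X}{\colim} \Map_{\diskd(\cB)_{/X}}(U,W)
\]
is the calculus-of-left-fractions formula, and it is \emph{not} ``standard'' for an arbitrary localization: it requires that $\cI_X$ satisfy the left Ore conditions and that the indexing category $(V\downarrow\cI_X)$ be filtered (or at least contractible) in an appropriate sense. You have not verified either of these. Without them the colimit on the right computes something, but not the hom in the localization. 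Moreover, even granting the formula, the passage from that filtered colimit of \emph{discrete} embedding sets to the \emph{topologized} mapping space $\Map_{\disk(\cB)_{/X}}(U,V)$ is not an appeal to isotopy extension: you would still need to produce the correct homotopy type, not just a bijection on $\pi_0$. Finally, your treatment of the operad structure is confused: neither $\diskd(\cB)_{/X}$ nor $\disk(\cB)_{/X}$ is symmetric monoidal (the slice over $X$ does not carry disjoint union), so ``the symmetric monoidal structure on both sides is disjoint union'' is false. The paper handles this by first observing that $\cI_X$ lies over equivalences in $\Fin_\ast$, so the $\infty$-categorical localization automatically inherits the $\infty$-operad structure; this reduction is a real step you are missing.

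The paper's argument is structurally different on the mapping-space step. Rather than a fractions formula, it identifies the space of morphisms of $\sD_X[\cI_X^{-1}]$ as the classifying space of the subcategory $\Fun^{\cI_X}([1],\sD_X)$, and then analyzes the evaluation-at-target fibration via Quillen's Theorem~B. The fiber over $(U\hookrightarrow X)$ is $\sB\cI_U$, and the hypothesis of Theorem~B (that each morphism in $\cI_X$ induces an equivalence on these classifying spaces) is exactly what was established in the groupoid step. This replaces your unjustified colimit formula with a fiber-sequence comparison against the corresponding (manifest) fiber sequence on the $\cD_X$ side. The Dugger--Isaksen input is used only in the groupoid step, to identify $\sB\cI_M^{[U]}$ with the configuration space, not as a general ``recognition criterion'' for the whole localization.
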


\begin{proof}
First note that the $\infty$-subcategory $\cI_X\subset \diskd(\cB)_{/X}$ lies over equivalences in $\Fin_\ast$.
As so, the localization $\bigl(\diskd(\cB)_{/X}\bigr)[\cI_X^{-1}]$ (among $\infty$-categories) canonically lies over $\Fin_\ast$.  
Therefore, should the canonical functor $\bigl(\diskd(\cB)_{/X}\bigr)[\cI_X^{-1}]\xra{\simeq}\disk(\cB)_{/X}$ from the localization (among $\infty$-categories) be an equivalence between $\infty$-categories, then this localization among $\infty$-categories is in fact an $\infty$-operad itself; furthermore, this localization among $\infty$-categories also presents the $\infty$-operadic localization. 
We are thusly reduced to showing that the functor~(\ref{disc-to-cont}) is a localization between $\infty$-categories.

Note that the functor~(\ref{disc-to-cont}) is essentially surjective, manifestly.  
We will argue that the resulting functor from the localization is an equivalence by showing it is an equivalence on underlying $\infty$-groupoids, then that it is an equivalence on spaces of morphisms.
We recall that, as in Observation \ref{B-independent}, since $\mfld(\cB) \to \snglr$ is a right fibration there are canonical equivalences of $\infty$-categories
\[
\diskd(\cB)_{/X}~ \simeq~ \diskd(\bsc)_{/X} ~=:~ \sD_X \qquad\text{ and }\qquad \disk(\cB)_{/X}~\simeq~ \disk(\bsc)_{/X} ~=:~ \cD_X
\]
and we adopt the indicated notation for this proof.
Therefore, we can assume $\cB = \bsc$.  

The underlying $\infty$-groupoid of $\sD_X$ is the classifying space $\sB \cI_X$.  
In light of the above coproduct in Lemma~\ref{EE-equivs}, fix an isomorphism type $[U=\underset{i\in I} \bigsqcup U_i^{\sqcup J_i}]$ of a finite disjoint union of a basic stratified space.
Consider the full subcategory $\cI_X^{[U]}\subset \cI_X$ consisting of those $(V\hookrightarrow X)$ for which $V\cong U$.
We thus seek to show that the resulting functor $\cI_X^{[U]} \to \underset{i\in I} \prod \conf_{J_i}(X_{[U_i]})_{\Sigma_{J_i}}$ witnesses an equivalence from the classifying space.  
We will do this by first showing that the functor $\underset{i\in I} \prod \conf_{J_i}(X_{[U_i]})\cap - \colon \cI_X^{[U]} \to \underset{i\in I} \prod \cI^{[(U_i^{\sqcup J_i})_{[U_i]}}_{X_{[U_i]}}$ induces an equivalence on classifying spaces, then observing a canonical equivalence of spaces $\sB \cI^{[(U_i^{\sqcup J_i})_{[U_i]}]}_{X_{[U_i]}} \simeq \conf_{J_i}(X_{[U_i]})_{\Sigma_{J_i}}$.  

Consider the slice category $(\cI_X^{[U]})^{(R_i\hookrightarrow X_{[U_i]})_{i\in I}/}$.
An object is an object $(U'\hookrightarrow X)$ of $\cI_X^{[U]}$ for which, for each $i\in I$, there is an inclusion $R_i\subset U'_{[U_i]}$ which is a bijection on components, and a morphism is an inclusion of such.
Because such $(U'_{[U_i]}\hookrightarrow X_{[U_i]})$ form a base for the topology about $R_i\hookrightarrow X_{[U_i]}$, then this category is filtered. 
In particular, the classifying space $\sB(\cI_X^{[U]})^{(R_i\hookrightarrow X_{[U_i]})_{i\in I}/}\simeq \ast$ is contractible.  
It follows that the functor $\underset{i\in I} \prod \conf_{J_i}(X_{[U_i]})\cap - \colon \cI_X^{[U]} \to \underset{i\in I} \prod \cI^{[(U_i^{\sqcup J_i})_{[U_i]}]}_{X_{[U_i]}}$ induces an equivalence on classifying spaces, as desired.  

Now, let $M$ be a smooth manifold, so that $I=\ast$ is a singleton. 
The category $\cI_M^{[U]}$ forms a basis for the standard Grothendieck topology on $\conf_{J_i}(M)_{\Sigma_{J_i}}$.  
It follows from Corollary 1.6 of~\cite{Dugger--Isaksen} that the canonical map of topological spaces
\[
\underset{(\underset{j\in J_i} \sqcup R_j\hookrightarrow M)\in \cI^{[U]}_M} \colim~ \underset{j\in J_i} \prod R_j~{}~\xra{~\simeq~}~{}~ \conf_{J_i}(M)_{\Sigma_{J_i}}
\]
is a homotopy equivalence.  
Because each term $R_j$ in this colimit is contractible, then this colimit is identified as the classifying space
\[
\sB \cI^{[U]}_M~\simeq~ \underset{(\underset{j\in J_i} \sqcup R_j\hookrightarrow M)\in \cI^{[U]}_M} \colim~ \underset{j\in J_i}\prod R_j~.
\]
Applying this to the case $M=X_{[U_i]}$, we conclude that $\sB \cI^{[(U_i^{\sqcup J_i})_{[U_i]}}_{X_{[U_i]}} \simeq \conf_{J_i}(M)_{\Sigma_{J_i}}$.
In summary, we have verified that the map of underlying $\infty$-groupoids
\[
\bigl(\sD_X[\cI_X^{-1}]\bigr)^{\sim} \xra{~\simeq~} \bigl(\cD_X\bigr)^{\sim}
\]
is an equivalence.  

We now show that the functor induces an equivalence on spaces of morphisms.
Consider the diagram of spaces
\[
\xymatrix{
\bigl(\sD_U[\cI_U^{-1}]\bigr)^{\sim}  \ar[r]  \ar[d]_-{(U\hookrightarrow X)}
&
\cD_U^{\sim}  \ar[d]^-{(U\hookrightarrow X)}
\\
\bigl(\sD_X[\cI_X^{-1}]\bigr)^{(1)} \ar[r]  \ar[d]_-{{\sf ev}_1}
&
\cD_X^{(1)}  \ar[d]^-{{\sf ev}_1}
\\
\bigl(\sD_X[\cI_X^{-1}]\bigr)^{\sim}  \ar[r]
&
\cD_X^{\sim}
}
\]
where a superscript ${}^{(1)}$ indicates a space of morphisms, and the upper vertical arrows are given as $(V\hookrightarrow U)\mapsto \bigl((V\hookrightarrow X)\hookrightarrow (U\hookrightarrow X)\bigr)$.
Our goal is to show that the middle horizontal arrow is an equivalence.
We will accomplish this by showing that the diagram is a map of fiber sequences, for we have already shown that the top and bottom horizontal maps are equivalences.

The right vertical sequence is a fiber sequence because such evaluation maps are coCartesian fibrations, in general.  
Then, by inspection, the fiber over $(U\hookrightarrow X)$ is the underlying $\infty$-groupoid of the slice $(\cD_X)_{/(U\hookrightarrow X)}$.
This slice is canonically identified as $\cD_U$.  

Let us show that the left vertical sequence is a fiber sequence.  
The space of morphisms of $\bigl(\sD_X[\cI_X^{-1}]\bigr)^{(1)}$ is the classifying space of the subcategory of the functor category $\Fun^{\cI_X}\bigl([1],\sD_X\bigr) \subset \Fun\bigl([1],\sD_X\bigr)$ consisting of the same objects but only those natural transformations by $\cI$.  
We claim the fiber over $(U\hookrightarrow X)$ of the evaluation map is canonically identified as in the sequence
\[
\bigl(\sD_U[\cI_U^{-1}]\bigr)^{\sim} \xra{~(U\hookrightarrow X)~}\bigl(\sD_X[\cI_X^{-1}]\bigr)^{(1)} \xra{~{\sf ev}_1~} \bigl(\sD_X[\cI_X^{-1}]\bigr)^{\sim}~.
\]
This claim is justified through Quillen's Theorem B, for the named fiber is the classifying space of the slice category $(\cI_X)_{/(U\hookrightarrow X)}$ which is canonically isomorphic to $\cI_U$.  
To apply Quillen's Theorem B, we must show that each morphism $(U\hookrightarrow X) \hookrightarrow (V\hookrightarrow X)$ in $\cI$ induces an equivalence of spaces $\sB\bigl((\cI_X)_{/(U\hookrightarrow X)}\bigr) \simeq \sB\bigl((\cI_X)_{/(V\hookrightarrow X)}\bigr)$.  
This map of spaces is canonically identified as $\sB \cI_U \to \sB \cI_V$.
Through the previous analysis of this proof, this map is further identified as the map of spaces $\underset{[B]} \coprod U_{[B]} \to \underset{[B]} \coprod V_{[B]}$ induced from the inclusion $U\hookrightarrow V$.
Because $U$ and $V$ are abstractly isomorphic, then results of~\cite{aft1} give that each such inclusion is isotopic through stratified open embeddings to an isomorphism.  
We conclude that Quillen's Theorem B applies. See also Theorem~5.3 of~\cite{barwick}.
\end{proof}

\begin{lemma}\label{refinement-localize}

Let $r\colon \w{X}\to X$ be a refinement between stratified spaces.
Then there is a functor 
\[
\disk(\bsc)_{/\w{X}} \longrightarrow \disk(\bsc)_{/X}
\]
which is a localization.

\end{lemma}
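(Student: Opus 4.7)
Define $F \colon \disk(\bsc)_{/\w X} \to \disk(\bsc)_{/X}$ on objects by sending an open embedding $j \colon U \hookrightarrow \w X$ from a disjoint union of basics to the composite open topological embedding $r \circ j$, but with the stratification on $U$ replaced by the one pulled back from $X$ via $r$. The key well-definedness point is that each connected component $U_\alpha \cong \RR^i \times C(Z)$ of $U$ remains a basic of $X$ after this coarsening: at the image $p$ of its cone-locus, the local stratified structure of $X$ around $p$ uniquely determines the basic chart type up to isomorphism, by the structural results of~\cite{aft1}, so the coarsened $U_\alpha$ is isomorphic to the $X$-basic at $p$. Functoriality in open embeddings and isotopies follows from naturality of coarsening; the compatibility with disjoint unions gives the required slice-$\infty$-operad structure.

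\textbf{Strategy via discrete versions.} The coarsening prescription also defines a functor $F_d \colon \diskd(\bsc)_{/\w X} \to \diskd(\bsc)_{/X}$ that sends $\cI_{\w X}$ into $\cI_X$, since refining an isotopy in $\w X$ produces an isotopy in $X$. This yields a commutative square
\[
\xymatrix{
\diskd(\bsc)_{/\w X} \ar[r]^-{F_d} \ar[d]_-{L_{\w X}} & \diskd(\bsc)_{/X} \ar[d]^-{L_X} \\
\disk(\bsc)_{/\w X} \ar[r]^-{F} & \disk(\bsc)_{/X},
}
\]
whose vertical arrows are the localizations of Proposition~\ref{EEd-vs-EE}. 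By the 2-out-of-3 property for $\infty$-categorical localizations, $F$ is a localization (at the image in $\disk(\bsc)_{/\w X}$ of $F_d^{-1}(\cI_X)$) if and only if $F_d$ is a localization at $F_d^{-1}(\cI_X)$. This reduces the entire statement to a discrete claim.

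\textbf{Proving $F_d$ is a localization.} I will adapt the template of the proof of Proposition~\ref{EEd-vs-EE}. Essential surjectivity is immediate: every basic $V \hookrightarrow X$ centered at a point $p$ is isotopic (in $\disk(\bsc)_{/X}$) to $F_d$ of a sufficiently small $\w X$-basic centered at $p$. For fully faithfulness after localization, I describe maximal $\infty$-groupoids by Lemma~\ref{EE-equivs} as disjoint unions of products of configuration spaces in strata, and compute mapping spaces using the density result of~\cite{Dugger--Isaksen} together with Quillen's Theorem B, exactly as in the proof of Proposition~\ref{EEd-vs-EE}. The class $F_d^{-1}(\cI_X)$ is precisely the class of morphisms witnessing that points may be moved within a single $X$-stratum (even across $\w X$-strata that lie inside it); after inverting it, the resulting configuration space in $\bigsqcup_\alpha \w X_{[U_\alpha]}$ is identified with the configuration space in the corresponding $X$-stratum. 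The main obstacle is thus the bookkeeping of how configurations across multiple $\w X$-strata combine into a single $X$-stratum, and the verification that the relevant slice categories of lifts are contractible; this follows from the fact that the poset of basic neighborhoods of a fixed point is filtered in both $\w X$ and $X$, and these posets are intertwined compatibly by $r$.
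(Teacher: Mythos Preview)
Your setup and reduction strategy match the paper's: define the functor by taking images with the $X$-induced stratification, pass to the discrete categories $\diskd(\bsc)_{/-}$, and invoke Proposition~\ref{EEd-vs-EE} together with the configuration-space description of underlying $\infty$-groupoids from Lemma~\ref{EE-equivs}. The paper does all of this too.

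The gap is in your final step. You assert that after inverting $F_d^{-1}(\cI_X)$ the configuration spaces in the various $\w X$-strata lying over a single $X$-stratum assemble to the configuration space of that $X$-stratum, and you justify this with ``the poset of basic neighborhoods of a fixed point is filtered in both $\w X$ and $X$, and these posets are intertwined compatibly by $r$.'' That sentence does not actually establish the claim: filteredness at a single point gives contractibility of a single slice, but what you need is a statement about how the \emph{global} classifying space of $\cI^{[U]}_{\w X}$ maps to that of $\cI^{[r(U)]}_X$, i.e., how a hypercover-style argument over $\conf_J(X_{[V]})_{\Sigma_J}$ interacts with the refinement. You have not carried out that argument, and Quillen's Theorem~B does not apply without first verifying the required fiberwise equivalences across the $\w X$-strata refining a given $X$-stratum.

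The paper closes this gap differently, and more efficiently: after the same configuration-space reduction, it observes that for each smooth stratum $M$ of $X$ and each finite cardinality $i$, the induced map $\conf_i(\w M)\to\conf_i(M)$ is itself a refinement of stratified spaces, and then invokes a general structural result from~\cite{aft1} (Proposition~1.2.14 there) asserting that for any refinement $\w Z\to Z$ the induced functor $\bsc_{/\w Z}\to\bsc_{/Z}$ on enter-path $\infty$-categories is a localization. That external input encapsulates exactly the content you are trying to re-derive by hand. If you want to avoid citing that result, you would need to supply a genuine hypercover argument (in the spirit of~\cite{Dugger--Isaksen}) showing that basics of $\w M$ lying in a single $M$-stratum form a hypercover of that stratum and hence recover its homotopy type; your final paragraph gestures at this but does not do it.
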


\begin{proof}

There is an obvious functor $\snglrd(\bsc)_{/\w{X}}\to \snglrd(\bsc)_{/X}$ given by assigning to $(\w{i}\colon \w{Z}\hookrightarrow\w{X})$ the object $(i\colon Z\hookrightarrow X)$ where $Z=\w{i}(\w{Z})\subset X$ is the image with the inherited stratification. 
Observe that this functor sends finite disjoint unions of basics conically smoothly openly embedding into $\w{X}$, to finite disjoint unions conically smoothly openly embedding into $X$.  
Also observe that this functor sends stratified isotopy equivalences to stratified isotopy equivalences.  
After these observations, the desired functor happens through Proposition~\ref{EEd-vs-EE}.

Notice that the map of underlying $\infty$-groupoids $\bigl(\disk(\bsc)_{/\w{X}}\bigr)^\sim \to \bigl(\disk(\bsc)_{/X}\bigr)^\sim$ respects the presentation of these spaces in Lemma~\ref{EE-equivs} as a coproduct of a product.  
Through the same logic as in the proof of Proposition~\ref{EEd-vs-EE}, it is enough to show that the functor
\[
\bsc_{/\conf_i(\w{M})} \longrightarrow \bsc_{/\conf_i(M)}
\]
is a localization for each finite cardinality $i$ and each refinement $\w{M}\xra{r} M$ of a smooth manifold.  
Because $\w{M}\to M$ is a refinement, then so is the map of stratified spaces $\conf_i(\w{M})\to \conf_i(M)$, by inspection.  
The result is an instance of Proposition~1.2.14 of~\cite{aft1}.  

\end{proof}

\subsection{Pushforward}\label{sec.pushforward}
For this subsection, we fix a symmetric monoidal $\infty$-category $\cV$ that is $\ot$-sifted cocomplete.  

\begin{lemma}\label{f-inv-factr}
Let $X$ be a $\cB$-manifold and $Y$ a $\cB'$-manifold, and let $f\colon X\to Y$ be a constructible bundle of the underlying stratified spaces. Taking inverse images defines a functor of $\oo$-operads
\[f^{-1}: \disk(\cB')_{/Y}\longrightarrow \mfld(\cB)_{/X}\]
which on objects sends $U\hookrightarrow Y$ to $f^{-1}U\hookrightarrow X$. As a consequence, there is a natural pushforward functor \[f_\ast: \Alg_{X}(\cV)\longrightarrow \Alg_{Y}(\cV)\] that sends an algebra $A$ on $X$ to the algebra on $Y$ taking values 
\[
f_\ast A(U\hookrightarrow Y) ~{}~\simeq~{}~ \int_{f^{-1}U}A ~.
\] 

\end{lemma}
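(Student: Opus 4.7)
The proof splits into two parts: first constructing the functor $f^{-1}$ of $\infty$-operads, then deducing the pushforward from the universal property of factorization homology.

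For the first part, the plan is to construct $f^{-1}$ by taking literal preimages of open stratified subspaces. Given $(U\hookrightarrow Y)\in \disk(\cB')_{/Y}$ with $U$ a disjoint union of basics, the underlying set-theoretic preimage $f^{-1}U\subset X$ is an open substratified space of $X$, hence inherits a $\cB$-structure by restriction of the tangent classifier $\tau_X\colon {\sf Entr}(X)\to \cB$ (this uses Observation \ref{B-independent} to see that the $\cB$-structure on $X$ restricts to any open subspace). Note that although $f^{-1}U$ need no longer be a disjoint union of basics, it is a legitimate object of $\mfld(\cB)_{/X}$. Functoriality on morphisms is straightforward: an open embedding $U\hookrightarrow V$ over $Y$ pulls back to $f^{-1}U\hookrightarrow f^{-1}V$ over $X$, and an isotopy in the target pulls back to one in the source. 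For the operadic structure, a multi-morphism $(U_i\hookrightarrow Y)_{i\in I}\to (V\hookrightarrow Y)$ consists of a disjoint-union embedding $\bigsqcup_i U_i\hookrightarrow V$ together with isotopies over $Y$; pulling back gives $\bigsqcup_i f^{-1}U_i\hookrightarrow f^{-1}V$ (the inverse image commutes with disjoint unions of open subsets) together with the pulled-back isotopies over $X$. One checks that pre-coCartesian morphisms of $\disk(\cB')_{/Y}$, which correspond to inert decompositions of disjoint-union embeddings, are carried to pre-coCartesian morphisms of $\mfld(\cB)_{/X}$, since the inverse image of a disjoint union of subsets is the corresponding disjoint union of inverse images.

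For the second part, I appeal to Theorem \ref{fact-explicit}: the relative factorization homology functor provides a fully faithful left adjoint
\[
\int_- \colon \Alg_{X}(\cV) \longrightarrow \Alg_{\mfld(\cB)_{/X}}(\cV)
\]
so any $A\in \Alg_X(\cV)$ extends canonically to an operad map $\int A\colon \mfld(\cB)_{/X}\to \cV$. The pushforward is then the composition
\[
f_\ast A ~:=~ \Bigl(\disk(\cB')_{/Y} \xra{~f^{-1}~} \mfld(\cB)_{/X} \xra{~\int A~} \cV\Bigr),
\]
which is an operad map since both factors are. Evaluating on $(U\hookrightarrow Y)$ yields $\int_{f^{-1}U}A$ by the explicit formula (\ref{eq.fact-explicit}), giving the claimed formula on the nose.

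The only step with real content is the first, verifying that $f^{-1}$ is a well-defined functor of $\infty$-operads. The main subtlety here is ensuring that the multi-morphism spaces and their coherences pull back correctly; this comes down to the elementary fact that set-theoretic preimage commutes with disjoint unions and preserves open embeddings and isotopies. The constructibility of $f$ is used precisely to guarantee that $f^{-1}U$ is a conically smooth stratified space (rather than merely a topological subspace), so that the target $\mfld(\cB)_{/X}$ genuinely receives it. Everything else is formal once the functor is in hand.
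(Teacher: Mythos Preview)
Your second part—obtaining $f_\ast$ by composing $\int A$ with $f^{-1}$—is correct and matches the paper exactly.

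The gap is in your construction of $f^{-1}$ at the $\infty$-categorical level. You write that ``an open embedding $U\hookrightarrow V$ over $Y$ pulls back to $f^{-1}U\hookrightarrow f^{-1}V$ over $X$, and an isotopy in the target pulls back to one in the source.'' But a morphism in $\disk(\cB')_{/Y}$ from $(U\hookrightarrow Y)$ to $(V\hookrightarrow Y)$ is an open embedding $U\hookrightarrow V$ together with an isotopy between the given $U\hookrightarrow Y$ and the composite $U\hookrightarrow V\hookrightarrow Y$; the embedding need not commute strictly with the maps to $Y$, so there is no set-theoretic preimage to take. Your claim that preimage ``preserves isotopies'' is exactly the nontrivial point: an isotopy $\phi_t\colon U\hookrightarrow Y$ does not literally pull back to an isotopy of $f^{-1}U$ inside $X$, and producing one coherently (for all higher simplices) would require an isotopy-lifting argument for constructible bundles that you have not supplied.

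The paper avoids this by first defining $f^{-1}$ as a functor of ordinary categories $\diskd(\bsc)_{/Y}\to \snglrd_{/X}$, where morphisms are strict triangles over $Y$ and hence correspond to honest inclusions of open subsets—so literal preimage works. The passage to the $\infty$-categorical level is then handled by Proposition~\ref{EEd-vs-EE}, which identifies $\disk(\bsc)_{/Y}$ as the localization of $\diskd(\bsc)_{/Y}$ at isotopy equivalences: one only needs to check that the discrete $f^{-1}$ sends isotopy equivalences to isotopy equivalences, and \emph{this} is where constructibility of $f$ enters. Your stated role for constructibility—ensuring $f^{-1}U$ is conically smooth—is not the issue, since any open subset of a stratified space is automatically a stratified subspace.
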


\begin{proof}

Note that taking point-wise inverse images defines a functor of discrete categories $\diskd(\bsc)_{/Y} \ra \snglrd_{/X}$ which, additionally, preserves the multi-category structure of Construction \ref{B-operadics}. We first show that this functor can be naturally extended to the topological case, i.e., that there is a preferred filler in the diagram of $\infty$-operads
\[
\xymatrix{
\diskd(\bsc)_{/Y}  \ar[rr]^-{f^{-1}}  \ar[d]
&&
\snglrd_{/X}  \ar[d]
\\
\disk(\bsc)_{/Y}  \ar@{-->}[rr]^-{f^{-1}}
&&
\snglr_{/X}~.
}
\]
Because $f$ is a constructible bundle, the collections $\cI$ and $\cJ$ of isotopy equivalences in $\diskd(\bsc)_{/Y}$ and $\snglrd_{/X}$, respectively, are mapped to one another by $f^{-1}$. Further, they map to equivalences in $\disk(\bsc)_{/Y}$ and $\snglr_{/X}$, respectively. By the universal property of localization, we can thus factor the previous diagram as
\[
\xymatrix{
\diskd(\bsc)_{/Y}  \ar[rr]^-{f^{-1}}  \ar[d]
&&
\snglrd_{/X}  \ar[d]
\\
\diskd(\bsc)_{/Y}[\cI^{-1}]  \ar[rr]^-{f^{-1}}  \ar[d]^{\sim}
&&
\snglrd_{/X}[\cJ^{-1}]  \ar[d]
\\
\disk(\bsc)_{/Y}  \ar@{-->}[rr]^-{f^{-1}}
&&
\snglr_{/X}~.
}
\]
Proposition~\ref{EEd-vs-EE} states that the bottom left downward arrow is an equivalence, as indicated, thereby determining the filler.  

Recall from Observation \ref{B-independent} that $\cB$-manifold structure on $X$ defines an equivalence $\snglr_{/X} \simeq \mfld(\cB)_{/X}$, directly from the definition of structures by right fibrations; the same is true for the $\cB'$-manifold structures on basics in $Y$. Under the hypotheses of the Lemma, we consequently have a map of $\oo$-operads $\disk(\cB')_{/Y}\ra \mfld(\cB)_{/X}$ defined by $f^{-1}$. The desired functor is the composition
\[\xymatrix{
f_\ast \colon \Alg_{X}(\cV) \ar[rr]^-{\int_-}&&\Alg_{\mfld(\cB)_{/X}}(\cV)\ar[rr]^-{(f^{-1})^\ast}&&\Alg_{Y}(\cV)~.\\
}\]

\end{proof}

We have the following result about this pushed-forward algebra $f_\ast A$.

\begin{theorem}[Pushforward]\label{:(}
Let $\cV$ be a symmetric monoidal $\oo$-category which is $\ot$-sifted cocomplete; let $X$ be a $\cB$-manifold with $f:X\ra Y$ a weakly constructible bundle over a stratified space $Y$. 
There is a commutative diagram:
\[\xymatrix{
\Alg_{X}(\cV)\ar[rr]^{\int_X}\ar[dr]_{f_\ast }&&\cV\\
&\Alg_{Y}\ar[ur]_{\int_Y}(\cV)\\}
\] In particular, for any $\disk(\cB)$-algebra $A$, there is a canonical equivalence in $\cV$:
\[ 
\int_Y f_\ast A~{}~ \simeq~{}~ \int_XA~.
\]
\end{theorem}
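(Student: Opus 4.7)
The plan is to compute both sides explicitly as colimits via Theorem \ref{fact-explicit} and reduce the desired equivalence to a cofinality statement. First, combining the defining formula $(f_\ast A)(U) \simeq \int_{f^{-1}U} A$ of Lemma \ref{f-inv-factr} with Theorem \ref{fact-explicit} yields
\[
\int_Y (f_\ast A) ~{}~\simeq~{}~ \colim_{U \in \disk(\cB')_{/Y}} \int_{f^{-1}U} A ~{}~\simeq~{}~ \colim_{U \in \disk(\cB')_{/Y}} \; \colim_{V \in \disk(\cB)_{/f^{-1}U}} A(V).
\]
Unstraightening the functor $U \mapsto \disk(\cB)_{/f^{-1}U}$ and commuting the iterated colimit identifies this with a single colimit $\colim_{(U,V) \in \cG} A(V)$, where $\cG$ is the total $\infty$-category of pairs $(U\hookrightarrow Y,\, V\hookrightarrow f^{-1}U)$ with $U$ a disjoint union of basics in $Y$ and $V$ a disjoint union of basics in $f^{-1}U$.

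Next I would consider the forgetful functor $\pi\colon \cG \to \disk(\cB)_{/X}$ sending $(U,V) \mapsto (V \hookrightarrow f^{-1}U \hookrightarrow X)$. Since $\int_X A \simeq \colim_{W \in \disk(\cB)_{/X}} A(W)$ by Theorem \ref{fact-explicit}, the desired equivalence reduces to showing $\pi$ is cofinal, so that precomposition with $\pi$ preserves the colimit of $A$. By the $\infty$-categorical Quillen's Theorem A (Theorem 4.1.3.1 of \cite{HTT}), cofinality amounts to the weak contractibility, for each $W\hookrightarrow X$ in $\disk(\cB)_{/X}$, of the slice
\[
\cG_{W/} ~:=~ \cG \underset{\disk(\cB)_{/X}}\times \bigl(\disk(\cB)_{/X}\bigr)_{W/}.
\]

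To establish this contractibility, I would argue that $\cG_{W/}$ is filtered, combining two ingredients. The first is the local triviality of weakly constructible bundles: after passing to a refinement $\widetilde X \to X$ the map $f$ becomes constructible, and basic product charts realize sufficiently small basic neighborhoods in $X$ as opens inside $f^{-1}U$ for small basics $U\hookrightarrow Y$. The second is the localization Proposition \ref{EEd-vs-EE}, identifying $\disk(\cB)_{/X}$ with the localization of $\diskd(\cB)_{/X}$ at isotopy equivalences, which allows $W\hookrightarrow X$ to be replaced by any isotopic embedding. Using the $\RR_{\geq 0}$-scaling on each basic component of $W$, one produces a shrinking $W'\hookrightarrow X$ equivalent to $W$ in $\disk(\cB)_{/X}$ so small that each $W'_i$ lies in a basic $V_i\hookrightarrow f^{-1}U_i$ for some basic $U_i\hookrightarrow Y$; by further shrinking in $Y$ the $U_i$ can be chosen pairwise disjoint, and $(U=\bigsqcup U_i,\; V=\bigsqcup V_i)$ defines an object of $\cG_{W/}$. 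Filteredness then follows since any two such choices can be simultaneously refined within a common system of disjoint product charts.

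The main obstacle will be the cofinality step. The subtlety is that for a general $W\hookrightarrow X$ the image $f(W)\subset Y$ need not lie in any disjoint union of basics in $Y$, so the argument essentially depends on the freedom to isotope $W$ supplied by Proposition \ref{EEd-vs-EE} together with the basic-chart structure for $f$ coming from constructibility (after refinement). Ensuring that all relevant choices---shrinking components, selecting disjoint basic neighborhoods in $Y$, and checking filteredness of the resulting slice---can be made uniformly over the components of $W$ requires a careful use of the local product structure of constructible bundles in the conically smooth setting of \cite{aft1}.
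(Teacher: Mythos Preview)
Your overall architecture is exactly that of the paper: introduce the auxiliary $\infty$-category of pairs (the paper calls it $\cX_f$, your $\cG$), identify $\int_Y f_\ast A$ as a colimit over it via the iterated-colimit/unstraightening maneuver, and reduce the comparison with $\int_X A$ to showing that the projection $\pi \simeq {\sf ev}_0 \colon \cX_f \to \disk(\cB)_{/X}$ is final. The paper also begins by using Lemma~\ref{refinement-localize} to reduce from weakly constructible to constructible $f$, which you anticipate.

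The gap is in your cofinality step. The slice $\cG_{W/}$ is \emph{not} filtered in general. A cocone on two objects $(U_1,V_1,\phi_1)$ and $(U_2,V_2,\phi_2)$ would require a disjoint union of basics $U_3 \hookrightarrow Y$ receiving embeddings from both $U_1$ and $U_2$; but if $U_1$ and $U_2$ are two large overlapping basic charts in a compact $Y$ there is no such $U_3$. Your ``simultaneous refinement'' heuristic points in the wrong direction: refining means passing to \emph{smaller} charts, whereas morphisms in $\cG_{W/}$ go to larger ones, so a common refinement does not produce a cocone. The freedom to isotope $W$ does not help here either---it only affects which object of $\disk(\cB)_{/X}$ you are slicing under, not the internal structure of the slice.

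The paper's Lemma~\ref{final} proceeds differently. It observes that ${\sf ev}_0$ is a Cartesian fibration (inherited from ${\sf ev}_0$ on arrow categories), so by Lemma~4.1.3.2 of~\cite{HTT} finality reduces to weak contractibility of each \emph{fiber} ${\sf ev}_0^{-1}(W) \simeq \bigl(\disk(\cB')_{/Y}\bigr)^{W/}$. The classifying space of this fiber is identified with the colimit $\colim_{V \in \disk(\cB')_{/Y}} \Map_{\mfld(\cB)_{/X}}(W, f^{-1}V)$, and the paper shows the canonical map from this colimit to $\Map_{\mfld(\cB)}(W,X)$ is an equivalence. Via Lemma~\ref{EE-equivs} this becomes a statement about configuration spaces of strata, and the key input is that the images $\{\conf_J(f^{-1}V)_{\Sigma_J}\}_V$ form a hypercover of $\conf_J(M)_{\Sigma_J}$, so that Corollary~1.6 of~\cite{Dugger--Isaksen} applies. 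This hypercover/configuration-space argument is the substantive content you are missing; Proposition~\ref{EEd-vs-EE} enters only to pass between the discrete and topological indexing categories in that step, not to move $W$ around.
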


The proof of this theorem will involve an auxiliary $\oo$-category $\cX_f$ built from the constructible bundle $f$.

\begin{definition}\label{def.disk-f}
Let $X$ be a $\cB$-manifold, and let $Y$ be a $\cB'$-manifold. For $f:X\ra Y$ a constructible bundle, the $\oo$-category $\cX_f$ is the limit of the following diagram
\[
\xymatrix{
\disk(\cB)_{/X}\ar@{_{(}->}[dr]
&
&
\Fun\bigl([1],\mfld(\cB)_{/X}\bigr)\ar[rd]_{{\sf ev}_1}   \ar[ld]^{{\sf ev}_0}    
&
&
\disk(\cB')_{/Y}   \ar[dl]^{f^{-1}}
\\
&
\mfld(\cB)_{/X}
&
&
\mfld(\cB)_{/X}
}
\] 
where $\Fun\bigl([1],\mfld(\cB)_{/X}\bigr)$ is the $\oo$-category of 1-morphisms in $\mfld(\cB)_{/X}$. 
\end{definition}

Informally, $\cX_f$ consists of compatible triples $(U, V, U\hookrightarrow f^{-1}V)$, where $V$ is a finite disjoint union of basics in $Y$; $U$ is a finite disjoint union of basics in $X$; the embedding $U\hookrightarrow f^{-1}V$ is compatible with the embeddings $f^{-1}V\hookrightarrow X$ and $U\hookrightarrow X$. 
We are able to utilize this $\oo$-category $\cX_f$ because of the following key finality property.

\begin{lemma}\label{final} For any constructible bundle $X\xra{f} Y$ between stratified spaces, the functor ${\sf ev}_0:\cX_f \ra \disk(\cB)_{/X}$ is final.
\end{lemma}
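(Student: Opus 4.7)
The plan is to apply Joyal's $\infty$-categorical version of Quillen's Theorem~A: the functor ${\sf ev}_0$ is final if and only if, for every $U\in \disk(\cB)_{/X}$, the comma $\infty$-category
\[
\cS_U~:=~\cX_f \underset{\disk(\cB)_{/X}}\times \bigl(\disk(\cB)_{/X}\bigr)_{U/}
\]
is weakly contractible. Unwinding Definition~\ref{def.disk-f}, an object of $\cS_U$ is a quadruple $(U',V,\phi,\alpha)$ in which $U'\in\disk(\cB)_{/X}$, $V\in\disk(\cB')_{/Y}$, $\phi\colon U'\to f^{-1}V$ is a morphism in $\mfld(\cB)_{/X}$, and $\alpha\colon U\to U'$ is a morphism in $\disk(\cB)_{/X}$.

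The first step will be to exhibit a deformation retraction of $\cS_U$ onto the full sub-$\infty$-category $\cT_U\subset \cS_U$ spanned by those objects for which $U'=U$ and $\alpha=\id_U$; equivalently, $\cT_U$ parametrizes pairs $(V,\psi)$ with $V\in\disk(\cB')_{/Y}$ and $\psi\colon U\to f^{-1}V$ a morphism in $\mfld(\cB)_{/X}$. I will define a retraction $r\colon \cS_U\to \cT_U$ by the rule $(U',V,\phi,\alpha)\mapsto (V,\phi\circ\alpha)$; the inclusion $\iota\colon \cT_U\hookrightarrow \cS_U$ is a strict section of $r$, and the morphisms $(U,V,\phi\circ\alpha,\id_U)\to (U',V,\phi,\alpha)$ specified by the pair $(\alpha,\id_V)$ assemble into a natural transformation $\iota\circ r\to \id_{\cS_U}$ (naturality follows from the compatibilities of $\alpha$ built into morphisms of $\cS_U$). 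This natural transformation witnesses the homotopy $|\cS_U|\simeq |\cT_U|$ of classifying spaces.

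The second step, which I anticipate is the heart of the proof, is to show that $\cT_U$ is weakly contractible. The strategy is to verify that $\cT_U$ is weakly cofiltered by constructing common sources: given any two objects $(V_1,\psi_1),(V_2,\psi_2)\in \cT_U$, one produces a third $(V_3,\psi_3)$ together with morphisms to each $(V_i,\psi_i)$, with $V_3\subset V_1\cap V_2$ a sufficiently fine disjoint union of basics in $Y$ and $\psi_3\colon U\to f^{-1}V_3$ a lift of $\psi_i$. The geometric input that makes this construction possible is the local triviality guaranteed by the constructible bundle hypothesis: over each basic $W\subset Y$ the restriction $f^{-1}W\to W$ is a stratified fiber bundle, so preimages of basics of $Y$ form a cofinal system in the directed system of open neighborhoods of $U$ in $X$ up to ambient isotopy, and one can coherently track the chosen embedding $\psi$ when refining $V$. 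To organize the necessary homotopy coherences, I would pass to the discrete model supplied by Proposition~\ref{EEd-vs-EE}, reducing the cofiltration property to a concrete statement about an ordinary category of triples localized at isotopy equivalences --- the main obstacle lies precisely in matching this combinatorial cofiltered structure with the homotopy-coherent data present in the topologized $\infty$-categories $\disk(\cB)_{/X}$ and $\disk(\cB')_{/Y}$.
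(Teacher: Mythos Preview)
Your first step is correct and matches what the paper does, though the paper packages it more efficiently: it observes that ${\sf ev}_0$ is a \emph{Cartesian} fibration and then invokes Lemma~4.1.3.2 of~\cite{HTT}, which for Cartesian fibrations reduces finality to weak contractibility of the fibers. Your deformation retraction onto $\cT_U$ is precisely the content of the Cartesian lifts, spelled out by hand; the paper identifies your $\cT_U$ as the coslice $\bigl(\disk(\cB')_{/Y}\bigr)^{U/}$.

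The genuine gap is in your second step. The $\infty$-category $\cT_U$ is not cofiltered, and the construction you sketch does not establish it. The obstacle you name at the end---``matching this combinatorial cofiltered structure with the homotopy-coherent data''---is exactly where the argument fails, not merely a detail to be filled in. In the discrete model the intersection $V_1\cap V_2$ may be empty, so the refinement you want need not exist; in the topologized model the spaces $\Map_{\mfld(\cB)_{/X}}(U,f^{-1}V)$ carry nontrivial higher homotopy, and coning off arbitrary finite diagrams in $\cT_U$ would require controlling that homotopy, which you do not do. Note also that the base $\disk(\cB')_{/Y}$ is only \emph{sifted}, not cofiltered---and its siftedness (Corollary~\ref{quittersifted}) is in fact \emph{deduced from} the present lemma, so cannot be used as input.

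The paper's route is homotopy-theoretic rather than categorical. It identifies the classifying space $\sB\cT_U$ with the colimit $\colim_{V\in\disk(\cB')_{/Y}}\Map_{\mfld(\cB)_{/X}}(U,f^{-1}V)$, then via a fiber-sequence argument reduces contractibility to showing that $\colim_V \disk(\cB)_{/f^{-1}V}\to\disk(\cB)_{/X}$ is an equivalence of right fibrations over $\disk(\cB)$. By Lemma~\ref{EE-equivs} this is a question about configuration spaces: one must show $\colim_V \conf_J(f^{-1}V)_{\Sigma_J}\xrightarrow{\simeq}\conf_J(M)_{\Sigma_J}$ for each stratum $M$ of $X$ and each finite $J$. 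After passing to the discrete indexing category via Proposition~\ref{EEd-vs-EE}, one verifies that the opens $\{\conf_J(f^{-1}V)_{\Sigma_J}\}$ form a hypercover of $\conf_J(M)_{\Sigma_J}$, and Corollary~1.6 of~\cite{Dugger--Isaksen} finishes the job. This hypercover/Dugger--Isaksen input is the idea missing from your outline.
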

\begin{proof} 
The functor ${\sf ev}_0$ is a Cartesian fibration of $\oo$-categories. Thus, to check finality, by Lemma 4.1.3.2 of \cite{HTT}, it suffices to show that, for each $U\in \disk(\cB)_{/X}$, the fiber ${\sf ev}_0^{-1}V$ has contractible classifying space.
So let $V\in \disk(\cB)_{X}$.
Unwinding the definition of $\cX_f$, we must show contractibility of the classifying space of the iterated slice $\infty$-category $\bigl(\disk(\cB)_{/Y}\bigr)^{U/}$. 

The projection from this slice $\bigl(\disk(\cB)_{/Y}\bigr)^{U/} \to \disk(\cB)_{/Y}$ is a left fibration, and so it is classified by a functor, justifiably written as 
\[
\Map_{\mfld(\cB)_{/X}}(U,f^{-1}-)\colon \disk(\cB)_{/Y} \to \spaces~,
\]
whose colimit is identified as the relevant classifying space 
\[
\sB\bigl(\disk(\cB)_{/Y}\bigr)^{U/}~{}~\simeq~{}~\underset{(V\hookrightarrow Y)\in \disk(\cB)_{/Y}}\colim ~\Map_{\mfld(\cB)_{/X}}(U,f^{-1}V)~.  
\]
So we seek to show the righthand colimit is contractible.

Formal is that the sequence of maps
\[
\Map_{\mfld(\cB)_{/X}}(U,f^{-1}V)~\longrightarrow~\Map_{\mfld(\cB)}(U,f^{-1}V)~\longrightarrow~\Map_{\mfld(\cB)}(U,X)
\]
is a fiber sequence (here the fiber is taken over any implicit morphism $U\hookrightarrow X$, thereby giving meaning to the lefthand space).  
So we seek to show the map from the colimit
\[
\underset{(V\hookrightarrow Y)\in \disk(\cB)_{/Y}}\colim \Map_{\mfld(\cB)}(U,f^{-1}V)~{}~\longrightarrow~{}~ \Map_{\mfld(\cB)}(U,X)
\]
is an equivalence of spaces. 
We recognize this map of spaces as the map of fibers over $U\in \disk(\cB)$ of the map of right fibrations over $\disk(\cB)$:
\[
\underset{(V\hookrightarrow Y)\in \disk(\cB)_{/Y}}\colim \disk(\cB)_{/f^{-1}V} \longrightarrow \disk(\cB)_{/X}~.
\]
Being right fibrations, it is enough to show that this functor is an equivalence on underlying $\infty$-groupoids.

Through Lemma~\ref{EE-equivs} this is the problem of showing, for each finite set $J$ and each basic $U$, that the map of spaces
\[
\underset{(V\hookrightarrow Y)\in \disk(\cB)_{/Y}}\colim \conf_J\bigl((f^{-1}V)_{[U]}\bigr)_{\Sigma_J} ~\longrightarrow~ \conf_J(X_{[U]})_{\Sigma_J}
\]
is an equivalence. 
So we can assume that the underlying stratified space of $X=M$ is an ordinary smooth manifold.  

Lemma~\ref{EEd-vs-EE} grants that the forgetful map
\[
\underset{(V\hookrightarrow Y)\in \diskd(\bsc)_{/Y}}\colim \conf_J\bigl(f^{-1}V\bigr)_{\Sigma_J} 
~\xra{~\simeq~}~
\underset{(V\hookrightarrow Y)\in \disk(\cB)_{/Y}}\colim \conf_J\bigl(f^{-1}V\bigr)_{\Sigma_J} 
\]
is an equivalence of spaces.  
Now notice that, for each $(V\hookrightarrow Y)\in \diskd(\bsc)_{/Y}$, the map $\conf_J(f^{-1}V) \to \conf_J(M)$ an open embedding.  Also, for each point $c\colon J\hookrightarrow M$ the image $f\bigl(c(J)\bigr)\subset Y$ has cardinality at most $J$.  So there is an object $(V\hookrightarrow Y)$ of $\diskd(\bsc)_{/Y}$ whose image contains the subset $f\bigl(c(J)\bigr)$.  We see then that the collection of open embeddings
\[
\Bigl\{ \conf_J(f^{-1}V)_{\Sigma_J} \hookrightarrow \conf_J(M)_{\Sigma_I}\mid (V\hookrightarrow Y)\in \diskd(\bsc)_{/Y} \Bigr\}
\]
form an open cover.

Proved in~\cite{aft1} is that open embeddings of basics into $Y$ form a basis for the topology of $Y$.
It follows that the collection of (at most) $|J|$-tuples of disjoint basics in $Y$ form an open cover of $Y$ in such a way that any finite intersection of such is again covered by such.  
It follows that the collection of open embeddings above is an open cover for which any finite intersection of its terms is again covered by terms of the collection.  
In particular, this collection of open embeddings forms a hypercover of $\conf_J(M)_{\Sigma_J}$.
Corollary 1.6 of \cite{Dugger--Isaksen} gives that the map
\[
\underset{(V\hookrightarrow Y)\in \diskd(\bsc)_{/Y}}\colim \conf_J\bigl(f^{-1}V\bigr)_{\Sigma_J} 
~\xra{~\simeq~}~
\conf_J(M)_{\Sigma_J}
\]
is an equivalence of spaces.
This completes the proof. 

\end{proof}

This has the following important corollary, which completes the proof of Theorem \ref{fact-explicit}.

\begin{cor}\label{quittersifted}
For $X$ a $\cB$-manifold, the $\oo$-category $\disk(\cB)_{/X}$ is sifted.
\end{cor}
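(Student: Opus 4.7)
The $\infty$-category $\disk(\cB)_{/X}$ is non-empty since it contains the object $(\emptyset \hookrightarrow X)$, so by Definition~\ref{def-siftedcocplt} the remaining task is to verify that the diagonal functor $\Delta \colon \disk(\cB)_{/X} \to \disk(\cB)_{/X} \times \disk(\cB)_{/X}$ is final. My plan is to recognize $\Delta$ as the inverse-image functor of a specific constructible bundle, so that Lemma~\ref{final} becomes applicable.

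Specifically, I would consider the fold map $f \colon X \sqcup X \to X$. This is a constructible bundle, since on each stratum $X_p$ of the target it restricts to the trivial two-fold cover $X_p \sqcup X_p \to X_p$, which is a fiber bundle of stratified spaces. Because each basic is connected, the natural functor
\[
\disk(\cB)_{/X} \times \disk(\cB)_{/X} ~\xra{~\simeq~}~\disk(\cB)_{/X \sqcup X}~, \qquad (U_1, U_2) \mapsto U_1 \sqcup U_2
\]
is an equivalence, and under this identification the inverse-image functor $f^{-1}\colon V \mapsto V \sqcup V$ coincides with $\Delta$. Using the auxiliary $\infty$-category $\cX_f$ from Definition~\ref{def.disk-f}, one has a natural section $\sigma \colon \disk(\cB)_{/X} \to \cX_f$ sending $V$ to the triple $(f^{-1}V, V, \id_{f^{-1}V})$, thereby producing a factorization $\Delta \simeq {\sf ev}_0 \circ \sigma$. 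Since Lemma~\ref{final} gives that ${\sf ev}_0$ is final and finality is preserved under composition, it suffices to show that $\sigma$ is final.

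To establish the finality of $\sigma$, I would check directly that for each object $y = (A, B, \phi \colon A \to f^{-1}B) \in \cX_f$, the comma category $y/\sigma$ has contractible classifying space. Unwinding the definition of $\cX_f$, a morphism $y \to \sigma(V)$ consists of a morphism $A \to f^{-1}V$ in $\disk(\cB)_{/X \sqcup X}$ together with a morphism $\beta \colon B \to V$ in $\disk(\cB)_{/X}$, subject to a commuting square; the commutativity forces the first morphism to equal $f^{-1}(\beta) \circ \phi$, so the data reduces to just the choice of $\beta$. Consequently $y/\sigma$ is equivalent to the undercategory $(\disk(\cB)_{/X})^{B/}$, which has initial object $\id_B$ and is therefore contractible. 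The only nontrivial step of the argument is this unwinding of $y/\sigma$; all the substantive geometric content is packaged in Lemma~\ref{final}, so I do not expect a serious obstacle.
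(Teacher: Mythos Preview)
Your proposal is correct and follows essentially the same route as the paper: both use the fold map $\nabla\colon X\sqcup X\to X$, its associated $\infty$-category $\cX_\nabla$, Lemma~\ref{final} for the finality of ${\sf ev}_0$, and the section $\sigma\colon V\mapsto(\nabla^{-1}V,V,\id)$. The paper packages the role of $\sigma$ as a right adjoint to ${\sf ev}_1$, whereas you verify directly that the undercategories $y/\sigma$ are coslices with initial object---but this is the same computation, and your factorization $\Delta\simeq{\sf ev}_0\circ\sigma$ is a clean way to conclude.
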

\begin{proof}

The $\oo$-category $\disk(\cB)_{/X}$ is evidently nonempty, as it contains the object $(\emptyset \hookrightarrow X)$. 
We must then prove that the diagonal functor $\disk(\cB)_{/X} \to \disk(\cB)_{/X}\times \disk(\cB)_{/X}$ is final.
This diagonal functor fits into a diagram among $\infty$-categories
\[
\xymatrix{
\disk_{\nabla} \ar[rr]^-{{\sf ev}_0}  \ar[d]_-{{\sf ev}_1}
&&
\disk(\cB)_{/X\sqcup X}  
\\
\disk(\cB)_{/X}  \ar[rr]^-{\sf diag}
&&
\disk(\cB)_{/X}\times \disk(\cB)_{/X}    \ar[u]^-{\simeq}_{\sqcup}
}
\]
that we now explain.
The upper left $\infty$-category is that of Definition~\ref{def.disk-f} applied to the fold map $\nabla \colon M \sqcup M \to M$; as so, it is equipped with the indicated projection functors.
The right vertical arrow is induced by the symmetric monoidal structure on $\mfld(\cB)$, which is disjoint union.  
This right vertical arrow is an equivalence; an inverse is given by declaring its projection to each factor to be given by intersecting with the corresponding cofactor of the disjoint union.
Therefore, to prove that the diagonal functor is final it is sufficient to prove that both of the projection functors ${\sf ev}_1$ and ${\sf ev}_0$ are final.  
The finality of ${\sf ev}_0$ is Lemma~\ref{final}.

We explain that ${\sf ev}_1$ is final.
Note that the functor $\nabla^{-1} \colon \disk(\cB)_{/X} \to \mfld(\cB)_{/X\sqcup X}$ factors through the full $\infty$-subcategory $\disk(\cB)_{/X\sqcup X}$.  
As so, there is a canonical identification between $\infty$-categories
\[
\disk_\nabla~\simeq~ \disk(\cB)_{/X}\underset{\disk(\cB)_{/X\sqcup X}}\times  {\sf Ar}(\disk(\cB)_{/X\sqcup X})
\]
over $\disk(\cB)_{/X}$.
Through this identification, the composite functor 
\[
\disk(\cB)_{/X} \xra{~\nabla~} \disk(\cB)_{/X\sqcup X} \xra{~\sf const~} {\sf Ar}(\disk(\cB)_{/X\sqcup X})
\]
determines a right adjoint to the functor ${\sf ev}_1$.  
Finality of ${\sf ev}_1$ follows.

\end{proof}

Using Lemma \ref{final}, we can now prove our push-forward formula for factorization homology.

\begin{proof}[Proof of Theorem \ref{:(}] 
After Lemma~\ref{refinement-localize}, we can assume that the \emph{weakly} constructible bundle $f$ is actually constructible.  

The functor \[f_\ast A:\xymatrix{\disk(\bsc)_{/Y}\ar[r]^{f^{-1}}& \disk(\cB)_{/X}\ar[r]^{\int A}& \cV}\] is the left Kan extension of $A:\cX_f\ra \cV$ along the functor ${\sf ev}_1:\cX_f\ra \disk(\bsc)_{/Y}$.
 Therefore there is a natural equivalence \[\colim_{\cX_f} A \simeq \colim_{U\in\disk(\bsc)_{/Y}}\int_{f^{-1}U}A \simeq \int_N f_\ast A~.\] By Lemma \ref{final}, the functor ${\sf ev}_0:\cX_f \ra \disk(\cB)_{/X}$ is final, which implies the equivalence $\colim_{\cX_f} A \simeq \int_MA$, and the result follows.
\end{proof}

In the case of the projection map $X\times Y\ra Y$ off of a product, this result has the following consequence.

\begin{cor}[Fubini]\label{fubini}
Let $\cB_-$ and $\cB_+$ and $\cB$ be $\oo$-categories of basics with a functor $\cB_-\times \cB_+ \to \cB$ over the product functor $\bsc\times\bsc\xra{\times} \bsc$.  
Let $A$ be a $\disk(\cB)$-algebra in $\cV$.  
Let $X$ be a $\cB_-$-manifold, and $Y$ be a $\cB_+$-manifold; so $X\times Y$ is canonically equipped as a $\cB$-manifold.
There is a canonical equivalence in $\cV$:
\[
\int_{X\times Y} A~{}~\simeq~{}~\int_Y \int_X A~.
\]
%where ${\sf pr}_{Y}:X\times Y\ra Y$ denotes the projection onto $Y$.  

\end{cor}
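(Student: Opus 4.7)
The plan is to deduce Fubini as a direct application of the Pushforward theorem (Theorem~\ref{:(}) to the projection $p\colon X\times Y \to Y$. First I would observe that $p$ is a fiber bundle with fiber the $\cB_-$-manifold $X$, and hence in particular a constructible (and so weakly constructible) bundle of stratified spaces; the hypotheses of Theorem~\ref{:(} are therefore satisfied, and it immediately yields an equivalence $\int_Y p_\ast A \simeq \int_{X\times Y}A$ in $\cV$.

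Next I would interpret the iterated integral $\int_Y \int_X A$ on the right-hand side as the factorization homology over $Y$ of the $\disk(\cB_+)$-algebra $p_\ast A$ supplied by Lemma~\ref{f-inv-factr}. Concretely, for a basic $U \hookrightarrow Y$ one has
\[
p_\ast A(U) ~\simeq~ \int_{p^{-1}U} A ~=~ \int_{X\times U} A,
\]
where the $\cB$-manifold structure on $X\times U$ is the one inherited from the given functor $\cB_-\times\cB_+ \to \cB$ covering the product on $\bsc$. Denoting this $\disk(\cB_+)$-algebra $p_\ast A$ by the symbol $\int_X A$ (a convention consistent with the display above), the asserted Fubini equivalence reads as $\int_Y p_\ast A \simeq \int_{X\times Y}A$, which is precisely what the pushforward theorem delivers.

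The main subtlety, and really the only non-formal point, is to ensure that the symbol $\int_X A$ appearing on the right-hand side is well-defined as an object of $\Alg_{\disk(\cB_+)}(\cV)$ and that the tautological identification $\int_Y\int_X A = \int_Y p_\ast A$ is legitimate. This requires verifying two compatibilities: (i) the preimage construction $p^{-1}$ produces a map of $\oo$-operads $\disk(\cB_+)_{/Y} \to \mfld(\cB)_{/X\times Y}$, which is exactly the content of Lemma~\ref{f-inv-factr}; and (ii) that the induced $\cB$-manifold structure on the open subset $p^{-1}U \subset X\times Y$ coincides, via the functor $\cB_-\times\cB_+\to\cB$, with the canonical $\cB$-structure on the product $X\times U$ of the $\cB_-$-manifold $X$ with the $\cB_+$-basic $U$. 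Both checks follow from the fact that the chosen functor of basics lies over the Cartesian product $\bsc\times\bsc\to\bsc$, so that tangent classifiers of product $\cB$-manifolds are the products of tangent classifiers; granted these identifications, the claim is a one-line application of Theorem~\ref{:(}.
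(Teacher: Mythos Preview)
Your proposal is correct and follows exactly the approach indicated in the paper: the corollary is stated immediately after Theorem~\ref{:(} as the specialization to the projection $X\times Y\to Y$, with no further argument given. Your write-up is in fact more careful than the paper's, spelling out why the projection is constructible and why the pushed-forward algebra $p_\ast A$ deserves the notation $\int_X A$.
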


\subsection{Algebras over a closed interval}

\begin{definition}[$\Ass^{\sf RL}$]\label{Assoc}
Let $\Ass^{\sf RL}$ denote an $\infty$-operad corepresenting triples $(A;Q,P)$ consisting of an associative algebra together with a unital left and a unital right module.  
Specifically, it is a unital multi-category whose space of colors is the three-element set $\{M, R, L\}$, and with spaces of multi-morphisms given as follows.  Let $I\xra{\sigma}\{M,R,L\}$ be a map from a finite set.
\begin{itemize}
\item $\Ass^{\sf RL}(\sigma, M)$ is the set of linear orders on $I$ for which no element is related to an element of $\sigma^{-1}(\{R,L\})$.
In other words, should $\sigma^{-1}(\{R,L\})$ be empty, then there is one multi-morphism from $\sigma$ to $M$ for each linear order on $\sigma^{-1}(M)$; should $\sigma^{-1}(\{R,L\})$ not be empty, then there are no multi-morphisms from $\sigma$ to $M$.

\item $\Ass^{\sf RL}(\sigma, L)$ is the set of linear orders on $I$ for which each element of $\sigma^{-1}(L)$ is a minimum, and no element is related to an element of $\sigma^{-1}(R)$.  
In other words, should $\sigma^{-1}(\{R\})$ be empty and $\sigma^{-1}(\{L\})$ have cardinality at most $1$, then there is one multi-morphism from $\sigma$ to $M$ for each linear order on $\sigma^{-1}(M)$;  should $\sigma^{-1}(\{R\})$ not be empty or $\sigma^{-1}(\{L\})$ have cardinality greater than $1$, then there are no multi-morphisms from $\sigma$ to $M$.

\item $\Ass^{\sf RL}(\sigma, R)$ is the set of linear orders on $I$ for which each element of $\sigma^{-1}(R)$ is a maximum, and no element is related to an element of $\sigma^{-1}(L)$.
In other words, should $\sigma^{-1}(\{L\})$ be empty and $\sigma^{-1}(\{R\})$ have cardinality at most $1$, then there is one multi-morphism from $\sigma$ to $M$ for each linear order on $\sigma^{-1}(M)$;  should $\sigma^{-1}(\{L\})$ not be empty or $\sigma^{-1}(\{R\})$ have cardinality greater than $1$, then there are no multi-morphisms from $\sigma$ to $M$.

\end{itemize}
Composition of multi-morphisms is given by concatenating linear orders.  

\end{definition}

Consider the oriented $1$-manifold with boundary $[-1,1]$, which is the closed interval, that we regard as a structured stratified space in the sense of~\cite{aft1}.  
Taking connected components depicts a map of symmetric monoidal $\infty$-categories
\[
[-]\colon \diskd^{\sf or}_1 \longrightarrow {\sf Env}\bigl(\Ass^{\sf RL}\bigr)
\]
to the symmetric monoidal envelope,
where $[\RR] = M$, $[\RR_{\geq 0}] = L$, and $[\RR_{\leq 0}] = R$.  

\begin{observation}\label{disk-assoc}
The symmetric monoidal functor $\diskd^{ \partial, \sf or}_1 \xra{[-]}{\sf Env}\bigl(\Ass^{\sf RL}\bigr)$ factors as an equivalence of symmetric monoidal $\infty$-categories:
\begin{equation}\label{disk-comb}
\disk^{\partial, \sf or}_1  \xra{~\simeq~}  {\sf Env}\bigl(\Ass^{\sf RL}\bigr)
\end{equation}
-- this follows by inspecting the morphism spaces of the $\infty$-category $\disk^{\partial, \sf or}_1$, which are discrete.
\end{observation}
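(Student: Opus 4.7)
The plan is first to check that the functor $[-]$ factors through $\disk^{\partial,\sf or}_1$ using Proposition~\ref{EEd-vs-EE}, then to verify the resulting symmetric monoidal functor is an equivalence by a direct comparison of objects and morphism spaces on the two sides.

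\textbf{Factorization.} Proposition~\ref{EEd-vs-EE} identifies $\disk^{\partial,\sf or}_1$ with the localization of $\diskd^{\partial,\sf or}_1$ at the collection of morphisms whose underlying embedding is isotopic to an isomorphism. Because ${\sf Env}(\Ass^{\sf RL})$ is an ordinary $1$-category, its equivalences are isomorphisms; and under $[-]$ any isotopy equivalence is sent to an identity-on-components morphism equipped with the trivial linear orderings on all fibers, which is an isomorphism. Consequently $[-]$ descends to a symmetric monoidal functor $\disk^{\partial,\sf or}_1 \to {\sf Env}(\Ass^{\sf RL})$.

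\textbf{Essential surjectivity and morphism spaces.} Every object of ${\sf Env}(\Ass^{\sf RL})$ is a finite tuple of colors in $\{M,L,R\}$, realized as $[U]$ for the corresponding finite disjoint union $U$ of copies of $\RR$, $\RR_{\geq 0}$, and $\RR_{\leq 0}$. On the algebraic side, by the universal property of the symmetric monoidal envelope together with Definition~\ref{Assoc}, a morphism $\sigma \to \tau$ is the datum of a function $f\colon I\to J$ between indexing sets and, for each $j\in J$, a linear order on $f^{-1}(j)$ subject to the appropriate constraints from $\sigma$ and $\tau(j)$. On the stratified side, the space of conically smooth oriented embeddings $U=\bigsqcup_{i\in I}U_i \hookrightarrow \bigsqcup_{j\in J}V_j = V$ decomposes as
\[
\coprod_{f\colon I \to J}\ \prod_{j\in J}\ \Emb^{\sf or}\Bigl(\bigsqcup_{i\in f^{-1}(j)} U_i,\ V_j\Bigr).
\]

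\textbf{Discreteness and contractibility.} The essential step is to show that each factor in the display above is discrete with $\pi_0$ in canonical bijection with the set of linear orders on $f^{-1}(j)$ satisfying the constraints dictated by $\tau(j)$. Stratification-preservation rules out the inadmissible patterns (for instance $\RR_{\leq 0}\hookrightarrow \RR_{\geq 0}$ or $\RR_{\geq 0}\hookrightarrow \RR$), and forces any $\RR_{\geq 0}$ source in an $\RR_{\geq 0}$ target to be unique and to occupy the leftmost position. For each admissible pattern the realizing embedding space is convex up to the contractible orientation-preserving reparametrization of each source component, and is therefore contractible.

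\textbf{Symmetric monoidality.} Direct inspection shows that $[-]$ reads off precisely the combinatorial data (the function $f$ together with the induced linear orderings) from an embedding, giving a bijection on $\pi_0$ of morphism spaces. Combined with essential surjectivity and the discreteness established above, this makes $[-]$ fully faithful and essentially surjective. Both symmetric monoidal structures are disjoint union / concatenation, and $[-]$ intertwines them strictly, yielding an equivalence of symmetric monoidal $\infty$-categories. The main (though routine) obstacle is the contractibility claim for each admissible embedding pattern, where the stratification on half-lines rigidifies the combinatorics in the subtle way encoded in Definition~\ref{Assoc}.
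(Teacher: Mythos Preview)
Your argument is essentially correct and follows the same route as the paper: the heart of the matter is the direct computation that the embedding spaces
\[
\Emb^{\sf or}\Bigl(\bigsqcup_{i\in f^{-1}(j)} U_i,\ V_j\Bigr)
\]
are discrete with $\pi_0$ given by the admissible linear orders, which is exactly what the paper means by ``inspecting the morphism spaces of $\disk^{\partial,\sf or}_1$, which are discrete.''

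One small correction: Proposition~\ref{EEd-vs-EE} is stated for the \emph{slice} $\infty$-operads $\diskd(\cB)_{/X}\to\disk(\cB)_{/X}$, not for the absolute categories $\diskd(\cB)\to\disk(\cB)$, so it does not literally supply the localization you invoke. Fortunately you do not need it: once you have established (in your ``Discreteness and contractibility'' paragraph) that the morphism spaces of $\disk^{\partial,\sf or}_1$ are discrete with the indicated $\pi_0$, the factorization of $[-]$ through $\disk^{\partial,\sf or}_1$ is automatic---$[-]$ is constant on isotopy classes, and the functor $\diskd^{\partial,\sf or}_1\to\disk^{\partial,\sf or}_1$ is surjective on $\pi_0$ of morphism spaces with fibers exactly the isotopy classes. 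So simply drop the appeal to Proposition~\ref{EEd-vs-EE} and let your embedding-space computation do all the work.
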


\begin{observation}\label{interval-assoc}
Consider the ordinary category $\sO^{\sf RL}$ for which an object is a linearly ordered finite set $(I,\leq)$ together with a pair of disjoint subsets $R\subset I \supset L$ for which each element of $R$ is a minimum and each element of $L$ is a maximum, and for which a morphism $(I,\leq, R,L)\to (I',\leq',R',L')$ is an order preserving map $I\xra{f} I'$ for which $f(R\sqcup L)\subset R'\sqcup L'$.  
(Note that the cardinality of each of $L$ and of $R$ is at most one.)
Concatenating linear orders makes $\sO^{\sf RL}$ into a multi-category, and it is equipped with a canonical maps of operads to ${\sf Env}\bigl(\Ass^{\sf RL}\bigr)$.  
By inspection, the equivalence~(\ref{disk-comb}) of Observation~\ref{disk-assoc} lifts to an equivalence of $\infty$-operads, 
\begin{equation}\label{interval-comb}
[-]\colon \disk^{\partial, {\sf or}}_{1/[-1,1]} \xra{\simeq} \sO^{\sf RL}~.
\end{equation} 

\end{observation}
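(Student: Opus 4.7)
The plan is to extend the equivalence of symmetric monoidal $\infty$-categories established in Observation~\ref{disk-assoc} to the slice $\infty$-operads over $[-1,1]$, using the functoriality of the slice construction of Corollary~\ref{slice-operads}. First I would identify the target $\sO^{\sf RL}$ as a slice $\infty$-operad of ${\sf Env}(\Ass^{\sf RL})$ over a distinguished two-element colored object $W$; then I would verify that $W$ corresponds to the presheaf represented by $[-1,1]\in \mfld^{\partial,{\sf or}}_1$ under the equivalence of Observation~\ref{disk-assoc}.

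For the first step, let $W := (\{r<\ell\},\sigma_W)$ be the two-element linearly ordered colored set in ${\sf Env}(\Ass^{\sf RL})$, with $\sigma_W$ chosen to match the orientation conventions below. I claim $\sO^{\sf RL} \simeq {\sf Env}(\Ass^{\sf RL})_{/W}$ as $\infty$-operads in the sense of Notation~\ref{slice-convention}. Unpacking the $\amalg$-construction, an object of the latter slice over $J_+$ is the data of a morphism in ${\sf Env}(\Ass^{\sf RL})$ from some coloring $\sigma\colon I\to \{M,R,L\}$ (with $|I|=|J|$) to $W$; by definition of $\Ass^{\sf RL}$, such a morphism is a map $I \to \{r,\ell\}$ together with a compatible linear order on each fiber satisfying the active multi-morphism constraints. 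These data reorganize exactly into a linear order on $I$ together with subsets $R, L\subset I$ satisfying the minimum/maximum conditions in the definition of $\sO^{\sf RL}$. Multi-morphisms match under concatenation of linear orders on both sides.

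For the second step, I would verify that for each $U\in \disk^{\partial,{\sf or}}_1$, the space $\Emb^{\sf or}(U,[-1,1])$ is homotopy discrete with components naturally in bijection with $\Hom_{{\sf Env}(\Ass^{\sf RL})}([U],W)$, functorially in $U$. Each connected component is contractible, since the space of orientation-preserving conically smooth embeddings of a fixed disjoint union of oriented $1$-basics into $[-1,1]$ in a given combinatorial configuration deformation retracts onto that configuration. The set of components is parametrized by a linear order on $\pi_0(U)$ (the left-to-right order along $[-1,1]$) together with the constraints that any $\RR_{\geq 0}$-summand must attach to the boundary point $-1$ (forcing it to be the minimum of this order) and any $\RR_{\leq 0}$-summand must attach to $+1$ (forcing it to be the maximum)—each half-line basic is pinned to a unique boundary point by orientation. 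This combinatorial set matches $\Hom_{{\sf Env}(\Ass^{\sf RL})}([U],W)$ exactly, and the bijection is natural in $U$.

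Applying Corollary~\ref{slice-operads} to the equivalence of Observation~\ref{disk-assoc}, with the matched distinguished objects $[-1,1]$ and $W$, then yields the desired equivalence of $\infty$-operads. The main obstacle is the careful bookkeeping between orientation-boundary compatibility on the geometric side and the minimum/maximum conventions of $\sO^{\sf RL}$, together with verifying that the resulting bijection respects the multi-operadic composition structure—which on both sides amounts to insertion of linear orders into linear orders. Once this combinatorial identification is pinned down, the promotion to an equivalence of $\infty$-operads follows formally from the functoriality of the slice construction applied to the already-established symmetric monoidal equivalence of Observation~\ref{disk-assoc}.
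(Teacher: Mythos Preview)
Your first step contains a genuine error: the identification $\sO^{\sf RL} \simeq {\sf Env}(\Ass^{\sf RL})_{/W}$ fails. Objects of ${\sf Env}(\Ass^{\sf RL})$ are colorings $\sigma\colon I\to\{M,R,L\}$ with \emph{no} linear order, so your description of $W$ as ``linearly ordered'' is already off; charitably interpreting $W$ as the two-color object $(R,L)$, the slice ${\sf Env}(\Ass^{\sf RL})_{/W}$ has strictly more isomorphism classes of objects than $\sO^{\sf RL}$. For instance, the single $M$-colored object admits two distinct active morphisms to $W$ (one landing in the $R$-slot, one in the $L$-slot), and these give two non-isomorphic objects of the slice, whereas $\sO^{\sf RL}$ has only one single-element object with $R=L=\emptyset$. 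Geometrically this is the statement that under Observation~\ref{disk-assoc} your $W$ corresponds to $\RR_{\geq 0}\sqcup\RR_{\leq 0}\in\disk^{\partial,{\sf or}}_1$, \emph{not} to $[-1,1]$; and indeed $\Emb^{\sf or}(\RR,\RR_{\geq 0}\sqcup\RR_{\leq 0})$ has two components while $\Emb^{\sf or}(\RR,[-1,1])$ is contractible. So the presheaf on $\disk^{\partial,{\sf or}}_1$ represented by $[-1,1]$ is not representable by any object of $\disk^{\partial,{\sf or}}_1\simeq{\sf Env}(\Ass^{\sf RL})$, and there is no way to realize $\disk^{\partial,{\sf or}}_{1/[-1,1]}$ as a slice of the target of Observation~\ref{disk-assoc}.

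What does work is essentially your second step, carried out directly and without the detour through a slice identification. The paper's ``by inspection'' amounts to: both sides are ordinary (discrete) colored operads, so one writes down the functor $[-]$ on colors and multi-morphisms and checks bijectivity by hand. Your analysis of $\pi_0\Emb^{\sf or}(U,[-1,1])$ in terms of a linear order on $\pi_0(U)$ with the half-line components pinned to the endpoints is exactly this check at the level of objects, and the compatibility with operadic composition (concatenation of linear orders on both sides) is the check on multi-morphisms. That direct combinatorial verification is the whole argument; the attempt to package it as a transported slice equivalence introduces an object $W$ that does not exist.
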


After these observations, there is this immediate consequence.
\begin{cor}\label{assoc-interval}
There is a canonical equivalence of $\infty$-categories:
\[
\int_-\colon \Alg_{\Ass^{\sf RL}}(\cV) \xra{~\simeq~} \Alg_{[-1,1]}(\cV)~.
\]
\end{cor}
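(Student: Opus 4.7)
The plan is to chain together the two operadic identifications from Observations~\ref{disk-assoc} and~\ref{interval-assoc}. By Notation~\ref{slice-convention}, $\Alg_{[-1,1]}(\cV)$ is by definition $\Alg_{\disk^{\partial,{\sf or}}_{1/[-1,1]}}(\cV)$, so the equivalence of $\infty$-operads~(\ref{interval-comb}) from Observation~\ref{interval-assoc} immediately yields $\Alg_{[-1,1]}(\cV) \simeq \Alg_{\sO^{\sf RL}}(\cV)$ by restriction. Separately, Observation~\ref{disk-assoc} together with the universal property of the symmetric monoidal envelope gives
\[
\Alg_{\disk^{\partial,{\sf or}}_1}(\cV) := \Fun^\ot\bigl(\disk^{\partial,{\sf or}}_1,\cV\bigr) \simeq \Fun^\ot\bigl({\sf Env}(\Ass^{\sf RL}),\cV\bigr) \simeq \Alg_{\Ass^{\sf RL}}(\cV).
\]
The ``lifting'' assertion in Observation~\ref{interval-assoc} arranges these identifications into a commutative square whose left vertical is the forgetful map $\disk^{\partial,{\sf or}}_{1/[-1,1]} \to \disk^{\partial,{\sf or}}_1$ and whose right vertical is the canonical operad map $\sO^{\sf RL}\to{\sf Env}(\Ass^{\sf RL})$.

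It thus suffices to show that restriction along $\sO^{\sf RL} \to {\sf Env}(\Ass^{\sf RL})$ induces an equivalence $\Alg_{\Ass^{\sf RL}}(\cV)\to\Alg_{\sO^{\sf RL}}(\cV)$. Unpacking the descriptions, each of these operads has colors $\{M,R,L\}$ and, for any tuple $\sigma\colon I\to\{M,R,L\}$ of colors indexed by a finite set, the same multi-morphism set: linear orders on $I$ compatible with the minima/maxima constraints imposed by $\sigma^{-1}(L)$ and $\sigma^{-1}(R)$. The extra objects of $\sO^{\sf RL}$ given by labeled linearly ordered finite sets $(I,\leq,R,L)$ of size greater than one are redundant modulo the inert-coCartesian structure, which forces any algebra to send such an object to the tensor product of its values on singletons.

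To match this composite equivalence with $\int_-$, I would argue as follows. An $\Ass^{\sf RL}$-algebra $(A,Q,P)$ corresponds, via Observation~\ref{disk-assoc}, to a symmetric monoidal functor on $\disk^{\partial,{\sf or}}_1$; by the explicit formula~(\ref{eq.fact-explicit}) of Theorem~\ref{fact-explicit}, the factorization homology $\int_U A$ of any disjoint union of basics $U\hookrightarrow[-1,1]$ reduces to the symmetric monoidal evaluation of $A$ on $U$, which is precisely the value assigned by restriction along the forgetful map $\disk^{\partial,{\sf or}}_{1/[-1,1]}\to \disk^{\partial,{\sf or}}_1$.

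The main obstacle is the verification in the second paragraph: matching the inert-coCartesian coherences in $\sO^{\sf RL}$ with those of $\Ass^{\sf RL}$ so that the comparison is an equivalence of $\infty$-operads and not merely a map. Because both operads are specified by entirely discrete combinatorial data (sets of linear orders on finite sets), this is a direct combinatorial comparison rather than a difficult technical step, and the remaining parts of the proof are immediate consequences of the preceding lemmas.
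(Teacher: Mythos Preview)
Your argument is correct and structurally identical to the paper's: both reduce the statement, via Observations~\ref{disk-assoc} and~\ref{interval-assoc}, to showing that the forgetful map $\disk^{\partial,{\sf or}}_{1/[-1,1]}\to\disk^{\partial,{\sf or}}_1$ (equivalently, after transport, $\sO^{\sf RL}\to{\sf Env}(\Ass^{\sf RL})$) induces an equivalence on algebra categories, and both identify the resulting equivalence with $\int_-$ by noting that factorization homology on basics is evaluation.

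The only difference is where that check is performed. The paper stays on the geometric side and argues that ${\sf Env}\bigl(\disk^{\partial,{\sf or}}_{1/[-1,1]}\bigr)\xra{\simeq}\disk^{\partial,{\sf or}}_1$ directly: essential surjectivity comes from the contractibility of each embedding space $\mfld^{\partial,{\sf or}}_1(U,[-1,1])\simeq\ast$, and fully faithfulness from the fact that the active projection $\disk^{\partial,{\sf or}}_{1/[-1,1]}\to\disk^{\partial,{\sf or}}_1$ is a right fibration. You instead transport through~(\ref{disk-comb}) and~(\ref{interval-comb}) and do the analogous check combinatorially, matching the multi-morphism sets of $\sO^{\sf RL}$ with those of $\Ass^{\sf RL}$ color by color. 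Since both operads are discrete, your combinatorial verification is entirely adequate and is in fact the transported form of the paper's geometric one; neither approach buys anything over the other in this instance.
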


\begin{proof}
So the remaining point to check is that restriction along the map of $\infty$-operads $\disk^{\partial, \sf or}_{1/[-1,1]}\to \disk^{\partial, \sf or}_1$ implements an equivalence of symmetric monoidal $\infty$-categories ${\sf Env}\bigl(\disk^{\partial, \sf or}_{1/[-1,1]}\bigr) \xra{\simeq} \disk^{\partial, \sf or}_1$ from the symmetric monoidal envelope. 
Well, for each object $U\in \sD^{\partial, \sf or}_1$ it is standard that the space of morphisms $\mfld^{\partial, \sf or}_1\bigl(U,[-1,1]\bigr) \simeq \ast$ is contractible; and it follows that this functor gives an equivalence on spaces of objects of underlying $\infty$-categories.  
That this functor gives an equivalence on the space of morphisms follows because the functor on the active $\infty$-subcategories $\disk^{\partial, \sf or}_{1/[-1,1]} \to \disk^{\partial, \sf or}_1$ is a right fibration.  

\end{proof}

\begin{prop}\label{tensor-prod}
Let $(A;P,Q)$ be an $\Ass^{\sf RL}$-algebra in $\cV$; which is to say an associative algebra $A$ together with a unital left and a unital right $A$-module.
Applying Observation~\ref{disk-assoc}, regard $(A;P,Q)$ as a $\disk^{\partial, \sf or}_1$-algebra in $\cV$. 
There is a canonical equivalence in $\cV$:
\[
Q\underset{A}\ot P\xra{~\simeq~} \int_{[-1,1]} (A;P,Q)~.  
\]

\end{prop}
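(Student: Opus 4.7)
The plan is to realize both sides as colimits -- the left via the defining presentation of the relative tensor product as a two-sided bar construction, and the right via the colimit formula of Theorem~\ref{fact-explicit} -- and then exhibit a final functor identifying them.

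Since $\cV$ is $\ot$-sifted cocomplete, Theorem~\ref{fact-explicit} gives
\[
\int_{[-1,1]} (A;P,Q) ~\simeq~ \colim\Bigl(\disk^{\partial,{\sf or}}_{1/[-1,1]} \to \disk^{\partial,{\sf or}}_1 \xra{(A;P,Q)} \cV\Bigr)~.
\]
By Observation~\ref{interval-assoc}, the indexing $\infty$-operad is equivalent to $\sO^{\sf RL}$, and in particular their underlying $\infty$-categories agree.  So this colimit is taken over the category $\sO^{\sf RL}$ whose objects are labeled linearly ordered sets $(I,\leq,R,L)$, with the functor sending such an object to $\bigotimes_{i\in I} X_i$, the tensor factors $X_i\in\{A,P,Q\}$ being determined by the labels on $I$.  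Separately, the relative tensor product $Q\ot_A P$ is by definition the colimit over $\Delta^{\op}$ of the two-sided bar simplicial object
\[
\sbar_\bullet(Q,A,P)\colon [n]\longmapsto Q\ot A^{\ot n}\ot P~,
\]
with face maps built from the multiplication of $A$ and the module actions of $P,Q$, and degeneracies from the unit of $A$; this colimit exists in $\cV$ by $\ot$-sifted cocompleteness and the siftedness of $\Delta^{\op}$.

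To identify these two colimits, I would construct a functor $\Phi\colon \Delta^{\op}\to \sO^{\sf RL}$ sending $[n]$ to the object $(\{0<1<\cdots<n+1\},\{0\},\{n+1\})$ and interpreting face and degeneracy operations geometrically as inclusions of disk configurations in $[-1,1]$: the $i$-th face maps merge adjacent components (and into a boundary half-disk when $i=0$ or $i=n$), while the $i$-th degeneracy inserts an extra interior disk.  By construction, the composition of $\Phi$ with the factorization-homology functor then recovers $\sbar_\bullet(Q,A,P)$ on the nose.

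It remains to verify that $\Phi$ is final.  By Quillen's Theorem~A, this reduces to showing that for each $(I,\leq,R,L)\in \sO^{\sf RL}$ the under-category $(I,\leq,R,L)\downarrow \Phi$ has contractible classifying space.  Geometrically, this under-category parameterizes factorizations of a given disk configuration through configurations that contain both boundary half-disks; such factorizations form a filtered category -- nonempty because one may always adjoin the missing boundary disks, and with common refinements given by unions of configurations in $[-1,1]$.  The principal obstacle I expect is the combinatorial verification that $\Phi$ on morphisms really does yield the standard bar complex with correct face and degeneracy operations; this is elementary but requires careful bookkeeping of the geometric interpretations, and one must check that the ``collapse to a single boundary element'' morphisms formally allowed by Observation~\ref{interval-assoc} correspond to iterated degeneracies and thus contribute nothing new to the colimit.
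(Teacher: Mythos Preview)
Your approach is essentially the paper's: embed $\bDelta^{\op}$ into $\sO^{\sf RL}$ as the full subcategory on objects with $R\neq\emptyset\neq L$, show this inclusion is final, and identify the restricted diagram with the two-sided bar construction.  The paper dispatches the finality step in one line by observing that adjoining a minimum and a maximum defines a \emph{left adjoint} to the inclusion $\bDelta^{\op}\hookrightarrow\sO^{\sf RL}$, so each under-category in your Theorem~A verification already has an initial object; this replaces your filtered argument, which as written conflates the order-theoretic morphisms of $\sO^{\sf RL}$ (which need not be injective) with geometric unions of disk configurations and would need more care to make precise.
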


\begin{proof}
There is the standard fully faithful functor $\bDelta^{\op}\subset \sO^{\sf RL}$ whose essential image consists of those objects $(I,\leq , R, L)$ for which $R\neq \emptyset \neq L$.  
Adjoining minima and maxima gives a left adjoint to this functor, and so it is final.   
Through Observation~\ref{interval-assoc}, there is a final functor $\bDelta^{\op} \to \disk^{\partial, {\sf or}}_{1/[-1,1]}$; and the resulting simplicial object 
\[
{\sf Bar}_\bullet\bigl(Q,A,P\bigr)\colon \bDelta^{\op} \to \disk^{\partial, {\sf or}}_{1/[-1,1]} \to \disk^{\partial, \sf or}_1 \xra{[-]}{\sf Env}\bigl(\Ass^{\sf RL}\bigr) \xra{(A;P,Q)} \cV
\]
is identified as the two-sided bar construction, as indicated. 
We conclude the equivalence in $\cV$:
\[
Q\underset{A}\ot P \simeq \colim\bigl(\bDelta^{\op} \xra{{\sf Bar}_\bullet\bigl(Q,A,P\bigr)} \cV\bigr)\xra{\simeq} \int_{[-1,1]}(A;P,Q)~.
\]

\end{proof}

Lemma~\ref{f-inv-factr} and Proposition~\ref{tensor-prod} assemble as the next result.  
\begin{cor}\label{excision-arrow}
Let $F\colon \mfld(\cB) \longrightarrow \cV$ be a symmetric monoidal functor.  
Let $X\cong X_- \underset{\RR\times X_0}\bigcup X_+$ be a collar-gluing among $\cB$-manifolds.  
Then there is a canonical arrow in $\cV$:
\begin{equation}\label{exc-compare}
F(X_-)\underset{F(X_0)} \bigotimes F(X_+) \longrightarrow F(X)~.
\end{equation}

\end{cor}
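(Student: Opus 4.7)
The arrow is produced by combining the pushforward construction with Proposition~\ref{tensor-prod}. After Lemma~\ref{refinement-localize}, we may replace the weakly constructible bundle $f\colon X\to [-1,1]$ underlying the collar-gluing by an honest constructible bundle without altering any relevant values of $F$. Restriction of $F$ to the sliced $\infty$-operad gives $F_X := F_{|\mfld(\cB)_{/X}}\in\Alg_{\mfld(\cB)_{/X}}(\cV)$, and pulling back along the operad map $f^{-1}\colon \disk(\cB')_{/[-1,1]}\to \mfld(\cB)_{/X}$ of Lemma~\ref{f-inv-factr} (with $\cB' = \sD^{\partial,\sf or}_1$) produces
\[
\widetilde F := (f^{-1})^\ast F_X \in \Alg_{[-1,1]}(\cV),\qquad \widetilde F(U\hookrightarrow [-1,1])\simeq F(f^{-1}U\hookrightarrow X).
\]

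Next, via the equivalence $\Alg_{[-1,1]}(\cV)\simeq \Alg_{\Ass^{\sf RL}}(\cV)$ of Corollary~\ref{assoc-interval} (through Observation~\ref{interval-assoc}), the algebra $\widetilde F$ corresponds to a triple $(B;P,Q)$. The three generating basics $\RR$, $\RR_{\geq 0}$, $\RR_{\leq 0}$ of $[-1,1]$ have preimages under $f$ equivalent, via the collar structure, to $\RR\times X_0$, $X_-$, and $X_+$ respectively, giving
\[
B \simeq F(\RR\times X_0),\qquad P\simeq F(X_-),\qquad Q\simeq F(X_+),
\]
with the module structures induced by the open collar inclusions $\RR\times X_0\hookrightarrow X_\pm$. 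Proposition~\ref{tensor-prod} then computes
\[
\int_{[-1,1]}\widetilde F \;\simeq\; F(X_-)\underset{F(X_0)}{\bigotimes} F(X_+),
\]
where, as customary, $F(X_0)$ denotes the algebra $F(\RR\times X_0)$.

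Finally, each basic $(U\hookrightarrow[-1,1])\in\disk(\cB')_{/[-1,1]}$ gives an open embedding $f^{-1}U\hookrightarrow X$, hence a map $\widetilde F(U) = F(f^{-1}U)\to F(X)$ in $\cV$, naturally in $U$; these assemble into a cocone on $\widetilde F$ with vertex $F(X)$. Combined with the colimit description of factorization homology from Theorem~\ref{fact-explicit},
\[
\int_{[-1,1]}\widetilde F \;\simeq\; \underset{\disk(\cB')_{/[-1,1]}}{\colim}\,\widetilde F,
\]
this cocone induces the desired arrow $F(X_-)\underset{F(X_0)}{\bigotimes}F(X_+)\to F(X)$. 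The principal technical care is in the second step, where the abstract $\Ass^{\sf RL}$-algebra structure produced by Corollary~\ref{assoc-interval} applied to $\widetilde F$ must be tracked through the equivalence $\disk^{\partial,\sf or}_{1/[-1,1]}\simeq \sO^{\sf RL}$ of Observation~\ref{interval-assoc} and shown to coincide with the module structures arising geometrically from the collar open inclusions; once these identifications are in place, the remaining steps are formal consequences of the previously established machinery.
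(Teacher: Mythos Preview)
Your proposal is correct and follows essentially the same approach as the paper: form the pulled-back algebra $\widetilde F = F\circ f^{-1}\in\Alg_{[-1,1]}(\cV)$, identify $\int_{[-1,1]}\widetilde F$ with the relative tensor product via Proposition~\ref{tensor-prod}, and obtain the arrow to $F(X)$ from the universal property of factorization homology. The paper phrases this last step as the counit of the factorization-homology adjunction, which is exactly the map your cocone construction produces.
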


\begin{proof}
The collar-gluing is prescribed by a constructible bundle $\w{X}\xra{f}[-1,1]$ from a refinement of $X$. 
From Lemma~\ref{f-inv-factr}, there is the composite map of $\infty$-operads 
\[
f^{-1}\colon \disk^{\partial, {\sf or}}_{1/[-1,1]} \xra{f^{-1}} \mfld(\cB)_{/\w{X}} \to \mfld(\cB) \xra{F} \cV~.
\] 
The universal property of factorization homology as a left adjoint gives the canonical arrow:
\[
F(X_-) \underset{F(X_0)}\bigotimes F(X_+)~{}~ \underset{\rm Prop~\ref{tensor-prod}}\simeq ~{}~\int_{[-1,1]} F_f~{}~ \longrightarrow ~{}~ F\bigl(f^{-1}([-1,1])\bigr) =  F(X)~;
\]
namely, this arrow is the counit of the adjunction $\int \colon \Alg_{\disk(\cB)}(\cV) \rightleftarrows \Fun^\ot\bigl(\mfld(\cB),\cV\bigr)$, evaluated on $F$.

\end{proof}

\subsection{Homology theories}

One an formulate an $\oo$-categorical analogue of the Eilenberg--Steenrod axioms for a functor from spaces or manifolds to the $\oo$-category of chain complexes or spectra: the functor should take certain gluing diagrams to pushout squares, and it should preserve sequential colimits. From these conditions, one recovers usual generalized homology theories. These axioms admit a generalization when one replaces chain complexes or spectra with a symmetric monoidal $\oo$-category $\cV$. 
The formulation is complicated slightly by the fact that the monoidal structure on $\cV$ is not required to be coCartesian. 
Nevertheless, we make the following definitions, which generalize those of \cite{Fact}.

A main result of~\cite{aft1} gives a precise articulation of the heuristic statement that the $\infty$-category $\mfld(\cB)$ is generated by $\cB$ through the formation of collar-gluings and sequential unions. 
After Example~\ref{union-collar-gluing}, this result of~\cite{aft1} has an immediate symmetric monoidal reformulation.
\begin{cor}[After~\cite{aft1}]\label{generation}
Let $\disk(\cB) \subset \sS \subset \mfld(\cB)$ be a full sub-symmetric monoidal $\infty$-category that is closed under the following two formations:
\begin{itemize}
\item Let $X\cong X_-\underset{\RR\times X_0}\cup X_+$ be a collar-gluing among $\cB$-manifolds.
If each of $X_+$, $X_-$, and $\RR\times X_0$ is an object of $\sS$, then $X$ too is an object of $\sS$.

\item  Consider a sequence $X_0\subset X_1\subset \dots \subset X$ of open subspaces whose union $\underset{i\geq 0}\bigcup X_i = X$ is entire.
If each $X_i$ is an object of $\sS$, then $X$ too is an object of $\sS$.  
\end{itemize}
Then the inclusion $\sS \subset \mfld(\cB)$ is an equality.

\end{cor}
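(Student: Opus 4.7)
The plan is to deduce this corollary directly from its non-symmetric-monoidal predecessor, the generation result for $\mfld(\cB)$ established in~\cite{aft1}. That earlier result asserts that $\mfld(\cB)$ is the smallest full $\infty$-subcategory of itself containing $\cB$ and closed under two operations: (i) collar-gluings $X\cong X_-\underset{\RR\times X_0}\bigcup X_+$ with $X_\pm$ and $\RR\times X_0$ in the subcategory, and (ii) sequential unions $X=\bigcup_i X_i$ with each $X_i$ in the subcategory. The task of Corollary~\ref{generation} is then to verify that the hypotheses on $\sS$ in the corollary force $\sS$ to satisfy the hypotheses of the \cite{aft1} statement.

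First I would observe that $\cB\subset \disk(\cB)\subset \sS$, so the generating family is present in $\sS$. Next, both of the closure conditions required by the \cite{aft1} result are literally the two closure conditions posited on $\sS$ in the corollary, so no additional work is needed to verify them; the role of requiring $\sS$ to be a full sub-symmetric monoidal subcategory (rather than merely a full $\infty$-subcategory) is only to allow us to begin from $\disk(\cB)$ rather than $\cB$, which is weaker than what the \cite{aft1} statement requires. Invoking the \cite{aft1} result then gives the equality $\sS = \mfld(\cB)$.

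It is worth noting, for the reader's intuition, that the symmetric monoidal reformulation is in fact no stronger than the underlying $\infty$-categorical statement: closure under collar-gluings already subsumes closure under binary disjoint unions by Example~\ref{union-collar-gluing}, taking $X_0 = \emptyset$ so that $\RR\times X_0 = \emptyset\in \sS$ as the symmetric monoidal unit. Thus the two hypotheses ``$\sS$ is a sub-symmetric monoidal $\infty$-category containing $\disk(\cB)$'' and ``$\sS$ is a full $\infty$-subcategory containing $\cB$'' become equivalent once one further imposes closure under collar-gluings.

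There is no real obstacle here, since the corollary is a direct repackaging; the only potential wrinkle is the cosmetic one of matching conventions between the current paper's definition of collar-gluing (as a weakly constructible bundle to $[-1,1]$) and the definition used in \cite{aft1}. I would address this by pointing to the discussion preceding Example~\ref{union-collar-gluing}, where the two formulations are reconciled, and then simply cite the relevant theorem of \cite{aft1}.
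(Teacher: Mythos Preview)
Your proposal is correct and matches the paper's own treatment: the paper gives no formal proof, but in the sentence immediately preceding the corollary states that the result of~\cite{aft1} has an ``immediate symmetric monoidal reformulation'' after Example~\ref{union-collar-gluing}, which is exactly the reduction you spell out. Your observation that closure under collar-gluings already entails closure under disjoint unions (via Example~\ref{union-collar-gluing} with $X_0=\emptyset$) is precisely the bridge the paper has in mind.
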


\begin{definition}\label{homology}
The $\infty$-category of \emph{homology theories (over $X$)} is the full $\infty$-subcategory
\[
\bcH\bigl(\mfld(\cB)_{/X},\cV\bigr)~\subset~\Fun^\ot\bigl(\mfld(\cB)_{/X},\cV\bigr)
\]
consisting of those $H$ that satisfy the following two properties:
\begin{itemize}
\item {\bf $\ot$-Excision:} 
Let $W \cong W_- \underset{\RR\times W_0} \bigcup W_+$ denote a collar-gluing among $\cB$-manifolds over $X$.
Then the canonical morphism~(\ref{exc-compare})
\begin{equation}\label{ot-excision}
H(W_-)\underset{H(W_0)}\bigotimes H(W_+)\xra{~\simeq~} H(W)
\end{equation}
is an equivalence in $\cV$.
\item {\bf Continuous:}
Let $W_0\subset W_1\subset \dots \subset X$ be a sequence of open sub-stratified spaces of $X$ with union denoted as $\underset{i\geq 0} \bigcup W_i = : W$.
Then then the canonical morphism in $\cV$
\begin{equation}\label{continuous}
\colim\Bigl(H(W_0) \to H(W_1)\to \dots \Bigr)\xra{~\simeq~} H(W)
\end{equation}
is an equivalence.

\end{itemize}
Absolutely, the $\infty$-category of \emph{homology theories (for $\cB$-manifolds}) is the full $\infty$-subcategory
\[
\bcH\bigl(\mfld(\cB),\cV\bigr)~\subset~\Fun^\ot\bigl(\mfld(\cB),\cV\bigr)
\]
consisting of those $H$ for which, for each $\cB$-manifold $X$, the restriction $H_{|\mfld(\cB)_{/X}}$ is a homology theory for $X$.  

\end{definition}

In the coming sections we will see a variety of examples of homology theories, for various categories of basics $\cB$.

In the next result we make use of Day convolution; we give a brief synopsis, taken from~\S4.8.1 of~\cite{HA}.  
For $\cD$ a symmetric monoidal $\infty$-category, Day convolution endows the $\infty$-category $\Psh(\cD)$ of presheaves (on the underlying $\infty$-category of $\cD$) with a symmetric monoidal structure.
Furthermore, with this symmetric monoidal structure, the Yoneda functor $\cD \to \Psh(\cD)$ is symmetric monoidal, and in fact presents the free symmetric monoidal cocompletion of $\cD$.  
In particular, each symmetric monoidal functor $\cD\to \cM$ to another symmetric monoidal $\infty$-category determines a restricted symmetric monoidal Yoneda functor $\cM \to \Psh(\cD)$.  
\begin{cor}[Universal homology theory]\label{univ}
The symmetric monoidal restricted Yoneda functor $\disk(\cB)_{/-}\colon \mfld(\cB) \to \Psh\bigl(\disk(\cB)\bigr)$ is a homology theory.

\end{cor}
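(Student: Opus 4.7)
The plan is to recognize the restricted Yoneda functor $\disk(\cB)_{/-}$ as the factorization homology of a tautological algebra in $\Psh(\disk(\cB))$ and then to verify the two homology-theory axioms via the pushforward theorem together with an elementary analysis of mapping spaces.

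First I would observe that the symmetric monoidal Yoneda embedding $y\colon \disk(\cB)\to \Psh(\disk(\cB))$ -- symmetric monoidal with respect to Day convolution on the target -- is a $\disk(\cB)$-algebra in the $\ot$-sifted cocomplete $\infty$-category $\Psh(\disk(\cB))$, the latter because Day convolution on a presheaf category preserves colimits separately in each variable. Theorem~\ref{fact-explicit} then gives
\[
\int_X y ~\simeq~ \underset{(U\hookrightarrow X)\in \disk(\cB)_{/X}}\colim\, y(U)~.
\]
Because colimits in $\Psh(\disk(\cB))$ are computed pointwise, and because $\disk(\cB)_{/X}\to \disk(\cB)$ is, by construction, the right fibration classifying the presheaf $V\mapsto \Map_{\mfld(\cB)}(V,X)$, the standard formula expressing any presheaf as the colimit of representables over its category of elements yields a natural equivalence $\int_X y\simeq \disk(\cB)_{/X}$ in $\Psh(\disk(\cB))$. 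This upgrades $\disk(\cB)_{/-}$ to a symmetric monoidal functor and reduces the problem to establishing $\ot$-excision and continuity for $X\mapsto \int_X y$.

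For $\ot$-excision, given a collar-gluing $W\cong W_-\cup_{\RR\times W_0} W_+$ presented by a constructible bundle $f\colon \widetilde W\to [-1,1]$ from a refinement of $W$, I would apply Theorem~\ref{:(} to the algebra $y$ to obtain $\int_W y\simeq \int_{[-1,1]} f_\ast y$, and then invoke Proposition~\ref{tensor-prod} to identify the right-hand side with the relative tensor product
\[
\int_{W_-}y\,\underset{\int_{\RR\times W_0}y}\bigotimes\, \int_{W_+}y~\simeq~\disk(\cB)_{/W_-}\underset{\disk(\cB)_{/\RR\times W_0}}\bigotimes \disk(\cB)_{/W_+}~.
\]
By construction, this composite equivalence agrees with the canonical arrow from Corollary~\ref{excision-arrow} applied to $\disk(\cB)_{/-}$, establishing excision.

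For continuity, pointwise evaluation again reduces the question to showing that for each $V\in \disk(\cB)$ and each sequential open union $W=\bigcup_i W_i$, the natural map
\[
\colim_i \Map_{\mfld(\cB)}(V,W_i)\longrightarrow \Map_{\mfld(\cB)}(V,W)
\]
is an equivalence of spaces. Lemma~\ref{EE-equivs} and its proof identify these mapping spaces, up to finite products and homotopy quotients by automorphism groups, with unordered configuration spaces in the strata of the target. The claim then reduces to the elementary statement that, for a sequential open cover $M=\bigcup_i M_i$ of a smooth manifold and a finite set $J$, the map $\colim_i \conf_J(M_i)_{\Sigma_J}\to \conf_J(M)_{\Sigma_J}$ is an equivalence, which holds because every finite configuration of $M$ lies in some $M_i$. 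The main point to monitor across these steps is confirming that the equivalence produced through pushforward really does coincide with the canonical comparison arrow~\eqref{exc-compare}; this is ensured by the universal property of $\int y$ as the left adjoint to restriction, which is what pins down~\eqref{exc-compare} in the first place.
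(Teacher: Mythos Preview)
Your proof is correct and takes essentially the same approach as the paper: recognize $\disk(\cB)_{/-}$ as factorization homology of the Yoneda embedding in $\Psh(\disk(\cB))$ with Day convolution, then invoke Theorem~\ref{:(} (together with Proposition~\ref{tensor-prod}) for $\ot$-excision and reduce continuity to configuration spaces via Lemma~\ref{EE-equivs}. The paper's own proof is terser---it only explicitly treats $\ot$-excision, deferring the continuity argument you give to the subsequent Corollary~\ref{fact-excisive}---so your version simply supplies the details the paper leaves implicit.
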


\begin{proof}
From its universal property, in the symmetric monoidal $\infty$-category $\Psh\bigl(\disk(\cB)\bigr)$ is the tautological $\disk(\cB)$-algebra which is the symmetric monoidal Yoneda functor $\disk(\cB) \to \Psh\bigl(\disk(\cB)\bigr)$.  
Note that the Day convolution symmetric monoidal structure distributes over all colimits, and therefore $\Psh\bigl(\disk(\cB)\bigr)$ is $\ot$-sifted cocomplete.
We can therefore apply Theorem~\ref{:(}.
Namely, let $f\colon X\to [-1,1]$ be a collar-gluing, written as $X\cong X_-\underset{\RR\times X_0}\bigcup X_+$.  
We must show the canonical arrow of~(\ref{exc-compare}),
\[
\disk(\cB)_{/X_-}\underset{\disk(\cB)_{/\RR\times X_0}}\bigotimes \disk(\cB)_{/X_+} \xra{\simeq}\disk(\cB)_{/X}~,
\]
is an equivalence between right fibrations over $\disk(\cB)$. 
Theorem~\ref{:(}, applied to the weakly constructible map $f\colon X\to[-1,1]$ gives just this.

\end{proof}

\begin{remark}\label{confs}
Inside of Corollary~\ref{univ} are a number of interesting geometric statements; we will indicate one such now.
Let $M$ be an ordinary smooth $n$-manifold.  
Let $I$ be a finite set.  
There is the restricted right fibration $\bigl(\disk(\cB)_{/M}\bigr)_{|\sB \Sigma_I} \to \sB\Sigma_I$, which is just the data of a map of spaces.
The fiber of this map over $I$ is equivalent to the space $\conf_I(M)$ of injections $I\hookrightarrow M$.
The fact that $\disk_{n/-}$ satisfies $\ot$-excision yields the following relationship among such configuration spaces.

Let $M = M_- \underset{\RR\times M_0}\bigcup M_+$ be a collar-gluing.
Then there is a weak homotopy equivalence of spaces
\[
\conf_\bullet(M_-)\underset{\conf_\bullet(\RR\times M_0)}\bigotimes \conf_\bullet(M_+)~\simeq~\conf_I(M)
\]
where the lefthand side is a two-sided bar construction; specifically, it is the geometric realization of a simplicial space whose space of $p$-simplices is weakly equivalent to 
\[
\underset{J_- \sqcup J_1\sqcup \dots \sqcup J_p\sqcup J_+ \cong I}\coprod \conf_{J_-}(M_-)\times\Bigl( \underset{1\leq k\leq p} \prod \conf_{J_k}(\RR\times M_0)\Bigr) \times \conf_{J_+}(M_+)
\]
and whose face maps are given by ordered embeddings by the $\RR$-coordinate.  
For $I=\ast$ a singleton, this simply recovers the underlying homotopy type of $M$ as the pushout $M_- \underset{M_0} \coprod M_+$.  

\end{remark}

The following result justifies some of our terminology.

\begin{cor}[Factorization homology satisfies $\ot$-excision and is continuous]\label{fact-excisive}
Let $\cV$ be a symmetric monoidal $\infty$-category that is $\ot$-sifted cocomplete.
Then the each of the absolute and, for $X$ a $\cB$-manifold, the relative factorization homology functors factor
\[
\int_-\colon \Alg_{\disk(\cB)}(\cV)~\longrightarrow~\bcH\bigl(\mfld(\cB),\cV\bigr)
\qquad\text{ and }\qquad
\int_-\colon \Alg_{X}(\cV)~\longrightarrow~
\bcH\bigl(\mfld(\cB)_{/X},\cV\bigr)
\]
through homology theories.

\end{cor}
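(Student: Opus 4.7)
The plan is to establish the two axioms of a homology theory --- $\otimes$-excision and continuity --- separately, in each case reducing to structural results already proven.

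For $\otimes$-excision, suppose $X \cong X_- \cup_{\RR \times X_0} X_+$ is a collar-gluing among $\cB$-manifolds, encoded by a weakly constructible bundle $f \colon X \to [-1,1]$. The plan is to invoke the pushforward theorem (Theorem~\ref{:(}) to obtain a canonical equivalence $\int_X A \simeq \int_{[-1,1]} f_* A$. The pushforward $f_*A$ is an algebra on the closed oriented interval which, under the equivalence of Corollary~\ref{assoc-interval}, corresponds to an $\Ass^{\sf RL}$-algebra $(B;P,Q)$ with underlying values $B = \int_{\RR \times X_0} A$, $P = \int_{X_-} A$, and $Q = \int_{X_+} A$ (using the identifications $f^{-1}(\RR) \cong \RR \times X_0$ and the corresponding ones for the two half-line basics). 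Proposition~\ref{tensor-prod} then identifies $\int_{[-1,1]} f_*A$ with the relative tensor product $\int_{X_-} A \otimes_{\int_{\RR \times X_0} A} \int_{X_+} A$. The last step is to verify that the equivalence so produced coincides with the canonical arrow~(\ref{exc-compare}); this is essentially by construction, since Corollary~\ref{excision-arrow} defines that canonical arrow as the counit of the factorization homology adjunction applied to $(\int A) \circ f^{-1}$, which is precisely the composite we have just built.

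For continuity, given a sequence $W_0 \subset W_1 \subset \cdots$ of open sub-stratified spaces with union $W$, the plan is to use the explicit colimit formula of Theorem~\ref{fact-explicit},
\[
\int_W A \simeq \colim_{\disk(\cB)_{/W}} A \qquad \text{and} \qquad \int_{W_i} A \simeq \colim_{\disk(\cB)_{/W_i}} A,
\]
and commute the two colimits. To do this I would show that the canonical functor $\colim_i \disk(\cB)_{/W_i} \to \disk(\cB)_{/W}$ is final. Using Lemma~\ref{EE-equivs} to identify underlying $\infty$-groupoids with disjoint unions of unordered configuration spaces, this reduces to the statement $\conf_J(W) \simeq \colim_i \conf_J(W_i)$ for each finite set $J$, which is immediate from the fact that any finite configuration, or simplicially-parametrized family of configurations, in $W$ lies in some $W_i$ by compactness.

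The main obstacle will be the continuity clause. The $\otimes$-excision half is essentially formal once Theorem~\ref{:(} and Proposition~\ref{tensor-prod} are applied, but continuity requires promoting the sequential-colimit behavior of configuration spaces to the full $\infty$-categorical statement at the level of the slice categories $\disk(\cB)_{/W_i}$, including spaces of morphisms; this promotion amounts to a careful compactness argument. The relative version over a fixed $\cB$-manifold $X$ follows from the absolute version, since both axioms are conditions on diagrams of $\cB$-manifolds sitting over $X$ and are inherited directly from the corresponding statements in $\mfld(\cB)$.
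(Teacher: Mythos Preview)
Your proposal is correct and matches the paper's proof: $\otimes$-excision via the pushforward theorem (Theorem~\ref{:(}) together with Proposition~\ref{tensor-prod}, and continuity by reducing via Lemma~\ref{EE-equivs} to the statement that $\colim_i \conf_J(W_i)_{\Sigma_J} \simeq \conf_J(W)_{\Sigma_J}$, which the paper phrases as a hypercover and cites Dugger--Isaksen for. One small sharpening: what you actually get from the configuration-space comparison is that $\colim_i \disk(\cB)_{/W_i} \to \disk(\cB)_{/W}$ is an \emph{equivalence} of $\infty$-categories (both sides being right fibrations over $\disk(\cB)$, so the check on underlying groupoids/fibers suffices), which is stronger than, and immediately implies, the finality you state.
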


\begin{proof}
That $\int_-$ satisfies the $\ot$-excision axiom follows by applying Theorem~\ref{:(} to a collar-gluing $X\to[-1,1]$, which, by definition, is a weakly constructible bundle.

Now, consider a sequential union $X_0\subset X_1\subset \dots \subset X$.
This open cover of $X$ has the following property.
\begin{itemize}
\item[~]
Let $S\subset X$ be a finite subset.  Then there is an $i$ for which $S\subset X_i$.  It follows that, for each finite set $J$, the collection of open subsets
\[
\Bigl\{ \conf_J(X_i)_{\Sigma_J} \subset \conf_J(X)_{\Sigma_J}\mid i\geq 0\bigr\}
\]
is a hypercover.
\end{itemize}
It follows from Corollary 1.6 of~\cite{Dugger--Isaksen} that, for each finite set $J$, the map from the colimit
\[
\underset{i\geq 0} \colim   \conf_J(X_i)_{\Sigma_J}~\xra{~\simeq~} ~ \conf_J(X)_{\Sigma_J}
\]
is an equivalence of spaces.
It follows from Lemma~\ref{EE-equivs} that the functor
\[
\colim_i\disk(\cB)_{/X_i} \to \disk(\cB)_{/X}
\]
is an equivalence of $\infty$-categories.  
In particular, for each $\disk(\cB)$-algebra $A$, the canonical arrow from colimits in $\cV$ (which exist, because $\cV$ admits filtered colimits)
\[
\underset{i\geq 0}\colim \int_{X_i} A~{}~\simeq~{}~\underset{i\geq 0} \colim ~\underset{(U\hookrightarrow X_i)\in \disk(\cB)_{/X_i}} \colim A(U)~{}~\xra{~\simeq~}~{}~ \underset{(U\hookrightarrow X)\in \disk(\cB)_{/X}} \colim A(U)~{}~\simeq~{}~ \int_X A
\]
is an equivalence.  

\end{proof}

\begin{cor}\label{mfld-sftd-cpltn}

The restricted Yoneda functor $\disk(\cB)_{/-} \colon \mfld(\cB) \to \Psh\bigl(\disk(\cB)\bigr)$ factors through $\Psh_{\Sigma}\bigl(\disk(\cB)\bigr)$, the free sifted cocompletion of $\disk(\cB)$.  

\end{cor}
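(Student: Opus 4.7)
The plan is to invoke the defining universal property of $\Psh_\Sigma(\disk(\cB))$ as the smallest full $\infty$-subcategory of $\Psh(\disk(\cB))$ that contains the representable presheaves and is closed under sifted colimits (see HTT 5.5.8.15). Consequently, it suffices to show that, for each $\cB$-manifold $X$, the presheaf $\disk(\cB)_{/X}$ on $\disk(\cB)$ is a sifted colimit of representables.

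The first step is to write this presheaf canonically as a colimit of representables via the category-of-elements construction. By the standard density theorem for presheaf $\infty$-categories (HTT 5.1.5.3), any presheaf $F$ on a small $\infty$-category $\cC$ is the colimit $F \simeq \colim_{\cC_{/F}} y$ of representables indexed by its category of elements. Applied to the restricted Yoneda presheaf associated to $X$ regarded as an object of $\mfld(\cB)$, the category of elements is by definition the $\infty$-category $\disk(\cB)_{/X}$, so we obtain a natural equivalence in $\Psh(\disk(\cB))$,
\[
\disk(\cB)_{/X}~\simeq~\colim_{(U\hookrightarrow X)\in \disk(\cB)_{/X}} y_U~,
\]
where $y_U$ denotes the representable presheaf of the disk $U$.

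The second (and final) step is to observe that the indexing $\infty$-category is sifted: this is precisely Corollary~\ref{quittersifted}. Hence the display above exhibits $\disk(\cB)_{/X}$ as a sifted colimit of representables, so it lies in $\Psh_\Sigma(\disk(\cB))$. Since this holds for every $X$, the restricted Yoneda functor $\disk(\cB)_{/-}$ factors through $\Psh_\Sigma(\disk(\cB))$, as claimed.

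There is essentially no obstacle here; the corollary is a direct formal consequence of the siftedness of $\disk(\cB)_{/X}$ established in Corollary~\ref{quittersifted} together with the standard density presentation of a presheaf as a colimit over its category of elements. The only subtlety worth flagging is that $\sqcup$ is \emph{not} the categorical coproduct in $\disk(\cB)$, so one must genuinely use the definition of $\Psh_\Sigma$ as sifted colimits of representables rather than the product-preservation characterization that is available when the source has finite coproducts.
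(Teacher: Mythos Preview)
Your proof is correct and takes a genuinely different route from the paper's. The paper argues via Corollary~\ref{generation} (every $\cB$-manifold is generated from $\disk(\cB)$ by collar-gluings and sequential unions) together with Corollary~\ref{fact-excisive} (the restricted Yoneda functor, being factorization homology with coefficients in the tautological algebra, satisfies $\ot$-excision and is continuous); since $\ot$-excision is computed by a bar construction (a $\bDelta^{\op}$-shaped, hence sifted, colimit) and continuity by a filtered colimit, an induction on the generation procedure shows each $\disk(\cB)_{/X}$ is built from representables by sifted colimits. Your argument bypasses this induction entirely: you write $\disk(\cB)_{/X}$ as a single colimit of representables over its own category of elements and invoke Corollary~\ref{quittersifted} to see that the indexing diagram is already sifted. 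This is more direct and uses strictly less input---you need neither the generation theorem nor the excision/continuity package. The paper's route has the virtue of making explicit how the corollary sits inside the larger narrative of homology theories, but for proving this particular statement your argument is cleaner. Your caveat about $\sqcup$ not being the categorical coproduct in $\disk(\cB)$ is well taken: one must read $\Psh_\Sigma$ as the free sifted cocompletion in the sense of HTT~\S5.3.6 rather than via the product-preservation characterization of HTT~5.5.8.8, which presupposes finite coproducts.
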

\begin{proof}
This is immediate from Corollary~\ref{generation}, after Corollary~\ref{fact-excisive}.

\end{proof}

\begin{remark}[Factorization homology is not a homotopy invariant]
We follow up on Remark~\ref{confs}.  
It is known that $\conf_I(M)$ is not a homotopy invariant of the argument $M$~(\cite{simple}).  
We conclude formally that the functor $\mfld(\cB) \to \Psh\bigl(\disk(\cB)\bigr)$ does not factor through the essential image of the underlying space functor $\mfld(\cB) \to \spaces$.
In other words, factorization homology is \emph{not} a homotopy invariant of manifolds, in general.

\end{remark}

\begin{theorem}[Characterization of factorization homology]\label{hmlgy=FH}
Let $X$ be a $\cB$-manifold.
The factorization homology functors each implement an equivalence of $\infty$-categories
\[
\int_-\colon \Alg_{\disk(\cB)}(\cV) \xra{~\simeq~} \bcH\bigl(\mfld(\cB),\cV\bigr)
\qquad \text{ and }\qquad
\int_-\colon \Alg_{X}(\cV) \xra{~\simeq~} \bcH\bigl(\mfld(\cB)_{/X},\cV\bigr)
~.\]

\end{theorem}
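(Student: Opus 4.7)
The plan is to combine fully faithfulness of $\int_-$ as a left adjoint (Theorem~\ref{fact-explicit}) with a generation argument (Corollary~\ref{generation}) to obtain essential surjectivity. By Corollary~\ref{fact-excisive}, factorization homology already corestricts to $\bcH \subset \Fun^\ot(\mfld(\cB),\cV)$, while Theorem~\ref{fact-explicit} identifies $\int_-$ as a fully faithful left adjoint to the restriction functor. Fully faithfulness is preserved under corestriction to a full subcategory, so it remains to prove essential surjectivity.

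Given a homology theory $H$, set $A := H_{|\disk(\cB)}$ and let $\eta\colon \int A \to H$ denote the counit of the adjunction, a natural transformation in $\Fun^\ot(\mfld(\cB),\cV)$. To show $\eta$ is an equivalence, I would consider the full sub-symmetric monoidal $\infty$-category
\[
\sS~\subset~\mfld(\cB)
\]
consisting of those $X$ for which $\eta_X$ is an equivalence in $\cV$, and verify the hypotheses of Corollary~\ref{generation}. The inclusion $\disk(\cB) \subset \sS$ follows because fully faithfulness of $\int_-$ provides an equivalence $A \xra{\simeq} (\int A)_{|\disk(\cB)}$ (the unit), which a triangle identity identifies with $\eta_{|\disk(\cB)}$. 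Closure under collar-gluings uses $\ot$-excision on both sides: if $W \cong W_- \cup_{\RR \times W_0} W_+$ with $W_\pm$ and $\RR \times W_0$ in $\sS$, then the naturality of $\eta$ with respect to the canonical arrow of Corollary~\ref{excision-arrow} writes $\eta_W$ as the induced map on tensor products, hence an equivalence. Closure under sequential unions is immediate from the continuity axiom, since $\eta_W$ becomes a sequential colimit of equivalences. Corollary~\ref{generation} then yields $\sS = \mfld(\cB)$, completing essential surjectivity in the absolute case.

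For the relative version, one applies the same strategy using Lemma~\ref{shape-existence-operad} in place of Lemma~\ref{shape-existence} for the existence and fully faithfulness of the left adjoint, and replacing Corollary~\ref{generation} with its sliced analogue: every $\cB$-manifold over $X$ can be built from disks over $X$ via collar-gluings and sequential unions taking place inside $X$. The verifications of $\ot$-excision and continuity for the counit pass through unchanged.

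The main subtlety is the collar-gluing verification, where one must unwind the naturality of $\eta$ against the comparison maps of Corollary~\ref{excision-arrow} to identify $\eta_W$ with the induced map on the tensor products appearing on both sides. This compatibility is formal once one recognizes the excision comparison of Corollary~\ref{excision-arrow} as produced by pushforward along $W \to [-1,1]$ (Lemma~\ref{f-inv-factr}, Theorem~\ref{:(}) combined with the tensor product calculation of Proposition~\ref{tensor-prod}; naturality of the counit along this pushforward construction gives the desired compatibility.
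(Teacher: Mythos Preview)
Your proposal is correct and follows essentially the same approach as the paper. The paper's version is more compressed: it observes that Corollary~\ref{generation} implies the restriction functor $\bcH(\mfld(\cB),\cV)\to\Alg_{\disk(\cB)}(\cV)$ is conservative, and then concludes from fully faithfulness of $\int_-$ that the counit $\int(H_{|\disk(\cB)})\to H$ is an equivalence for any homology theory $H$ (since its restriction to $\disk(\cB)$ is the unit, hence an equivalence). Your argument unwinds this conservativity step explicitly by checking closure of the locus $\sS$ under collar-gluings and sequential unions, which is precisely the content of Corollary~\ref{generation}; the two are the same proof at different levels of packaging.
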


\begin{proof}
The proofs of the two equivalences are identical, so we only give that of the first.
Immediately from Corollary~\ref{generation} we have that the forgetful functor $\bcH\bigl(\mfld(\cB),\cV\bigr) \to \Alg_{\disk(\cB)}(\cV)$ is conservative.  

Now, let $H\colon \mfld(\cB) \to \cV$ be a symmetric monoidal functor.
There is a canonical arrow $\int_- (H_{|\disk(\cB)}) \to H$ between symmetric monoidal functors.
Corollary~\ref{fact-excisive} gives that the domain of this arrow is a homology theory.
So $H$ is a homology theory if this arrow is an equivalence.  
Conversely, because $\int_-$ is fully faithful (Proposition~\ref{fact-explicit}) the above paragraph gives that this canonical arrow is an equivalence whenever $H$ is a homology theory.

\end{proof}

\section{Homotopy invariant homology theories}
In this section we give two classes of examples of homology theories.  
The first class is quite formal, and depends only on the homotopy type of the tangent classifier, ${\sf Entr}(X)\xra{\tau_X} \cB$, of a $\cB$-manifold.
The second class is not homotopy invariant in general, yet we identify an understood subclass for when these examples only depend on the \emph{proper} homotopy type of the tangent classifier -- this is the statement of non-abelian Poincar\'e duality for structured stratified spaces.
\\
Fix an $\oo$-category of basics $\cB$.  

\subsection{Classical homology theories}
Recall from~\S\ref{recollections} the tangent classifier functor
$
\mfld(\cB) \xra{\tau} \Psh(\cB),
$
which is symmetric monoidal with respect to coproduct of presheaves (see Construction~\ref{B-monoidals}).  

To state the next result, for $\cC$ an $\infty$-category we denote the full $\infty$-subcategory $\cC\subset\Psh^{\sf Ind\text{-}fin}(\cC)\subset \Psh(\cC)$ which is the smallest that is closed under the formation of finite colimits and filtered colimits.  
\begin{prop}[\S3.3 of~\cite{aft1}]\label{universal-excision}
The symmetric monoidal functor $\mfld(\cB) \xra{\tau} \Psh(\cB)$ is a homology theory, and it factors through $\Psh^{\sf Ind\text{-}fin}(\cB)$.  

\end{prop}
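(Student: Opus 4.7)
The plan is to verify the two homology-theory axioms pointwise on basics and then deduce the factorization through $\Psh^{\sf Ind\text{-}fin}(\cB)$ from Corollary~\ref{generation}. First I will observe that, by Construction~\ref{B-monoidals}, $\Psh(\cB)$ carries its coCartesian symmetric monoidal structure, so $\otimes$ is coproduct and every colimit (in particular pushouts and sequential colimits) in $\Psh(\cB)$ is computed pointwise. Both axioms thus reduce, for each $U \in \cB$, to statements about the space $\tau_X(U) = \Map_{\snglr}(U, X)$. The results of~\cite{aft1} reviewed in Section~\ref{recollections} identify ${\sf Entr}_{[U]}(X) \simeq \Map_{\snglr}(U,X)_{\Aut_0(U)}$ with the underlying space of the stratum $X_{[U]}$ via evaluation at the center of $U$; in other words, $\Map_{\snglr}(U,X) \to X_{[U]}$ is an $\Aut_0(U)$-principal bundle. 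Since such bundles are compatible with homotopy colimits of the base, I can verify the axioms at the level of the strata themselves.

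For $\otimes$-excision, given a collar-gluing $X \cong X_- \cup_{\RR \times X_0} X_+$, the stratum decomposes as an open cover $X_{[U]} = (X_-)_{[U]} \cup (X_+)_{[U]}$ with intersection $(\RR \times X_0)_{[U]}$, and the homotopy pushout of such an open cover recovers $X_{[U]}$ by a standard Mayer--Vietoris argument (or equivalently, by Corollary~1.6 of~\cite{Dugger--Isaksen}, already invoked elsewhere in this paper); lifting along the principal bundle produces the required equivalence of presheaf values. For continuity, a sequential union $W_0 \subset W_1 \subset \dots \subset W$ restricts to a sequential union of open subspaces $(W_0)_{[U]} \subset (W_1)_{[U]} \subset \dots \subset W_{[U]}$ whose colimit is $W_{[U]}$, again by Dugger--Isaksen; lifting along the principal bundle then yields the continuity axiom.

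For the factorization through $\Psh^{\sf Ind\text{-}fin}(\cB)$, I will invoke Corollary~\ref{generation}: since $\tau$ is symmetric monoidal, sends each basic to its representable (which lies in $\cB \subset \Psh^{\sf Ind\text{-}fin}(\cB)$), and, by the axioms just established, preserves collar-gluings (finite pushouts) and sequential unions (filtered colimits), its image must lie in the smallest full subcategory of $\Psh(\cB)$ containing representables and closed under finite and filtered colimits, namely $\Psh^{\sf Ind\text{-}fin}(\cB)$. The main technical point, rather than a genuine obstacle, will be justifying the principal-bundle reduction: that colimits (pushouts and filtered) of the strata lift along the $\Aut_0(U)$-fibration to the corresponding colimits of $\tau_X(U)$. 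This follows formally from the fact that pulling back a principal bundle along a colimit diagram of base spaces yields a principal bundle over the colimit, but it is the step that must be handled most carefully before the rest of the argument becomes purely a matter of bookkeeping.
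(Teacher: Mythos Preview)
The paper does not supply its own proof of this proposition; it is imported from \S3.3 of~\cite{aft1}, so there is nothing in-text to compare against directly. Your outline is sound and essentially self-contained. Two points deserve sharpening.

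First, you should make explicit (as the paper does in the proof of Proposition~\ref{sections-excision}) that in the coCartesian monoidal structure on $\Psh(\cB)$ the relative tensor product $H(W_-)\bigotimes_{H(W_0)}H(W_+)$ collapses to the ordinary pushout $H(W_-)\amalg_{H(W_0)}H(W_+)$; this is what licenses the reduction of $\otimes$-excision to a pointwise Mayer--Vietoris statement.

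Second, the step you correctly flag as delicate requires slightly more than ``principal bundles are compatible with colimits of the base.'' For each open piece $Y\in\{X_-,X_+,\RR\times X_0\}$ the square
\[
\xymatrix{
\Map_{\snglr}(U,Y)\ar[r]\ar[d] & \Map_{\snglr}(U,X)\ar[d]\\
Y_{[U]}\ar[r] & X_{[U]}
}
\]
is a homotopy pullback (both verticals are fibrations with fiber $\Aut_0(U)$, and the induced map of fibers is the identity), but the top-left corner is \emph{not} the literal preimage of $Y_{[U]}$ under the right vertical map: an embedding of $U$ whose center lies in $X_-$ may still have image meeting $X\smallsetminus X_-$. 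So you are not lifting an honest open cover of $X_{[U]}$ to an honest open cover of $\Map_{\snglr}(U,X)$. What actually makes the argument work is that colimits in $\spaces$ are universal (descent for the $\infty$-topos $\spaces$): the open-cover pushout on strata pulls back along the right vertical fibration to a pushout diagram whose corners are identified, via the pullback squares above, with the spaces $\Map_{\snglr}(U,Y)$. The continuity axiom goes through by the same mechanism. Your deduction of the factorization through $\Psh^{\sf Ind\text{-}fin}(\cB)$ from Corollary~\ref{generation} is clean and correct as written.
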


\begin{cor}
Let $\cV$ be an $\infty$-category that admits pushouts and filtered colimits, which we regard as a symmetric monoidal $\infty$-category whose symmetric monoidal structure is given by coproduct.  
Let $F\colon \Psh^{\sf Ind\text{-}fin}(\cB) \to \cV$ be a symmetric monoidal functor that preserves pushouts and filtered colimits (so $F$ is the left Kan extension of its restriction $F_{|\cB}$).
Then the composition 
\[
F\tau \colon \mfld(\cB) \xra{\tau} \Psh^{\sf Ind\text{-}fin}(\cB) \xra{F} \cV
\]
is a homology theory.  

\end{cor}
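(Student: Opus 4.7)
The plan is to verify the two defining axioms of a homology theory -- $\otimes$-excision and continuity -- for the composite $F\tau$, by reducing each to the corresponding property of $\tau$ proved in Proposition~\ref{universal-excision} and invoking the hypotheses on $F$. Observe first that $F\tau$ is symmetric monoidal, being a composition of symmetric monoidal functors, so it lands in $\Fun^\otimes(\mfld(\cB),\cV)$.

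For $\otimes$-excision, let $W\cong W_-\cup_{\RR\times W_0} W_+$ be a collar-gluing of $\cB$-manifolds. Since the symmetric monoidal structure on both $\Psh^{\sf Ind\text{-}fin}(\cB)$ and $\cV$ is coCartesian, the relative tensor product over an object $R$ of modules $M,N$ is computed as the pushout $M\sqcup_R N$ (the two-sided bar construction collapses, since every object is canonically a commutative algebra and modules are just arrows). Thus the $\otimes$-excision statement for $\tau$, provided by Proposition~\ref{universal-excision}, is the assertion that the canonical arrow
\[
\tau(W_-)\underset{\tau(\RR\times W_0)}\bigsqcup \tau(W_+)\xra{~\simeq~}\tau(W)
\]
is an equivalence in $\Psh^{\sf Ind\text{-}fin}(\cB)$. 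Since $F$ preserves pushouts by hypothesis, applying $F$ yields an equivalence $F\tau(W_-)\sqcup_{F\tau(\RR\times W_0)} F\tau(W_+)\xra{\simeq} F\tau(W)$, which is precisely the $\otimes$-excision equivalence for $F\tau$.

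For continuity, let $W_0\subset W_1\subset\cdots\subset X$ be a sequential cover with union $W$. Proposition~\ref{universal-excision} gives an equivalence $\colim_i \tau(W_i)\xra{\simeq}\tau(W)$ in $\Psh^{\sf Ind\text{-}fin}(\cB)$, which is a sequential (hence filtered) colimit. Since $F$ preserves filtered colimits by hypothesis, we obtain $\colim_i F\tau(W_i)\xra{\simeq} F\tau(W)$, as required.

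There is no serious obstacle -- everything is a matter of tracking that in a coCartesian monoidal $\infty$-category the relative tensor coincides with the pushout, and then composing equivalences. The only subtlety worth noting is that the canonical comparison morphism~(\ref{exc-compare}) used to formulate $\otimes$-excision matches, under this identification, the comparison map from the pushout; this is formal from the construction of~(\ref{exc-compare}) via Corollary~\ref{excision-arrow} and the universal property of pushouts in the coCartesian setting. With both axioms verified, $F\tau$ lies in $\bcH(\mfld(\cB),\cV)$.
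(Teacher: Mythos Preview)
Your proof is correct and is precisely the argument the paper leaves implicit (the Corollary is stated without proof, as immediate from Proposition~\ref{universal-excision}). The key observation you make---that in a coCartesian symmetric monoidal $\infty$-category the relative tensor product coincides with the pushout---is exactly what the paper invokes explicitly later in the proof of Proposition~\ref{sections-excision}, so your use of it here is entirely in line with the paper's methods.
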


\begin{example}
\begin{itemize}
\item[~]

\item The \emph{underlying space} functor $\mfld(\cB) \to \spaces$ is a homology theory -- here, we are equipping the target with the symmetric monoidal structure given by coproduct.  
The restriction of this functor to $\mfld(\cB)^{\sf fin}$ (see \S1.1) factors through $\spaces^{\sf fin}$.  

\item The \emph{underlying space of the `frame bundle'} functor $\mfld(\cB) \to \bigl(\spaces_{/|\cB|}\bigr)$ is a homology theory -- here, we are equipping the target with the symmetric monoidal product given by coproduct.  
Note that the restriction of this functor to $\mfld(\cB)^{\sf fin}$ factors through $\bigl(\spaces_{/|\cB|}\bigr)^{\sf fin}$.  

\item Let $\cV$ be a presentable $\infty$-category.
Consider a functor $E\colon \cB\to \cV$.
Use the same notation $E\colon \Psh(\cB) \to \cV$ for the left Kan extension.  
This left Kan extension preserves coproducts, and therefore defines a symmetric monoidal functor, which we again give the same notation, between their coCartesian symmetric monoidal strutures:
$
E\colon \Psh(\cB)^{\amalg} \longrightarrow \cV^{\amalg}.
$
We obtain a composite symmetric monoidal functor
\[
\mfld(\cB) \xra{\tau} \Psh(\cB)^\amalg \xra{E} \cV^{\amalg}~.
\]
In~\S3.3 of~\cite{aft1} we show that $\tau$ carries collar-gluings to pushouts and sequential open covers to sequential colimits.
Using this, and because this left Kan extension $E$ preserves sifted colimits, this symmetric monoidal functor $E\tau$ is a homology theory. 
\\
Should $\cV$ be equipped with an auxiliary symmetric monoidal structure that distributes over colimits, and should the original functor $E\colon \cB\to \cV$ factor through the full $\infty$-subcategory $\cV^{\sf dual}\subset \cV$ consisting of the dualizable objects, then the restriction of this homology theory $E\tau$ to $\mfld(\cB)^{\sf fin}$ factors through $\cV^{\sf dual}$.  
\begin{itemize}
\item Let $E$ be a spectrum.  The assignment $X\mapsto E\wedge X_+$ depicts a homology theory $\mfld(\cB) \to \spectra$, where the latter is equipped with wedge sum as its symmetric monoidal structure.
Should $E$ be dualizable with respect to smash product, then the restriction of this homology theory to $\mfld(\cB)^{\sf fin}$ factors through $\spectra^{\sf dual}$. 
In particular, the suspension spectrum $\Sigma^\infty_+ X$ of the underlying space of a finitary stratified space $X$ is dualizable.

\item Let $V$ be a chain complex over a commutative ring $\Bbbk$.  
The assignment $X\mapsto \sC_\ast(X;V)$ depicts a homology theory $\mfld(\cB) \to \Ch_\Bbbk$, where the latter is equipped with direct sum as its symmetric monoidal structure.  
Should $V$ be dualizable with respect to tensor products over $\Bbbk$, then the restriction of this homology theory to $\mfld(\cB)^{\sf fin}$ factors through $\Ch_\Bbbk^{\sf dual}$.  
\end{itemize}
\end{itemize}

\end{example}

\subsection{1-point compactifications}

We will make use of the concept of a zero-pointed embedding from \cite{ZP}. There is a $\Kan$-enriched functor 
\[
{\sf ZEmb}\colon \snglr^{\op} \times \snglr^{\op} \longrightarrow \Kan_\ast
\]
to pointed Kan complexes, given as follows.
Its value on $(X,Y)$ is the simplicial set, written as ${\sf ZEmb}(X_+,Y^+)$, for which a $p$-simplex is a diagram of conically smooth open embeddings among submersions over $\Delta^p_e$
\[
X \times \Delta^p_e\xra{~f_X~} W  \xla{~f_Y~} Y \times \Delta^p_e
\]
witnessing an open cover of $W$ -- the simplicial structure maps are evident, and the distinguished point is the case where $W$ is the disjoint union.  
That this simplicial set is a Kan complex as claimed follows from a similar argument for why the simplicial set $\snglr(X,Y)$ is a Kan complex which is explained in~\cite{aft1}; details for ${\sf ZEmb}$ can be found in~\cite{ZP}.  
The action $\snglr(X,X')\times \snglr(Y,Y') \times {\sf ZEmb}(X'_+,{Y'}^+) \to {\sf ZEmb}(X_+,Y^+)$ is given on $p$-simplices as 
\[
\bigl(g,h; (X'\times \Delta^p_e \xra{f_{X'}} W' \xla{f_{Y'}} Y'\times \Delta^p_e)\bigr) \mapsto \bigl(X\times \Delta^p_e \xra{f_{X'}g} f_{X'}(g(X)) \cup f_{Y'}(h(Y)) \xla{f_{Y'}h} Y\times \Delta^p_e\bigr)~,
\]
which is quickly noticed to be compatible with the simplicial structure maps. 
It is manifest that this action is compatible with the composition among conically smooth open embeddings.  
Equipping all stratified spaces present in the definition of the functor $\sf ZEmb$ with a $\cB$-structure, and each map as one of $\cB$-manifolds, enhances $\sf ZEmb$ to a functor
\[
\mfld(\cB)^{\op}\times \mfld(\cB)^{\op} \to \spaces_\ast
\]
to based spaces.  
The restricted adjoint to this functor is the \emph{1-point compactified} tangent classifier
\begin{equation}\label{tau-plus}
\tau^+\colon \bigl(\mfld(\cB)^{\sf fin}\bigr)^{\op} \to \Psh^{\sf fin}_\ast(\cB)~,\qquad X\mapsto \bigl({\sf Entr}(X)^+ \xra{~\tau_{X^+}~} \cB\bigr)~,
\end{equation}
where the target is endowed with the symmetric monoidal structure given by coproduct, written here as right fibrations (details and context can be found in~\cite{ZP}).  

Here is a relative version of the functor $\tau^+$.  
Let $X$ be a $\cB$-manifold.
Let us define the following functor:
\[
\tau^{X^+}\colon \mfldd(\cB)_{/X} \longrightarrow \Psh_\ast(\cB)_{/{\sf Entr}(X^+)}~, \qquad (O\subset X)\mapsto \bigl({\sf Entr}(O_{X^+}) \xra{\tau_{O}^{X^+}} \cB\bigr)~.
\]  
Let $O\subset X$ be an open subspace of the underlying stratified space.
Consider the sub-$\Kan$-enriched functor ${\sf ZEmb}(-_+,O_{X^+}) \subset {\sf ZEmb}(-_+,X^+) \colon \bsc^{\op} \to \Kan$ whose value on $U$ is the sub-simplicial set of ${\sf ZEmb}(U,X)$ consisting of those $U\times \Delta^p_e \xra{f_U}W\xla{f_X} X\times \Delta^p_e$ for which $f_{U} \amalg (f_X)_{|O} \to W$ is an open cover -- easy to check is that this simplicial set is indeed a Kan complex. 
Again, equipping each such $W$ with a $\cB$-structure and each map as one of $\cB$-manifolds, there is the presheaf $\cB^{\op} \to \spaces_\ast$.  
We will use the notation ${\sf Entr}(O_{X^+}) \to \cB$ for the associated right fibration, which is equipped with a section.
The assignment $(O\subset X)\mapsto \bigl({\sf Entr}(O_{X^+}) \xra{\tau_{O}^{X^+}} \cB\bigr)$ depicts the desired functor.
Notice that this functor $\tau^{X^+}$ canonically extends as a map of $\infty$-operads, where the target is equipped with coproduct as its symmetric monoidal structure.

\begin{lemma}\label{pre-excision}
Let $X\cong X_- \underset{\RR\times X_0}\bigcup X_+$ be a collar-gluing among finitary $\cB$-manifolds. 
Then, in the canonical diagram of pointed presheaves on $\cB$
\[
\xymatrix{
{\sf Entr}\bigl((\RR\times X_0)_{X^+}\bigr)  \ar[r]  \ar[d]
&
{\sf Entr}\bigl((X_+)_{X^+}\bigr)  \ar[r]  \ar[d]
&
{\sf Entr}\bigl((X_+)^+\bigr)  \ar[d]^-=
\\
{\sf Entr}\bigl((X_-)_{X^+}\bigr)  \ar[r]  \ar[d]
&
{\sf Entr}(X^+)  \ar[d]  \ar[r]
&
{\sf Entr}\bigl((X_+)^+\bigr)
\\
{\sf Entr}\bigl((X_-)^+\bigr)  \ar[r]^-=
&
{\sf Entr}\bigl((X_-)^+\bigr) 
&
}
\]
the upper left square is a pushout, and the linear sequences of maps are cofibration sequences.  

\end{lemma}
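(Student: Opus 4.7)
The plan is to verify all claims by working pointwise over $\cB$. Each object in the diagram is a right fibration over $\cB$, equivalently a pointed presheaf of spaces, and both pushout squares and cofibration sequences in $\Psh_\ast(\cB)$ may be checked after evaluating at every basic $U \in \cB$. This reduces the lemma to assertions about the pointed spaces ${\sf ZEmb}(U_+, -)$ of zero-pointed embeddings defined in~(\ref{tau-plus}) and the accompanying relative version.

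For the upper-left pushout square, the key input is that the assignment $O \mapsto {\sf Entr}(O_{X^+})$ from open subsets of $X$ to pointed presheaves on $\cB$ is a homotopy cosheaf. After evaluation on a basic $U$, the condition defining ${\sf ZEmb}(U_+, O_{X^+}) \subset {\sf ZEmb}(U_+, X^+)$ is that $f_U(U) \cup f_X(O)$ is an open cover of the ambient $W$, a condition which is local in $O$. For the open cover $X = X_- \cup X_+$ with intersection $\RR \times X_0$, I would argue -- via a hypercover-comparison argument in the spirit of Corollary~1.6 of~\cite{Dugger--Isaksen} as applied elsewhere in this paper -- that the canonical map from the pointed-space pushout ${\sf ZEmb}(U_+,(X_-)_{X^+}) \cup_{{\sf ZEmb}(U_+,(\RR\times X_0)_{X^+})} {\sf ZEmb}(U_+,(X_+)_{X^+})$ to ${\sf ZEmb}(U_+, X^+)$ is a weak equivalence. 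This yields the desired pushout in pointed presheaves on $\cB$.

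For the cofibration sequences, the approach is to use collapse maps associated to the collar-gluing. The weakly constructible bundle $f\colon X \to [-1,1]$ determines a map of zero-pointed $\cB$-manifolds $X^+ \to (X_+)^+$ obtained by crushing $X_-$ to the basepoint; the collar structure along $\RR \times X_0$ ensures that this is a morphism in the appropriate conically smooth sense. Postcomposition induces a map of pointed presheaves ${\sf Entr}(X^+) \to {\sf Entr}((X_+)^+)$ whose preimage of the basepoint is exactly ${\sf Entr}((X_-)_{X^+})$: a zero-pointed embedding $U_+ \to X^+$ maps to the basepoint of $(X_+)^+$ precisely when its image lies in $X_- \cup \{\text{basepoint}\}$, which by definition is the condition for membership in ${\sf Entr}((X_-)_{X^+})$. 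Recognizing this fiber sequence as a cofiber sequence in pointed presheaves then gives the middle row; the symmetric collapse (crushing $X_+$) gives the middle column, and restricting the same arguments to the open subspaces $X_\pm$ yields the top row and left column.

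The main obstacle will be verifying, at the pointwise level, that the collapse map ${\sf ZEmb}(U_+,X^+) \to {\sf ZEmb}(U_+,(X_+)^+)$ is genuinely a quotient realizing the cofiber in pointed spaces, rather than merely a fibration with the expected fiber. This requires the conically smooth collar structure from~\cite{aft1} to extend zero-pointed embeddings across the collapsed region, ensuring that every zero-pointed embedding into $(X_+)^+$ lifts (up to isotopy) to one into $X^+$. Once this surjectivity-up-to-isotopy is established, the identification of the cofiber follows formally, and the compatibility with the pushout from the preceding step gives the full diagram.
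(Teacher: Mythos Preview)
Your approach is genuinely different from the paper's, and both of its main steps have gaps that would need substantial new ideas to close.

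For the pushout square, the Dugger--Isaksen hypercover argument does not apply as stated: the subspaces ${\sf ZEmb}\bigl(U_+,(X_-)_{X^+}\bigr)$ and ${\sf ZEmb}\bigl(U_+,(X_+)_{X^+}\bigr)$ need not cover ${\sf ZEmb}(U_+,X^+)$. Concretely, a span $U \xra{f_U} W \xla{f_X} X$ for which $W\smallsetminus f_U(U)$ meets both $f_X(X_+\smallsetminus X_-)$ and $f_X(X_-\smallsetminus X_+)$ lies in neither subspace, since neither $f_U(U)\cup f_X(X_-)$ nor $f_U(U)\cup f_X(X_+)$ then covers $W$. So while the pushout is true, it is not an open-cover pushout of subspaces of ${\sf ZEmb}(U_+,X^+)$, and the cosheaf heuristic does not go through.

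For the cofiber sequences, your identification of the preimage of the basepoint is not correct. Under the action of the open embedding $X_+\hookrightarrow X$ on ${\sf ZEmb}$, a span $(f_U,f_X)$ maps to the basepoint of ${\sf ZEmb}\bigl(U_+,(X_+)^+\bigr)$ exactly when $f_U(U)\cap f_X(X_+)=\emptyset$, which is a strictly different condition from $f_U(U)\cup f_X(X_-)=W$ defining membership in ${\sf ZEmb}\bigl(U_+,(X_-)_{X^+}\bigr)$. Even granting a correct preimage and surjectivity-up-to-isotopy, these would give at best a fiber sequence in pointed spaces; the passage to a cofiber sequence does not follow formally from a surjective map with identified point-preimage.

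The paper avoids both difficulties by a compactification trick that reduces the zero-pointed statements to unpointed ones. Using that the collar-gluing is among \emph{finitary} $\cB$-manifolds, one extends it to a collar-gluing $\ov{X}\to[-1,1]$ among compact stratified spaces with boundary. The key identification, proved stratum-by-stratum, is that for each relevant open $O\subset X$ one has
\[
{\sf Entr}(O_{X^+}) \;\simeq\; \ast \underset{{\sf Entr}\bigl((0,1)\times\partial\ov{O}\bigr)}{\coprod} {\sf Entr}(O)~.
\]
This expresses every pointed presheaf in the diagram as a cofiber of maps of \emph{unpointed} presheaves ${\sf Entr}(-)$, for which the pushout property along the collar-gluing is already established in~\cite{aft1} both for the interiors and for the boundary collars. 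Taking levelwise mapping cones of the resulting map of pushout squares yields the upper-left pushout, and the cofiber sequences fall out from the same identification together with the strata-preserving homeomorphisms $(X_\pm)^+\cong \ov{X}_\pm/\partial$. The compactification is the missing idea in your outline: it is what lets one trade zero-pointed excision for ordinary excision already in hand.
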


\begin{proof}
The appearance of $\cB$ is superficial, and so the statement is equivalent to the one with $\cB = \bsc$.  

Because the collar-gluing, written as a constructible bundle $X\xra{f}[-1,1]$, is one among \emph{finitary} stratified spaces, it can be extended, up to non-canonical isotopy, to a collar-gluing $\ov{X} \xra{\w{f}}[-1,1]$ among stratified spaces \emph{with boundary}, with $\ov{X}$ compact.  
By composing $\w{f}$ with the constructible bundle $[-1,1]\to [-1,1]$ determined by declaring its restriction to $(-\frac{1}{2},\frac{1}{2})$ to be an orientation preserving isomorphism onto $(-1,1)$, we can assume that the closure of the image of $\RR\times \ov{X}_0 \hookrightarrow \ov{X}$ intersects $\partial \ov{X}$ as $\RR\times \partial \ov{X}_0$. 

Let $\ov{O}\subset \ov{X}$ be a sub-stratified space with boundary, and choose a collar-neighborhood $[0,1)\times\partial \ov{O}\subset \ov{O}$.
We claim that the canonical morphism of pointed presheaves on $\bsc$
\[
\ast \underset{{\sf Entr}(0,1)\times \partial \ov{O})} \coprod {\sf Entr}(O)\xra{~\simeq~}{\sf Entr}(O_{X^+})
\]
is an equivalence.
This is the case if and only if for each singularity type $[U]$, the likewise map of pointed spaces 
\[
\ast \underset{{\sf Entr}_{[U]}((0,1)\times \partial \ov{O})} \coprod {\sf Entr}_{[U]}(O)\xra{~\simeq~}{\sf Entr}_{[U]}(O_{X^+})
\]
is an equivalence.
Write $U\cong \RR^i\times \sC(Z)$, and use the notation $\ov{U} := [0,1)\times \RR^{i-1}\times \sC(Z)$ for the basic with boundary and $\partial \ov{U} := \RR^{i-1}\times \sC(Z)$ for its boundary.  
Developments of~\cite{aft1} give that the lefthand space in this last expression is canonically identified as the pushout 
\[
\ast \underset{((0,1)\times \partial\ov{O})_{[U]}} \coprod O_{[U]}
\]
in terms the underlying spaces of the $[U]$- and $[\partial \ov{U}]$-strata.
Along the same lines, the righthand pointed space in that expression is canonically identified as the pushout 
\[
\ast \underset{(\partial \ov{O})_{\partial[\ov{U}]}} \coprod \ov{O}_{[U]\leq [\ov{U}]}
\]
in terms of the underlying spaces of the $([U]\leq [\ov{U}])$- and $[\partial \ov{U}]$-strata. 
That the the map of pointed spaces in that expression is an equivalence follows by inspection: $((0,1)\times \partial \ov{O})_{[U]} = \RR\times (\partial \ov{O})_{[\partial \ov{U}]}\simeq (\partial \ov{O})_{[\partial \ov{U}]}$, and the inclusion $O_{[U]} \subset \ov{O}_{[U]\leq [\ov{U}]}$ induces a weak homotopy equivalence of underlying topological spaces.  

Developments in~\cite{aft1} give that each of the two squares of presheaves on $\bsc$
\[
\xymatrix{
{\sf Entr}\bigl(\RR\times X_0\bigr)  \ar[r]  \ar[d]
&
{\sf Entr}\bigl(X_+\bigr)   \ar[d]
\\
{\sf Entr}\bigl(X_-\bigr)  \ar[r]  
&
{\sf Entr}(X)  
}
\]
and
\[
\xymatrix{
{\sf Entr}\bigl(\RR\times (0,1)\times \partial \ov{X}_0\bigr)  \ar[r]  \ar[d]
&
{\sf Entr}\bigl((0,1)\times \partial\ov{X}_+\bigr)   \ar[d]
\\
{\sf Entr}\bigl((0,1)\times \partial \ov{X}_-\bigr)  \ar[r]  
&
{\sf Entr}((0,1)\times \partial \ov{X})  
}
\]
are pushouts. 
The latter maps to the former upon consistent choices of collars of the respective boundaries.  
By taking the levelwise mapping cones of this map of squares, the result follows then from the first paragraph, after observing a standard strata-preserving homeomorphism $(X_-)^+ \cong \bigr(\ov{X}_- \setminus \RR_{>-1}\times \ov{X}_0\bigr)/ \partial$, where $\partial =\bigl( \partial \ov{X}_- \bigcup \{-1\}\times \ov{X}_0\bigr) \setminus \RR_{>-1}\times \ov{X}_0 $, and likewise for $(X_+)^+$.  

\end{proof}

\subsection{Compactly supported cohomology}

Let $\cE$ be a spectrum object of $\Psh_\ast(\cB)$, a model for which is as a functor $\cB^{\op} \to \spectra$.  
Now, Corollary~1.4.4.5 of~\cite{HA} grants that a finite limit preserving functor from a stable $\infty$-category to $\spaces$ canonically factors through $\spectra \xra{\Omega^\infty} \spaces$.  
In particular, for $\cS$ a stable $\infty$-category, the Yoneda functor $\cS^{\op}\times \cS \to \spaces$ canonically factors through $\spectra \xra{\Omega^\infty}\spaces$. 
In particular, there is a factorization of the Yoneda functor
\[
\Map_{\Psh_\ast(\cB)}(-,\Omega^\infty\cE)\colon \Psh_\ast(\cB)^{\op} \xra{\cE^-} \spectra \xra{\Omega^\infty} \spaces_\ast~,
\]
the first of which is symmetric monoidal with respect to coproduct on the source and on the target, and the second of which is symmetric monoidal with respect to coproduct on the source and product on the target.  
We will denote the composite symmetric monoidal functor
\begin{equation}\label{E-c}
\cE_{\sf c}\colon \mfld(\cB)^{\sf fin} \xra{\tau^+}\Psh^{\sf fin}_\ast(\cB)^{\op}\xra{\cE^-} \spectra
\end{equation}
where here the target is equipped with the symmetric monoidal structure given by wedge sum.
We will use the notation
\[
A_\cE ~:=~ (\cE_{\sf c})_{|\disk(\cB)} \in \Alg_{\disk(\cB)}(\spectra)
\]
for the restriction.  
We give notation for the relative version:
\[
\cE_{X^+}\colon \mfld(\cB)_{/X^+} \xra{\tau^{X^+}}\Psh_\ast(\cB)\xra{\cE^-} \spectra~.  
\]

\begin{prop}[Compactly supported cohomology]\label{sections-excision}
Let $\cE$ be a spectrum object of $\Psh_\ast(\cB)$.  
Then the symmetric monoidal functor $\cE_{\sf c}\colon \mfld(\cB) \to \spectra$ satisfies $\ot$-excision.
In particular, the canonical map
\[
\int_- A_\cE\xra{~\simeq~} \cE_{\sf c}(-)
\]
is an equivalence of symmetric monoidal functors $\mfld(\cB)^{\sf fin}\to \spectra$.  

\end{prop}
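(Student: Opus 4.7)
The plan is to verify $\otimes$-excision for $\cE_{\sf c}$ directly from Lemma~\ref{pre-excision} by exploiting the stability of the target $\spectra$, and then deduce $\int A_\cE \simeq \cE_{\sf c}$ on finitary manifolds by a generation argument.

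First I reduce the excision condition. Because wedge sum is the coproduct in $\spectra$, the symmetric monoidal structure $(\spectra,\vee)$ is coCartesian, so every associative algebra there is canonically commutative via the fold map and relative tensor products $M \otimes_A N$ identify with pushouts $M \sqcup_A N$ in the underlying $\infty$-category. Hence $\otimes$-excision for $\cE_{\sf c}$ reduces to verifying, for each collar-gluing $X \cong X_- \underset{\RR \times X_0}\bigcup X_+$ of finitary $\cB$-manifolds, that the canonical square
\[
\xymatrix{
\cE_{\sf c}(\RR \times X_0) \ar[r] \ar[d] & \cE_{\sf c}(X_+) \ar[d] \\
\cE_{\sf c}(X_-) \ar[r] & \cE_{\sf c}(X)
}
\]
is a pushout in $\spectra$.

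Next, I invoke Lemma~\ref{pre-excision}, which supplies both a pushout square in $\Psh_\ast(\cB)$ among the relative enter-path presheaves ${\sf Entr}((-)_{X^+})$ of the four pieces and four cofiber sequences linking these to their absolute counterparts ${\sf Entr}((-)^+)$. The symmetric monoidal contravariant functor $\cE^-\colon \Psh_\ast(\cB)^{\op} \to \spectra$ is determined by mapping into the spectrum $\cE$, and hence sends pushouts in $\Psh_\ast(\cB)$ to pullbacks in $\spectra$ and cofiber sequences to fiber sequences. By the stability of $\spectra$, the pullback arising from the Lemma's pushout square is simultaneously a pushout. The four fiber sequences from Lemma~\ref{pre-excision} identify $\cE_{\sf c}(X_\pm)$ and $\cE_{\sf c}(\RR \times X_0)$ as fibers and cofibers of the maps in this pullback/pushout; rearranging these identifications within the stable $\infty$-category $\spectra$ transforms the square of relative-version spectra into the desired absolute-version MV pushout, establishing $\otimes$-excision.

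For the equivalence $\int_- A_\cE \xra{\simeq} \cE_{\sf c}$ as symmetric monoidal functors $\mfld(\cB)^{\sf fin} \to \spectra$: by construction $A_\cE := (\cE_{\sf c})_{|\disk(\cB)}$, so the counit of the adjunction of Definition~\ref{def:fact-hmlgy} provides a canonical arrow $\int_- A_\cE \to \cE_{\sf c}$ that is an equivalence on $\disk(\cB)$. Both source and target satisfy $\otimes$-excision---the left by Corollary~\ref{fact-excisive}, the right by the preceding paragraph. The full sub-symmetric monoidal $\infty$-category of $\mfld(\cB)^{\sf fin}$ on which the counit is an equivalence therefore contains $\disk(\cB)$ and is closed under collar-gluings; by the finitary analogue of Corollary~\ref{generation}, this subcategory is all of $\mfld(\cB)^{\sf fin}$. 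The main obstacle lies in the second paragraph: reconciling the pullback of the \emph{relative} presheaves $\cE^-({\sf Entr}((-)_{X^+}))$ produced by Lemma~\ref{pre-excision} with the pullback of the \emph{absolute} spectra $\cE_{\sf c}(-)$ appearing in the MV square. The two families differ in general as pointed presheaves on $\cB$, but the four cofiber sequences together with stability of $\spectra$ supply the extensions that cancel in the pullback computation.
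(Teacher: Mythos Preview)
Your strategy matches the paper's exactly: reduce $\otimes$-excision in $(\spectra,\vee)$ to a pushout statement, apply the contravariant $\cE^{-}$ to the pushout square of Lemma~\ref{pre-excision} to obtain a pullback (hence pushout, by stability) of the \emph{relative} spectra $\cE_{X^+}(-)$, and then pass to the absolute spectra $\cE_{\sf c}(-)$ via the cofiber sequences of that lemma. Your ``in particular'' argument via the counit and closure under collar-gluings is precisely the mechanism of Theorem~\ref{hmlgy=FH} restricted to $\mfld(\cB)^{\sf fin}$, and is fine.

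There is one genuine imprecision, which you flag yourself in the last paragraph but do not resolve. The cofiber sequences displayed in Lemma~\ref{pre-excision} involve the presheaves ${\sf Entr}\bigl((\RR\times X_0)_{X^+}\bigr)$, ${\sf Entr}\bigl((X_\pm)_{X^+}\bigr)$, ${\sf Entr}(X^+)$, and ${\sf Entr}\bigl((X_\pm)^+\bigr)$; the \emph{absolute} presheaf ${\sf Entr}\bigl((\RR\times X_0)^+\bigr)$ does not appear anywhere in that diagram. So you cannot identify $\cE_{\sf c}(\RR\times X_0)$ directly as a fiber or cofiber of maps in that square, and your claim that the four fiber sequences ``identify $\cE_{\sf c}(X_\pm)$ and $\cE_{\sf c}(\RR\times X_0)$ as fibers and cofibers'' is not correct for the $\RR\times X_0$ corner. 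The paper supplies this missing piece explicitly: the identification
\[
\cE_{\sf c}(\RR\times X_0)~\simeq~\Omega\,\cE_{X^+}(\RR\times X_0)~,
\]
which reflects that the absolute one-point compactification $(\RR\times X_0)^+$ is a suspension while the relative compactification inside $X$ adds separate points at the two ends of the collar. With that loop-shift in hand, comparing cofibers of the two columns of the pullback square (using the row and column cofiber sequences of Lemma~\ref{pre-excision}) yields the desired Mayer--Vietoris pushout. This is the content your ``rearranging'' step needs to make precise.
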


\begin{proof}
Let $X\cong X_-\underset{\RR\times X_0}\bigcup X_+$ be a collar-gluing among $\cB$-manifolds.  
After Lemma~\ref{pre-excision}, there is the pullback diagram among spectra
\[
\xymatrix{
\cE_{\sf c}(X^+)  \ar[r]  \ar[d]
&
\cE_{X^+}(X_+)  \ar[d]
\\
\cE_{X^+}(X_-)  \ar[r]
&
\cE_{X^+}(\RR\times X_0)~.
}
\]
Thereafter, using the canonical identification $\Omega \cE_{X^+}(\RR\times X_0) \simeq \cE_{\sf c}(\RR\times X_0)$ and Lemma~\ref{pre-excision} again, there is the pushout diagram of spectra
\[
\xymatrix{
\cE_{\sf c}(X_0) \ar[r]  \ar[d]
&
\cE_{\sf c}(X_+)  \ar[d]
\\
\cE_{\sf c}(X_-)\ar[r]
&
\cE_{\sf c}(X^+)~.
}
\]

Because we are equipping spectra with wedge sum, which is its categorical coproduct, the forgetful functor $\Alg_{\Ass^{\sf RL}}(\spectra) \xra{\simeq} \Fun\bigl((-\la 0 \to +), \spectra\bigr)$ is an equivalence; and, for $(P\la A \to Q)$ an object of this functor category, regarded as an $\Ass^{\sf RL}$-algebra in $\spectra$, then the canonical map from the pushout $P\underset{A} \coprod Q \xra{\simeq} P\underset{A}\bigotimes Q$ is an equivalence of spectra.  
And so, we have verified that $\cE_{\sf c}$ satisfies $\ot$-excision.  

\end{proof}

Specializing Proposition~\ref{sections-excision} to the case that $\cE = \SS \colon \cB^{\op} \to \spectra$ is the constant functor at the sphere spectrum, we have the following classical consequence.
For this case, we will use the special notation 
\[
\DD\bigl((-)^+\bigr)~:=~\SS_{\sf c}(-) ~,\qquad \text{ and }\qquad \omega ~:=~ A_{\SS}~.
\]
This notation is invoked because, for each $\cB$-manifold $X$, the spectrum $\DD(X^+)$ is the Spanier-Whitehead dual of the 1-point compactification of the underlying space of $X$; and the value of $\omega$ on a basic $U$ is $\DD(U^+)$, the stalk of the dualizing sheaf for the site $\mfldd(\cB)$ at $U$.  

Immediate from the present definitions is the following result. 
\begin{prop}
Let $\cB = \BO(n)$, so that a $\cB$-manifold is an ordinary smooth $n$-manifold.  
Let $X$ be smooth $n$-manifold that is the interior of a compact manifold with boundary.
Then there is a canonical identification 
\[
X^{-\tau_X} \simeq \int_X\omega
\] 
from the Thom spectrum of the virtual negative of the tangent bundle of $X$.  

\end{prop}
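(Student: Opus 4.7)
The plan has two steps. The first is to apply Proposition~\ref{sections-excision} with $\cE = \SS$, the constant presheaf at the sphere spectrum on $\cB = \BO(n)$. By construction $A_\SS = \omega$, so this yields
\[
\int_X \omega ~\simeq~ \SS_{\sf c}(X) ~=~ \DD(X^+)~.
\]
To ensure finitariness of $X$ (required by Proposition~\ref{sections-excision}), I would use a handle decomposition of a compact closure $\ov{X}$, which presents $X$ as a finite iterated collar-gluing of Euclidean spaces and thereby exhibits $X$ as an object of $\mfld(\BO(n))^{\sf fin}$.

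The second step is to identify $\DD(X^+)$ with the Thom spectrum $X^{-\tau_X}$ via classical Atiyah duality. Choosing a compact manifold with boundary $\ov{X}$ whose interior is $X$, there is a standard homotopy equivalence $X^+ \simeq \ov{X}/\partial\ov{X}$ induced by a collar of $\partial\ov{X}$, and Atiyah duality supplies a canonical equivalence of spectra $\DD(\ov{X}/\partial\ov{X}) \simeq \ov{X}^{-\tau_{\ov{X}}}$; restricting the tangent bundle along $X\hookrightarrow\ov{X}$ then rewrites this as $X^{-\tau_X}$.

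The main subtlety is naturality: one would like the composite identification to be truly canonical, and in particular independent of the choice of compactification $\ov{X}$. The cleanest way to achieve this, while fitting within the framework of the paper, is to verify directly that the assignment $X \mapsto X^{-\tau_X}$ defines a symmetric monoidal functor $\mfld(\BO(n))^{\sf fin}\to \spectra$ that satisfies $\ot$-excision and continuity, with value on $\RR^n$ equal to $\SS^{-n} \simeq \omega(\RR^n) = \DD(S^n)$ as $\disk_n$-algebras. Theorem~\ref{hmlgy=FH} then produces a canonical equivalence of homology theories $(-)^{-\tau_{(-)}} \simeq \int_- \omega$, which evaluates on $X$ to the claimed formula.
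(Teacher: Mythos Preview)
Your proposal is correct and matches the paper's approach. The paper offers no proof at all---it simply declares the result ``immediate from the present definitions,'' having just specialized Proposition~\ref{sections-excision} to $\cE=\SS$ (which yields $\int_X\omega\simeq\DD(X^+)$) and having just remarked that $\DD(X^+)$ is by notation the Spanier--Whitehead dual of $X^+$; the identification $\DD(X^+)\simeq X^{-\tau_X}$ is then the classical Atiyah duality, taken as known. Your two-step argument is exactly this unpacking, and your alternative route via Theorem~\ref{hmlgy=FH} (checking that $X\mapsto X^{-\tau_X}$ is itself a homology theory with the right value on disks) is a clean way to secure the canonicity you flag, though the paper does not pursue it.
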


\begin{cor}[Atiyah duality]\label{atiyah}
Let $X$ be a finitary $\cB$-manifold.
Then there is a canonical identification
\[
\int_X \omega  ~\simeq~ \DD(X^+)~.
\]
More generally, for each functor $\cE\colon \cB^{\op} \to \spectra$, there is a canonical identification
\[
\int_X \cE\wedge \omega~\simeq~ \cE_{\sf c}(X)~.
\]

\end{cor}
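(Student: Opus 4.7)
Both equivalences should fall out of Proposition~\ref{sections-excision} after identifying the $\disk(\cB)$-algebras that appear.

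The first statement is the case $\cE = \SS$, the constant presheaf at the sphere spectrum.  By definition $\omega = A_\SS$ and $\DD\bigl((-)^+\bigr) = \SS_{\sf c}(-)$, so Proposition~\ref{sections-excision} immediately yields
\[
\int_X \omega \;=\; \int_X A_\SS \;\xra{~\simeq~}\; \SS_{\sf c}(X) \;=\; \DD(X^+).
\]

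For the general statement the plan is to produce a canonical equivalence of $\disk(\cB)$-algebras $A_\cE \simeq \cE \wedge \omega$, where the smash product on the right is formed pointwise in $\spectra$ using that wedge (the symmetric monoidal structure chosen on $\spectra$) distributes over smash.  On a basic $U$, the pointed right fibration ${\sf Entr}(U^+)\to \cB$ differs from ${\sf Entr}(U)\to \cB$ only by the addition of a single stratum -- the point at infinity -- since $U$ is a cone and is therefore contractible away from $\infty$.  This decomposition, together with the definition~(\ref{E-c}) of $\cE_{\sf c}$ as the mapping spectrum out of $\tau^+$, identifies
\[
A_\cE(U) \;=\; \cE_{\sf c}(U) \;\simeq\; \cE(U)\wedge \DD(U^+) \;=\; \cE(U)\wedge \omega(U).
\]
Naturality of this identification in $\disk(\cB)$ comes from the functoriality of $\tau^+$ built from ${\sf ZEmb}$, while its compatibility with disjoint unions follows because $\cE_{\sf c}$, $\omega$, and the operation $\cE\wedge(-)$ all convert disjoint unions into wedge sums.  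Feeding the resulting equivalence $A_\cE \simeq \cE\wedge \omega$ of $\disk(\cB)$-algebras back into Proposition~\ref{sections-excision} gives
\[
\int_X \cE\wedge \omega \;\simeq\; \int_X A_\cE \;\xra{~\simeq~}\; \cE_{\sf c}(X),
\]
as required.

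The main obstacle is the pointwise identification $\cE_{\sf c}(U) \simeq \cE(U)\wedge \omega(U)$ on basics and its promotion to an equivalence of symmetric monoidal functors on $\disk(\cB)$; the geometric content is that for a basic $U$ the compactification $U^+$ adds only the single stratum given by the basepoint, so that the relevant mapping spectrum factors as a smash product.  Everything else is formal: the extension over finite disjoint unions of basics uses only the distributivity of $\wedge$ over $\vee$ in $\spectra$, and functoriality in embeddings among basics is inherited from the constructions of $\tau^+$ and $A_{(-)}$ that precede Proposition~\ref{sections-excision}.
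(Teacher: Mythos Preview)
Your overall strategy is right and is exactly what the paper intends: both parts are meant to be read off from Proposition~\ref{sections-excision}. The first statement you handle cleanly by specializing to $\cE=\SS$.

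For the second statement, however, your justification of the pointwise identification $A_\cE(U)\simeq \cE(U)\wedge\omega(U)$ is not solid. Two issues:
\begin{itemize}
\item Your geometric claim that ``${\sf Entr}(U^+)$ differs from ${\sf Entr}(U)$ only by the addition of a single stratum --- the point at infinity --- since $U$ is a cone'' is not correct as stated and in any case does not obviously yield a smash decomposition of the mapping spectrum $\cE^{\tau^+(U)}$. What you actually need is a description of the pointed presheaf $\tau^+(U)$ on $\cB$ that makes $\cE^{\tau^+(U)}\simeq \cE(U)\wedge\DD(U^+)$ transparent; the cone structure alone does not give this.
\item You have not dealt with the variance mismatch: $\cE\colon\cB^{\op}\to\spectra$ is contravariant in embeddings of basics, while $\omega$ and $A_\cE$ are covariant on $\disk(\cB)$. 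So before you can even assert $A_\cE\simeq\cE\wedge\omega$ as $\disk(\cB)$-algebras you must say what functoriality the pointwise smash $U\mapsto\cE(U)\wedge\omega(U)$ carries, and why it agrees with that of $A_\cE$. Your appeal to ``naturality from ${\sf ZEmb}$'' sweeps this under the rug.
\end{itemize}

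The paper itself gives no proof of this corollary; it treats $\cE\wedge\omega$ essentially as a name for $A_\cE$, so that the second display is a restatement of Proposition~\ref{sections-excision} in more suggestive notation (justified by the constant-$\cE$ case worked out in the classical Poincar\'e duality example that follows, where $\sH\ZZ\wedge\omega(U)\simeq \sH\ZZ\wedge\SS^{-n}$ is literal). If you want to prove more --- that $A_\cE$ really is a pointwise smash in a sense independent of its definition --- you will need a sharper argument than the one you have written.
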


\begin{example}[Classical Poincar\'e duality]
Let us consider the case $\cB = \sD_n^{\sf or}$ so that a $\cB$-manifold is an oriented smooth $n$-manifold; and an oriented $n$-manifold $X$ that is the interior of a compact manifold with boundary; and $\cE = \sH\ZZ\colon (\sD_n^{\sf or})^{\op} \to \spectra$ is the constant functor at the Eilenberg-MacLane spectrum for $\ZZ$.
Identify $\sH\ZZ_{\sf c}(X)\simeq \sH\ZZ^{X^+}$ as the mapping spectrum from the 1-point compactification.  
For each object $U\in \sD_n^{\sf or}$ there is a canonical identification $\sH\ZZ\wedge \omega(U)\simeq \sH\ZZ\wedge \SS^{-n}\simeq \Omega^n \sH\ZZ$.    
And so, there is a canonical equivalence of spectra $\int_X \sH\ZZ\wedge \omega~\simeq \Omega^n\sH\ZZ\wedge X_+$.
Putting these identification into Corollary~\ref{atiyah} arrives at a canonical equivalence of spectra:
$
\Omega^n \sH\ZZ\wedge X_+~\simeq~ \sH\ZZ^{X^+}.  
$
Upon taking homotopy groups, indexed appropriately, we arrive at classical Poincar\'e duality: 
\[
\sH_\ast(X;\ZZ)\cong \sH^{n-\ast}_{\sf c}(X;\ZZ)~.
\]

\end{example}

\subsection{Non-abelian Poincar\'e duality}

\begin{definition}[Coefficient system]\label{def.coef-sys}
A \emph{coefficient system (for $\cB$-manifolds)} is a pointed presheaf on $\cB$.  
Let $E$ be a coefficient system.
The \emph{compactly supported sections} functor is the composition of the symmetric monoidal functors
\[
\Gamma_{\!\sf c}^E\colon \mfld(\cB)^{\sf fin} \xra{\tau^+} \Psh^{\sf fin}_\ast(\cB)^{\op} \xra{\Map_{\Psh_\ast(\cB)}(-,E)} \spaces_\ast
\]
where the target is equipped with the symmetric monoidal structure given by product.
We denote the resulting $\disk(\cB)$-algebra
\[
A_E~:=~ \bigl(\Gamma_{\!\sf c}^E\bigr)_{|\disk(\cB)}~\in\Alg_{\disk(\cB)}(\spaces_\ast)~.
\]
\end{definition}

\begin{example}
Consider a flag of maps of based spaces $\ZZ_{\geq 0}^{\op} \xra{Z_\bullet} \spaces_\ast$.  It corresponds to a right fibration $Z\to \ZZ_{\geq 0}$, equipped with a section.  
In~\cite{aft1} is constructed the functor ${\sf Depth}\colon \bsc \to \ZZ_{\geq 0}$.  
There results the coefficient system ${\sf Depth}^\ast Z \to \bsc$.  

\end{example}

\begin{example}\label{ordinary-coefs}
Let $\cB = \sD^{\fr}_n\simeq \ast$. Then a coefficient system is equivalent to the datum of a based space $Z$.  
Such a coefficient system is connective, in the sense of Definition \ref{connective-coefs}, exactly if $Z$ is $n$-connective.  

\end{example}

\begin{example}\label{coefs-corners}
A coefficient system on $\sD^\partial_n$ is map of fibrations 
\[
\xymatrix{
E_{n-1}  \ar[r]  \ar[d]
&
E_n   \ar[d]
\\
{\BO}(n-1) \ar[r]  
&
{\BO}(n)
}
\]
together with a pair of (compatible) sections of each.  

Consider the more elaborate example $\sD_{\langle n \rangle}$ codifying smooth $n$-manifolds with corners.  
A coefficient system on $\sD_{\langle n \rangle}$ is the data of a fibration $E_S \to\BO(\RR^S)$ for each subset $S\subset \{1,\dots,n\}$, and for each inclusion $S\subset T$ a map $E_S \to E_T$ over the inclusion $\BO(\RR^S) \xra{-\oplus \RR^{T\smallsetminus S}} \BO(\RR^T)$, which respect composition; together with a compatible section of each of these fibrations.  

\end{example}

We now concern ourselves with the question of when $\Gamma_{\!\sf c}^E$ is a homology theory.
We will only consider this question in the case that there is an integer $n\geq 0$ such that each basic $U\in \cB$ has pure dimension $n$ -- that is to say, the local topological dimension at each point in the underlying stratified space equals $n$.  
To address the problem, it is convenient to extend $\Gamma_{\!\sf c}^E$ to stratified spaces $X$ equipped with a $\cB$-structure on $\RR^{n-k}\times X$ for some $k$.
Here are the relevant temporary definitions.
\begin{definition}[$\mfld(\cB_k)$]\label{B-less-than}
Let $k$ be an integer.
Denote the $\infty$-categories which are the pullbacks
\[
\xymatrix{
\cB_k \ar[r]  \ar[d]
&
\mfld(\cB_{k})  \ar[rr]^-{\RR^{n-r}\times -}  \ar[d]
&&
\mfld(\cB)  \ar[d]
\\
\bsc  \ar[r]^-{\iota}
&
\snglr \ar[rr]^-{\RR^{n-k}\times -}
&&
\snglr.
}
\]
Denote the composite functor 
\[
\Gamma_{\!\sf c}^E\colon \mfld(\cB_k) \xra{\bigl((-1,1)^{n-k} \subset \RR^{n-k}\bigr)\times -} \Fun\bigl([1],\mfld(\cB)\bigr)\xra{{\sf Entr}\bigl({\sf ev}_0(-)_{{\sf ev}_1(-)^+}\bigr)} \spaces_\ast~.
\]
Explicitly, this functor evaluates as
\[
\Gamma_{\!\sf c}^E\colon X~\mapsto~{\sf Entr}\Bigl(\bigl((-1,1)^{n-k}\times X\bigr)_{(\RR^{n-k}\times X)^+}\Bigr)
\]
which we think of as ``sections of $E_{|\RR^{n-k}\times X}$ that are compactly supported in the $X$-direction''.  

\end{definition}

\begin{definition}\label{connective-coefs}
Let $\cB$ be a category of basics each of pure dimension $n$.
Say the coefficient system $E$ is \emph{connective} if the based space $\Gamma_{\!\sf c}^E(V)$ is connected for every $V\in \cB_{k}$ with $k<n$.  
\end{definition}

\begin{example}\label{fr-coefs}

Consider the case $\cB=\sD_n^{\fr}\simeq \ast$.
Here, a connective coefficient system is the datum of an $n$-connective based space $Z$, and the associated $\disk^{\fr}_n$-algebra is $\Omega^n Z$.
This $\disk_n^{\sf fr}$-algebra is \emph{group-like}.  
Through May's recognition principle there is an equivalence of $\infty$-categories between connective coefficient systems for framed $n$-manifolds, and that of group-like $\cE_n$-spaces.  
Accordingly, one can think of the data of a connective coefficient system as a generalization of the notion of a group-like algebra, for the structured stratified setting.
\end{example}

\begin{example}
Let us return to Example~\ref{coefs-corners} for the case of $\sD_n^\partial$.   
For simplicity, let us assume the two fibrations are trivialized with (based) fibers $Z_n$ and $Z_{n-1}$ respectively.  
This coefficient system is connective exactly if $Z_n$ is $n$-connective and the map $Z_{n-1} \to Z_n$ is $n$-connective; the last condition being equivalent to saying the homotopy fiber $F$ of the map $Z_{n-1}\to Z_n$ is $(n-1)$-connective.  
Indeed, for this coefficient system to be connective, then both $\Gamma_{\!\sf c}^E(\RR^k) \simeq \Omega^k Z_n$ and $\Gamma_{\!\sf c}^E(\HH^k)\simeq \Omega^k F$ must be connected for each $k<n$.
Now, recall from Example~\ref{deligne-boundary} a consolidation of the data of a $\disk_n^\partial$-algebra.
The associated $\disk^{\partial}_n$-algebra $A_E$ is the data $(\Omega^n Z_n, \Omega^{n-1}F , \alpha)$ where $\alpha$ is the action of $\Omega^n Z_n$ on $\Omega^{n-1} F$ from the $\Omega$-Puppe sequence of the fibration $F \to Z_{n-1} \to Z_n$.  
\end{example}

\begin{remark}
The previous example can be generalized to the case of $\sD_{\langle n\rangle}$, as we indicate here.
For simplicity, let us assume each of the said fibrations is trivial with respective fibers $Z_S$.  Denote by $F_T = \mathsf{hofib}(Z_T \to \holim_{S\subsetneq T} Z_S)$ the total homotopy fiber of the $T$-subcube.  
This coefficient system is connective exactly if each $F_T$ is $(n-|T|)$-connective.  
The associated $\disk_{\langle n \rangle}$-algebra $A_E$ is the data $\bigl((\Omega^{n\smallsetminus S} F_S\bigr)_{S\subset \{1,\dots,n\}}; \bigl(a_{S\subset T})\bigr)$ where $a_{S\subset T}$ is the action of $\Omega^{n\smallsetminus S}F_S$ on $\Omega^{n\smallsetminus T} F_T$ from an elaboration of the $\Omega$-Puppe sequence.  
\end{remark}

\begin{theorem}(Non-abelian Poincar\'e duality)\label{non-abel}
Let $\cB$ be an $\oo$-category of basics each of pure dimension $n$.  
Let $E$ be a connective coefficient system for $\cB$-manifolds.
Then the canonical arrow
\[
\int_- A_E~\xra{~\simeq~} ~\Gamma_{\!\sf c}^E(-)
\]
is an equivalence of functors $\mfld(\cB)^{\sf fin} \to \spaces_\ast$. 

\end{theorem}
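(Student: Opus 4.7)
The plan is to apply the characterization of homology theories (Theorem~\ref{hmlgy=FH}): since $A_E = (\Gamma_{\!\sf c}^E)_{|\disk(\cB)}$ by definition, and since $\int_- A_E$ is already a homology theory by Corollary~\ref{fact-excisive}, it suffices to show that $\Gamma_{\!\sf c}^E\colon \mfld(\cB)^{\sf fin} \to \spaces_\ast$ is itself a homology theory in the sense of Definition~\ref{homology}. Both symmetric monoidal functors will then lie in $\bcH(\mfld(\cB)^{\sf fin}, \spaces_\ast)$ and agree on $\disk(\cB)$, so the canonical counit arrow between them is an equivalence.

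Continuity is essentially formal and mirrors the continuity step of Corollary~\ref{fact-excisive}: for a sequential open cover $W_0 \subset W_1 \subset \cdots \subset W$, a hypercover argument via Corollary~1.6 of \cite{Dugger--Isaksen} shows that any compactly supported section on $W$ has support in some $W_i$, so the canonical map $\colim_i \Gamma_{\!\sf c}^E(W_i) \to \Gamma_{\!\sf c}^E(W)$ is an equivalence of pointed spaces.

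The heart of the argument is $\ot$-excision. For a collar-gluing $X \cong X_-\underset{\RR \times X_0}\bigcup X_+$ of finitary $\cB$-manifolds, Lemma~\ref{pre-excision} identifies $\mathsf{Entr}(X^+)$ as a pushout in $\Psh_\ast(\cB)$; applying $\Map_{\Psh_\ast(\cB)}(-, E)$ converts this pushout into a pullback square of pointed spaces
\[
\Gamma_{\!\sf c}^E(X)~\simeq~\Gamma_{\!\sf c}^E(X_-) \underset{\Gamma_{\!\sf c}^E(\RR \times X_0)}\times \Gamma_{\!\sf c}^E(X_+).
\]
Writing $A := \Gamma_{\!\sf c}^E(\RR \times X_0)$, $P := \Gamma_{\!\sf c}^E(X_-)$, and $Q := \Gamma_{\!\sf c}^E(X_+)$, Proposition~\ref{tensor-prod} together with the $\Ass^{\sf RL}$-structure induced by the $\RR$-coordinate (Observation~\ref{interval-assoc}) identifies the relative tensor product $P\underset{A}\ot Q$ with the two-sided bar construction $|\mathsf{Bar}_\bullet(P,A,Q)|$. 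Excision thus reduces to showing that the canonical comparison from this bar construction to the pullback displayed above is an equivalence in $\spaces_\ast$.

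The main obstacle is precisely this bar-to-pullback comparison, and it is here that the connectivity hypothesis on $E$ enters decisively. The $\RR$-factor in $\RR \times X_0$ equips $A$ with an $\cE_1$-structure, and connectivity of $\Gamma_{\!\sf c}^E(V)$ for $V \in \cB_k$ with $k<n$ (Definition~\ref{connective-coefs}) forces $\pi_0 A$ to be trivial, hence $A$ to be group-like; this is the structured analogue of the fact that $\Omega^n Z$ is group-like whenever $Z$ is $n$-connective (Example~\ref{fr-coefs}). For a group-like $\cE_1$-algebra acting on pointed spaces, the canonical map from the two-sided bar construction to the homotopy pullback is an equivalence, by a non-abelian group-completion principle after May--Segal (mirroring the coincidence of pushouts and pullbacks for modules over a monoid in spectra used in Proposition~\ref{sections-excision}). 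I would establish this stratified form by induction on the depth of $\cB$: the inductive hypothesis supplies the equivalence on lower-dimensional strata via the decomposition of Lemma~\ref{EE-equivs}, and the base case reduces to the classical non-abelian Poincar\'e duality of Salvatore and Lurie for smooth manifolds.
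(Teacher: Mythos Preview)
Your overall strategy is correct and matches the paper's: reduce to showing that $\Gamma_{\!\sf c}^E$ satisfies $\ot$-excision, then identify the bar construction with a pullback once the relevant $\cE_1$-algebra is group-like. (Your continuity paragraph is superfluous: the statement concerns $\mfld(\cB)^{\sf fin}$, and the paper's proof invokes only excision.)

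Where you diverge from the paper is precisely at the crux, namely the connectivity argument. You assert that the hypothesis on basics $V\in\cB_k$ with $k<n$ ``forces $\pi_0 A$ to be trivial'' for $A=\Gamma_{\!\sf c}^E(\RR\times X_0)$, and propose to justify this by induction on the depth of $\cB$, with base case the classical non-abelian Poincar\'e duality. But $\RR\times X_0$ is not a basic---it is a general finitary $\cB$-manifold---so the connectivity hypothesis does not apply directly; one must propagate connectedness from basics to arbitrary finitary manifolds. Your proposed depth induction is not clearly adequate for this: the collar $X_0$ need not have smaller depth than $X$, and Lemma~\ref{EE-equivs} (which concerns underlying $\infty$-groupoids as configuration spaces) does not give the needed control over $\Gamma_{\!\sf c}^E(X_0)$. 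Moreover, taking classical NAPD as a black box makes the argument non-self-contained, whereas the paper's proof is.

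The paper's maneuver is different and sharper. It first reindexes the problem over the auxiliary categories $\cB_k$ of Definition~\ref{B-less-than} for $-1\leq k\leq n$, and applies Lemma~\ref{pre-excision} to the \emph{modified} collar-gluing $X\cong (X_-\sqcup X_+)\underset{\RR\times(X_0\sqcup X_0)}\bigcup \RR\times X_0$ to obtain a fiber sequence
\[
\Gamma_{\!\sf c}^E(X_-)\times\Gamma_{\!\sf c}^E(X_+)\longrightarrow \Gamma_{\!\sf c}^E(X)\longrightarrow \Gamma_{\!\sf c}^E(X_0)
\]
whose base is indexed by a $\cB_{k-1}$-manifold. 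Excision then reduces to showing this base is connected. This is proved by a double induction: outer induction on $k$ (the dimension of the manifold in play, with $k=-1$ trivial), and for fixed $k$ an inner induction on the number $r$ of collar-gluings needed to build the finitary $\cB_k$-manifold $X_0$. The base case $r\leq 1$ is exactly the connectivity hypothesis on basics, and the inductive step reuses the same fiber sequence one dimension down. This $(k,r)$-induction is the key technical idea you are missing; once you have it, no appeal to the classical smooth case is necessary.
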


\begin{proof}
From Theorem~\ref{hmlgy=FH}, we must show $\Gamma_{\!\sf c}^E\colon \mfld(\cB) \to \spaces$ satisfies excision.  
Consider a collar-gluing $X\cong X_-\underset{\RR\times X_0}\bigcup X_+$ among $\cB_{k}$-manifolds where $-1\leq k \leq n$.
We will show that the canonical map
\[
\Gamma_{\!\sf c}^E(X_-)\underset{\Gamma_{\!\sf c}^E(\RR\times V)} \bigotimes \Gamma_{\!\sf c}^E(X_+) \longrightarrow \Gamma_{\!\sf c}^E(X)~,
\]
is an equivalence; the desired case is $k=n$.  
There is the sequence of maps of pointed spaces
\begin{equation}\label{conn-base?}
\Gamma_{\!\sf c}^E(X_-)\times \Gamma_{\!\sf c}^E(X_+) \to \Gamma_{\!\sf c}^E(X) \to \Gamma_{\!\sf c}^E(X_0)~,
\end{equation}
which is a fiber sequence because of Lemma~\ref{pre-excision} applied to the modified collar-gluing $X \cong (X_-\sqcup X_+)\underset{\RR\times (X_0\sqcup X_0)}\bigcup \RR\times X_0$.
It is sufficient to show that the base of this fiber sequence is connected.  
We will do this by induction on $k$.  

Let $Y$ be a finitary $\cB_k$ manifold.
If $k=-1$, then $\Gamma_{\!\sf c}^E(Y) \simeq \ast$ is terminal.  In particular it is connected.
So suppose $\Gamma_{\!\sf c}^E(W)$ is connected for each $\cB_j$ manifold $W$ for $j<k$.   
From Theorem~\ref{recollections}, $Y$ can be witnessed as a finite iteration of collar-gluings of basics.  
We prove $\Gamma_{\!\sf c}^E(Y)$ is connected by induction on the minimal number $r$ of iterated collar-gluings to obtain $Y$.  
If $r=0$ the statement is vacuously true.   
If $Y$ is a basic then $\Gamma_{\!\sf c}^E(Y)$ is connected, by the connectivity assumption.  
If $r\geq 2$, write $Y = Y_-\underset{\RR\times W}\bigcup Y_+$ with each of the $\cB_k$-manifolds $Y_\pm$ witnessed through strictly fewer than $r$ collar-gluings, and with $W$ a $\cB_{k-1}$-manifold.  
By induction on $r$, each of the spaces $\Gamma_{\!\sf c}^E(Y_\pm)$ is connected.  
As in~(\ref{conn-base?}), there is the fiber sequence of pointed spaces
\[
\Gamma_{\!\sf c}^E(Y_-)\times \Gamma_{\!\sf c}^E(Y_+) \to \Gamma_{\!\sf c}^E(Y) \to \Gamma_{\!\sf c}^E(W)~;
\]
now the base is connected, by induction on $k$.
It follows that $\Gamma_{\!\sf c}^E(Y)$ is connected.  
\end{proof}

\begin{remark}\label{remark:grouplike}
We elaborate on Example~\ref{fr-coefs}.  
The space of stratified continuous maps is a homotopy invariant of the underlying stratified space $X$. 
In this sense, Theorem~\ref{non-abel} tells us that connective coefficient systems (think, `$\disk(\cB)$-spaces that are \emph{group-like}') cannot detect more than the stratified proper homotopy type of $\cB$-manifolds. 

\end{remark}

\section{Examples of factorization homology theories}\label{section:examples}

In this section we give examples of factorization homology over stratified spaces. To illustrate the relevance to low-dimensional topology, we show that the free $\disk_{3,1}^{\fr}$-algebra can distinguish the homotopy type of link complements, and in particular defines a non-trivial link invariant.

\subsection{Factorization homology of stratified 1-manifolds}

When the target symmetric monoidal $\oo$-category $\cV$ is $\m_\Bbbk^\ot$, the category of $\Bbbk$-modules for some commutative algebra $\Bbbk$, then factorization homology of closed 1-manifolds gives variants of Hochschild homology.

The simplest and most fundamental example is factorization homology for framed 1-manifolds, $\mfld_1^{\fr}$. In this case, there is an equivalence between framed 1-disk algebras and associative algebras in $\cV$, $\Alg_{\disk_1^{\fr}}(\cV)\simeq \Alg(\cV)$, and we have the following immediate consequence of the excision property of factorization homology (Theorem~\ref{fact-excisive}). 

\begin{prop}\label{hochschild} For an associative algebra $A$ in $\m_d$, there is an equivalence \[\int_{S^1}A \simeq \hh_*(A)\] between the factorization homology of the circle with coefficients in $A$ and the Hochschild homology of $A$ relative $\Bbbk$.
\end{prop}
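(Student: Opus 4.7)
The plan is to apply $\ot$-excision to a standard collar-gluing decomposition of the circle and recognize the output as the two-sided bar construction computing Hochschild homology.

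I would cover $S^1$ by two open arcs $J_\pm\cong \RR$ whose intersection $J_-\cap J_+\cong \RR\sqcup \RR$ has two connected components. This exhibits $S^1$ as a collar-gluing among framed $1$-manifolds (with $W_0 = \ast\sqcup\ast$). Applying $\ot$-excision (Corollary~\ref{fact-excisive}) yields
\[
\int_{S^1} A~\simeq~\int_{J_-} A \underset{\int_{J_-\cap J_+} A}\bigotimes \int_{J_+} A~.
\]
Now the canonical arrow $A\xra{\simeq}\int_{\RR} A$ is an equivalence (by Theorem~\ref{hmlgy=FH} applied to a single disk), so $\int_{J_\pm} A\simeq A$, and symmetric monoidality of $\int$ gives $\int_{J_-\cap J_+} A\simeq A\ot A$.

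The essential point is then to identify the $(A\ot A)$-bimodule structure induced by the two gluings. On $J_+$ the two components of the overlap embed into $J_+\cong \RR$ in a definite linear order, so by Observation~\ref{interval-assoc} and Proposition~\ref{tensor-prod} they produce the standard left and right $A$-actions on $\int_{J_+}A\simeq A$. On $J_-$ the same two components embed in the \emph{reverse} linear order (this is a local manifestation of the fact that, going around $S^1$, the two overlap components appear on opposite sides in $J_-$ compared to $J_+$), so they produce the $(A,A)$-bimodule structure with the two actions swapped. Consequently the two $(A\ot A)$-module structures on the factors of the relative tensor product assemble into a single action of $A\ot A^{\op}=:A^e$, and the displayed tensor product becomes $A\ot_{A^e} A$. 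This last expression is, by definition (or by direct identification of the corresponding bar complex via Proposition~\ref{tensor-prod}), the Hochschild homology $\hh_\ast(A)$.

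The main obstacle is the orientation bookkeeping of the previous paragraph: that is, checking that the opposing linear orders on $J_\pm$ of the two components of $J_-\cap J_+$ indeed convert one $(A,A)$-bimodule structure into its opposite, and that the resulting coend presentation agrees with $B_\bullet(A,A^e,A)$. This is a finite combinatorial check in the slice $\infty$-operad $\disk^{\fr}_{1/S^1}$ using the identification of the interval slice with $\Ass^{\sf RL}$-algebras from Observation~\ref{interval-assoc}; once this step is settled, the conclusion is formal.
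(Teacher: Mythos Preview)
Your proposal is correct and follows essentially the same approach as the paper: both decompose $S^1$ into two arcs with overlap $\RR\sqcup\RR$, apply $\ot$-excision, and identify the resulting relative tensor product as $A\underset{A\ot A^{\op}}\ot A\simeq \hh_\ast(A)$. In fact you supply more detail than the paper does on the orientation bookkeeping that produces the $A^{\op}$ factor; the paper simply writes the equivalence $\int_{S^0\times\RR^1}A\simeq A\ot A^{\op}$ without further comment.
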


\begin{proof} We have the equivalences \[\int_{S^1}A\simeq \int_{\RR^1}A\underset{{\underset{{S^0\times\RR^1}}\int\!\! A}}\ot\int_{\RR^1}A\simeq A\underset{A\ot A^{\op}}\ot A\simeq \hh_*(A)\]using excision and a decomposition of the circle by two slightly overlapping hemispheres.\end{proof}

\begin{remark} Lurie in \S5.5.3 of ~\cite{HA} shows further that the obvious circle action by rotations on $\int_{S^1}A$ agrees with the usual simplicial circle action on the cyclic bar construction.
\end{remark}

It is interesting to probe this example slightly further and see the algebraic structure that results when one introduces marked points and singularities into the 1-manifolds. Recall the $\oo$-category $\mfld^{\fr}_{1,0}$ of framed 1-manifolds with marked points, and the $\infty$-subcategory $\sD^{\fr}_{1,0}$ of framed 1-disks with at most one marked point -- its set of objects is the two-element set $\{U^1_{\emptyset^{-1}} , U^1_{S^0}\}$ whose elements we justifiably denote as $\RR^1:= U^1_{\emptyset^{-1}}$ and $(\RR^1,\{0\}) := U^1_{S^0}$.   So $\Alg_{\disk^{\fr}_{1,0}}(\cV)$ is equivalent to the $\oo$-category whose objects are pairs $(A_1, A_{\sf b})$ consisting of an algebra $A_1$ and a unital $A_1$-bimodule $A_{\sf b}$, i.e., a bimodule with an invariant map from the unit. Specifically, $A_1 \simeq A(\RR^1)$ and $A_{\sf b}\simeq A(\RR^1, \{0\})$. The proof of the Proposition~\ref{hochschild} extends mutatis mutandis to the following.

\begin{prop} There is an equivalence \[\int_{(S^1,\ast)} A\simeq \hh_*(A_1,A_{\sf b})\] between the factorization homology of the pointed circle $(S^1,\ast)$ with coefficients in $A =(A_1,A_{\sf b})$ and the Hochschild homology of $A_1$ with coefficients in the bimodule $A_{\sf b}$.
\end{prop}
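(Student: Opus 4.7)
The plan is to imitate the proof of Proposition~\ref{hochschild} verbatim, with one hemisphere now carrying the marked point. Specifically, I would first exhibit the pointed circle as a collar-gluing
\[
(S^1,\ast)~\cong~(\RR^1,\{0\})\underset{S^0\times \RR^1}\bigcup \RR^1~,
\]
obtained by covering $(S^1,\ast)$ by two slightly overlapping arcs, one containing the marked point and the other disjoint from it, so that the overlap is a disjoint union of two small unmarked intervals. Each of the constituent stratified 1-manifolds and their intersection lies in $\sD^{\fr}_{1,0}$ (possibly after identifying each factor with a basic).

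Next I would apply $\ot$-excision (Corollary~\ref{fact-excisive}, i.e.\ the fact that $\int$ is a homology theory) to this collar-gluing. This gives
\[
\int_{(S^1,\ast)} A~\simeq~\int_{(\RR^1,\{0\})}\!\! A\,\underset{\int_{S^0\times \RR^1}\!\! A}{\bigotimes}\,\int_{\RR^1}\!\! A~.
\]
Since $\int$ is fully faithful on $\disk^{\fr}_{1,0}$-algebras (Theorem~\ref{fact-explicit}) and each of the constituent 1-manifolds is (a disjoint union of) objects of $\sD^{\fr}_{1,0}$, the factors evaluate as $\int_{(\RR^1,\{0\})} A \simeq A_{\sf b}$, $\int_{\RR^1} A \simeq A_1$, and $\int_{S^0\times \RR^1} A \simeq A_1\otimes A_1$. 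Rearranging the last tensor product in the standard way (identifying an $A_1\otimes A_1$-module structure arising from embeddings of intervals on opposite sides of the marked point with an $A_1$-bimodule structure, i.e.\ an $A_1\otimes A_1^{\op}$-module structure) gives
\[
A_{\sf b}\underset{A_1\otimes A_1^{\op}}\otimes A_1~\simeq~\hh_\ast(A_1,A_{\sf b})~,
\]
which is the desired Hochschild homology with coefficients, presented as the two-sided bar construction (cf.\ Proposition~\ref{tensor-prod}).

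The only real content beyond Proposition~\ref{hochschild} is bookkeeping the bimodule structure: I must check that the $A_1$-bimodule structure on $A_{\sf b}$ induced by the two disk embeddings from the overlap (one on each side of the marked point in $(\RR^1,\{0\})$) coincides with the given bimodule structure of the $\disk^{\fr}_{1,0}$-algebra $A=(A_1,A_{\sf b})$. This is precisely the content of the equivalence in Observation~\ref{interval-assoc} and Proposition~\ref{tensor-prod} adapted to $\sD^{\fr}_{1,0}$: a $\disk^{\fr}_{1,0}$-algebra is the same data as a pair $(A_1,A_{\sf b})$ of an $A_\infty$-algebra together with a unital bimodule, and the two disjoint intervals to the left and right of $\{0\}$ act as the left and right module structure maps, respectively. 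Once this identification is in hand, the conclusion is immediate. The main (minor) obstacle is thus this bookkeeping, which amounts to repeating the analysis of morphisms in $\sO^{\sf RL}$ in Observation~\ref{interval-assoc} while keeping track of the marked color.
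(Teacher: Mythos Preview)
Your proposal is correct and is precisely the argument the paper intends: the paper states only that the proof of Proposition~\ref{hochschild} ``extends mutatis mutandis,'' and your decomposition of $(S^1,\ast)$ into a marked arc and an unmarked arc glued along two unmarked intervals, followed by $\ot$-excision and the identification of the induced $A_1$-bimodule structure on $A_{\sf b}$, is exactly that extension.
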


Finally, we mention the example of factorization homology for $\snglr_1^{\fr}$, the category of 1-dimensional, framed, stratified spaces. In this case, the $\oo$-category of basic opens $\bsc_1^{\fr}$ has as its set of objects $\{\RR\}\coprod \{(\sC(J), \sigma)\}$ where the latter set is indexed by finite sets $J$ together with an orientation $\sigma$ of the ordinary $1$-manifold $\bigsqcup_J \RR_{>0} = \sC(J)\smallsetminus \ast$.  

An object $A$ in $\Alg_{\disk(\bsc^{\fr}_1)}(\cV)$ is then equivalent, by evaluating on directed graphs with a single vertex, to the data of an associative algebra $A(\RR)$ in $\cV$ and for each pair $i,j\geq 0$ an object $A(i,j) \in \cV$ equipped with $i$ intercommuting left $A(\RR)$-module structures and $j$ intercommuting right compatible $A(\RR)$-module structures. One can see, for instance, that the factorization homology of a wedge of two circles with a marked point on each circle, ${(S^1\cup_{\{0\}}S^1, \{1, -1\})}$, can be calculated as \[\int_{(S^1\cup_{\{0\}}S^1, \{1, -1\})}A\simeq A(1,1)\underset{A_1\ot A_1^{\op}}\ot A(2,2)\underset{A_1\ot A_1^{\op}}\ot A(1,1)~.\]

\subsection{Intersection homology}

Recall from \cite{aft1} that the underlying space of a $n$-dimensional stratified space $X\in \snglr_n$ has a canonical filtration by the union of its strata $X_0 \subset X_1 \subset \ldots \subset X_n= X$ where each $X_i\smallsetminus X_{i-1}$ is a smooth $i$-dimensional manifold. As such, the definition of Goresky \& MacPherson's intersection homology~\cite{goreskymacpherson} applies verbatim. Namely, we restrict to stratified spaces $X$ have no codimension-1 strata, $X_{n-1}=X_{n-2}$ and for which the $n$-dimensional open stratum $X_n\smallsetminus X_{n-1}$ is nonempty.

For the definition below we use $j^{\rm th}$-stratum functor $(-)_j\colon \snglr_n \to \snglr_{\leq j}$ which assigns to stratified space $X$ the maximal substratified space $X_j$ whose strata are of dimension less than $j+1$.
\begin{definition} 
Denote the left ideal $\bsc_n^{\sf ps} \to \bsc_n$ spanned by those basics $U$ for which $U_{n-1} = U_{n-2}$.  Define the category of \emph{pseudomanifolds} as $\snglr_n^{\sf ps} = \mfld(\bsc_n^{\sf ps})$ -- its objects are those $n$-dimensional stratified spaces for which $X_{n-1} = X_{n-2}$.  
\end{definition}

Continuing, choose a perversity function $p$, i.e., a mapping $p:\{2, 3, \ldots, n\} \ra \ZZ_{\geq 0}$ such that $p(2)=0$ and for each $i>2$ either $p(i)=p(i-1)$ or $p(i) = p(i-1)+1$. Recall the following definition.

\begin{definition}[\cite{goreskymacpherson}] A $j$-simplex $g: \Delta^j \ra X$ is $p$-allowable if, for every $i$ the following bound on the dimensions of intersections holds:
\[
{\sf dim} \bigl(g(\Delta^j)\cap X_i\bigr) \leq i + j -n +p(n-i)~.
\]
A singular chain $\phi \in \sC_j(X)$ is $p$-allowable if both $|\phi|$ is $p$-allowable and $|\partial\phi|$ is $p$-allowable.
\end{definition}

This gives the following definition of Goresky--MacPherson, of intersection homology with perversity $p$.

\begin{definition}[\cite{goreskymacpherson}] The intersection homology $\sI_p\sC_\ast(X)$ of $X\in \snglr_n^{\sf ps}$ is the complex of all $p$-allowable singular chains.
\end{definition}

The condition of a simplex being $p$-allowable is clearly preserved by embeddings of stratified spaces: if $f:X \ra Y$ is a morphism in $\snglr_n^{\sf ps}$ and $g:\Delta^j \ra X$ is $p$-allowable, then $f\circ g: \Delta^j\ra Y$ is $p$-allowable. Further, being $p$-allowable varies continuously in families of embeddings. That is, there is a natural commutative diagram:
\[\xymatrix{
\snglr_n(X,Y) \ar[r]\ar[d]&\Map(X,Y)\ar[d]\\
\Map\bigl(\sI_p\sC_\ast(X), \sI_p\sC_\ast(Y)\bigr) \ar[r]&\Map\bigl(\sC_*(X), \sC_*(Y)\bigr)\\}\]
Consequently, intersection homology is defined on the $\oo$-category $\snglr_n^{\sf ps}$ of $n$-dimensional pseudomanifolds. 
Obviously $\sI_p\sC_\ast (X\sqcup Y) \cong \sI_p\sC_\ast (X) \oplus \sI_p\sC_\ast (Y)$.  
We have the following:
\begin{prop} The intersection homology functor
\[\sI_p\sC_\ast: \snglr_n^{\sf ps} \longrightarrow {\sf Ch}\] defines a homology theory in $\bcH( \snglr_n^{\sf ps} , {\sf Ch}^\oplus)$.
\end{prop}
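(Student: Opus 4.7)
The plan is to verify directly the two axioms of Definition~\ref{homology}, with the crucial simplification that the chosen symmetric monoidal structure on chain complexes is $\oplus$, which is the \emph{coproduct}; hence in this setting the relative tensor product $-\underset{C}\bigotimes-$ coincides with the homotopy pushout, and the $\otimes$-excision axiom becomes ordinary Mayer--Vietoris for $\sI_p\sC_\ast$. Symmetric monoidality of $\sI_p\sC_\ast$ with respect to $\sqcup$ and $\oplus$ is immediate from the identification $\sI_p\sC_\ast(X\sqcup Y)\cong \sI_p\sC_\ast(X)\oplus \sI_p\sC_\ast(Y)$ noted in the paper, upgraded to a symmetric monoidal functor using the natural $\Kan$-enrichment on the left and the standard enrichment on the right.

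For continuity, given a sequential open cover $W_0\subset W_1\subset \dots \subset W$, I would argue that the canonical map
\[
\colim_i \sI_p \sC_\ast(W_i) \longrightarrow \sI_p \sC_\ast(W)
\]
is an isomorphism of chain complexes (hence an equivalence). Both $p$-allowability of a chain and of its boundary are conditions on the images of finitely many singular simplices in the appropriate strata; since each simplex has compact image, and since $W_i\hookrightarrow W$ is stratum-preserving, every allowable singular chain on $W$ is already allowable on some $W_i$, and allowability in $W_i$ implies allowability in $W$. This is the same argument that gives continuity for ordinary singular chains, unchanged by the dimensional bounds defining $p$-allowability.

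For $\otimes$-excision, given a collar-gluing $W\cong W_-\cup_{\RR\times W_0} W_+$, I must show that the diagram
\[
\xymatrix{
\sI_p\sC_\ast(\RR\times W_0)\ar[r]\ar[d] & \sI_p\sC_\ast(W_+)\ar[d]\\
\sI_p\sC_\ast(W_-)\ar[r] & \sI_p\sC_\ast(W)
}
\]
is a pushout square in chain complexes, equivalently that the induced Mayer--Vietoris sequence is exact. The approach is the classical one: let $\sI_p\sC_\ast^{\cU}(W)\subset \sI_p\sC_\ast(W)$ denote the subcomplex generated by $p$-allowable simplices whose image lies entirely in $W_-$ or in $W_+$. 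Then by construction the square
\[
\sI_p\sC_\ast(W_-)\oplus_{\sI_p\sC_\ast(W_-\cap W_+)}\sI_p\sC_\ast(W_+) \xra{\simeq} \sI_p\sC_\ast^{\cU}(W)
\]
is an isomorphism, and one uses iterated barycentric subdivision to produce a chain-homotopy inverse to the inclusion $\sI_p\sC_\ast^\cU(W)\hookrightarrow \sI_p\sC_\ast(W)$, together with the observation that the collar $\RR\times W_0\hookrightarrow W_-\cap W_+$ is a stratified homotopy equivalence, so that $\sI_p\sC_\ast(\RR\times W_0)\to \sI_p\sC_\ast(W_-\cap W_+)$ is a quasi-isomorphism.

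The main obstacle, and the only substantive point, is verifying that barycentric subdivision preserves $p$-allowability, so that the standard small-chains argument applies inside $\sI_p\sC_\ast$. This reduces to the observation that if $g\colon \Delta^j\to W$ is a singular simplex, then for every simplex $g'$ in its subdivision one has $g'(\Delta^j)\subset g(\Delta^j)$, so the bound ${\sf dim}(g'(\Delta^j)\cap W_i)\leq i+j-n+p(n-i)$ is inherited from the bound for $g$; the same holds for boundaries. Consequently subdivision is an endomorphism of $\sI_p\sC_\ast$, and the classical chain-homotopy between subdivision and the identity is built from prisms whose images also sit inside $g(\Delta^j)\times[0,1]$, hence satisfy the same dimension bounds (upgraded by $1$, which is absorbed by the same inequality after reindexing the simplex dimension). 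This is exactly the argument used by King in his treatment of singular intersection homology, to which I would appeal; together with the two observations of the preceding paragraphs it delivers the required Mayer--Vietoris equivalence and completes the verification that $\sI_p\sC_\ast$ is a homology theory in the sense of Definition~\ref{homology}.
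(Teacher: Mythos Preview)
Your proposal is correct and follows the same approach as the paper: reduce $\otimes$-excision to Mayer--Vietoris using that $\oplus$ is the coproduct in ${\sf Ch}$, and then invoke the Mayer--Vietoris property for singular intersection chains. The paper's proof is in fact just the bare assertion that the square is a homotopy pushout, whereas you supply the substantive content behind that assertion---the observation that barycentric subdivision and the prism operator preserve $p$-allowability because images only shrink, yielding the small-chains quasi-isomorphism---and you also explicitly verify the continuity axiom, which the paper does not mention. One minor slip in phrasing: the prism simplices have image contained in $g(\Delta^j)\subset W$, not in ``$g(\Delta^j)\times[0,1]$''; but your conclusion (the dimension bound for a $(j{+}1)$-simplex is automatically satisfied once the bound for the original $j$-simplex holds) is correct.
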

The proof is exactly that intersection homology satisfies excision, or has a version of the Mayer-Vietoris sequence for certain gluings.
\begin{proof} Let $X\cong  X_- \cup_{\RR\times V}X_+$ be a collar-gluing. Then 
\[\xymatrix{
\sI_p\sC_\ast(\RR\times V)\ar[d]\ar[r]&\sI_p\sC_\ast(X_+)\ar[d]\\
\sI_p\sC_\ast(X_-)\ar[r]&\sI_p\sC_\ast(X)\\}\]
is a pushout diagram in the $\oo$-category of chain complexes. I.e., the natural map 
\\
$\sI_p\sC_\ast(X_-)\oplus_{\sI_p\sC_\ast(\RR\times V)}\sI_p\sC_\ast(X_+) \ra \sI_p\sC_\ast(X)$ is a quasi-isomorphism.
\end{proof}

\subsection{Link homology theories and $\disk^{\fr}_{d\subset n}$-algebras}

We now consider one of the simplest, but more interesting, classes of $n$-dimensional stratified spaces -- that of $n$-manifolds together with a distinguished properly embedded $d$-dimensional submanifold. 
While we specialize to this class of stratified spaces, the techniques for their analysis are typical of techniques that can be used for far more general classes.

Recall from \cite{aft1} the $\oo$-category $\mfld_{d\subset n}^{\fr}$ whose objects are framed $n$-manifolds $M$ with a properly embedded $d$-dimensional submanifold $L\subset M$ together with a splitting of the framing along this submanifold, and the full $\infty$-subcategory $\disk^{\fr}_{d\subset n}\subset \mfld_{d\subset n}^{\fr}$ generated under disjoint union by the two objects $\RR^n := U^n_{\emptyset^{-1}}$ and $(\RR^d\subset \RR^n):= U^n_{S^{n-d-1}}$ with their standard framings.

\subsubsection{Explicating $\disk^{\fr}_{d\subset n}$-algebras}
Fix a symmetric monoidal $\oo$-category $\cV$ which is $\ot$-sifted cocomplete.  
Recall from the discussion of the push-forward and Corollary \ref{fubini} the map of $\oo$-categories $\int_Y\colon \Alg_{\disk_n^{\fr}}(\cV) \to \Alg_{\disk_{d+1}^{\fr}}(\cV)$
defined for any framed $(n-d-1)$-manifold $Y$.  

\begin{prop}\label{nk} There is a pullback diagram:

\[\xymatrix{
\Alg_{\disk^{\fr}_{d\subset n}}(\cV)\ar[d]&\ar[l]{\Alg_{\disk^{\fr}_{d+1}}}\Bigl(\displaystyle\int_{S^{n-d-1}}A~,~ \hh^\ast_{\sD^{\fr}_d}(B)\Bigr)\ar[d]\\
\Alg_{\disk^{\fr}_{n}}(\cV)\times\Alg_{\disk^{\fr}_{d}}(\cV)&\ar[l]\{(A,B)\}\\}\]
\end{prop}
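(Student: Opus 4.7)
The plan is to exhibit the pullback by identifying a $\disk^{\fr}_{d\subset n}$-algebra with the data of a $\disk^{\partial,\fr}_{d+1}$-algebra of a very specific type, and then to invoke the higher Deligne conjecture (Example \ref{deligne-boundary}) to rewrite that data as a triple $(A,B,\alpha)$. First I would note that, since the symmetric monoidal $\oo$-category $\disk^{\fr}_{d\subset n}$ is generated by the two basics $\RR^n$ and $(\RR^d\subset \RR^n)$, restriction to the two full sub-$\oo$-operads $\disk^{\fr}_n \hookrightarrow \disk^{\fr}_{d\subset n}$ and $\disk^{\fr}_d \hookrightarrow \disk^{\fr}_{d\subset n}$ (where the latter sends a framed $d$-disk $V$ to $(V\subset V\times \RR^{n-d})$) gives the left vertical forgetful functor in the diagram. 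The content is the identification of the homotopy fiber over $(A,B)$.

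The key construction is the polar-coordinate weakly constructible bundle
\[
p\colon (\RR^d\subset\RR^n)\longrightarrow \RR^d\times [0,\infty),\qquad (x,y)\longmapsto (x,|y|),
\]
which collapses the submanifold stratum $\RR^d\times\{0\}$ to the boundary stratum of the target, and is a submersion with fiber $S^{n-d-1}$ over the open interior. Applying the push-forward of Theorem~\ref{:(} to a $\disk^{\fr}_{d\subset n}$-algebra $F$ with $F|_{\RR^n}=A$ and $F|_{(\RR^d\subset \RR^n)}=B$ produces an algebra $p_\ast F$ over the stratified space $\RR^d\times[0,\infty)$. By the Fubini formula (Corollary~\ref{fubini}), the restriction of $p_\ast F$ to any basic in the open interior $\RR^d\times (0,\infty)$ computes $\int_{S^{n-d-1}\times \RR^{d+1}}A$, so its underlying $\disk^{\fr}_{d+1}$-algebra is $\int_{S^{n-d-1}}A$; meanwhile, the restriction of $p_\ast F$ to any basic that meets the boundary computes factorization homology on a stratified manifold isomorphic to $(\RR^d\subset \RR^n)$, recovering $B$. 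Running this construction functorially, in a way compatible with disjoint unions, identifies $\Alg_{\disk^{\fr}_{d\subset n}}(\cV)$ with the $\infty$-category of $\disk^{\partial,\fr}_{d+1}$-algebras whose ambient $\disk^{\fr}_{d+1}$-algebra is constrained to be of the form $\int_{S^{n-d-1}}A$ for some $\disk^{\fr}_n$-algebra $A$.

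Finally, Example~\ref{deligne-boundary} rewrites such a $\disk^{\partial,\fr}_{d+1}$-algebra as a triple consisting of the ambient algebra $\int_{S^{n-d-1}}A$, the boundary algebra $B$, and a map of $\disk^{\fr}_{d+1}$-algebras $\alpha\colon \int_{S^{n-d-1}}A\to \hh^\ast_{\sD^{\fr}_d}(B)$. Stacking this rewriting over the space of pairs $(A,B)$ and taking the homotopy pullback gives exactly the square of the proposition.

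The main obstacle is Step 2: verifying that pushforward along $p$ implements a genuine equivalence of $\infty$-operads between $\disk^{\fr}_{d\subset n}$ and the full suboperad of $\disk^{\partial,\fr}_{d+1}$-algebras picked out above, rather than merely a functor. Concretely, this amounts to checking that for every configuration of basics, the map $p$ induces an equivalence on the relevant spaces of embeddings; the essential geometric input is the deformation retraction $\RR^n\smallsetminus \RR^d \simeq \RR^d\times S^{n-d-1}\times (0,\infty)$, which identifies configurations of $n$-disks in the complement of $\RR^d$ with configurations of $(d+1)$-disks in $\RR^d\times (0,\infty)$ labelled by points in $S^{n-d-1}$. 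Making this identification coherent at the level of $\infty$-operads, using the localization result of Proposition~\ref{EEd-vs-EE} to reduce to the discrete combinatorial picture, is the technical heart of the argument.
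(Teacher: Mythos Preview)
Your approach is genuinely different from the paper's, and the pushforward idea is attractive, but the argument as written has a real gap precisely where you flag it, and your proposed resolution does not work.

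The paper does not use the polar-coordinate pushforward at all. Instead it filters $\disk^{\fr}_{d\subset n}$ by the number of components isomorphic to $(\RR^d\subset\RR^n)$: the step $(\disk^{\fr}_{d\subset n})_{\leq 1}$ encodes exactly a $\disk^{\fr}_n$-algebra $A$ together with a $\disk^{\fr}_{n-d}$-$A$-module; extending to all of $\disk^{\fr}_{d\subset n}$ then adds a compatible $\disk^{\fr}_d$-algebra structure on that module. This yields a pullback square identifying the fiber over $A$ with $\Alg_{\disk^{\fr}_d}\bigl(\m_A^{\disk^{\fr}_{n-d}}(\cV)\bigr)$. The paper then invokes the equivalence $\m_A^{\disk^{\fr}_{n-d}}(\cV)\simeq \m_{\int_{S^{n-d-1}}A}(\cV)$ from \cite{cotangent} and applies higher Deligne. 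Note that this module-category equivalence is the real engine; it is precisely what converts the $\disk^{\fr}_{n-d}$-action of $A$ into an associative action of $\int_{S^{n-d-1}}A$.

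Your pushforward produces a functor $\Alg_{\disk^{\fr}_{d\subset n}}(\cV)\to \Alg_{\disk^{\fr}_n}(\cV)\times_{\Alg_{\disk^{\fr}_{d+1}}(\cV)}\Alg_{\disk^{\partial,\fr}_{d+1}}(\cV)$, and that is fine. But to finish you must show it is an equivalence, and your sketch for this is not right. First, the phrase ``equivalence of $\infty$-operads between $\disk^{\fr}_{d\subset n}$ and a full suboperad of $\disk^{\partial,\fr}_{d+1}$-algebras'' is a category error: algebras do not form an operad, and there is no map of operads $\disk^{\fr}_{d\subset n}\to\disk^{\partial,\fr}_{d+1}$ inducing $p_\ast$ (the functor $p^{-1}$ of Lemma~\ref{f-inv-factr} sends the interior basic $\RR^{d+1}$ to $S^{n-d-1}\times\RR^{d+1}$, which is not a basic). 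Second, the geometric claim you offer---that framed embeddings of $n$-disks into $\RR^n\smallsetminus\RR^d$ are identified with framed embeddings of $(d{+}1)$-disks into $\RR^d\times(0,\infty)$ with $S^{n-d-1}$-labels---already fails for two disks: $\conf_2(S^{n-d-1}\times\RR^{d+1})$ is not in general equivalent to $\conf_2(\RR^{d+1})\times(S^{n-d-1})^2$. So the localization argument via Proposition~\ref{EEd-vs-EE} cannot reduce to the combinatorial picture you describe.

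What would actually close the gap is essentially the same input the paper uses: one must identify, for fixed $A$, the fiber of your pushforward functor with the fiber of the right-hand side, and this identification is the equivalence $\m_A^{\disk^{\fr}_{n-d}}(\cV)\simeq \m_{\int_{S^{n-d-1}}A}(\cV)$ of \cite{cotangent}. If you are willing to quote that, your route can be completed; but then the pushforward construction is doing little beyond repackaging the filtration argument.
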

That is, the space of compatible $\disk^{\fr}_{d\subset n}$-algebra structures on the pair $(A,B)$ is equivalent to the space of $\disk^{\fr}_{d+1}$-algebra maps from $\int_{S^{n-d-1}}A$ to the Hochschild cohomology $\hh^\ast_{\sD^{\fr}_d}(B)$; the datum of a $\disk^{\fr}_{d\subset n}$-algebra is equivalent to that of a triple $(A,B, \alpha)$, where $A$ is a $\disk^{\fr}_n$-algebra, $B$ is a $\disk^{\fr}_d$-algebra, and $\alpha$ is a map of $\disk^{\fr}_{d+1}$-algebras
	\[
	\alpha: \int_{S^{n-d-1}} A \longrightarrow \hh^*_{\sD^{\fr}_d} (B)
	\]
-- this is an $S^{n-d-1}$ parametrized family of central $\disk^{\fr}_d$-algebra actions of $A$ on $B$. In essence, Proposition~\ref{nk} is a parametrized version of the higher Deligne conjecture, and in the proof we will rely on the original version of the higher Deligne conjecture (proved in \S5.3 of \cite{HA}).

\begin{proof}

The $\oo$-category $\disk_{d\subset n}^{\fr}$ has a natural filtration by the number of components which are isomorphic to the stratified space $(\RR^d\subset \RR^n)$:
\[
\disk_n^{\fr}=(\disk_{d\subset n}^{\fr})_{\leq 0} \ra (\disk_{d\subset n}^{\fr})_{\leq 1} \ra \ldots \ra \colim_i ~(\disk_{d\subset n}^{\fr})_{\leq i} \simeq \disk^{\fr}_{d\subset n}
\] 
Consider the second step in this filtration, the full subcategory $(\disk_{d\subset n}^{\fr})_{\leq 1}$ of $\disk_{d\subset n}^{\fr}$ whose objects contain at most one connected component equivalent to $(\RR^d\subset \RR^n)$. 

Disjoint union endows $(\disk_{d\subset n}^{\fr})_{\leq 1}$ with a \emph{partially defined} symmetric monoidal structure. 
This partially defined symmetric monoidal structure can be articulated as follows.  Consider the pullback $(\disk^{\fr,\sqcup}_{d\subset n})_{\leq 1} := (\disk^{\fr}_{d\subset n})_{\leq 1} \times_{\disk^{\fr}_{d\subset n}} \disk^{\fr,\sqcup}_{d\subset n}$ where here we are using the map from the right factor $\sqcup\colon \disk^{\fr,\sqcup}_{d\subset n} \to \disk^{\fr}_{d\subset n}$.  
The coCartesian fibration $\disk^{\fr, \sqcup}_{d\subset n} \to \mathsf{Fin}_\ast$ restricts to a map $(\disk^{\fr,\sqcup}_{d\subset n})_{\leq 1} \to \mathsf{Fin}_\ast$ which is an inner fibration and for each edge $f$ in $\mathsf{Fin}_\ast$ with a lift $\w{J}_+$ of its source in $(\disk^{\fr,\sqcup}_{d\subset n})_{\leq 1}$ there is either a coCartesian edge over $f$ with source $\w{J}_+$ or the simplicial set of morphisms over $f$ with source $\w{J}_+$ is empty.  
In this way, by a symmetric monoidal functor from $(\disk^{\fr,\sqcup}_{d\subset n})_{\leq 1}$ over $\mathsf{Fin}_\ast$ it is meant a map over $\mathsf{Fin}_\ast$ which sends coCartesian edges to coCartesian edges.  

It is immediate that such a symmetric monoidal functor $F$ is equivalent to the data of a $\disk^{\fr}_n$-algebra $F(\RR^n)$ and a $\disk^{\fr}_{n-d}$-$F(\RR^n)$-module given by $F(\RR^d\subset \RR^n)$. Extending such a symmetric monoidal functor $F$ to $\disk_{d\subset n}^{\fr}$ is thus equivalent to giving a $\disk^{\fr}_d$-algebra structure on $F(\RR^d\subset \RR^n)$ compatible with the $\disk^{\fr}_{n-d}$-$F(\RR^n)$-module. That is, the following is a triple of pullback squares of $\oo$-categories
\[\xymatrix{
\Alg_{\disk_{d\subset n}^{\fr}}(\cV)\ar[d]&\ar[l]\Alg_{\disk^{\fr}_{d}}\bigl(\m_A^{\disk^{\fr}_{n-d}}(\cV)\bigr)\ar[d]\\
\Fun^\ot\bigl((\disk_{d\subset n}^{\fr})_{\leq 1},\cV\bigr)\ar[d]&\ar[l]\m_A^{\disk^{\fr}_{n-d}}(\cV)\ar[d]\\
\Alg_{\disk^{\fr}_{n}}(\cV)&\ar[l]\{A\}~.
\\}\]

Using the equivalence $\m_A^{\disk^{\fr}_{n-d}}(\cV)\simeq \m_{\int_{S^{n-d-1}}A}(\cV)$ of~\cite{cotangent}, we can then apply the higher Deligne conjecture to describe an object of the $\oo$-category \[\Alg_{\disk^{\fr}_{d}}\Bigl(\m_A^{\disk^{\fr}_{n-d}}(\cV)\Bigr) \simeq \Alg_{\disk^{\fr}_{d}}\Bigl(\m_{\int_{S^{n-d-1}}A}(\cV)\Bigr)~.\] That is, to upgrade a $\disk^{\fr}_d$-algebra $B$ to the structure of a $\disk^{\fr}_d$-algebra in $\int_{S^{n-d-1}}A$-modules is equivalent to giving a $\disk^{\fr}_{d+1}$-algebra map $\alpha: \int_{S^{n-d-1}}A\ra \hh^*_{\sD^{\fr}_d}(B)$ to the $\sD^{\fr}_d$-Hochschild cohomology of $B$.
\end{proof}

\subsubsection{Hochschild cohomology in spaces}

We now specialize our discussion of $\disk^{\fr}_{d\subset n}$-algebras to the case where $\cV=\cS^\times$ is the $\oo$-category of spaces with Cartesian product, but any $\infty$-topos would do just as well. In this case, the $\sD^{\fr}_n$-Hochschild cohomology of an $n$-fold loop space has a very clear alternate description which is given below.

\begin{prop}\label{HH-Aut}
Let $Z=(Z,\ast)$ be a based space which is $n$-connective.  
In a standard way, the $n$-fold based loop space $\Omega^n Z$ is a $\disk^{\fr}_n$-algebra.  
There is a canonical equivalence of $\disk^{\fr}_{n+1}$-algebras in spaces \[\hh^*_{\sD^{\fr}_n}(\Omega^n Z) \simeq \Omega^n \Aut(Z)\] between the $\sD^{\fr}_n$-Hochschild cohomology space of $\Omega^nZ$ and the $n$-fold loops, based at the identity map, of the space of homotopy automorphisms of $Z$.
\end{prop}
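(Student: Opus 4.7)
The plan is to recognize both sides as corepresenting the same functor on $\Alg_{\disk^{\fr}_{n+1}}(\spaces)$. On the right, the space $\Aut(Z) \subset \Map(Z,Z)$ of self-equivalences is a group-like $\cE_1$-algebra under composition -- equivalently, $\Aut(Z) \simeq \Omega\Baut(Z)$ for $\Baut(Z) := \sB\Aut(Z)$ its classifying space -- so $\Omega^n\Aut(Z)\simeq \Omega^{n+1}\Baut(Z)$ carries a canonical $\cE_{n+1}$-algebra structure via Dunn additivity. On the left, $\hh^\ast_{\sD^{\fr}_n}(\Omega^n Z)$ carries its $\cE_{n+1}$-algebra structure via the higher Deligne conjecture (as invoked in Example~\ref{deligne-boundary} and in the proof of Proposition~\ref{nk}), characterized by the universal property that a $\disk^{\fr}_{n+1}$-algebra map $B\to \hh^\ast_{\sD^{\fr}_n}(\Omega^n Z)$ is the same datum as a Swiss-cheese $\disk^{\partial,\fr}_{n+1}$-algebra structure $(B,\Omega^n Z,\alpha)$ with $B$ in the bulk and $\Omega^n Z$ on the boundary.

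To match the two universal properties, I would apply Theorem~\ref{non-abel} (non-abelian Poincar\'e duality), which identifies factorization homology valued in $\Omega^n Z$ with spaces of compactly supported maps into $Z$. A Swiss-cheese structure $(B,\Omega^n Z,\alpha)$ with $B$ group-like, evaluated via factorization homology on a half-space $\HH^{n+1}$ together with its one-point compactification, yields a $B$-parametrized family of pointed maps into $Z$; $n$-fold delooping turns this into precisely the datum of an $\cE_{n+1}$-algebra map from $B$ to $\Omega^{n+1}\Baut(Z)\simeq \Omega^n\Aut(Z)$. The converse construction, from such a map back to a Swiss-cheese structure, uses May's recognition principle, which applies because the $n$-connectivity of $Z$ makes the relevant coefficient system connective in the sense of Definition~\ref{connective-coefs}.

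As a concrete check at the level of underlying spaces, one can compute $\hh^\ast_{\sD^{\fr}_n}(\Omega^n Z)\simeq \End_{\m^{\cE_n}_{\Omega^n Z}}(\Omega^n Z)$ via the standard equivalence $\m^{\cE_n}_A(\spaces)\simeq \m_{\int_{S^{n-1}\times\RR}A}(\spaces)$ of $\cE_n$-modules with left modules over factorization homology on a thickened sphere (see~\cite{HA}, \S5.3). Under non-abelian Poincar\'e duality, $\int_{S^{n-1}\times\RR}\Omega^n Z\simeq \Map_\ast((S^{n-1}\times\RR)^+,Z)$, and the unit $\cE_n$-module corresponds to a canonical ``diagonal'' section of this parametrized space; its endomorphism space then reduces to the homotopy fiber of the diagonal $\Map(Z,Z)\to \Map(Z,Z)^{S^{n-1}}$ at $\id_Z$. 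This fiber is $\Omega^n\Map(Z,Z)_{\id}$, which equals $\Omega^n\Aut(Z)$ since any path to $\id_Z$ in $\Map(Z,Z)$ lies in the open subspace $\Aut(Z)$.

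The main obstacle will be rigorously tracking the $\cE_{n+1}$-algebra structures through both arguments -- the Dunn additivity/Eckmann--Hilton promotion on the right, and the higher-Deligne-conjecture structure on the left -- and verifying that the identification at the level of underlying spaces upgrades coherently to the $\cE_{n+1}$-level. The hypothesis of $n$-connectivity on $Z$ is essential throughout: it ensures that $\Omega^n Z$ determines $Z$ (so $\Baut(Z)$ is the correct receiver), places the coefficient system within the connectivity range of Theorem~\ref{non-abel}, and guarantees that May's recognition principle applies in the form used above.
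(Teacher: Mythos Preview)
Your third paragraph (the ``concrete check'') is essentially the paper's proof, and is in fact the entire argument the paper gives. The paper takes one further step than you do: rather than stopping at $\m^{\cE_n}_{\Omega^n Z}(\cS)\simeq \m_{\int_{S^{n-1}}\Omega^n Z}(\cS)$, it continues to $\m_{\Omega Z^{S^{n-1}}}(\cS)\simeq \cS_{/Z^{S^{n-1}}}$, so that $\Omega^n Z$ with its canonical self-module structure corresponds to the object $Z\to Z^{S^{n-1}}$ given by constant maps. The Hochschild cohomology then becomes the mapping space $\Map_{/Z^{S^{n-1}}}(Z,Z)$, which the paper computes by two stacked pullback squares: the first expresses it as a fiber of $\Map(Z,Z)\to\Map(Z,Z^{S^{n-1}})$, the second factors through $\Map(Z,Z)^{S^n}\to\Map(Z,Z)$ using $S^n\simeq\Sigma S^{n-1}$. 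This is the same fiber-of-the-diagonal computation you describe, just organized via the slice category.

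Your first two paragraphs propose a genuinely different, more structured route via the universal property of $\hh^\ast_{\sD^{\fr}_n}$ from the higher Deligne conjecture, matching Swiss-cheese structures against maps into $\Omega^{n+1}\Baut(Z)$. This would, if carried through, yield the $\cE_{n+1}$-equivalence directly rather than only at the level of underlying spaces. It is worth noting that the paper's own proof, like your concrete check, works entirely at the level of underlying spaces and does not explicitly verify the $\disk^{\fr}_{n+1}$-algebra compatibility; so the obstacle you flag in your last paragraph is not one the paper resolves either. Your universal-property approach is thus more ambitious than what the paper actually does, but also, as you acknowledge, harder to make precise.

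One small point: you write $\int_{S^{n-1}\times\RR}A$ where the paper writes $\int_{S^{n-1}}A$; these agree, but the paper's choice makes the identification with $\Omega Z^{S^{n-1}}$ and the passage to the slice category a bit more transparent.
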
 

\begin{proof} 
In what follows, all mapping spaces will be regarded as based spaces, based at either the identity map or at the constant map at the base point of the target argument -- the context will make it clear which of these choices is the appropriate one.  

There are equivalences 
\[
\m_{\Omega^n Z}^{\disk^{\fr}_n}(\cS) \simeq \m_{\int_{S^{n-1}}\Omega^n Z}(\cS) \simeq \m_{\Omega Z^{S^{n-1}}}(\cS)\simeq \cS_{/Z^{S^{n-1}}}
\] 
sending the object $\Omega^n Z$ with its natural $\disk^{\fr}_n$-$\Omega^nZ$-module self-action to the space $Z$ with the natural inclusion of constant maps $Z\ra Z^{S^{n-1}}$. 
Thus, to describe the mapping space \[\hh^*_{\sD^{\fr}_n}(\Omega^nZ) \simeq {\m_{\Omega^nZ}^{\disk^{\fr}_n}}(\Omega^nZ,\Omega^nZ)\] it suffices to calculate the equivalent mapping space $\Map_{/Z^{S^{n-1}}}(Z,Z)$. By definition, there is the (homotopy) pullback square of spaces
\[\xymatrix{
\Map_{/Z^{S^{n-1}}}(Z,Z)\ar[r]\ar[d]& \ast\ar[d]\\
\Map(Z,Z)\ar[r]&\Map(Z,Z^{S^{n-1}})~.
\\}\]

Choose a base point $p\in S^{n-1}$.  The restriction of the evaluation map $ev_p^\ast\colon\Map(Z,Z) \ra \Map(Z,Z^{S^{n-1}})$ is a map of based spaces.   
Thus, the pullback diagram above factorizes as the (homotopy) pullback diagrams
\[\xymatrix{
\Map_{/Z^{S^{n-1}}}(Z,Z)\ar[r]\ar[d]& \ast\ar[d]\\
\Map(Z,Z)^{S^n}\ar[r]\ar[d]&\Map(Z,Z)\ar[d]\\
\Map(Z,Z)\ar[r]&\Map(Z,Z^{S^{n-1})}\\
}\]
where the space of maps from the suspension $S^n= \Sigma S^{n-1}$ to $\Map(Z,Z)$ is realized as the homotopy pullback of the two diagonal maps $\Map(Z,Z) \ra \Map(Z,Z)^{S^{n-1}}$; this is a consequence of the fact that the functor $\Map(-,Z)$ sends homotopy colimits to homotopy limits, applied to the homotopy colimit $\colim(\ast \la S^{n-1}\ra \ast)\simeq S^n$.

Applying the adjunction between products and mapping spaces, we obtain that the Hochschild cohomology space $\Map_{/Z^{S^{n-1}}}(Z,Z)$ is the homotopy fiber of the map $\Map\bigl(S^n, \Map(Z, Z)\bigr) \ra \Map(Z,Z)$ over the identity map of $Z$, which recovers exactly the definition of the based mapping space $\Map_*\bigl(S^n,\Map(Z,Z)\bigr) \simeq \Map_*\bigl(S^n,\Aut(Z)\bigr)$, where the last equivalence follows by virtue of $S^n$ being connected.

\end{proof}

\begin{cor}
Let $Z$ and $W$ be pointed spaces.  Suppose $Z$ is $n$-connective and $W$ is $d$-connective. A $\disk^{\fr}_{d\subset n}$-algebra structure on the pair $(\Omega^n Z,\Omega^d W)$ is equivalent to the data of a pointed map of spaces
	\[
	Z^{S^{n-d-1}} \longrightarrow {\sf BAut}(W)~.
	\]

\end{cor}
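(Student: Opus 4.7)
The plan is to combine the reduction furnished by Proposition~\ref{nk} with nonabelian Poincaré duality (Theorem~\ref{non-abel}) and the identification of Hochschild cohomology of loop spaces (Proposition~\ref{HH-Aut}). Applied to $A = \Omega^n Z$ and $B = \Omega^d W$, Proposition~\ref{nk} says that a $\disk^{\fr}_{d\subset n}$-algebra structure on $(\Omega^n Z, \Omega^d W)$ is the datum of a map of $\disk^{\fr}_{d+1}$-algebras
\[
\alpha \colon \int_{S^{n-d-1}} \Omega^n Z ~\longrightarrow~ \hh^{\ast}_{\sD^{\fr}_d}(\Omega^d W)~,
\]
so it suffices to identify source and target explicitly, and then to identify the space of such $\alpha$ with $\Map_\ast\bigl(Z^{S^{n-d-1}}, {\sf BAut}(W)\bigr)$.

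For the source, the hypothesis that $Z$ is $n$-connective makes the coefficient system on $\sD_n^{\fr}$ corresponding to $\Omega^n Z$ connective (Example~\ref{fr-coefs}). The product $S^{n-d-1} \times \RR^{d+1}$ inherits a $\sD^{\fr}_n$-structure from its open embedding into $\RR^n \smallsetminus \{0\}$, and Theorem~\ref{non-abel} yields
\[
\int_{S^{n-d-1} \times \RR^{d+1}} \Omega^n Z ~\simeq~ \Gammac\bigl(S^{n-d-1} \times \RR^{d+1},\, Z\bigr) ~\simeq~ \Omega^{d+1}\bigl(Z^{S^{n-d-1}}\bigr)~.
\]
Running the same argument over $S^{n-d-1} \times Y$ for arbitrary $Y \in \disk^{\fr}_{d+1}$ (combining the framing of $Y$ with the stable trivialization $TS^{n-d-1} \oplus \RR \cong \RR^{n-d}$) and invoking Fubini (Corollary~\ref{fubini}), the values of the pushforward $\disk^{\fr}_{d+1}$-algebra $\int_{S^{n-d-1}} \Omega^n Z$ on $Y$ are identified naturally with $\Gammac(Y, Z^{S^{n-d-1}})$. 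Since $Z^{S^{n-d-1}}$ is $(d+1)$-connective, a second application of nonabelian Poincaré duality---now in the $\sD^{\fr}_{d+1}$-direction---identifies the latter with the factorization homology over $Y$ of the $\disk^{\fr}_{d+1}$-algebra $\Omega^{d+1} Z^{S^{n-d-1}}$, giving $\int_{S^{n-d-1}} \Omega^n Z \simeq \Omega^{d+1} Z^{S^{n-d-1}}$ as $\disk^{\fr}_{d+1}$-algebras.

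For the target, Proposition~\ref{HH-Aut} supplies an equivalence of $\disk^{\fr}_{d+1}$-algebras $\hh^{\ast}_{\sD^{\fr}_d}(\Omega^d W) \simeq \Omega^d \Aut(W) \simeq \Omega^{d+1} {\sf BAut}(W)$. Combining these identifications, maps $\alpha$ correspond to $\disk^{\fr}_{d+1}$-algebra maps $\Omega^{d+1} Z^{S^{n-d-1}} \to \Omega^{d+1} {\sf BAut}(W)$ between grouplike $\cE_{d+1}$-spaces. By May's recognition principle (compare Example~\ref{fr-coefs} and Remark~\ref{remark:grouplike}), such maps are computed by pointed maps between the corresponding deloopings, and since $Z^{S^{n-d-1}}$ is $(d+1)$-connective every pointed map out of it factors uniquely through the $(d+1)$-connective cover of its target, yielding the required identification with $\Map_\ast\bigl(Z^{S^{n-d-1}}, {\sf BAut}(W)\bigr)$. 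The main technical point is the second step---promoting the space-level equivalence supplied by nonabelian Poincaré duality to an equivalence of $\disk^{\fr}_{d+1}$-algebras---which amounts to comparing the two homology theories $Y \mapsto \int_{S^{n-d-1}\times Y}\Omega^n Z$ and $Y \mapsto \Gammac(Y, Z^{S^{n-d-1}})$ on $\mfld^{\fr}_{d+1}$ and checking compatibility with the pushforward functoriality of Theorem~\ref{:(}.
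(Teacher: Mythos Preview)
Your proof is correct and follows essentially the same approach as the paper's: both apply Proposition~\ref{nk}, identify the source via nonabelian Poincar\'e duality as $\Omega^{d+1}Z^{S^{n-d-1}}$ and the target via Proposition~\ref{HH-Aut} as $\Omega^d\Aut(W)$, then deloop using May's recognition principle and the connectivity of $Z^{S^{n-d-1}}$ to pass to ${\sf BAut}(W)$. The paper is terser---it simply asserts that nonabelian Poincar\'e duality gives the source identification as $\disk^{\fr}_{d+1}$-algebras---whereas you spell out a justification via Fubini and a second application of Theorem~\ref{non-abel} in the $(d{+}1)$-direction, which is a reasonable elaboration of what the paper leaves implicit.
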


\begin{proof} 
Proposition~\ref{nk} informs us that giving the structure of a $\disk^{\fr}_{d\subset n}$-algebra on $(\Omega^nZ,\Omega^dW)$ is equivalent to defining a $\disk^{\fr}_{d+1}$-algebra map \[\int_{S^{n-d-1}}\Omega^nZ \longrightarrow \hh^*_{\sD^{\fr}_d}(\Omega^dW)~.\] 
By way of nonabelian Poincar\'e duality (Theorem~\ref{non-abel}), the factorization homology $\int_{S^{n-d-1}}\Omega^nZ$ is equivalent as $\disk^{\fr}_{d+1}$-algebras to the mapping space $\Omega^{d+1}Z^{S^{n-d-1}}$.  Proposition~\ref{HH-Aut} gives that the Hochschild cohomology $\hh^*_{\sD^{\fr}_d}(\Omega^dW)$ is equivalent to the space of maps to $\Omega^d \Aut(W)$.

Finally, a $(d+1)$-fold loop map $\Omega^{d+1}Z^{S^{n-d-1}} \ra \Omega^d \Aut(W)$ is equivalent to a pointed map between their $(d+1)$-fold deloopings. The $(d+1)$-fold delooping of $\Omega^{d+1}Z^{S^{n-d-1}}$ is $Z^{S^{n-d-1}}$, since $Z$ is $n$-connective; the $(d+1)$-fold delooping of $\Omega^d \Aut(W)$ is $\tau_{\geq d+1}{\sf BAut}(W)$, the $d$-connective cover of ${\sf BAut}(W)$. However, since $Z^{S^{n-d-1}}$ is already $(d+1)$-connective, the space of maps from it into $\tau_{\geq d+1}{\sf BAut}(W)$ is homotopy equivalent to the space of maps into ${\sf BAut}(W)$.
\end{proof}

\subsubsection{Free $\disk^{\fr}_{d\subset n}$-algebras}\label{free-examples}

Fix a symmetric monoidal $\oo$-category $\cV$ whose underlying $\oo$-category is presentable and whose monoidal structure distributes over colimits in each variable.  
We analyze the factorization homology theory resulting from one of the simplest classes of $\disk^{\fr}_{d\subset n}$-algebras, that of freely generated $\disk^{\fr}_{d\subset n}$-algebras. 
That is, there is a forgetful functor
	\begin{equation}\label{forget-two}
	\Alg_{\disk^{\fr}_{d\subset n}}(\cV) \to \cV \times \cV~,
	\end{equation}
given by evaluating on the objects $\RR^n$ and $(\RR^d\subset \RR^n)$, and this functor admits a left adjoint $\free_{d\subset n}$. 
To accommodate more examples, we modify~(\ref{forget-two}).  
Consider the maximal sub-Kan complex $\cE\subset \sD_{d\subset n}$.  
 $\cE$ is a coproduct $\End_{\sD_{d\subset n}}(\RR^n)\coprod \End_{\sD_{d\subset n}}(\RR^d\subset \RR^n) \simeq \sO(n) \coprod \sO(d\subset n)$, here $\sO(d\subset n):= \sO(n-d)\times \sO(d)$.  
There results a map of $\oo$-categories $\cE \to \sD_{d\subset n} \to \disk_{d\subset n} \to \disk_{d\subset n}^\sqcup$, restriction along which gives the map of $\oo$-categories
\[
\Alg_{\disk_{d\subset n}}(\cV) \to \Map(\cE,\cV) \simeq \cV^{\sO(n)}\times \cV^{\sO(d\subset n)}
\]
to the $\oo$-category of pairs $(P,Q)$ consisting of an $\sO(n)$-object in $\cV$ and an $\sO(d\subset n)$-object in $\cV$.  
We will denote the left adjoint to this map as $\free_\cE$.  
Denote the inclusion as $\delta \colon \cV\times \cV \to \Map(\cE,\cV)$ as the pairs $(P,Q)$ whose respective actions are trivial.  

For $X$ a $\sD_{d\subset n}$-manifold (not necessarily framed) define $\int_X \free^{(P,Q)}_{d\subset n}:= \int_X \free^{\delta(P,Q)}_\cE$.  
When $X$ is framed (i.e., is a $\sD^{\fr}_{d\subset n}$-manifold) the lefthand side of this expression has already been furnished with meaning as the factorization homology of $X$ with coefficients in the free $\disk^{\fr}_{d\subset n}$-algebra generated by $(P,Q)$.  
The following lemma ensures that the two meanings agree.
Recall the forgetful map $\disk^{\fr, \sqcup}_{d\subset n} \to \disk_{d\subset n}^\sqcup$.  

\begin{lemma}\label{frame-or-not}
Let $(P,Q)$ be a pair of objects of $\cV$.  
Then the universal arrow
\[
\free_{d\subset n}^{(P,Q)} \xra{\simeq} \bigl(\free_\cE^{\delta(P,Q)}\bigr)_{|\disk^{\fr, \sqcup}_{d\subset n}}
\]
is an equivalence.  

\end{lemma}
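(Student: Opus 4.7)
The plan is to verify the equivalence pointwise on objects of $\disk^{\fr,\sqcup}_{d\subset n}$ by computing both sides as explicit colimits in $\cV$ and comparing, exploiting the structure of the forgetful map $\disk^{\fr,\sqcup}_{d\subset n}\to\disk^{\sqcup}_{d\subset n}$ as a right fibration of $\infty$-operads (in the spirit of Proposition~\ref{smoothing}).

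First I would invoke the standard colimit formula for a free algebra over an $\infty$-operad with discrete colors, following~\S3.1.3 of~\cite{HA} adapted to the multi-colored setting. Applied to $\disk^{\fr}_{d\subset n}$, whose two generating colors $\RR^n$ and $(\RR^d\subset\RR^n)$ each have contractible endomorphism spaces, the formula yields
\[
\free_{d\subset n}^{(P,Q)}(W) ~\simeq~ \underset{(i,j)\in \ZZ_{\geq 0}^2}\coprod \Bigl(\Emb^{\fr}\bigl((\RR^n)^{\sqcup i}\sqcup(\RR^d\subset\RR^n)^{\sqcup j},W\bigr)\otimes P^{\otimes i}\otimes Q^{\otimes j}\Bigr)_{h(\Sigma_i\times\Sigma_j)}.
\]
Applied to $\disk_{d\subset n}$, whose corresponding generating colors carry endomorphism spaces $\sO(n)$ and $\sO(d\subset n)$, the same formula gives
\[
\free_\cE^{\delta(P,Q)}(W) ~\simeq~ \underset{(i,j)}\coprod \Bigl(\Emb\bigl((\RR^n)^{\sqcup i}\sqcup(\RR^d\subset\RR^n)^{\sqcup j},W\bigr)\otimes P^{\otimes i}\otimes Q^{\otimes j}\Bigr)_{h\bigl((\sO(n)^i\rtimes\Sigma_i)\times(\sO(d\subset n)^j\rtimes\Sigma_j)\bigr)},
\]
where the $\sO$-factors act trivially on the tensor powers of $P$ and $Q$ by the construction of $\delta$.

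Next I would identify, for each framed object $W$ of $\disk^{\fr,\sqcup}_{d\subset n}$, the embedding space in the unframed category as a principal bundle over its framed counterpart: namely,
\[
\Emb\bigl((\RR^n)^{\sqcup i}\sqcup(\RR^d\subset\RR^n)^{\sqcup j},W\bigr) ~\simeq~ \Emb^{\fr}\bigl((\RR^n)^{\sqcup i}\sqcup(\RR^d\subset\RR^n)^{\sqcup j},W\bigr)\times \sO(n)^i\times \sO(d\subset n)^j
\]
as a space carrying a free $\sO(n)^i\times\sO(d\subset n)^j$-action, compatibly with the $\Sigma_i\times\Sigma_j$-equivariance. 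This is the content of the right fibration $\disk^{\fr,\sqcup}_{d\subset n}\to \disk^{\sqcup}_{d\subset n}$ having the indicated fibers, and is proved exactly along the lines of Proposition~\ref{smoothing}. Since $\cV$ is $\ot$-sifted cocomplete and its monoidal structure distributes over colimits, the homotopy $\sO$-coinvariants commute past the tensor with the trivial-action factor $P^{\otimes i}\otimes Q^{\otimes j}$; the second displayed formula reduces pointwise to the first, and the canonical comparison arrow of the lemma (coming from the universal property of $\free_{d\subset n}^{(P,Q)}$ applied to the restricted algebra) is identified with this equivalence.

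The principal technical obstacle I anticipate is justifying the colimit formula for free algebras with colors having non-trivial automorphism spaces rigorously within the $\infty$-operadic framework of Notation~\ref{slice-convention} and~\cite{HA}; once this is in hand, the remainder of the argument is a formal manipulation exploiting that the $\sO$-action on framings is free while trivial on the coefficient data.
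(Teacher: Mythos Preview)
Your argument is correct, but it takes a different route from the paper's. You compute both free algebras explicitly via the operadic free-algebra formula and compare them termwise, using that for a framed target $W$ the unframed multi-morphism space is a principal $\sO(n)^i\times\sO(d\subset n)^j$-bundle over the framed one, together with the triviality of the $\sO$-action on the coefficient tensor powers. The paper instead argues entirely formally: it sets up a square of adjunctions
\[
\xymatrix{
\Alg_{\disk_{d\subset n}}(\cV) \ar@<.5ex>[r]^-{\rho} \ar@<.5ex>[d]^-{\rho} & \Map(\cE,\cV) \ar@<.5ex>[l]^-{\free_\cE} \ar@<.5ex>[d]^-{\rho} \\
\Alg_{\disk^{\fr}_{d\subset n}}(\cV) \ar@<.5ex>[r]^-{\rho} \ar@<.5ex>[u]^-{\mathsf{RKan}} & \Map(\cE^{\fr},\cV) \ar@<.5ex>[l]^-{\free_{d\subset n}} \ar@<.5ex>[u]^-{\mathsf{RKan}}
}
\]
observes that the square of right adjoints (restriction and pointwise right Kan extension) commutes, hence so does the square of left adjoints, and then uses that $\delta$ is a section of the right-hand vertical restriction to conclude $\free_{d\subset n}\simeq \free_{d\subset n}\circ\rho\circ\delta\simeq \rho\circ\free_\cE\circ\delta$.

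The paper's approach is shorter and requires no computation of either side; it only needs the observation that restriction along $\disk^{\fr}_{d\subset n}\to\disk_{d\subset n}$ and along $\cE^{\fr}\to\cE$ are compatible. Your approach has the virtue of being concrete---you actually see what the free algebra looks like---and in fact the explicit colimit you write down is exactly what the paper establishes a few paragraphs later in the proof of Proposition~\ref{prop:conf} (see expression~(\ref{tensor-time}) there). So your route is not wasted effort; it simply front-loads a calculation the paper defers. The obstacle you flag, justifying the free-algebra colimit formula in the presence of non-discrete automorphism spaces, is handled in the paper by the operadic left Kan extension discussion in that same proof.
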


\begin{proof}

Denote the pullback $\oo$-category $\cE^{\fr} = \cE\times_{\sD_{d\subset n}} \sD^{\fr}_{d\subset n}$.  
Like $\cE$, $\cE^{\fr}\subset \sD^{\fr}_{d\subset n}$ is the maximal sub-Kan complex (=$\infty$-groupoid).  
The projection $\cE^{\fr} \to \cE$ is a Kan fibration with fibers $\sO(n)$ or $\sO(d\subset n)$, depending on the component of the base.  
As so, the inclusion of the two objects with their standard framings $\{\RR^n\}\coprod \{\RR^d\subset \RR^n\} \xra{\simeq} \cE^{\fr}$ is an equivalence of Kan complexes.  
Therefore $\Map(\cE^{\fr},\cV) \xra{\simeq} \cV\times \cV$.  

Let us explain the following diagram of $\oo$-categories
\[
\xymatrix{
\Alg_{\disk_{d\subset n}}(\cV)   \ar[rr]^-{\rho}  \ar@(l,l)[dd]_-{\rho}
&&
\Map(\cE,\cV)   \ar@(r,r)[dd]^-{\rho}    \ar@(u,u)[ll]^-{\free_\cE} 
\\
&&
\\
\Alg_{\disk^{\fr}_{d\subset n}}(\cV)  \ar[rr]_-{\rho}  \ar[uu]_-{\mathsf{RKan}}  
&&
\Map(\cE^{\fr},\cV)~.   \ar[uu]^-{\mathsf{RKan}}  \ar@(d,d)[ll]^-{\free_{d\subset n}}
}
\]
Each leg of the square is an adjunction.
All maps labeled by $\rho$ are the evident restrictions.  
The maps denoted as $\mathsf{RKan}$ are computed as point-wise right Kan extension.  
(That is, $\mathsf{RKan}(A)\colon U\mapsto \lim_{U\to U'} A(U')$ where this limit is taking place in $\cV$ and is indexed by the appropriate over category.  
We emphasize that, unlike the case for left extensions, this point-wise right Kan extension agrees with operadic right Kan extension.)
As so, the straight square of right adjoints commutes.
It follows that the outer square of left adjoints also commutes.  

The right downward map is equivalent to that which assigns to a pair of objects $(P,Q)$ with respective actions of $\sO(n)$ and $\sO(d\subset n)$, the pair $(P,Q)$.  
The map $\delta\colon \cV\times \cV \to \cV^{\sO(n)}\times \cV^{\sO(d\subset n)}$ is a section to this right downward map $\rho$.    
We have established the string of canonical equivalences
\[
\free_{d\subset n} \simeq \free_{d\subset n}\circ\bigl(\rho \circ \delta\bigr) = \bigl(\free_{d\subset n}\circ \rho\bigr)\circ \delta \simeq \bigl(\rho\circ \free_\cE\bigr)\circ \delta~.
\]
This completes the proof.  

\end{proof}

In order to formulate our main result, we first give the following definition.

\begin{definition} For $M$ a topological space and $P$ an object of $\cV$, the \emph{configuration object of points in $M$ labeled by $P$} is 
\[
\conf^P(M) = \coprod_{j\geq 0} \conf_j(M)\underset{\Sigma_j}\ot P^{\ot j} \in \cV
\] 
where $\conf_j (M)\subset M^{\times j}$ is the configuration space of $j$ ordered and distinct points in $M$.
\end{definition}

For the remainder of the section, assume that the monoidal structure of $\cV$ distributes over small colimits.

\begin{prop}\label{prop:conf} Let $(P,Q)$ be a pair of objects of $\cV$.  
Let $(L\subset M)$ be a $\sD_{d\subset n}$-manifold, i.e., a smooth $n$-manifold and a properly embedded $d$-submanifold.  
There is a natural equivalence
	\[
	\int_{(L\subset M)} \free_{d\subset n}^{(P,Q)}
	\simeq
	\conf^{P}(M\smallsetminus L) \ot \conf^{Q}(L)\]
between the factorization homology of $(L\subset M)$ with coefficients in the $\disk_{d\subset n}$-algebra freely generated by $(P,Q)$ and the tensor product of the configurations objects of the link complement $M\smallsetminus L$ and the link $L$ labeled by $P$ and $Q$, respectively.
\end{prop}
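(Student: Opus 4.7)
The plan is to reduce $\int_X \free^{(P,Q)}_{d\subset n}$ to a colimit over labeled configurations of the strata of $X = (L \subset M)$. By definition, this factorization homology equals $\int_X \free^{\delta(P,Q)}_\cE$, where $\delta(P,Q)$ is the pair $(P,Q)$ equipped with trivial $\sO(n)$- and $\sO(d\subset n)$-actions. Composing the left adjoints $\free_\cE \colon \Fun(\cE,\cV) \to \Alg_{\disk(\cB)}(\cV)$ and $\int_-$, then invoking Theorem~\ref{fact-explicit} together with transitivity of left Kan extension, identifies the composite as
\[
\int_X \free^{(P,Q)}_{d\subset n} ~\simeq~ \underset{(\sqcup_i W_i \hookrightarrow X)\in \cE^{\sqcup}_{/X}}\colim ~\bigotimes_i \delta(P,Q)(W_i)~,
\]
where $\cE^{\sqcup}_{/X}$ denotes the $\infty$-category whose objects are embeddings into $X$ of a finite disjoint union of connected basics of $\cB = \sD_{d\subset n}$, with morphisms given by isomorphisms of disjoint unions together with isotopies over $X$. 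Because $\cE$ is a groupoid, $\cE^\sqcup_{/X}$ is itself an $\infty$-groupoid.

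By Lemma~\ref{EE-equivs}, applied to the two isomorphism classes of connected basics in $\sD_{d\subset n}$ --- namely $\RR^n$ (whose $X$-stratum is $M \smallsetminus L$) and $(\RR^d \subset \RR^n)$ (whose $X$-stratum is $L$) --- this $\infty$-groupoid is identified as
\[
\underset{j,k \geq 0}\coprod ~\conf_j(M\smallsetminus L)_{\Sigma_j}\times \conf_k(L)_{\Sigma_k}~.
\]
Evaluating the colimit component-by-component and using distributivity of $\otimes$ over colimits in $\cV$, one obtains
\[
\underset{j,k}\coprod ~\bigl(\conf_j(M\smallsetminus L) \times \conf_k(L)\bigr)_{\Sigma_j\times\Sigma_k} \otimes P^{\otimes j} \otimes Q^{\otimes k} ~\simeq~ \conf^P(M\smallsetminus L)\otimes \conf^Q(L)~,
\]
as desired.

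The main obstacle is assembling the above colimit formula, i.e., carrying out the composition of the two symmetric-monoidal left Kan extensions. This requires tracking the operadic enhancements of slice categories (Notation~\ref{slice-convention} and Corollary~\ref{slice-operads}) and invoking Lemma~\ref{shape-existence-operad} to pass from operadic to underlying-category colimits; the key hypotheses there --- that $\otimes$ distributes over sifted colimits in $\cV$, and that the slice tensor-product functors are final --- hold in our situation by Corollary~\ref{quittersifted} and by inspection. Once this formal identification is in place, the geometric content is supplied by Lemma~\ref{EE-equivs}, and the proof concludes by straightforward rearrangement of coproducts.
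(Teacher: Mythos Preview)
Your overall strategy --- reduce to a colimit over a groupoid of disjoint embedded basics and then invoke Lemma~\ref{EE-equivs} --- is the right one, and your use of Lemma~\ref{EE-equivs} to identify $(\disk(\sD_{d\subset n})_{/X})^\sim$ with $\coprod_{j,k}\conf_j(M\smallsetminus L)_{\Sigma_j}\times\conf_k(L)_{\Sigma_k}$ is a clean shortcut compared with the paper's explicit evaluation-at-origins computation. However, the first displayed identification
\[
\int_X \free^{(P,Q)}_{d\subset n} ~\simeq~ \underset{\cE^{\sqcup}_{/X}}\colim ~\bigotimes_i \delta(P,Q)(W_i)
\]
is not justified by the tools you cite, and this is the heart of the matter.

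The difficulty is that the two left adjoints you compose are of different kinds. Theorem~\ref{fact-explicit} says that $\int_X$ is computed by an \emph{underlying} left Kan extension (a colimit over $\disk(\cB)_{/X}$), but $\free_\cE$ is an \emph{operadic} free construction, not an underlying left Kan extension along $\cE\to\disk(\cB)$: the free algebra on a color involves all the multi-operations. So ``transitivity of left Kan extension'' in the ordinary sense does not apply. Moreover, your appeal to Lemma~\ref{shape-existence-operad} and Corollary~\ref{quittersifted} does not close the gap: those results require the relevant slice to be sifted, and your indexing $\infty$-groupoid $\cE^\sqcup_{/X}$ is a disconnected coproduct of configuration spaces, hence \emph{not} sifted. (Corollary~\ref{quittersifted} concerns $\disk(\cB)_{/X}$, a different category.)

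What the paper does to bridge this step is: recognize the composite left adjoint as the operadic left Kan extension of $(P^\bullet,Q^\bullet)$ along $\cE^\amalg_{\sf inert}\to\mfld^\sqcup_{d\subset n}$, giving a colimit over the $\infty$-category $\cE^\amalg_X$ of \emph{active} morphisms to $X$; then observe that the inclusion of the isomorphism part $\cE^\amalg_{\sf iso}\subset\cE^\amalg_{\sf inert}$ is final, and hence so is the induced inclusion $\cG\subset\cE^\amalg_X$. It is this finality argument that reduces the operadic colimit to one over the groupoid you describe. Once that reduction is in hand, your Lemma~\ref{EE-equivs} identification and the remainder of your argument go through. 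In short: your $\cE^\sqcup_{/X}$ is the right answer for the indexing category, but you must supply the finality step (or an equivalent) to arrive there from the operadic colimit formula.
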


We make some remarks before proceeding with the proof of this result.

\begin{remark} We see from this result with $(d\subset n)=(3,1)$ that factorization homology in particular distinguish knots whose knot complements have distinct homotopy types. For instance, the unknot, whose knot group is $\ZZ$, and the trefoil knot, whose knot group is presented by $\langle x,y | x^2 = y^3 \rangle$, give rise to different factorization homologies.
\end{remark}

\begin{remark} Specializing to the case where the link $L$ is empty, we obtain the equivalence $\int_M \free_n^P \simeq \conf^P(M)$. Consequently, factorization homology is not a homotopy invariant of $M$, in as much as the homotopy types of the configuration spaces $\conf_j(M)$ are sensitive to the homeomorphism (or, at least, the simple homotopy) type of $M$, see~\cite{simple}. This is in contrast to the case in which the $\disk^{\fr}_n$-algebra $A$ comes from an $n$-fold loop space on an $n$-connective space, in which case nonabelian Poincar\'e duality~(Theorem~\ref{non-abel}) implies that factorization homology with such coefficients is a proper homotopy invariant. However, note that the factorization homology $\int_M \free_n^P$ is independent of the framing on $M$; this is a consequence of the fact that the $\disk_n^{\fr}$-algebra structure on $\free_n^P$ can be enhanced to a $\disk_n$-algebra.
\end{remark}

Recall the maps of symmetric monoidal $\oo$-categories $\disk^{\fr, \sqcup}_n \to \disk^{\fr,\sqcup}_{d\subset n}$ and $\disk^{\fr,\sqcup}_d \to \disk^{\fr,\sqcup}_{d\subset n}$ indicated by the assignments $\RR^n\mapsto \RR^n$ and $\RR^d \mapsto (\RR^d\subset \RR^n)$, respectively.
The following lemma describes the free $\disk_{d\subset n}^{\fr}$-algebras in terms of free $\disk^{\fr}_n$-algebras and free $\disk^{\fr}_d$-algebras. 

\begin{lemma}\label{free-free}
Let $(P,Q)$ be a pair of objects of $\cV$. Then the universal arrows to the restrictions
	\[
	 \free_n^P\xra{\simeq} \bigl(\free_{d\subset n}^{(P,Q)}\bigr)_{|\disk^{\fr,\sqcup}_n} 
	\qquad \&
	\qquad
	 \free_d^Q \ot\int_{S^{n-d-1}\times \RR^{d+1}}\free_n^P~{}~\xra{\simeq} ~{}~\bigl(\free_{d\subset n}^{(P,Q)}\bigr)_{|\disk^{\fr,\sqcup}_d}
	\]
are equivalences.  
\end{lemma}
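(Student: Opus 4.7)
The plan is to compute $\free_{d\subset n}^{(P,Q)}$ on the two generating objects $\RR^n$ and $(\RR^d\subset \RR^n)$ of $\disk_{d\subset n}^{\fr,\sqcup}$ via the operadic left Kan extension formula for free algebras over a colored operad. Applying Lemma~\ref{frame-or-not} to reduce to the framed case, for each basic $U$ there is a natural equivalence
\[
\free_{d\subset n}^{(P,Q)}(U)~\simeq~\coprod_{j,k\geq 0}\Map_{\disk_{d\subset n}^{\fr,\sqcup}}\bigl((\RR^n)^{\sqcup j}\sqcup (\RR^d\subset\RR^n)^{\sqcup k},U\bigr)\underset{\Sigma_j\times\Sigma_k}\otimes (P^{\otimes j}\otimes Q^{\otimes k})~.
\]
Naturality in $U$ then promotes the pointwise computations to equivalences of symmetric monoidal functors after restriction along $\disk_n^{\fr,\sqcup}\hookrightarrow \disk_{d\subset n}^{\fr,\sqcup}$ or $\disk_d^{\fr,\sqcup}\hookrightarrow \disk_{d\subset n}^{\fr,\sqcup}$.

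The first equivalence will follow from an elementary stratum-counting observation. For $U=\RR^n$, any stratum-preserving embedding $(\RR^d\subset\RR^n)\hookrightarrow \RR^n$ would need to send the $d$-stratum into a stratum of the target of dimension $d<n$, but $\RR^n$ is smooth of pure dimension $n$ and has no such stratum. Hence the mapping space above is empty whenever $k>0$, and the sum collapses to $\coprod_j\Emb^{\fr}((\RR^n)^{\sqcup j},\RR^n)\otimes_{\Sigma_j}P^{\otimes j}\simeq \free_n^P(\RR^n)$, naturally in $\RR^n\in \disk_n^{\fr,\sqcup}$.

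For the second equivalence, at $U=(\RR^d\subset\RR^n)$, the geometric heart of the argument is a $\Sigma_j\times\Sigma_k$-equivariant equivalence
\[
\Map_{\disk_{d\subset n}^{\fr,\sqcup}}\bigl((\RR^n)^{\sqcup j}\sqcup (\RR^d\subset\RR^n)^{\sqcup k},(\RR^d\subset\RR^n)\bigr)~\simeq~ \conf_k(\RR^d)\times \conf_j(\RR^n\smallsetminus\RR^d)~.
\]
I would obtain this by analyzing the forgetful Serre fibration onto $\Map((\RR^d\subset\RR^n)^{\sqcup k},(\RR^d\subset\RR^n))$. Its base is equivalent to $\conf_k(\RR^d)$ by restriction to the $d$-stratum, since once the framed embedding of the $d$-stratum is fixed the transverse tubular data is determined up to contractible choice. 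Its fiber over a stratified self-embedding $\phi$ is $\Emb^{\fr}((\RR^n)^{\sqcup j},(\RR^d\subset\RR^n)\smallsetminus \phi)$, which I would identify with $\Emb^{\fr}((\RR^n)^{\sqcup j},\RR^n\smallsetminus\RR^d)\simeq \conf_j(S^{n-d-1}\times \RR^{d+1})$ by the isotopy that shrinks the normal tubular neighborhoods of $\phi$ toward the $d$-stratum.

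The main obstacle will be promoting this fiberwise identification to a global trivialization of the Serre fibration, coherent with the $\Sigma_j\times\Sigma_k$-actions. I would handle this by constructing a canonical equivariant shrinking isotopy using the product structure $(\RR^d\subset\RR^n)\cong \RR^d\times(\{0\}\subset \RR^{n-d})$ and the canonical trivialization of the normal framing. Granted the splitting, taking coinvariants with coefficients $P^{\otimes j}\otimes Q^{\otimes k}$ and summing over $j,k$ gives
\[
\free_{d\subset n}^{(P,Q)}(\RR^d\subset\RR^n)~\simeq~\conf^Q(\RR^d)\otimes \conf^P(S^{n-d-1}\times \RR^{d+1})~.
\]
The first tensorand equals $\free_d^Q(\RR^d)$ by the same free-algebra formula applied in $\disk_d^{\fr,\sqcup}$, and the second equals $\int_{S^{n-d-1}\times \RR^{d+1}}\free_n^P$ by the classical configuration-space computation of factorization homology of a free algebra on a smooth framed manifold (the case of Proposition~\ref{prop:conf} with empty submanifold, which predates this paper). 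Naturality in $\disk_d^{\fr,\sqcup}$, which acts on $\disk_{d\subset n}^{\fr,\sqcup}$ through the tubular-thickening inclusion, upgrades this to the stated equivalence of $\disk_d^{\fr}$-algebras.
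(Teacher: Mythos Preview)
Your approach is correct but quite different from the paper's. The paper argues structurally: it invokes Proposition~\ref{nk} to identify $\Alg_{\disk_{d\subset n}^{\fr}}(\cV)$ with the $\infty$-category of pairs $(A_n,A_d)$ where $A_n$ is a $\disk_n^{\fr}$-algebra and $A_d$ is a $\disk_d^{\fr}$-algebra in $\disk_{n-d}^{\fr}$-$A_n$-modules. The forgetful functor to $\cV\times\cV$ then factors through $\Alg_{\disk_n^{\fr}}(\cV)\times\Alg_{\disk_d^{\fr}}(\cV)$, and composing left adjoints gives the answer: on the first factor one gets $\free_n^P$; on the second, the free $\disk_{n-d}^{\fr}$-$\free_n^P$-module on $\free_d^Q$, which is identified as $\free_d^Q\otimes\int_{S^{n-d-1}\times\RR^{d+1}}\free_n^P$ via the equivalence between $\disk_j^{\fr}$-$R$-modules and $\int_{S^{j-1}}R$-modules from~\cite{cotangent}. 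No embedding spaces are analyzed directly.

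Your route instead computes the operadic left Kan extension explicitly as a sum over configuration spaces, then identifies the embedding spaces geometrically. This is exactly the computation the paper carries out later, and in greater generality, in the proof of Proposition~\ref{prop:conf}; you are effectively specializing that argument to $(L\subset M)=(\RR^d\subset\RR^n)$ and reading off the answer. What the paper's structural proof buys is conceptual economy and avoidance of the fibration-trivialization step you flag; what your direct approach buys is that it is self-contained and does not rely on Proposition~\ref{nk} or the module-theoretic input from~\cite{cotangent}. One small point to tighten: for the second equivalence you should be explicit that the universal arrow of $\disk_d^{\fr}$-algebras is the one induced by the adjunction, and that your pointwise identification is compatible with the $\disk_d^{\fr}$-structure coming from the tubular-thickening functor; this is true, but the naturality claim at the end deserves a sentence explaining why the $\int_{S^{n-d-1}\times\RR^{d+1}}\free_n^P$ factor carries the $\disk_d^{\fr}$-structure you expect.
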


\begin{proof} 
Recall from the proof of Proposition~\ref{nk} that a $\disk^{\fr}_{d\subset n}$-algebra structure $A$ on $(A_n, A_d)$, where $A_n$ is a $\disk^{\fr}_n$-algebra and $A_d$ is a $\disk^{\fr}_d$-algebra, is equivalent to the structure of a $\disk_{n-d}^{\fr}$-$A_n$-module structure on $A_d$ in the $\oo$-category $\Alg_{\disk^{\fr}_d}(\cV)$. 
The forgetful functor factors as the forgetful functors
\[
\Alg_{\disk^{\fr}_{d\subset n}}(\cV)\longrightarrow\Alg_{\disk^{\fr}_n}(\cV)\times\Alg_{\disk^{\fr}_d}(\cV)\longrightarrow\cV\times\cV
\] 
and thus, passing to the left adjoints, we can write the free algebra $A$ on a pair $(P,Q)$ as the composite of the two left adjoints, which gives the free $\disk^{\fr}_n$-algebra on $P$ and the free $\disk^{\fr}_{n-d}$-module on the free $\disk^{\fr}_d$-algebra on $Q$; the latter is calculated by tensoring with the factorization homology $\int_{S^{n-d-1} \times \RR^{d+1}}\free_n^P$, which is a special case of the equivalence between ${\disk^{\fr}_j}$-$R$-modules and left modules for $\int_{S^{j-1}}R$, see Proposition 3.16 of~\cite{cotangent}, applied to $R =\free_n^P$ and $j=n-d$.

\end{proof}

\begin{proof}[Proof of Proposition ~\ref{prop:conf}]

Recall the construction of the $\infty$-operad $\cE^{\amalg}_{\sf inert}$ -- it is the free $\infty$-operad on $\cE$.  That is, the map 
\[
\mathsf{Fun}^\ot(\cE^{\amalg}_{\sf inert},\cV) \xra{\simeq} \mathsf{Fun}(\cE,\cV)\simeq \cV^{\sO(n)}\times \cV^{\sO(d\subset n)}~, 
\]
induced by restriction along the inclusion of the underlying $\oo$-category $\cE \to \cE^\amalg_{\sf inert}$, is an equivalence of Kan complexes -- here we are using exponential notation for simplicial sets of maps.
Explicitly, a vertex of $\cE^\amalg_{\sf inert}$ is a pair of finite sets $(J_n,J_d)$ while an edge is the data of a pair of based maps $(J_n)_+ \xra{a} (J_n')_+$ and $(J_d)_+ \xra{b} (J_d')_+$ which are \emph{inert}, which is to say, the fibers over non-base points $a^{-1}(j')$ and $b^{-1}(j'')$ are each singletons, together with a pair of elements $\alpha\in \sO(n)^{J_n'}$ and $\beta\in \sO(d\subset n)^{J_d'}$.  
We will denote a typical object of $\cE^\amalg_{\sf inert}$ as $E=(J_n,J_d)$.  

The the standard inclusion $\cE \to \mfld_{d\subset n}$ then induces the map of $\infty$-operads $\cE^\amalg_{\sf inert} \xra{i} \mfld^{\sqcup}_{d\subset n}$ whose value on vertices is
\[
i\colon (J_n,J_d)\mapsto \bigl(\bigsqcup_{J_n} \RR^n\bigr)\sqcup \bigl(\bigsqcup_{J_d} (\RR^d\subset \RR^d)\bigr)~.
\] 
Likewise, let $(\sO(n)\xra{\w{P}}\cV~,~\sO(d\subset n)\xra{\w{Q}} \cV)$ be a pair $(P,Q)$ of objects in $\cV$ each equipped with actions of $\sO(n)$ and $\sO(d\subset n)$, respectively.  
These data then determine the solid diagram of $\infty$-operads
\[
\xymatrix{
\cE^{\amalg}_{\sf inert}  \ar[rrrrrd]^-{(P^\bullet,Q^\bullet)}   \ar[d]^i
&&&&&
\\
\mfld^{\sqcup}_{d\subset n}  \ar@{.>}[rrrrr]^-{\mathsf{Free}_\cE^{(\w{P},\w{Q})}}
&&&&&
\cV
}
\]
in where the value of $(P^\bullet,Q^\bullet)$ on $(J_n,J_d)$ is canonically equivalent to $ P^{\ot J_n} \ot Q^{\ot J_d}$ as a $\bigl(\sO(n)^{J_n} \times \sO(d\subset n)^{J_d}\bigr)$-objects.  
The filler $\mathsf{Free}_\epsilon^{(\w{P},\w{Q})}$ is the desired free construction, and is computed as operadic left Kan extension.  Explicitly, for $X\in \mfld_{d\subset n}$, the value
\begin{equation}\label{free-colimit}
\mathsf{Free}_\cE^{(\w{P},\w{Q})}(X) = \colim_{E\xra{\mathsf{act}} X}  (P^\bullet,Q^\bullet)(E) = \colim_{(J_n,J_d)\xra{\sf act} X} P^{\ot J_n} \ot Q^{\ot J_d}
\end{equation}
where the colimit is over the $\oo$-category $\cE^\amalg_X  := \cE^\amalg_{\sf inert} \times_{\mfld^{\sqcup}_{d\subset n}} \bigl((\mfld^{\sqcup}_{d\subset n})_{\sf act}\bigr)_{/X}$ of active morphisms in $\mfld^{\sqcup}_{d\subset n}$ from the image under $i$ of $\cE^\amalg_{\sf inert}$ to the object $X$.  

We will now compute the colimit in~(\ref{free-colimit}).  
By construction, the projection $\cE^\amalg_X \to \cE^\amalg_{\sf inert}$ is a right fibration whose fiber over $E$ is the Kan complex $\mfld_{d\subset n}(i(E),X)$. 
Consider the subcategory of isomorphisms $\cE^\amalg_{\sf iso}\subset \cE^\amalg_{\sf inert}$ -- it is isomorphic to the category of pairs of finite sets and pairs of bijections among them. 
Denote $\cG = \cE^\amalg_{\sf iso}\times_{\cE^\amalg_{\sf inert}} \cE^\amalg_X$.  
Because the inclusion $\cE^\amalg_{\sf iso}\subset \cE^\amalg_{\sf inert}$ is final, so is the inclusion $\cG \subset \cE^\amalg_X$.  So the colimit~(\ref{free-colimit}) is canonically equivalent to the colimit of the composite $\cG \subset \cE^\amalg_X \xra{(P^\bullet,Q^\bullet)} \cV$.  
Because $\cE^\amalg_{\sf iso}$ is a coproduct of $\infty$-groupoids (Kan complexes) indexed by isomorphism classes of its objects, then $\cG$ is a coproduct of $\infty$-groupoids indexed by isomorphism classes of objects of $\cE^\amalg_{\sf iso}$.  As so, the colimit~(\ref{free-colimit}) breaks up as a coproduct over isomorphism classes of objects of $\cE^\amalg_{\sf iso}$.  

We will now understand the $[E]^{\rm th}$ summand of this colimit.  
Choose a representative $E=(J_n,J_d)\in \cE^\amalg_{\sf iso}$ of this isomorphism class. 
We point out that the Kan complex of $\Aut(E)$ fits into a Kan fibration sequence $\sO(n)^{ J_n}\times \cO(d\subset n)^{ J_d} \to \Aut(E) \to \Sigma_{J_n}\times \Sigma_{J_d}$.  
Consider the right fibration $(\cE^\amalg_{\sf iso})_{/E} \to \cE^\amalg_{\sf iso}$ whose fiber over $E'$ is the Kan complex $\Iso(E',E)$ which is a torsor for the Kan complex $\Aut(E)$ is $E'$ if isomorphic to $E$ and is empty otherwise.  
Denote the resulting right fibration $\cG_E = (\cE^\amalg_{\sf iso})_{/E} \times_{\cE^\amalg_{\sf iso}} \cG~\longrightarrow~ \cG$ whose fibers are either a torsor for $\Aut(E)$ or empty.  
The composite $\cG_E \to \cG \subset \cE^\amalg_X \xra{(P^\bullet,Q^\bullet)} \cV$ is canonically equivalent to the constant map at $P^{\ot J_n}\ot Q^{\ot J_d}$.  
It follows from the definition of the tensor over spaces structure, that the colimit of this composite  is 
\[
\Bigl(\mfld_{d\subset n}\bigl(i(J_n, J_d),X \bigr)\Bigr) \ot \bigl(P^{\ot J_n} \ot Q^{\ot J_d}\bigr)~.
\]  
We conclude from this discussion that the colimit of the composite $\cG \subset \cE^\amalg_X \xra{(P^\bullet,Q^\bullet)} \cV$ is 
\begin{equation}\label{tensor-time}
\coprod_{[(J_n,J_d)]} \Bigl(\mfld_{d\subset n}\bigl(i(J_n,J_d),X \bigr)\Bigr) \ot_{\Aut(J_n,J_d)} \bigl(P^{\ot J_n} \ot Q^{\ot J_d}\bigr)~.
\end{equation}

We make expression~(\ref{tensor-time}) more explicit for the case that $(\w{P},\w{Q}) = \delta(P,Q)$ is a pair of objects with trivial group actions.  
As so, the map $\cG \subset \cE^\amalg_X \xra{(P^\bullet,Q^\bullet)} \cV$ factors through the projection $\cG \to \cE^\amalg_{\sf iso} \to (\mathsf{Fin}_\ast)_{iso}$ the groupoid of finite sets and bijections -- we denote this groupoid as $\Sigma$.   

Recall that the $\sD_{d\subset n}$-manifold $X=(L\subset M)$ is the data of a framed $n$-manifold $M$, a properly embedded smooth submanifold $L$, and a splitting of the framing along $L$.  Evaluation at the origins of $i(J_n,J_d) = \bigl(\bigsqcup_{J_n} \RR^n\bigr)\sqcup \bigl(\bigsqcup_{J_d} (\RR^d\subset \RR^n)\bigr)$ gives a map
\[
\mfld_{d\subset n}\bigl((\bigsqcup_{J_n} \RR^n)\sqcup (\bigsqcup_{J_d} \bigl(\RR^d\subset \RR^n)\bigr)~,~(L\subset M)\bigr) ~\longrightarrow~ \mathsf{Conf}_{J_n}(M\smallsetminus L)\times \mathsf{Conf}_{J_d}(L)~.  
\]
This map is evidently natural among morphisms among the variable $(J_n,J_d)\in \cG$ where the action of $\cG$ on the righthand side factors through the projection $\cG \to \Sigma$.   
There results a $\Sigma$-equivariant map
\[
\Bigl(\mfld_{d\subset n}\bigl(i(J_n,J_d),(L\subset M)\bigr)\Bigr)_{/\sO(n)^{ J_n}\times \sO(d\subset n)^{ J_d}} ~\overset{\sim}\longrightarrow~ \mathsf{Conf}_{J_n}(M\smallsetminus L)\times \mathsf{Conf}_{J_d}(L)~;
\]
and for standard reasons it is an equivalence of Kan complexes.  
We conclude that
\[
\free_\cE^{\delta(P,Q)}(L\subset M) \xra{\simeq} \conf^P(M\smallsetminus L)\ot \conf^Q(L)~.
\]
Finally, the formula 
\[
\free_\cE^{\delta(P,Q)}(L\subset M) = \int_{(L\subset M)} \free_\cE^{\delta(P,Q)}
\]
is a formal consequence of commuting left Kan extensions (here we are using the same notation for $\free_\cE^{\delta(P,Q)}$ and its restriction to $\disk^{\sqcup}_{d\subset n}$).  With Lemma~\ref{frame-or-not}, this completes the proof of the proposition.  

\end{proof}

\begin{remark}
The methods employed here in~\S\ref{free-examples} have been intentionally presented to accommodate much greater generality.  
For instance, with appropriate modifications of the statements, the role of $\sD_{d\subset n}$ (or its framed version) could be replaced by any category of basics $\cB$.  Likewise, the maximal sub-Kan complex $\cE\subset \sD_{d\subset n}$ could be replaced by any map $\cE\to \cB$ of $\oo$-categories.  

\end{remark}

\end{document}